\DeclareSymbolFont{bbold}{U}{bbold}{m}{n}
\DeclareSymbolFontAlphabet{\mathbbold}{bbold}
\newcommand{\N}{\mathbb{N}}
\newcommand{\Z}{\mathbb{Z}}
\newcommand{\R}{\mathbb{R}}
\newcommand{\C}{\mathbb{C}}
\newcommand{\K}{\mathbb{K}}
\newcommand{\1}{\mathbbold{1}}
\newcommand{\id}{\operatorname{id}}
\newcommand{\ii}{\mathrm{i}}
\newcommand{\dd}{\mathrm{d}}
\newcommand{\mm}{\mathfrak{m}}
\newcommand{\cowot}{\textnormal{co-}\tau_{\textnormal{w}}}
\newcommand{\cowotl}{\textnormal{co-}\tau_{\textnormal{w}}\textnormal{-}\lim}
\newcommand{\cow}{\textnormal{co-w}}
\newcommand{\cowl}{\textnormal{co-w-}\lim}
\renewcommand{\epsilon}{\varepsilon}
\newcommand{\m}{{\rm m}}
\DeclareMathAccent{\Circ}{\mathalpha}{operators}{"17}
\newcommand{\intereset}{\operatorname{int}}
\DeclareMathOperator{\conv}{conv}
\DeclareMathOperator{\ind}{ind}
\DeclareMathOperator{\diag}{diag}
\DeclareMathOperator{\sym}{sym}
\DeclareMathOperator{\dive}{div}
\DeclareMathOperator{\grad}{grad}
\renewcommand{\ker}{\operatorname{ker}}
\DeclareMathOperator{\ran}{ran}
\DeclareMathOperator{\dom}{dom}
\DeclareMathOperator{\sgn}{sgn}
\renewcommand{\Re}{\operatorname{Re}}
\renewcommand{\hat}{\widehat}
\newcommand{\e}{{\rm e}}
\newcommand{\rdd}{{\rm red}}
\renewcommand\binom[2]{\left[\begin{matrix} #1 \\ #2\end{matrix}\right]}
\let\phi\varphi
\let\leq\leqslant
\let\geq\geqslant
\def\@row#1,{#1\@ifnextchar;{\@gobble}{&\@row}}
\def\@matrix{%
    \expandafter\@row\my@arg,;%
    \@ifnextchar({\\ \get@in@paren{\@matrix}}{\after@matrix}%
    }
\def\matrixtype#1#2#3{%
    \ifmmode\def\after@matrix{\end{#2}\right#3}%
    \else\def\after@matrix{\end{#2}\right#3$}$\fi
    \left#1\begin{#2}\get@in@paren{\@matrix}%
    }
\def\@column#1,{#1\@ifnextchar;{\@gobble}{\\ \@column}}
\newcommand\vect{}
\def\svect(#1){\left(\begin{smallmatrix}\@column#1,;\end{smallmatrix}\right)}
\def\vect{\get@in@paren{\@vect}}
\def\@vect{\left(\begin{matrix}\expandafter\@column\my@arg,;\end{matrix}\right)}
\def\get@in@paren#1({\def\my@arg{}\def\my@rest{}\def\after@get{#1}\get@arg}
\let\e@a\expandafter
\def\get@arg#1){\e@a\kl@test\my@rest#1(;}
\def\kl@test#1(#2;{\e@a\def\e@a\my@arg\e@a{\my@arg#1}%
                   \ifx:#2:\let\my@exec\after@get
                   \else\let\my@exec\get@arg
                        \e@a\def\e@a\my@arg\e@a{\my@arg(}%
                        \def@rest#2;%
                   \fi\my@exec}
\def\def@rest#1(;{\def\my@rest{#1\kl@zu}}
\def\kl@zu{)}
\newcommand\MyPairedDelimiter{%
  \@ifstar{\My@Paired@Delimiter{{}}}
          {\My@Paired@Delimiter{}}%
}
\newcommand\My@Paired@Delimiter[4]{%
  \newcommand#2{%
    \@ifstar{\start@PD{#1}{\delimitershortfall=-1sp}{#3}{#4}}
            {\start@PD{#1}{}{#3}{#4}}%
  }%
}
\newcommand\start@PD[5]{%
  #1\mathopen{\mathpalette\put@delim@helper{\put@delim{#2}{#3}{.}{#5}}}%
  #5%
  \mathclose{\mathpalette\put@delim@helper{\put@delim{#2}{.}{#4}{#5}}}%
}
\newcommand\put@delim@helper[2]{%
  \hbox{$\m@th\nulldelimiterspace=0pt #2#1$}%
}
\newcommand\put@delim[5]{%
  \setbox\z@\hbox{$\m@th#5{#4}$}%
  \setbox\tw@\null
  \ht\tw@\ht\z@ \dp\tw@\dp\z@
  #1#5%
  \left#2\box\tw@\right#3%
}
\MyPairedDelimiter*{\abs}{\lvert}{\rvert}
\MyPairedDelimiter*{\norm}{\lVert}{\rVert}
\MyPairedDelimiter{\set}{\{}{\}}
\theoremstyle{plain} 
\newtheorem{theorem}{Theorem}[section]
\newtheorem{corollary}[theorem]{Corollary}
\newtheorem{lemma}[theorem]{Lemma}
\newtheorem{proposition}[theorem]{Proposition}
\theoremstyle{definition}
\newtheorem{example}[theorem]{Example}
\newtheorem{setting}[theorem]{Setting}
\newtheorem{remark}[theorem]{Remark}
\begin{document}

\title{Homogenisation of Laminated Metamaterials \\ and the  \\ Inner Spectrum}

\author{Marcus Waurick}

\date{}

\maketitle

\begin{abstract} We study homogenisation problems for divergence form equations with rapidly sign-changing coefficients. With a focus on problems with piecewise constant, scalar coefficients in a ($d$-dimensional) crosswalk type shape, we will provide a limit procedure in order to understand potentially ill-posed and non-coercive settings. Depending on the integral mean of the coefficient and its inverse, the limits can either satisfy the usual homogenisation formula for stratified media, be entirely degenerate or be a non-local differential operator of 4th order. In order to mark the drastic change of nature, we introduce the `inner spectrum' for conductivities. We show that even though $0$ is contained in the inner spectrum for all strictly positive periods, the limit inner spectrum can be empty. Furthermore, even though the spectrum was confined in a bounded set uniformly for all strictly positive periods and not containing $0$, the limit inner spectrum might have $0$ as an essential spectral point and accumulate at $\infty$ or even be the whole of $\C$. This is in stark contrast to the classical situation, where it is possible to derive upper and lower bounds in terms of the values assumed by the coefficients in the pre-asymptotics. In passing, we also develop a theory for Sturm--Liouville type operators with indefinite weights, reduce the question on solvability of the associated Sturm--Liouville operator to understanding zeros of a certain explicit polynomial and show that generic real perturbations of piecewise constant coefficients lead to continuously invertible Sturm--Liouville expressions. \end{abstract}

Keywords: metamaterials, sign-changing coefficients, divergence form equations, laminated materials, Sturm--Liouville problems, indefinite weights, $G$-convergence, holomorphic $G$-convergence, $T$-coercivity, inner spectrum, spectrum

MSC 2020: Primary 35B27; Secondary 35J25; 35P05; 35Q61; 35R25; 35B20; 35B34; 47A52; 47A53; 74Q05

\medmuskip=4mu plus 2mu minus 3mu
\thickmuskip=5mu plus 3mu minus 1mu
\belowdisplayshortskip=9pt plus 3pt minus 5pt

\newpage

\tableofcontents
\vfill
\section*{Acknowledgments}

The author is very grateful for the hospitality extended to him by the Erwin-Schr\"odinger Institute. The workshop on ``Spectral Theory of Differential Operators in Quantum Theory'' in November 2022 led to a wonderful scientific atmosphere with valuable discussions and talks that have helped to improve the presentation of the material at hand. The authors thanks Jonathan Stanfill for careful reading. The support of the GrK2583/1 during an early stage of this research is gratefully acknowledged.

\newpage

\section{Introduction}\label{s:int}

The aim of the present article is to understand divergence form problems with piecewise constant possibly sign-changing coefficients and to address the limit of period tending to $0$ for divergence form equations with periodic coefficients of the said type. In more applied contexts sign-changing coefficients are used to describe so-called `metamaterials' and the mathematical challenge lies in the fact that the problem considered is non-coercive by nature. There is an abundance of literature concerning real-world applications of metamaterials, we exemplarily refer to \cite{SPW04,P04,FB05}.

The rigorous limit of period tending to $0$ is classically treated in the area of mathematical homogenisation, see, e.g., \cite{Cioranescu1999,ZKO94}. This theory aims at understanding microscopically heterogenous materials (e.g., periodic with `small' period) by finding a suitable homogeneous (i.e., constant coefficient) replacement sharing a more or less similar behaviour on a macroscopic level. The homogenisation problem (in the context of coercive coefficients) with coefficients depending on one variable only has been studied under the umbrella term of `stratified media' or `laminated materials'. Treating coefficients being constant on slabs, we shall thus analyse homogenisation theory for periodic laminated metamaterials. 

In order to obtain a flavour of the type of problems we encounter here, we recall one of the arguably `easiest' non-trivial examples (see also \cite{H14} for a (possibly) more elementary situation), namely that of a divergence form equation on an open rectangle symmetric about the $y$-axis:
On $\Omega=(-1,1)\times (0,1)$ consider
\[
   -\dive a \grad u = f \in L_2(\Omega)
\]
subject to $u|_{\partial\Omega}=0$, where the conductivity $a$ satisfies
\[
   a(x,y) = \sgn(x) =\begin{cases} 1,& x\geq 0,\\
   -1,&x<0.
   \end{cases}
\]
 This problem has been thoroughly analysed by \cite{BK18} and it was shown that the associated operator-realisation with homogeneous Dirichlet boundary conditions on $L_2$ is self-adjoint, but not invertible; it has $0$ as an eigenvalue of infinite multiplicity; thus implying that the corresponding operator possesses non-trivial essential spectrum and has no compact resolvent. The operator is studied by adding $\ii\eta$, $\eta\in \R$ small, to $-1$ on $x<0$, thus making the perturbed operator coercive (after suitable multiplication with a complex scalar) for all $\eta\neq 0$ and then consider the limit $\eta\to 0$. 

A detailed analysis of this problem with the techniques developed in this manuscript shows that \emph{any} (sufficiently small) number added to both sides of the coefficients leads to a well-posed albeit non-coercive problem, see Example \ref{exam:minusoneplusone1} below.

In a series of papers, the well-posedness of sign-changing coefficients for divergence form problems (also involving Maxwell type equations) has been addressed mainly by the French school, particularly by Bonnet-Ben Dhia and Ciarlet jr., and co-authors, see, \cite{BCZ10} and, e.g., \cite{BCH11,BCC12,BCC14}; see, however, also \cite{DT11}. In \cite{BCZ10}, the authors introduce the concept of $\mathbb{T}$-coercivity, which may be interpreted as coercivity after multiplication with an operator and is equivalent to well-posedness of the problem considered. The main task is in constructing a suitable topological isomorphism $\mathbb{T}$, which may or may not be easy to accomplish in applications, such that the standard bilinear form weighted with $\mathbb{T}$ becomes coercive. For given $f\in L_2(\Omega)$, in \cite{BCZ10}, the authors studied (among other things) problems of finding $u\in H_0^1(\Omega)$ such that 
\[
  \dive\varepsilon^{-1} \grad u = f\in L_2(\Omega)
\] for $\Omega\subseteq \R^{d}$ open, bounded, $d=2$ or $d=3$, with $\varepsilon,\varepsilon^{-1}\in L_\infty(\Omega;\R)$. Here, they additionally assume that $\Omega$ has two open, mutually disjoint (connected) subdomains $\Omega_+,\Omega_-$ with Lipschitz boundaries satisfying $\overline{\Omega} =\overline{\Omega}_+\cup\overline{\Omega}_-$. The coefficient $\varepsilon$ respectively satisfies on $\Omega_+$, $\varepsilon \geq c_+>0$ and, on  $\Omega_-$,  $\varepsilon \leq -c_-<0$ for some $c_\pm>0$. For the case
\[
    \frac{-\sup_{\Omega_-} \varepsilon }{\sup_{\Omega_+}\varepsilon} > \kappa^2 
\] $\mathbb{T}$-coercivity and, hence, well-posedness for the problem in question has been shown in \cite{BCZ10}; here $\kappa$ satisfies
\[
  \| v\|_{1/2,+} \leq \kappa \| v\|_{1/2,-}\quad (v\in H^{1/2}(\Sigma); \Sigma=\overline{\Omega}_+\cap   \overline{\Omega}_-)
\]  and $\|v\|_{1/2,\pm}$ denotes the norm of the trace $v=u|_{\Sigma}$ for $u\in H^1(\Omega_{\pm})$. With the more particular situation of piecewise constant, laminated materials, we are able to provide a more precise well-posedness condition, which in turn is even a characterisation of well-posedness. Moreoever, since we allow for several sign-changes, $\Omega_-$ and $\Omega_+$ are not connected anymore; thus, the results presented here paint a more detailed picture of a particular situation studied in \cite{BCZ10,BCC12} and they do complement the mentioned results in that we provide results for disconnected $\Omega_-$ and $\Omega_+$.

In \cite{Pan19}, K.~Pankrashkin discusses whether or not the sign-indefinite Laplacian given by
\[
   -\dive h \grad u  = f \in L_2(\Omega)
\]
subject to homogeneous Dirichlet boundary conditions for $u$ with $h\colon \Omega \to \{-\mu,1\}$ for some $\mu>0$ admits self-adjoint realisations. In the paper, similar to the approach in \cite{BK18}, the notion of boundary triplets are used. Similar to the discussion we develop in the present article (see Section \ref{sec:sturm}), the author analyses the indefinite Laplacian using operator-valued Sturm--Liouville problems by a separation of variables ansatz. We also mention \cite{CPP19}, where the case $\mu=1$ has been addressed in higher-dimensions again using the theory of boundary triples. 

By means of representation theory, the thesis \cite{Schmitz14} treats the case of sign-indefinite coefficients. We particularly note \cite[Theorem 8.2.2 and Corollary 8.4.11]{Schmitz14} for results in the flavour of the general Theorem \ref{thm:TW-Mana}.

To proceed with the present approach, we need a well-posedness theorem for divergence form problems other than the classical Lax--Milgram lemma. In this manuscript, we recall one that \emph{characterises} well-posedness in terms of invertibility of a projected variant of the conductivity matrix, see \cite{TW14_FE} and Theorem \ref{thm:TW-Mana} below. The core observation underlying  \cite{TW14_FE} is from \cite{P10} and its main consequence can be summarised as follows. Let $\Omega\subseteq \R^d$ be open and bounded. Then for $a\in L( L_2(\Omega)^d)$ the Dirichlet-problem of finding $u\in H_0^1(\Omega)$ such that for given $f\in H^{-1}(\Omega)$ we have
\[
   -\dive a \grad u = f
\]is \emph{equivalent} to 
\[
   0 \in \rho(\iota_0^*a\iota_0),
\]where $\iota_0\colon g_0(\Omega)\hookrightarrow L_2(\Omega)^d$, $g_0(\Omega)\coloneqq \grad[H_0^1(\Omega)]$,  is the canonical embedding. This observation gives rise to the definition of $\sigma_{g_0(\Omega)}(a)$, the \textbf{inner spectrum of $a$} with respect to this Dirichlet problem, given by
\[
    \sigma_{g_0(\Omega)}(a) = \{\lambda\in \C; (\iota_0^*(a-\lambda)\iota_0)^{-1}\in L(g_0(\Omega))\} (=\sigma(\iota_0^*a\iota_0)).
\] In order to determine the (essential) spectrum for Maxwell's equations the relevance of the inner spectrum has been shown in \cite{M1,M2}; some examples complementing the present setting are given there as well. Here, we will explore the inner spectrum for piecewise constant sign-changing coefficients, that is, for metamaterials. In \cite[Section 8.3]{Schmitz14} the inner spectrum for the one-dimensional case has been thoroughly discussed; see in particular \cite[Theorem 8.3.1]{Schmitz14}. We provide the concise arguments for this particular case in Section \ref{sec:1DWP}. For higher-dimensional cases and scalar coefficients $a$ admitting two values with $\Omega_+=[a>0]$ and $\Omega_-=[a<0]$ not necessarily connected, the inner spectrum of $a$ has been computed in \cite[Theorem 8.4.8]{Schmitz14}. The result is presented in terms of the spectrum of a certain combination of the Dirichlet-to-Neumann maps associated to finding harmonic functions on $\Omega_+$ and $\Omega_-$ on $H^{1/2}(\Gamma)$, where $\Gamma$ denotes the interface of $\Omega_+$ and $\Omega_-$.

Using a non-classical variant of the homogenisation process (allowing for the coercivity constant not to be uniformly bounded below or above in the homogenisation parameter), people were able to rigorously derive constitutive relations associated to metamaterials, see, e.g., \cite{BBF09,BS10,FB05,LS16}. Even though the coercivity constants of the problems considered were allowed to degenerate as $\varepsilon\to 0$, the $\varepsilon$-parameter problem is always coercive. Here, we adopt a different point of view in that the problem class we start out with is neither assumed to be coercive nor well-posed. Thus, we study highly oscillatory (periodic, laminated) metamaterials from the outset.

For the coercive case of periodic laminated materials explicit formulas for the homogenised coefficients are due to Tartar and Murat and are well-established, see \cite{Murat1997} or  \cite[Theorem 5.12]{Cioranescu1999}. As a consequence, see also Examples \ref{ex:class1D} and \ref{ex:classDD}, when the inner spectrum is concerned, the following formulas can be shown for periodic, real, coefficients $\alpha_{\#}$ depending on one variable only and assuming the values $\alpha_0,\ldots,\alpha_r\in \R\setminus\{0\}$ on slabs of equal width. Denote by $\mm(\alpha_{\#})$ the integral mean over the period interval, which for simplicity we assume here to be $(0,1)$. Then if $a_n$ is the multiplication operator on $L_2(0,1)^d$ induced by $\alpha_n(x_1,\ldots,x_d)\coloneqq \alpha_{\#} (nx_1)$, $a_n$ $G$-converges (see below for the definition) to 
\[a_\infty = \begin{cases} \frac{1}{\mm(\alpha_{\#}^{-1})}, &d=1,\\
\diag(\frac{1}{\mm(\alpha_{\#}^{-1})},\mm(\alpha_{\#}), \ldots,\mm(\alpha_{\#})  )\in \R^{d\times d}, &d\geq 2,
\end{cases}
\] and we have
\[
  \begin{cases}  \sigma_{g_0(\Omega)}(a_\infty)  
      = \big\{\frac{1}{\mm(\alpha_{\#}^{-1})}\big\},& d=1,\\
  \sigma_{g_0(\Omega)}(a_\infty)      \subseteq {\conv}\{ \frac{1}{\mm(\alpha_{\#}^{-1})}, \mm(\alpha_{\#})\},& d\geq 2.
   \end{cases}
\]
We emphasise that, classically, only properties of the spectra of the (self-adjoint) divergence form problem are considered, see, e.g., \cite{ZKO94}, and to the best of the author's knowledge the inner spectrum has not been addressed so far. In any case, the mentioned formulas reflect the rather weak convergence of the coefficients; see also \cite[Lemma 6.7]{Tartar2009} for a related statement. However, as the values of $\alpha_{\#}$ (for piecewise constant coefficients) belong to the inner spectrum (Theorem \ref{thm:isp-mr}), the inner spectrum of the homogenised coefficients still belongs to a suitable convex hull of the inner spectrum of $a_1$ (note $\mm(\alpha_{\#})\in \conv\{\alpha_1,\ldots,\alpha_r\}\subseteq \sigma_{g_0(\Omega)}(a_1)$ and $\mm(\alpha_{\#}^{-1})\in \conv\{1/\alpha_1,\ldots,1/\alpha_r\}\subseteq 1/[\sigma_{g_0(\Omega)}(a_1)]$).
Given the lack of coercivity and well-posedness of the problems considered here, the corresponding inclusion formulas do not hold in general.

In the series of papers \cite{BCRR20,BRT21,BRT23}, the homogenisation of sign-changing coefficients has been addressed with a combination of the $\mathbb{T}$-coercivity approach and the method of periodic unfolding. Even though the coefficients are changing sign on a complicated domain, the coefficients only assume two different values. Moreover, the setting is arranged in a way that both the pre-asymptotic regime together with the homogenised limit are well-posed allowing for standard arguments in homogenisation theory. 

This contrast the present situation, for example, take $\alpha$ to be the $1$-periodic extension of $\1_{(0,1/4)\cup [3/4,1)}-2\1_{[1/4,3/4)}$ and consider $a_n$ as before. Then we will show and make precise the following statement
\[
a_n  \text{ `$G$-converges to' } -2\Delta_{(0,1)}(-\Delta_{(0,1)^{d-1}}^{-1})
\]
or, written in terms of the corresponding divergence form operators, 
\[
   -\dive a_n \grad \text{  `converges to' } -2\Delta_{(0,1)}^2(-\Delta_{(0,1)^{d-1}}^{-1})-2\Delta_{(0,1)}
\]Thus, in a generalised sense, the limit of the highly oscillatory sign-changing problem is a 4th order pseudo-differential operator, where the differential operator on the left-hand side is subject to homogeneous Dirichlet boundary conditions and considered on $L_2((0,1)^{d})$ and $-\Delta_{\Omega}$ denotes the Dirichlet--Laplacian as the operator realisation in $L_2(\Omega)$ for an open and bounded subset $\Omega\subseteq \R^{d}$. Depending on the dimension and the values of $\mm(\alpha_\#)$ and $\mm(\alpha_\#^{-1})$ we will show that even though there exists a bounded set $\Sigma$ such that $\bigcup_{n\in \N} \sigma_{g_0(\Omega)} (a_n)\subseteq \Sigma$, the corresponding generalised $G$-limit, $a_\infty$, may either satisfy the above inclusions valid in the classical case or one of the following alternatives
\[
   \emptyset = \sigma_{g_0(\Omega)} (a_\infty) \text{ even though } 0\in \bigcap_{n\in \N} \sigma_{g_0(\Omega)} (a_n)\quad(d=1)
\]
or 
\[
   \sigma_{g_0(\Omega)} (a_\infty) =\C \text{ or } \sigma_{g_0(\Omega)} (a_\infty) = \overline{\{ \lambda_k; k\in \N\}}\quad (d\geq 2)
\]
for some (explicit) sequence $(\lambda_k)_k$ in $\R_{>0}$ accumulating at $0$ and $\infty$.

For making the $G$-convergence statement rigorous, we develop a new homogenisation concept, namely that of \emph{holomorphic $G$-convergence}. This convergence generalises classical $G$-convergence of symmetric-matrix-valued coefficients developed and initially studied by Spagnolo, see \cite{Spagnolo1967,Spagnolo1976}. The key ingredients for the method to work are standard facts from operator theory and complex analysis, we refer, e.g., to \cite{Kato95,ABHN11}. We note in passing that tools from analytic perturbation theory are of great importance in the context of (quantitative) homogenisation theory, see in particular the seminal series of papers by Birman and Suslina, see, e.g., \cite{BS04}.

The idea of holomorphic $G$-convergence is as follows: Revisiting the above example, for a dense set of right-hand sides $f\in L_2(\Omega)^{d}$ ($\dom(\Delta_{d})$ will be sufficient), $n\in \N$ and suitable $\lambda\in \C$, we consider $u_{n}(\lambda)\in H_0^1(\Omega)$ the solution of
\[
   -\dive (a_n-\lambda)\grad u_{n}(\lambda) = f.
\]
Then, it can be shown that there exists $\omega\subseteq \C$ open with $0\in \overline{\omega}$ such that $\langle v,u_n(\cdot)\rangle_{L_2(\Omega)}$ converges for all $v\in L_2(\Omega)$ in the compact open topology of holomorphic functions on $\omega$ to some $\langle v,u(\cdot)\rangle_{L_2(\Omega)}$, which extends holomorphically to $0$. Scalar holomorphy and vector-valued holomorphy being the same by Dunford's Theorem, see, e.g., \cite[Proposition A.3]{ABHN11}, this defines an element $u(0)\in H_0^1(\Omega)$, which satisfies
\[
\big(-2\Delta_{(0,1)}^2(-\Delta_{(0,1)^{d-1}}^{-1})-2\Delta_{(0,1)}\big)u(0) = f.
\]
The details of the definition of holomorphic $G$-convergence are provided in Sections \ref{sec:mr} and \ref{sec:holG}.

Homogenisation in an operator theoretic set-up has been addressed in various articles predominantly for the time-dependent situation focussing on so-called evolutionary equations, see, e.g., \cite{NVW21,W16_HPDE,W16_H,W14_FE,W13_HP,W12_HO,W11_P}. Problems in an (almost) time-independent situation were treated in \cite{W18_NHC,W22_NHC} or \cite{CW17_1D,CW17_FH}, where in the latter references also quantitative results were established. In the mentioned references, the core ideas recycled here to understand homogenisation processes are the well-posedness theorems from \cite{TW14_FE} and \cite{PicPhy,A11} (see also \cite[Theorem 6.2.1]{STW_EE21}) in conjunction with certain stability properties of operator-valued holomorphic functions under weak limits, see, e.g., \cite{W16_H} or the Montel type Theorem \ref{thm:MontalOV} below.

In the course of the manuscript, we also have occasion to study Sturm--Liouville-type problems with sign-changing conductivities. In a slightly different situation similar problems have been considered in \cite{BLM10,BMR12} (and the references stated there). Focussing rather on the classical spectrum, in these references however, the authors had as their key assumptions mere differentiability (except for finitely many points) of the said conductivity in order to allow for an analysis invoking an integrating factor. Moreover, the boundary conditions were periodic and the derivative at some value needed to be non-zero. This situation is entirely complementary to the equations we consider in the present research in that we mostly consider piecewise constant coefficients yielding vanishing of the classical derivative where ever it exists.

We briefly comment on the structure of the article at hand. In Section \ref{sec:mr} we provide a concise presentation of the main results of the present manuscript. Section \ref{sec:WPT} recalls the operator-theoretic perspective to divergence form operators due to \cite{TW14_FE}. This section particularly contains a characterisation result instrumental for the main results of the present article as it asserts well-posedness of abstract second order divergence form equations \emph{irrespective} of whether or not the corresponding conductivity is accretive, thus,  justifying the introduction of the inner spectrum. In Section \ref{sec:1DWP} we apply the well-posedness criterion to one-dimensional divergence form problems with $L_\infty$-conductivity and show that the essential condition to solve this type of equations is the inverse of the coefficient being bounded and this inverse having non-vanishing integral mean. In this 1-d set-up, we also compute the inner spectrum. A preparation for well-posedness results for sign-changing coefficients of a corresponding higher-dimensional setting is provided in Section \ref{sec:sturm} containing aspects of Sturm--Liouville theory. Considering piecewise constant conductivities, only, we prove a result characterising well-posedness involving \emph{all} values of the coefficients differing from the criterion presented in \cite{BCZ10}. We start to address the higher-dimensional set-up in Section \ref{sec:DDWP}. Making use of the laminated structure of the coefficients, we write the Laplacian with variable coefficients as sum of a $1$-dimensional second order differential operator plus a $(d-1)$-dimensional Laplacian with conductivity $1$ multiplied with a function not depending on the $d-1$ variables contained in the derivatives of this Laplacian.
 Using the spectral theorem for the $(d-1)$-dimensional operator, we are therefore in the position to apply results from Section \ref{sec:sturm} developing a solution theory for the higher-dimensional set-up in Section \ref{sec:hdp-wp}. Section \ref{sec:pertex} is devoted to describe the inner spectrum in the higher-dimensional setting. 

In Section \ref{sec:classG} we recap the concept of $G$-convergence for (self-adjoint, matrix-valued) coefficients and recall an operator-theoretic characterisation for this convergence from \cite{W16_Gcon}. Equipped with this knowledge, we proceed in Section \ref{sec:holG} to generalise $G$-convergence by introducing the concept of \emph{holomorphic $G$-convergence}. As a first application of this concept, we provide a homogenisation theorem for general sign-changing coefficients in Section \ref{sec:hom1d}. It turns out that the only requirement needed for a homogenisation theorem is convergence in the $L_\infty(\Omega)$-weak-* topology of the inverses of the conductivities. Holomorphic $G$-convergence requires mere periodicity of the conductivity and the consideration of the usual limit of period tending to $0$. The final application of the results and concepts developed here is carried out in Section \ref{sec:homdd}, where we consider the homogenisation problem in the higher-dimensional case. The more lengthy proofs are presented in Section \ref{sec:proofs}. We conclude the article with some summarising thoughts and future directions in Section \ref{sec:con}.

For linear operators the term `invertible' will always be reserved for surjective and injective transformations. Since we only consider closed operators defined on Hilbert spaces here; invertible will thus always mean `continuously invertible'. $\K\in \{\R,\C\}$, scalar products are anti-linear in the first and linear in the second component.

\section{The main theorems}\label{sec:mr}

In this short section, we present the major results of the present article in a rather concise form. For more details we refer to the subsequent sessions. We focus on results concerning the inner spectrum and piecewise constant coefficients. For the precise results and for Sturm--Liouville theory we refer to the individual sections. 

For $\alpha_0,\ldots,\alpha_r \in \C$, we let $\alpha_{\#}$ be the $1$-periodic extension of $\sum_{j=0}^r \alpha_j \1_{( jh,(j+1)h]}$ on the whole of $\R$. For all $d\geq 1$ and ${\Omega}\subseteq \R^{d}$ open, the multiplication operator on $L_2(\Omega)^d$ induced by $(x_1,\ldots,x_d)\mapsto \alpha_{\#}(x_1)$ is denoted by $\alpha(\m_1)$ (if $d=1$, we just write $\alpha(\m)$); correspondingly we will write $\alpha(n\m_1)$ for the respective multiplication operator induced by $(x_1,\ldots,x_d)\mapsto \alpha_{\#}(n x_1)$. For a Lebesgue integrable function $\beta$ defined on $(0,1)$, we  denote by $\mm (\beta)\coloneqq \int_{(0,1)}\beta$ its integral mean.

In the one-dimensional situation, the inner spectrum can be properly understood; for this we recall one of the results of \cite{Schmitz14} for convenience. Later we provide the proof of these in order to have a sample application of Theorem \ref{thm:TW-Mana}. 

\begin{theorem}[{{see Theorem \ref{thm:isp}, see \cite[Theorem 8.3.1]{Schmitz14} or \cite{Schmitz15}}}]\label{thm:isp-mr} Let $\alpha_0,\ldots,\alpha_r \in \C$. Then for $V_1=\{1\}^\bot\subseteq L_2(0,1)$ we have
\[
     \sigma_{V_1}(\alpha(\m)) = \{ \lambda \in \C; (\alpha_\#-\lambda)^{-1} \in L_\infty, \mm((\alpha_\#-\lambda)^{-1})=0\}\cup \{\alpha_j; j\in\{0,\ldots,r\}\}.
\]
\end{theorem}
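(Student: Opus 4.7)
The plan is to work out the restricted operator $S_\lambda := \iota^*(\alpha(\m)-\lambda)\iota \colon V_1 \to V_1$ explicitly and then decide its invertibility case by case. Since $V_1 = \{1\}^\bot$, the projection $P = \iota\iota^*$ acts as $Pf = f - \mm(f)\cdot \1$, so that for $v \in V_1$ one has $S_\lambda v = (\alpha_\# - \lambda)v - \mm((\alpha_\# - \lambda)v)$. The identification of $V_1$ with $g_0((0,1)) = \grad[H_0^1((0,1))]$ comes from the one-dimensional divergence theorem (the derivative of an $H_0^1$-function has integral zero, and every mean-zero $L_2$-function admits an $H_0^1$ antiderivative), placing us in the setting of Theorem \ref{thm:TW-Mana}; the inner spectrum is then the spectrum of $S_\lambda$ on $V_1$.

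First, suppose $\lambda = \alpha_j$ for some $j$. Then $\alpha_\# - \lambda$ vanishes on an interval $(jh,(j+1)h]$ of positive length. Any nonzero mean-zero function supported in that interval (for instance $\1_A - \1_B$ for $A,B$ disjoint subsets of equal positive measure) lies in the kernel of $S_\lambda$, so $\alpha_j \in \sigma_{V_1}(\alpha(\m))$. Now suppose $\lambda \notin \{\alpha_0,\ldots,\alpha_r\}$. Then $(\alpha_\# - \lambda)^{-1} \in L_\infty$, and multiplication by $\alpha_\# - \lambda$ is a bounded bijection of $L_2(0,1)$. For injectivity of $S_\lambda$: $S_\lambda v = 0$ forces $(\alpha_\# - \lambda)v$ to equal some constant $c$, hence $v = c(\alpha_\# - \lambda)^{-1}$; the requirement $v \in V_1$ reads $c\cdot \mm((\alpha_\# - \lambda)^{-1}) = 0$. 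Thus $S_\lambda$ fails to be injective exactly when $\mm((\alpha_\# - \lambda)^{-1}) = 0$, in which case $(\alpha_\# - \lambda)^{-1}$ itself witnesses a nontrivial kernel element. For surjectivity when $\mm((\alpha_\# - \lambda)^{-1}) \neq 0$: given $g \in V_1$, the ansatz $v = (g + c)(\alpha_\# - \lambda)^{-1}$ satisfies $S_\lambda v = g$ whenever $c$ is chosen so that $\mm(v) = 0$, i.e.\ $c = -\mm(g(\alpha_\# - \lambda)^{-1}) / \mm((\alpha_\# - \lambda)^{-1})$, which is a uniquely solvable scalar equation. The open mapping theorem then gives continuous invertibility of $S_\lambda$, so $\lambda \notin \sigma_{V_1}(\alpha(\m))$.

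Combining the cases yields the asserted formula. There is no substantial technical obstacle once the operator-theoretic viewpoint of Theorem \ref{thm:TW-Mana} is in place; the piecewise-constant assumption is used only to identify the zero set of $\alpha_\# - \lambda$ (in the $L_\infty$ sense) with the finite set $\{\alpha_j\}$, and the remainder is routine manipulation of multiplication by $\alpha_\# - \lambda$ together with the integral-mean projection. The only matter to state carefully is that, under these hypotheses, the two unioned sets in the statement are automatically disjoint, since $(\alpha_\# - \lambda)^{-1} \in L_\infty$ rules out $\lambda \in \{\alpha_j\}$; this keeps the decomposition clean and makes the alternative ``injectivity versus surjectivity'' argument unnecessary (non-injectivity alone already places $\lambda$ in $\sigma_{V_1}(\alpha(\m))$, and bijectivity of $S_\lambda$ on the Hilbert space $V_1$ yields continuous invertibility).
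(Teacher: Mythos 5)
Your proof is correct and operates in the same framework as the paper's: identify $V_1=\{1\}^\bot$ with $g_0((0,1))$, invoke Theorem \ref{thm:TW-Mana}, write $S_\lambda v = (\alpha_\#-\lambda)v - \mm((\alpha_\#-\lambda)v)$, and argue case by case on $\lambda$. The one genuine difference is in the tactics used for the ``bad'' $\lambda$: where you exhibit explicit kernel elements, the paper exhibits obstructions to surjectivity. Concretely, for $\lambda=\alpha_j$ you point to mean-zero functions supported on the interval where $\alpha_\#-\lambda$ vanishes as kernel vectors, whereas the paper observes that $\psi = S_\lambda\phi$ is then forced to be constant on that interval, so $S_\lambda$ cannot be onto. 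Similarly, when $(\alpha_\#-\lambda)^{-1}\in L_\infty$ with $\mm((\alpha_\#-\lambda)^{-1})=0$, you produce $(\alpha_\#-\lambda)^{-1}$ itself as a nonzero kernel element, while the paper shows $\ran(S_\lambda)\subseteq\{1,\overline{(\alpha_\#-\lambda)^{-1}}\}^\bot\subsetneq V_1$. These are dual observations and both legitimately place $\lambda$ in the spectrum; your version has the minor advantage of being entirely constructive (you can write down the null vector), while the paper's version slots seamlessly into its general discussion of point, continuous, and residual parts of the inner spectrum (cf.\ Lemma \ref{lem:pointinner} and Theorem \ref{thm:dinner}). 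Your explicit inverse formula for the regular case ($v=(g+c)(\alpha_\#-\lambda)^{-1}$ with $c$ solved from the mean-zero constraint) is a self-contained recapitulation of the solution formula already extracted in Theorem \ref{thm:1Dwp}, which the paper simply cites. Your closing remark that the two unioned sets are disjoint is correct, though it is not needed for the set-theoretic equality being proved.
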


With significantly more work, we then derive a condition involving checking for zeros of a certain explicit polynomial that characterises well-posedness of the higher-dimensional situation. To keep the presentation concise we just mention an idea of the sort of theorems we derive.

\begin{theorem}[see Corollary \ref{cor:WP1}, Theorem \ref{thm:ndtensor}, and Corollary \ref{cor:1D0}]\label{thm:easy-wp-mr} Let $\alpha_0,\ldots,\alpha_r \in  \R\setminus\{0\}$, $\hat{\Omega}\subseteq \R^{d-1}$ open, bounded; put $\Omega\coloneqq (0,1)\times \hat{\Omega}$ and assume that $\mm(\alpha_{\#}^{-1})\neq 0$. If for all $t\in (0,1]$ 
\[
  p_{\alpha}(t)= \begin{pmatrix}1 &  \alpha_{r}^{-1}  t  \end{pmatrix} \begin{pmatrix} 1 & \alpha_{r-1}^{-1}t \\ \alpha_{r-1}t  & 1\end{pmatrix}\cdots  \begin{pmatrix} 1 &\alpha_{1}^{-1}t  \\  \alpha_{1} t & 1\end{pmatrix} \begin{pmatrix}t  \\  \alpha_0  \end{pmatrix}\neq 0,
\]
then $-\Delta_{\alpha}\coloneqq -\overline{ \dive \alpha(\m_1) \grad}$, where
\[
    -\dive \alpha(\m_1) \grad \colon \{ u\in H_0^1(\Omega); \alpha(\m_1) \grad u \in H(\dive,\Omega)\}\subseteq L_2(\Omega) \to L_2(\Omega),
\]is continuously invertible.
\end{theorem}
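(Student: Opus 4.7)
The laminated structure permits separation of variables in the transverse direction. Let $(\mu_k)_{k\in\N}$ and $(\psi_k)_{k\in\N}$ be the Dirichlet eigenvalues of $-\Delta_{\hat{\Omega}}^{\Dir}$ (counted with multiplicity) together with an orthonormal eigenbasis of $L_2(\hat{\Omega})$. The factorisation $L_2(\Omega) \cong L_2(0,1) \otimes L_2(\hat{\Omega})$ diagonalises $-\Delta_\alpha$: setting $u = \sum_k u_k \otimes \psi_k$, the equation $-\Delta_\alpha u = f$ decouples into the family of one-dimensional Sturm--Liouville problems
\[
T_k u_k := -\partial_1(\alpha_{\#} \partial_1 u_k) + \mu_k \alpha_{\#} u_k = (\psi_k \mid f)_{L_2(\hat{\Omega})}, \qquad u_k \in H_0^1(0,1).
\]
The plan is to show (i) each $T_k$ is continuously invertible and (ii) $\sup_k \|T_k^{-1}\|_{L_2(0,1) \to L_2(0,1)} < \infty$; then $\bigoplus_k T_k^{-1}$ is a bounded two-sided inverse which, by Theorem \ref{thm:TW-Mana}, extends to the continuous inverse of the closure $-\Delta_\alpha$.

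Mode-by-mode, the analysis is explicit. Piecewise constancy of $\alpha_{\#}$ reduces the homogeneous equation on the $j$-th slab to $-u'' + \mu_k u = 0$, whose solutions are spanned by $\cosh(\sqrt{\mu_k}\,\cdot)$ and $\sinh(\sqrt{\mu_k}\,\cdot)$. Enforcing the transmission conditions (continuity of $u_k$ and of the flux $\alpha_{\#} u_k'$) at each interface propagates the boundary data across one slab by a $2\times 2$ transfer matrix of precisely the shape appearing in $p_\alpha$. After a uniformising change of variable relating $\mu_k \geq 0$ to $t \in (0,1]$ (with $t \to 0^+$ corresponding to $\mu_k \to \infty$ and $t = 1$ encoding the limiting 1D problem from Section \ref{sec:1DWP}) and a convenient normalisation of fundamental solutions, the scalar whose non-vanishing is equivalent to triviality of $\ker T_k$ is identified, up to a nowhere-vanishing analytic prefactor, with $p_\alpha(t)$. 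The assumption $p_\alpha \neq 0$ on $(0,1]$ thereby delivers invertibility of $T_k$ for every Dirichlet eigenvalue $\mu_k$, while the auxiliary hypothesis $\mm(\alpha_{\#}^{-1}) \neq 0$ covers the limit mode $\mu = 0$ via the 1D criterion of Section \ref{sec:1DWP}. The Sturm--Liouville Green's function construction of Section \ref{sec:sturm} then supplies $T_k^{-1}$ explicitly as an integral operator with an explicit kernel.

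The principal obstacle is (ii), the uniform bound on $\|T_k^{-1}\|$ as $\mu_k \to \infty$. The difficulty is that $p_\alpha(0) = 0$ by direct inspection of the matrix product, so the denominator of the Green's function, essentially $p_\alpha(t(\mu_k))$, degenerates as $t \to 0^+$, and the bare non-vanishing hypothesis does not by itself yield a positive lower bound throughout $(0,1]$. The compensating feature is that the numerator of the Green's function is assembled from the same transfer matrix entries and degenerates at an identical rate; the plan is to factor the common leading power of $t$ out of numerator and denominator and show that the resulting ratio extends continuously to the closed interval $[0,1]$, on which compactness combined with the assumed non-vanishing of $p_\alpha$ on $(0,1]$ produces the required uniform resolvent bound. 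Once (ii) is secured, $\bigoplus_k T_k^{-1}$ is a bounded operator on $L_2(\Omega)$; undoing the tensor decomposition places its range in $\dom(-\Delta_\alpha)$, so that continuous invertibility of the closure follows.
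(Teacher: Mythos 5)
The overall architecture you propose (tensor decomposition, reduction to one-dimensional Sturm--Liouville problems per transverse Dirichlet eigenvalue $\mu_k$, transfer matrices across slabs, and a uniform bound on the family of inverses) is exactly the route the paper takes via Theorem \ref{thm:ndtensor}, Theorem \ref{thm:spectral1D}, Theorem \ref{thm:sp1Dff} and Corollaries \ref{cor:1D0}/\ref{cor:WP1}. However, your description of the key step has the crucial asymptotics reversed. The uniformising variable in the paper is $t = \tanh(\sqrt{\mu_k}\,h)$, so $t \to 1^-$ as $\mu_k \to \infty$ and $t \to 0^+$ as $\mu_k \to 0^+$ --- the opposite of what you assert. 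Consequently there is \emph{no} degeneracy of $p_\alpha(t(\mu_k))$ in the high-frequency regime: by Proposition \ref{prop:pam}(b) and Lemma \ref{lem:homdeg0}, $p_\alpha(\tanh(\sqrt{\mu_k}\,h)) \to p_\alpha(1) = \chi(\alpha)$ as $\mu_k \to \infty$, and $\chi(\alpha) \neq 0$ is part of the hypothesis $p_\alpha \neq 0$ on $(0,1]$. The zero $p_\alpha(0)=0$ is the $\mu \to 0^+$ limit; its first-order coefficient is $p_\alpha'(0) = \alpha_0 \sum_j \alpha_j^{-1}$, so $\mm(\alpha_\#^{-1}) \neq 0$ controls that endpoint --- but the Dirichlet eigenvalues of $\hat\Omega$ are bounded away from $0$, so this is not "the limit mode $\mu=0$" you describe. (The substantive role of $\mm(\alpha_\#^{-1}) \neq 0$ in the paper is as a standing hypothesis for Theorem \ref{thm:ndtensor} and for the Fredholm argument behind Theorem \ref{thm:spectral1D}.)

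Because the obstacle is misidentified, the proposed remedy --- factoring the common $t$-power out of a Green's function kernel and extending to $[0,1]$ by compactness --- is aimed at the wrong endpoint and would not, as written, address what actually requires care: the uniform resolvent estimate across all $k$ requires controlling not just $p_\alpha$ but the exponential $\mu_k$-dependence of the fundamental-solution entries, which does not reduce to a function of $t$ alone. The paper bypasses kernel estimates entirely: since each $D_\alpha + \alpha\lambda_k$ is self-adjoint (Theorem \ref{thm:TW-Mana}), the bound $\|(D_\alpha + \alpha\lambda_k)^{-1}\| \leq C$ is \emph{equivalent} to a spectral gap $(-1/C,1/C) \subseteq \rho(D_\alpha+\alpha\lambda_k)$, which is then recast as a non-vanishing of the characteristic quantity for \emph{perturbed} parameters (the $\tilde q$-criterion); the uniformity in $k$ is supplied by the asymptotic analysis in Proposition \ref{prop:pam} and Theorem \ref{thm:asy}. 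You should replace the Green's function compactness step by this spectral-theorem reduction, and correct the orientation of the $t$--$\mu$ correspondence throughout.
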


A more detailed account of the understanding highlighted in the previous theorem helps us to deduce the following result on the inner spectrum for divergence form problems with sign-changing coefficients. 

We introduce for $\Omega\subseteq \R^d$ open and bounded, the set
\begin{equation}\label{eq:rangr}
    g_0(\Omega)\coloneqq \grad[H_0^1(\Omega)]\text{ and } \iota_0 \colon g_0(\Omega)\hookrightarrow L_2(\Omega)^d.
\end{equation} Then, by the standard Poincar\'e inequality, we have $g_0(\Omega)\subseteq L_2(\Omega)^d$ is a closed subspace and, thus,  $\iota_0^*$ is well-defined; $\iota_0^*q=q_0\in g_0(\Omega)$ where $q=q_0+q_1$ for some $q_1\in g_0(\Omega)^\bot$. 

\begin{theorem}[see Theorem \ref{thm:isp-dd}]\label{thm:isp-dd-mr} Let $\alpha_0,\ldots,\alpha_r \in \R\setminus\{0\}$ and $\hat{\Omega}\subseteq \R^{d-1}$ open and bounded; $\Omega\coloneqq (0,1)\times \hat{\Omega}$, $A>\max_j|\alpha_j|$. Then, there exists a countable nowhere dense set $\Sigma\subseteq [-A,A]$ such that
\[
    \sigma_{g_0(\Omega)}(\alpha(\m_1))\setminus \sigma_{g_0(\Omega),\textnormal{c}}(\alpha(\m_1)) =\Sigma \cup \{\alpha_j; j\in \{0,\ldots,r\}\}.
\]
where $\sigma_{g_0(\Omega),\textnormal{c}}(\alpha(\m_1))$ is the \textnormal{\textbf{inner continuous spectrum}} of $\alpha(\m_1)$; we have $\sigma_{g_0(\Omega),\textnormal{c}}(\alpha(\m_1))\subseteq [-A,A]$.
\end{theorem}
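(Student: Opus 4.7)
The plan is to reduce the higher-dimensional spectral problem to a countable family of one-dimensional problems via the spectral theorem applied to the transversal Laplacian $-\Delta_{\hat\Omega}$, then apply the polynomial criterion of Theorem \ref{thm:easy-wp-mr} (and its shifted/extended form) to locate where well-posedness fails. First, since $\hat\Omega\subseteq\R^{d-1}$ is open and bounded, $-\Delta_{\hat\Omega}$ has compact resolvent, hence purely discrete spectrum $0<\mu_1\leq\mu_2\leq\cdots\to\infty$ with orthonormal eigenbasis $(\phi_k)_k$. As in Section \ref{sec:DDWP}, using the laminated structure, the operator $\iota_0^*(\alpha(\m_1)-\lambda)\iota_0$ decouples along this eigenbasis into a direct sum of one-dimensional Sturm--Liouville problems associated with the shifted coefficient $\alpha_\#-\lambda$ coupled with the parameter $\mu_k$. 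Consequently, $\lambda\notin\sigma_{g_0(\Omega)}(\alpha(\m_1))$ if and only if each of the fibre problems is continuously invertible \emph{and} the resulting inverses are uniformly bounded in $k$.

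Next, I would show that each fibre problem is governed by a polynomial $p_{\alpha-\lambda}(t;\mu_k)$ built in the style of Theorem \ref{thm:easy-wp-mr} (transfer matrices across slabs, with $\mu_k$ entering as the transversal weight and $\alpha_j$ replaced by $\alpha_j-\lambda$). For a fixed $k$, this polynomial is a polynomial in $\lambda$ (and in $\mu_k$), so its zero set $Z_k\subseteq\C$ is finite. Any $\lambda$ that is neither an $\alpha_j$ nor a root of any $p_{\alpha-\lambda}(\cdot;\mu_k)$ makes all individual fibre problems invertible. Set $\Sigma_0 \coloneqq \bigcup_k Z_k \setminus \{\alpha_0,\dots,\alpha_r\}$; this is countable as a countable union of finite sets.

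Now I need to separate the isolated failures (which go into $\Sigma$) from the genuine accumulation behaviour (which constitutes the inner continuous spectrum $\sigma_{g_0(\Omega),\c}(\alpha(\m_1))$). For large $k$, the transfer matrices of the fibre problems can be normalised/rescaled so that the zero-locus of the normalised polynomial converges (in the Hausdorff sense on compacta) to a limit set contained in $[-A,A]$, since the coefficients $\alpha_j$ take real values in $[-A,A]$ and the asymptotic behaviour is dictated by the trigonometric/hyperbolic solutions on each slab whose characteristic values are controlled by $|\alpha_j|\leq A$. The accumulation points of $\bigcup_k Z_k$ thus lie in $[-A,A]$; I would define $\sigma_{g_0(\Omega),\c}(\alpha(\m_1))$ as precisely this accumulation set (the set of $\lambda$ such that for some sequence $k_n\to\infty$ there exist roots $\lambda_{k_n}\in Z_{k_n}$ with $\lambda_{k_n}\to\lambda$, or equivalently such that the norms of the fibre resolvents blow up along a subsequence). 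This yields the bound $\sigma_{g_0(\Omega),\c}(\alpha(\m_1))\subseteq[-A,A]$. The complement, $\Sigma = \Sigma_0\setminus\sigma_{g_0(\Omega),\c}(\alpha(\m_1))$, consists of isolated roots and is therefore nowhere dense (any limit point must, by construction, lie in the continuous part); it is contained in $[-A,A]$ by the same real-coefficient argument at the pre-asymptotic level (for $|\lambda|>A$ each fibre polynomial is non-vanishing, which can be checked directly from the transfer-matrix structure). Finally, the $\alpha_j$ themselves always lie in the inner spectrum, exactly as in the one-dimensional case Theorem \ref{thm:isp-mr}: one constructs trial gradients supported in a slab where $\alpha_\# = \alpha_j$ to realise $\alpha_j$ as an eigenvalue of $\iota_0^*\alpha(\m_1)\iota_0$.

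The main obstacle is the careful separation of the two types of failure modes: distinguishing a $\lambda$ for which only finitely many fibre problems are singular (so $\lambda\in\Sigma$) from a $\lambda$ that is approached by roots of $p_{\alpha-\lambda}(\cdot;\mu_k)$ as $k\to\infty$ (so $\lambda\in\sigma_{g_0(\Omega),\c}$). Making this dichotomy rigorous requires uniform quantitative bounds on the fibre resolvents away from the accumulation set, which I would obtain by analytic perturbation arguments applied to the transfer-matrix polynomials $p_{\alpha-\lambda}(\,\cdot\,;\mu)$ as functions of $(\lambda,\mu)$, combined with a compactness argument on $[-A,A]$. The countability and nowhere-density of $\Sigma$ then follow from the countability of $\{\mu_k\}$ and the fact that the remaining points are isolated in $\C\setminus\sigma_{g_0(\Omega),\c}(\alpha(\m_1))$ by the implicit function theorem applied to the polynomial equations defining them.
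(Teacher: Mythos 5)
Your reduction to a countable family of fibre problems indexed by the transversal eigenvalues $\mu_k$ is exactly the decomposition the paper uses (Theorem \ref{thm:ndtensor}), and your observation that for each fixed $\mu_k$ the zero set of the shifted characteristic function is finite (so $\bigcup_k Z_k$ is countable) is the heart of the paper's Theorem \ref{thm:perturb}. However, there is a genuine gap in how you handle the inner \emph{continuous} spectrum. You propose to \emph{define} $\sigma_{g_0(\Omega),\c}(\alpha(\m_1))$ as the accumulation set of $\bigcup_k Z_k$; this is not available to you. The inner continuous spectrum is a fixed, pre-existing object: it is the set of $\lambda$ for which $\iota_{V_1}^*(\alpha(\m_1)-\lambda)\iota_{V_1}$ is one-to-one with dense but non-closed range. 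You must work with that definition, not replace it. Once you stop redefining it, nothing in your argument tells you how your finite fibre-zero sets $Z_k$ (or their accumulation points) relate to $\sigma_{\p}$, $\sigma_{\r}$, $\sigma_{\c}$; in particular you have no way of showing that the non-continuous part of the inner spectrum is contained in the countable nowhere dense set you have built.

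The missing ingredient is the bridge theorem (Theorem \ref{thm:dinner}, applied in Theorem \ref{thm:innerspectcont}(b)): if the $L_2(\Omega)$-realisation $-\Delta_{\alpha-\lambda}$ is continuously invertible, then $\lambda\in\rho_{g_0(\Omega)}(\alpha(\m_1))\cup\sigma_{g_0(\Omega),\c}(\alpha(\m_1))$. Equivalently, the inner point and residual spectra are covered by the set of $\lambda$ for which $-\Delta_{\alpha-\lambda}$ fails to be boundedly invertible. This is how the paper translates ``all fibre problems are uniformly invertible'' (i.e., the shifted $\tilde q$-criterion holds, via Corollary \ref{cor:1D0}) into a statement about the inner continuous spectrum. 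Your plan identifies the set where the fibre criterion fails, but without this bridge you cannot conclude that everything in $\sigma_{g_0(\Omega)}(\alpha(\m_1))\setminus\sigma_{g_0(\Omega),\c}(\alpha(\m_1))$ lies inside it. You would also need the precise uniform-bound machinery of the $\tilde q$-criterion (Theorems \ref{thm:sp1Dff}, \ref{thm:charcrit1}, Proposition \ref{prop:pam}) to make rigorous the ``analytic perturbation argument'' you gesture at: uniformity in $k$ is not automatic from fibrewise invertibility, and it is exactly what decides whether $-\Delta_{\alpha-\lambda}$ is boundedly invertible. Finally, a small imprecision: $p_{\alpha-s}(t)$ is a rational function of $s$, not a polynomial, but the conclusion (finitely many zeros, since it is non-constant — e.g. non-vanishing at $s=A$) survives.
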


Equipped with the understanding of when sign-changing coefficients lead to well-posed divergence form operators, we can now proceed to the understanding of homogenisation of potentially ill-posed problems and present the respective results. For this we introduce a new notion of convergence related to homogenisation. 

Let $(\tilde{a}_n)_n$ in $L_\infty(\Omega)^{d\times d}$, identified with a sequence in $L(L_2(\Omega)^d)$. Define $a_n\coloneqq \iota_0^*\tilde{a}_n\iota_0$ and assume there exists $\omega\subseteq \bigcap_{n\in\N} \rho(a_n)$ open in $\C$ with $0\in \overline{\omega}$ such that $(a_n-\cdot)^{-1}$ is locally bounded on $\omega$. Let $a\subseteq g_0(\Omega)\times g_0(\Omega)$ be a relation. Then we say that $(a_n)_n$ \textbf{holomorphically $G$-converges (on $\omega$)} to $a$, $a_n\stackrel{\textnormal{hol}-G}{\to} a$,
if the set $\mathcal{G}$ of $\phi\in g_0(\Omega)$ satisfying the following two conditions (a) and (b) is dense, where
\begin{enumerate}
\item[(a)] there exists $f_\phi \colon \omega \to g_0(\Omega)$ holomorphic, such that, for all $\psi\in g_0(\Omega)$, $\langle\psi,(a_n-\cdot)^{-1}\phi\rangle \to \langle \psi, f_\phi(\cdot)\rangle$ in the \textbf{compact open topology} (i.e.,  uniform convergence on compact sets),
\item[(b)] $f_\phi$ admits a holomorphic extension to $0$;
\end{enumerate} and 
\[
   a^{-1} = \{ (\phi,\psi)\in \mathcal{G} \times g_0(\Omega); \psi = f_\phi(0)\}.
\]
$(\tilde{a}_n)_n$ in $L_\infty(\Omega)^{d\times d}$  \textbf{holomorphically $G$-converges (on $\omega$)} to  $\tilde{a} \subseteq L_2(\Omega)^d\times L_2(\Omega)^d$, $\tilde{a}_n\stackrel{\textnormal{hol}-G}{\to} \tilde{a}$, if $(a_n)_n$ {holomorphically $G$-converges on $\omega$} to $a\subseteq g_0(\Omega)\times g_0(\Omega)$ and $a^{-1} = (\iota_0^* \tilde{a}\iota_0)^{-1}$.

A comparison of classical $G$-convergence\footnote{Classically $G$-convergence is formulated for strictly positive definite coefficients, only; we use the same notion for the non-coercive situation, see Section \ref{sec:classG}.} and holomorphic $G$-convergence reads as follows.

\begin{theorem}\label{thm:hGcontw-mr} Let $(\tilde{a}_n)_n$ in $L_\infty(\Omega)^{d\times d}$, $\tilde{a}\in L_\infty(\Omega)^{d\times d}$; $a_n\coloneqq \iota_0^*\tilde{a}_n\iota_0$, $a\coloneqq \iota_0^*\tilde{a}\iota_0$. Assume $0\in \bigcap_{n\in\N} \rho(a_n)\cap \rho(a)$ with $(a_n^{-1})_n$ uniformly bounded.
Consider the following assertions:
\begin{enumerate}
\item[(i)] $(a_n)_n$ holomorphically $G$-converges to $a$;
\item[(ii)] every subsequence $(a_{\pi(n)})_n$ contains a subsequence holomorphically $G$-converging to $a$;
\item[(iii)]  $\tilde{a}_n$ \textbf{\textnormal{$G$-converges}} to $\tilde{a}$, i.e., for all $f\in H^{-1}(\Omega)$ and $u_n\in H_0^1(\Omega)$ such that $-\dive \tilde{a}_n\grad u_n =f $ we have
\[
  u_n \rightharpoonup u\in H_0^1(\Omega)\text{ and }-\dive \tilde{a}\grad u =f
\] 
\end{enumerate}
Then (i)$\Rightarrow$(iii)$\Leftrightarrow$(ii).
\end{theorem}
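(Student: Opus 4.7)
The unifying observation is that assertion (iii) translates into weak-operator-topology (WOT) convergence $a_n^{-1}\to a^{-1}$ on $g_0(\Omega)$. Indeed, Riesz-representing $f\in H^{-1}(\Omega)$ by $F\in g_0(\Omega)$ via $\langle F,\grad v\rangle_{L_2}=\langle f,v\rangle_{H^{-1},H_0^1}$, the equation $-\dive\tilde{a}_n\grad u_n=f$ becomes $a_n(\grad u_n)=F$, so $\grad u_n=a_n^{-1}F$. By Poincar\'e's inequality, $u_n\rightharpoonup u$ in $H_0^1(\Omega)$ is equivalent to $\grad u_n\rightharpoonup \grad u$ in $g_0(\Omega)$, i.e.\ $a_n^{-1}F\rightharpoonup a^{-1}F$ for every $F\in g_0(\Omega)$. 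Together with the uniform bound on $(a_n^{-1})_n$ postulated in the hypothesis, this is precisely WOT convergence.

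For (i)$\Rightarrow$(iii), set $M\coloneqq \sup_n\|a_n^{-1}\|$. Since $\lambda I$ and $a_n$ commute, the Neumann series applied to $a_n-\lambda=a_n(I-\lambda a_n^{-1})$ gives $(a_n-\lambda)^{-1}=(I-\lambda a_n^{-1})^{-1}a_n^{-1}$, of norm at most $2M$ for every $\lambda$ in the disk $D\coloneqq B(0,1/(2M))$, uniformly in $n$. Fix $\phi\in\mathcal{G}$ and $\psi\in g_0(\Omega)$; the scalar holomorphic functions $g_n(\lambda)\coloneqq \langle \psi,(a_n-\lambda)^{-1}\phi\rangle$ are then locally uniformly bounded on $D$, while on $\omega$ they converge compactly to $\langle\psi,f_\phi(\cdot)\rangle$ by (i). Since $0\in\overline{\omega}$, the open set $D\cup\omega$ meets the domain of convergence along a sequence accumulating at $0$, and Vitali's convergence theorem upgrades compact convergence to $D\cup\omega$. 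Evaluating at $\lambda=0$ and using that $\langle\psi,f_\phi(\cdot)\rangle$ extends holomorphically to $0$ with value $\langle\psi,a^{-1}\phi\rangle$, we obtain $\langle\psi,a_n^{-1}\phi\rangle\to\langle\psi,a^{-1}\phi\rangle$. Density of $\mathcal{G}$ combined with the uniform bound $M$ propagates this to all $\phi\in g_0(\Omega)$, yielding (iii).

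For (iii)$\Rightarrow$(ii), let $(a_{\pi(n)})_n$ be any subsequence. The Neumann argument above makes $\bigl((a_{\pi(n)}-\lambda)^{-1}\bigr)_{n,\,\lambda\in D}$ uniformly bounded and pointwise holomorphic. Invoking the operator-valued Montel theorem (Theorem~\ref{thm:MontalOV}) we extract a sub-subsequence $(a_{\pi(n_k)})_k$ and a WOT-holomorphic $R\colon D\to L(g_0(\Omega))$ with $(a_{\pi(n_k)}-\lambda)^{-1}\to R(\lambda)$ in the WOT, locally uniformly in $\lambda$. By Dunford's theorem, $f_\phi(\lambda)\coloneqq R(\lambda)\phi$ is vector-valued holomorphic on $D$; taking $\lambda=0$, WOT convergence together with (iii) identifies $R(0)=a^{-1}$, so $f_\phi(0)=a^{-1}\phi$ for every $\phi\in g_0(\Omega)$. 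Hence the sub-subsequence holomorphically $G$-converges to $a$ on $\omega\coloneqq D\setminus\{0\}$. Finally, (ii)$\Rightarrow$(iii) follows from the subsequence principle in the Hausdorff WOT: every subsequence of $(a_n^{-1})_n$ admits a WOT-convergent sub-subsequence with common limit $a^{-1}$, and thus the full sequence converges in WOT. The main technical point throughout is to connect the a priori domain $\omega$ of definition (i), which need not be a subset of $\bigcap_n\rho(a_n)$ accumulating at $0$ from a fixed disk, with the value $a^{-1}$ at $0$; the uniform Neumann-series bound supplies a common disk $D$ about $0$ on which all $(a_n-\cdot)^{-1}$ are holomorphic and uniformly bounded, after which Vitali (for (i)$\Rightarrow$(iii)) and the operator-valued Montel theorem (for (iii)$\Rightarrow$(ii)) do the rest.
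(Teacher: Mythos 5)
Your proof is correct and follows essentially the same route as the paper: first translate (iii) into weak-operator-topology convergence of $a_n^{-1}$ on $g_0(\Omega)$ (the paper invokes Theorem~\ref{thm:Gconcocoer} for this), then prove the abstract equivalence about holomorphic $G$-convergence (the paper's Theorem~\ref{thm:hGcontw}) via a common Neumann-series disk, a normal-family argument, and the subsequence principle. The only stylistic difference is that for (i)$\Rightarrow$(iii) you apply the scalar Vitali theorem directly to $\langle\psi,(a_n-\lambda)^{-1}\phi\rangle$, whereas the paper routes through compact-open weak-operator-topology convergence (Remark~\ref{rem:holG}) and Cauchy's integral formula (Proposition~\ref{thm:justification}); the underlying complex-analytic ideas are the same.
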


We are now in the position to present our main homogenisation theorems on (holomorphic) $G$-convergence for sign-changing coefficients. We start off with a result the formulation of which does not require the notion of holomorphic $G$-convergence.

\begin{theorem}[see Theorem \ref{thm:hom1D}]\label{thm:hom1D-mr} Let $(\alpha_n)_n$ be a sequence in $L_\infty(0,1)$ such that $(\alpha_n^{-1})_n$ converges in $\sigma(L_\infty,L_1)$ to some $\beta \in L_\infty(0,1)$ with $\mm(\beta)\neq 0$. If $\alpha_\infty\coloneqq\beta^{-1}\in L_\infty(0,1)$ then, for the associated multiplication operators on $L_2(0,1)$, 
\[
   \alpha_n(\m) \stackrel{G}{\to} \alpha_\infty(\m)
\]
as $n\to\infty$. \end{theorem}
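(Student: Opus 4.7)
The strategy is to integrate the $1$-d equation explicitly and to exploit that $\sigma(L_\infty,L_1)$-convergence automatically yields a uniform $L_\infty$-bound via Banach--Steinhaus. Given $f\in H^{-1}(0,1)$, surjectivity of the distributional derivative $L_2(0,1)\to H^{-1}(0,1)$, $F\mapsto -F'$ (kernel the constants), supplies some $F\in L_2(0,1)$ with $F'=-f$. Any $u_n\in H_0^1(0,1)$ with $-(\alpha_n u_n')'=f$ then satisfies $(\alpha_nu_n'-F)'=0$, so $\alpha_n u_n'=F+c_n$ for some $c_n\in\C$, i.e.\ $u_n'=\alpha_n^{-1}(F+c_n)$. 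The boundary condition $0=u_n(1)-u_n(0)=\int_0^1 u_n'$ determines
\[
    c_n = -\frac{\int_0^1 \alpha_n^{-1}F}{\int_0^1 \alpha_n^{-1}},
\]
which is well-defined at least for large $n$, because $\int_0^1 \alpha_n^{-1}\to\mm(\beta)\neq 0$ by testing the $\sigma(L_\infty,L_1)$-convergence against $\1\in L_1(0,1)$.

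Testing also against $F\in L_1(0,1)$ yields $c_n\to c:=-\int_0^1 \beta F/\mm(\beta)$. I would then define $u'(x):=\beta(x)(F(x)+c)=\alpha_\infty^{-1}(x)(F(x)+c)\in L_2(0,1)$ and $u(x):=\int_0^x u'$. Then $u(0)=0$, and the very choice of $c$ gives $u(1)=\int_0^1\beta F+c\,\mm(\beta)=0$, hence $u\in H_0^1(0,1)$. Moreover $\alpha_\infty u' = F+c$ yields $-(\alpha_\infty u')' = -F' = f$, which is the required limit equation.

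It remains to prove $u_n\rightharpoonup u$ in $H_0^1(0,1)$, i.e.\ $u_n'\rightharpoonup u'$ in $L_2(0,1)$, for which I use the splitting
\[
     u_n'-u' \;=\; \alpha_n^{-1}(c_n-c) \;+\; (\alpha_n^{-1}-\beta)(F+c).
\]
Banach--Steinhaus produces $M:=\sup_n\|\alpha_n^{-1}\|_\infty<\infty$, whence $\|\alpha_n^{-1}(c_n-c)\|_2\leq M|c_n-c|\to 0$. For the second summand, any test function $\phi\in L_2(0,1)$ makes $(F+c)\phi\in L_1(0,1)$, against which $\alpha_n^{-1}\to\beta$ in $\sigma(L_\infty,L_1)$ forces $\int_0^1(\alpha_n^{-1}-\beta)(F+c)\phi\to 0$. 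The only delicate point is this splitting: naively multiplying a weak-$*$ convergent $L_\infty$-factor by a merely weakly convergent $L_2$-sequence is not permitted, so the scalar discrepancy $c_n-c$ must first be absorbed into the uniform $L_\infty$-bound before the test against the fixed $L_1$-function $(F+c)\phi$ is performed. Beyond this, everything is an explicit ODE computation.
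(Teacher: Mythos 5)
Your proof is correct and matches the paper's underlying computation exactly: your formula $u_n'=\alpha_n^{-1}(F+c_n)$ with $c_n=-\bigl(\int_0^1\alpha_n^{-1}F\bigr)\big/\bigl(\int_0^1\alpha_n^{-1}\bigr)$ is precisely the explicit solution representation that the paper quotes from Theorem \ref{thm:1Dwp}, and your splitting $u_n'-u'=\alpha_n^{-1}(c_n-c)+(\alpha_n^{-1}-\beta)(F+c)$ (scalar error absorbed by the Banach--Steinhaus $L_\infty$-bound, then one test against a fixed $L_1$-function) is the unwritten detail behind the paper's one-line passage to the limit of $(\iota_0^*\alpha_n\iota_0)^{-1}$ in the weak operator topology. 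The only difference is one of packaging: the paper then feeds that limit into the abstract equivalence of Theorem \ref{thm:Gconcocoer}, whereas you verify the definition of $G$-convergence directly, which amounts to the same argument at a lower level of abstraction.
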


\begin{example}\label{ex:class1D}
Let $\alpha_0,\ldots,\alpha_r\in \R_{>0}$. Then, by Theorem \ref{thm:isp-mr},
\[
\{\alpha_0,\ldots,\alpha_r\}\subseteq \sigma_{g_0(0,1)}(\alpha(n\m)).
\]
By Theorem \ref{thm:hom1D-mr} (and Theorem \ref{thm:perweak}), we obtain
\[
   \alpha(n\m) \stackrel{\textnormal{$G$}}{\to} \Big(\frac{1}{r+1}\big(\frac{1}{\alpha_0}+\cdots+\frac{1}{\alpha_r}\big)\Big)^{-1} =\mm(\alpha_{\#}^{-1})^{-1}\eqqcolon \alpha_\infty.
\]
Thus, for all $n\in \N$,
\[
\emptyset \neq \big(1/ [\sigma_{g_0(0,1)}(\alpha_\infty)]\big) =\{\mm(\alpha_{\#}^{-1})\}\subseteq \conv\big( 1/[\sigma_{g_0(0,1)}(\alpha(n\m))]\big),
\]
where $\conv$ denotes the convex hull.
\end{example}

The degenerate case reads as follows:

\begin{theorem}[see Theorem \ref{thm:hGcon1D}]\label{thm:hGcon1D-mr} Let $\alpha\in L_\infty(\R; \R)$ be $1$-periodic with $\alpha^{-1}\in L_\infty(\R)$ and assume $\mm (\alpha^{-1})=0$; define $\alpha_n\coloneqq \alpha(n\cdot)$. Then \[
   \alpha_n(\m)  \stackrel{\textnormal{hol-$G$}}{\to} \alpha_\infty\coloneqq \{0\}\times L_2(\Omega).
   \]
  In particular, 
  \[
     \sigma_{g_0(\Omega)} (\alpha_\infty) =\emptyset.
  \]
  \end{theorem}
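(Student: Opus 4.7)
The plan is to verify the definition of holomorphic $G$-convergence directly, via the explicit one-dimensional resolvent formula combined with the standard periodic weak-$*$ limit. Identify $g_0(0,1) = V_1 = \{1\}^\bot$, and set $\beta_\lambda \coloneqq (\alpha - \lambda)^{-1}$ and $N(\lambda) \coloneqq \mm(\beta_\lambda)$ whenever $\beta_\lambda \in L_\infty$. By the one-dimensional well-posedness characterisation from Section~\ref{sec:1DWP} (in the spirit of Theorem~\ref{thm:isp-mr}), $\lambda \in \rho(a_n)$ exactly when $\beta_\lambda \in L_\infty$ and $N(\lambda) \neq 0$; both conditions see only $\alpha$, not $n$, because rescaling preserves both the essential range and the integral mean of the inverse. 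The hypothesis $N(0) = \mm(\alpha^{-1}) = 0$ excludes $0$, but since $\alpha^{-1} \in L_\infty$ forces $|\alpha|$ to be bounded below, $N'(0) = \mm(\alpha^{-2}) > 0$. Hence $0$ is an isolated zero of the holomorphic function $N$, and any sufficiently small punctured neighbourhood $\omega$ of $0$ lies in $\bigcap_n \rho(a_n)$ with $0 \in \overline{\omega}$.

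For $\phi \in V_1$ and $\lambda \in \omega$, solving $\iota_0^*(\alpha(n\m) - \lambda)\iota_0\, u_n = \phi$ is equivalent to $(\alpha_n - \lambda) u_n = \phi + c_n$ for the unique constant $c_n$ placing $u_n$ back in $V_1$. Explicit computation yields
\[
u_n(\lambda) = \beta_\lambda(n\,\cdot)\,\phi + c_n\,\beta_\lambda(n\,\cdot), \qquad c_n = -\,\frac{\mm\!\bigl(\beta_\lambda(n\,\cdot)\phi\bigr)}{N(\lambda)},
\]
together with the bound $\|u_n(\lambda)\|_2 \leq \bigl(\|\beta_\lambda\|_\infty + \|\beta_\lambda\|_\infty^2/|N(\lambda)|\bigr)\|\phi\|_2$, which is locally bounded on $\omega$ uniformly in $n$. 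Now invoke the weak-$*$ limit $\beta_\lambda(n\,\cdot) \rightharpoonup N(\lambda)$ in $\sigma(L_\infty, L_1)$ for rescaled periodic functions. Pairing against $\overline{\psi}\phi \in L_1$ for any $\psi \in V_1$ gives $\mm(\beta_\lambda(n\,\cdot)\phi) \to N(\lambda)\,\mm(\phi) = 0$, so $c_n \to 0$, and $\langle\psi, u_n(\lambda)\rangle \to N(\lambda)\,\langle\psi,\phi\rangle$ pointwise in $\lambda$. Each $\lambda \mapsto \langle\psi, u_n(\lambda)\rangle$ is holomorphic on $\omega$, and together they are locally uniformly bounded; so the Montel-type theorem (Theorem~\ref{thm:MontalOV}) combined with the identification of the pointwise limit upgrades the convergence to the compact open topology on $\omega$.

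The candidate is therefore $f_\phi(\lambda) = N(\lambda)\phi \in V_1$, which extends holomorphically across $0$ because $\lambda \mapsto \beta_\lambda$ is analytic in $L_\infty$ near $0$ (via the Neumann series $\sum_{k\ge 0} \lambda^k \alpha^{-k-1}$), with $f_\phi(0) = N(0)\phi = 0$. Hence every $\phi \in V_1 = g_0(\Omega)$ lies in $\mathcal{G}$, and $a^{-1} = g_0(\Omega) \times \{0\} = \bigl(\iota_0^*(\{0\}\times L_2(\Omega))\iota_0\bigr)^{-1}$, proving the holomorphic $G$-convergence. The spectral statement is then automatic: writing $b \coloneqq \iota_0^*\alpha_\infty\iota_0 = \{0\} \times g_0(\Omega)$, the shift $b - \lambda$ equals $b$ as a relation for every $\lambda \in \C$, and its inverse is the bounded zero operator on $g_0(\Omega)$, so $\rho(b) = \C$. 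The main obstacle is passing from pointwise to compact open convergence; this rests crucially on the explicit uniform resolvent bound of the middle step and on $N'(0) \neq 0$ guaranteeing a genuine punctured open neighbourhood $\omega$ of $0$ where the Montel/Vitali argument applies.
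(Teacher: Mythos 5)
Your proposal is correct and follows essentially the same route as the paper's proof of Theorem \ref{thm:hGcon1D}: it combines the explicit one-dimensional resolvent formula on $V_1=\{1\}^\bot$ (Theorem \ref{thm:1Dwp}) with the periodic weak-$*$ limit (Theorem \ref{thm:perweak}) to show the projected resolvent converges to multiplication by $N(\lambda)=\mm((\alpha-\lambda)^{-1})$, and then uses $N(0)=0$. The only difference is that you make explicit the Vitali/Montel upgrade from pointwise to compact open convergence and the choice of a punctured disk $\omega$ justified via $N'(0)=\mm(\alpha^{-2})>0$, details the paper leaves implicit by invoking Theorem \ref{thm:hom1D}.
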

  Hence, note that by Theorem \ref{thm:isp-mr}, $0\in \sigma_{g_0(\Omega)}(\alpha_n(\m))$ for all $n\in \N$; but still the limit has empty spectrum.

The higher-dimensional situation is more involved and provides -- in the degenerate case -- a more complex picture concerning the limit behaviour of the coefficients. Using some known results from homogenisation theory and the following representation we can provide the behaviour of the inner spectrum in the classical situation.

\begin{lemma}[see Lemma \ref{lem:computeproj}]\label{lem:computeproj-mr} Let $\Omega=(0,1)^d$ and $\gamma>0$; denote $\Gamma \coloneqq \diag(\gamma,1,\ldots,1)$. Then 
\[
  \sigma_{g_0(\Omega)}(\Gamma) = \overline{\{ \Big (\frac{\gamma k_1^2 +\sum_{m=2}^d k_m^2 }{  \sum_{m=1}^d k_m^2} \Big); k\in \N_{>0}^d\}}.
\]
\end{lemma}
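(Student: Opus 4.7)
The plan is to diagonalise $\iota_0^*\Gamma\iota_0$ directly, using the explicit basis of $g_0(\Omega)$ coming from the Dirichlet--Laplacian on the cube.

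First I would set $\phi_k(x)\coloneqq\prod_{m=1}^d\sin(\pi k_m x_m)$ for $k\in\N_{>0}^d$. These are (up to normalisation) the eigenfunctions of $-\Delta_{\Omega}$ and form an orthogonal basis of $H_0^1(\Omega)$ in the $H_0^1$-inner product $\langle u,v\rangle=\langle\grad u,\grad v\rangle_{L_2^d}$. Since $\grad\colon H_0^1(\Omega)\to g_0(\Omega)$ is a Hilbert space isomorphism (Poincaré), the family $\{e_k\coloneqq\grad\phi_k/(\pi|k|)\}_{k\in\N_{>0}^d}$ is an orthonormal basis of $g_0(\Omega)$, where I use $|k|^2=\sum_m k_m^2$ and $\langle\grad\phi_j,\grad\phi_k\rangle_{L_2^d}=\pi^2|k|^2 2^{-d}\delta_{jk}$.

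Second I would compute the matrix of $\iota_0^*\Gamma\iota_0$ in this basis. Writing $\partial_m\phi_k(x)=\pi k_m\cos(\pi k_m x_m)\prod_{n\ne m}\sin(\pi k_n x_n)$ and using the orthogonality relations
\[
\int_0^1 \cos(\pi j x)\cos(\pi k x)\,\dd x=\tfrac12\delta_{jk},\qquad \int_0^1\sin(\pi j x)\sin(\pi k x)\,\dd x=\tfrac12\delta_{jk}\qquad(j,k\in\N_{>0}),
\]
I get $\langle\partial_m\phi_j,\partial_m\phi_k\rangle_{L_2}=\pi^2 k_m^2\,2^{-d}\delta_{jk}$, so
\[
\langle\grad\phi_j,\Gamma\grad\phi_k\rangle_{L_2^d}=\frac{\pi^2}{2^d}\Bigl(\gamma k_1^2+\sum_{m=2}^d k_m^2\Bigr)\delta_{jk}.
\]
Dividing by $\|\grad\phi_k\|^2=\pi^2|k|^2 2^{-d}$ shows that $\iota_0^*\Gamma\iota_0$ is diagonal in the orthonormal basis $\{e_k\}$ with eigenvalues
\[
\lambda_k=\frac{\gamma k_1^2+\sum_{m=2}^d k_m^2}{\sum_{m=1}^d k_m^2}.
\]

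Third, since $\iota_0^*\Gamma\iota_0$ is a bounded self-adjoint operator diagonal in an orthonormal basis, its spectrum is the closure of its set of eigenvalues, which yields the claimed formula. The only subtle point (and the one I would make sure to address) is justifying that $\{\grad\phi_k\}_k$ has dense span in $g_0(\Omega)$; this follows because the $\phi_k$ are dense in $H_0^1(\Omega)$ and $\grad\colon H_0^1(\Omega)\to g_0(\Omega)$ is surjective and bounded. Everything else is a direct computation, so I do not anticipate a genuine obstacle.
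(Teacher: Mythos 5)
Your argument is correct and, while built on the same eigenfunction family $\phi_k(x)=\prod_m\sin(\pi k_m x_m)$ as the paper, proceeds by a genuinely more direct route. The paper first derives the identity $(\iota_0^*\Gamma\iota_0)^{-1}=\iota_0^*\grad\,(\dive\Gamma\grad)^{-1}\dive\iota_0$, then uses the polar decomposition $\iota_0^*\grad=U|\grad|$ to reduce the inverse to $|\grad|(\dive\Gamma\grad)^{-1}|\grad|$, diagonalises \emph{that} via the common eigenbasis of $-\Delta$ and $\dive\Gamma\grad$, and finally passes from the spectrum of the inverse to the spectrum of $\iota_0^*\Gamma\iota_0$. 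You instead recognise that $\grad\colon\bigl(H_0^1(\Omega),\langle\grad\cdot,\grad\cdot\rangle\bigr)\to g_0(\Omega)$ is unitary, so $\{\grad\phi_k/(\pi|k|2^{-d/2})\}_k$ is an orthonormal basis of $g_0(\Omega)$, and compute $\langle\grad\phi_j,\Gamma\grad\phi_k\rangle$ directly; the trigonometric orthogonality relations show $\iota_0^*\Gamma\iota_0$ is diagonal in this basis with the claimed eigenvalues, after which the spectrum is the closure of the eigenvalue set. This avoids both the detour through the resolvent of $\dive\Gamma\grad$ and the polar decomposition, at the cost of only the elementary integral identities you state; the paper's form, on the other hand, produces the unitary equivalence of $(\iota_0^*\Gamma\iota_0)^{-1}$, which it reuses directly in the homogenisation argument (Theorem \ref{thm:mainhom}). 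Both arguments are sound.
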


\begin{example}\label{ex:classDD} Let $\alpha_0,\ldots,\alpha_r\in \R_{>0}$. Then, classically (see \cite{Murat1997} or Theorem \ref{thm:stmed} below), 
\begin{align*}
    \alpha(n\m_1) & \stackrel{\textrm{$G$}}{\to} \diag(1/(\mm(\alpha_{\#}^{-1})),\mm(\alpha_{\#}),\ldots,\mm(\alpha_{\#})) \\ & = \mm(\alpha_{\#})\diag(\frac{1}{\mm(\alpha_{\#}^{-1})\mm(\alpha_{\#})},1,\ldots,1)\eqqcolon \alpha_\infty.
\end{align*}
Hence, by Lemma \ref{lem:computeproj-mr}, we infer
\begin{align*}
   \sigma_{g_0(\Omega)}(\alpha_\infty) & = \mm(\alpha_{\#})\overline{\{ \Big (\frac{\frac{1}{\mm(\alpha_{\#}^{-1})\mm(\alpha_{\#})} k_1^2 +\sum_{m=2}^d k_m^2 }{  \sum_{m=1}^d k_m^2} \Big); k\in \N_{>0}^d\}}\\
 &  \subseteq {\conv} \{\frac{1}{\mm(\alpha_{\#}^{-1})},\mm(\alpha_{\#})\}
\end{align*}
where both $\mm(\alpha_{\#}^{-1})$ and $\mm(\alpha_{\#})$ can be determined from $\sigma_{g_0(\Omega)}( \alpha(n\m_1))\supseteq\{\alpha_0,\ldots,\alpha_r\}$, for the last inclusion see Theorem \ref{thm:isp-dd-mr}.
\end{example}

\begin{theorem}[see Theorem \ref{thm:mainhom} and Corollary \ref{cor:mainhom}]\label{thm:mainhom-mr} Let $\alpha_0,\ldots,\alpha_r\in \R\setminus\{0\}$.
Then the following statements hold\begin{enumerate}
\item[(a)] If $\mm(\alpha_{\#}^{-1})=0$, then there is $\alpha_\infty^{-1}\subseteq L_2(\Omega)^{d}\times \{0\}$ densely (but not everywhere) defined, such that
\[
  \alpha(n\m_1) \stackrel{\textnormal{hol-}G}{\longrightarrow} \alpha_\infty.\]
  We have $\sigma_{g_0(\Omega)}(a_\infty)=\C$
\item[(b)] If $\mm(\alpha_{\#})=0$ and $\mm(\alpha_{\#}^{-1})\neq 0$, then
\[
  \alpha(n\m_1)\stackrel{\textnormal{hol-}G}{\longrightarrow} -\frac{1}{\mm(\alpha_{\#}^{-1})}\Delta_{(0,1)}(-\Delta_{(0,1)^{(d-1)}})^{-1},\]
where $\Delta_{(0,1)}$ denotes the Dirichlet--Laplace operator on $L_2(0,1)$ tensorised with $d-1$ copies of the identity of $L_2(0,1)$; similarly $\Delta_{(0,1)^{d-1}}$ is the identity on $L_2(0,1)$ tensorised with the Dirichlet--Laplacian on $L_2((0,1)^{d-1})$. The limit inner spectrum is 
\[
\frac{1}{\mm(\alpha_{\#}^{-1})} \overline{\{\frac{k_1^2}{k_2^2+\ldots+k_d^2}; k_1,\ldots,k_d\in \N_{>0}\}}.
\]
\item[(c)]If $\mm (\alpha_{\#})\mm(\alpha_{\#}^{-1})\neq 0$, then\[     \alpha(n\m_1)\stackrel{\textnormal{hol-}G}{\longrightarrow} 
     \diag(\mm(\alpha_{\#}^{-1})^{-1}, \mm(\alpha_{\#}),\ldots,\mm(\alpha_{\#}))
\]
If, in addition, $\mm (\alpha_{\#})\mm(\alpha_{\#}^{-1})>0$, the limit inner spectrum is
\[
\overline{\{\frac{\mm(\alpha_{\#}^{-1})  \sum_{m=1}^d k_m^2}{k_1^2 +\mm(\alpha_{\#}^{-1})\mm(\alpha_{\#})\sum_{m=2}^d k_m^2 }; k_1,\ldots,k_m\in \N_{>0}\}}.
\]
\end{enumerate}
\end{theorem}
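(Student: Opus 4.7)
The plan is to reduce to classical stratified-media homogenisation via the resolvent shift $\lambda\mapsto\alpha(n\m_1)-\lambda$ and then carry out the $\lambda\to 0$ asymptotics within the holomorphic $G$-convergence framework. I would fix an open set $\omega\subseteq\C\setminus\R$ with $0\in\overline{\omega}$ (an upper half-disc around $0$, say). For every $\lambda\in\omega$ the 1-D coefficient $\alpha_\#-\lambda$ is uniformly bounded with uniformly bounded inverse and $\mm((\alpha_\#-\lambda)^{-1})\neq 0$. Hence the stratified-media formula (in the spirit of Theorem~\ref{thm:hom1D-mr} and Example~\ref{ex:classDD}) yields classical $G$-convergence
\[
\alpha(n\m_1)-\lambda \stackrel{G}{\to} a_\infty(\lambda)=\diag\bigl(\beta(\lambda),\gamma(\lambda),\ldots,\gamma(\lambda)\bigr),
\]
with $\beta(\lambda)=\mm((\alpha_\#-\lambda)^{-1})^{-1}$ and $\gamma(\lambda)=\mm(\alpha_\#)-\lambda$. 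For such $\lambda$ the projected operators $\iota_0^{*}(\alpha(n\m_1)-\lambda)\iota_0$ are uniformly invertible with resolvent norm bounded by $1/|\mathrm{Im}\,\lambda|$. By Theorem~\ref{thm:hGcontw-mr} the classical $G$-convergence is equivalent to pointwise-strong resolvent convergence, and the local uniform bound together with the Montel-type Theorem~\ref{thm:MontalOV} upgrade this to convergence of the full resolvent map in the compact-open topology on $\omega$.

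Next I would identify the limit resolvent $r(\lambda)=(\iota_0^{*}a_\infty(\lambda)\iota_0)^{-1}$ by spectrally decomposing along the transverse Dirichlet Laplacian $-\Delta_{(0,1)^{d-1}}$. On the eigenspace with eigenvalue $\mu_k=\pi^{2}|k|^{2}$, $k\in\N_{>0}^{d-1}$, the equation $-\dive a_\infty(\lambda)\grad u_k=f_k$ reduces to the scalar Dirichlet problem $-\beta(\lambda)\partial_1^{2}u_k+\gamma(\lambda)\mu_k u_k=f_k$ on $(0,1)$, whose inverse is explicit and depends holomorphically on $\lambda$ wherever $\beta(\lambda)\neq 0$. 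Summing against the spectral resolution on a dense set of right-hand sides (for instance $f\in\dom(\Delta_d)$, as in the sketch in the introduction) produces the candidate holomorphic extension $r(0)$, and translating it back through $\iota_0$ yields the relation $\alpha_\infty$ that defines the hol-$G$-limit.

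The three cases then correspond to the different singularity structures of $\beta,\gamma$ at $\lambda=0$. In case (c) both functions are holomorphic and non-vanishing at $0$, so the extension is trivial and $a_\infty=\diag(\mm(\alpha_\#^{-1})^{-1},\mm(\alpha_\#),\ldots,\mm(\alpha_\#))$. In case (b), $\gamma(\lambda)=-\lambda$ and $\beta(0)=\mm(\alpha_\#^{-1})^{-1}\neq 0$, so the mode-wise resolvent $(-\beta(\lambda)\partial_1^{2}-\lambda\mu_k)^{-1}$ passes to $-\beta(0)^{-1}(-\Delta_{(0,1)})^{-1}$ as $\lambda\to 0$; reassembled over the transverse eigenbasis this is precisely the inverse of $-\mm(\alpha_\#^{-1})^{-1}\Delta_{(0,1)}(-\Delta_{(0,1)^{d-1}})^{-1}$, matching the statement. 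In case (a) the coefficient $\beta(\lambda)$ blows up as $\lambda\to 0$; the same mode-wise computation shows $r(\lambda)\to 0$ on a dense set, so $\alpha_\infty^{-1}\subseteq L_2(\Omega)^{d}\times\{0\}$ is densely but not everywhere defined, whence $\sigma_{g_0(\Omega)}(a_\infty)=\C$.

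The inner-spectrum formulas in (b) and (c) I would read off from Lemma~\ref{lem:computeproj-mr} together with the explicit Dirichlet eigenvalues $\pi^{2}k_i^{2}$ on $(0,1)^d$: substituting these into the representations of $r(0)$ produces the stated closures of rational expressions in $k\in\N_{>0}^d$. The main obstacle I anticipate is the uniform control of the $\lambda\to 0$ limit across all transverse modes simultaneously -- justifying the interchange of the limit with the infinite spectral sum and identifying the resulting object as a genuine operator (or relation) on $g_0(\Omega)$ rather than a merely formal expression. Here the polynomial criterion behind Theorem~\ref{thm:easy-wp-mr} and the Sturm--Liouville machinery of Section~\ref{sec:sturm} will be needed to locate the exceptional real $\lambda$ at which the pre-limit shifted problem fails, thereby constraining the admissible choices of $\omega$ whose closure must contain $0$.
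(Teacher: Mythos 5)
Your overall strategy matches the paper's: use Murat's stratified--media formula to compute the $\lambda$-shifted $G$-limit $a_\infty(\lambda)=\diag(\mm((\alpha_\#-\lambda)^{-1})^{-1},\mm(\alpha_\#)-\lambda,\dots)$ for $\lambda$ away from the inner spectrum, pass to an explicit spectral representation of the projected inverse, and then send $\lambda\to 0$. That is exactly the route taken in the paper via Lemma~\ref{lem:computeproj-mr}. Your choice of working in the transverse eigenbasis of $-\Delta_{(0,1)^{d-1}}$ rather than in the full $d$-dimensional Dirichlet eigenbasis is also fine in principle.

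However, the mode-wise computation in case (b) contains a genuine error that propagates into a wrong answer. You compute the resolvent of the divergence-form operator $-\dive a_\infty(\lambda)\grad$, which on the transverse mode with eigenvalue $\mu_k$ is $\bigl(-\beta(\lambda)\partial_1^2+\gamma(\lambda)\mu_k\bigr)^{-1}$, and observe that as $\lambda\to 0$ (with $\gamma(\lambda)=-\lambda\to 0$) this tends mode-wise to $\beta(0)^{-1}(-\Delta_{(0,1)})^{-1}$, \emph{independently of $\mu_k$}. Reassembling over the transverse eigenbasis therefore produces $\beta(0)^{-1}(-\Delta_{(0,1)})^{-1}\otimes\operatorname{id}$, whose inverse is the \emph{local, second-order} operator $\mm(\alpha_\#^{-1})^{-1}(-\Delta_{(0,1)})\otimes\operatorname{id}$ --- not the stated \emph{nonlocal, fourth-order} pseudodifferential operator $-\mm(\alpha_\#^{-1})^{-1}\Delta_{(0,1)}(-\Delta_{(0,1)^{d-1}})^{-1}$. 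Your claim that these coincide is false: the stated limit has an extra $(-\Delta_{(0,1)^{d-1}})$ factor in its inverse that cannot appear if the mode-wise limit is $\mu_k$-independent.

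The source of the error is that holomorphic $G$-convergence is defined through $(\iota_0^*(\alpha(n\m_1)-\lambda)\iota_0)^{-1}$ acting on $g_0(\Omega)$, and by Theorem~\ref{thm:TW-Mana} (via the polar decomposition $\iota_0^*\grad=U|\grad|$) one has
\[
\bigl(\iota_0^*a_\infty(\lambda)\iota_0\bigr)^{-1}\;=\;U\,|\grad|\,\bigl(-\dive a_\infty(\lambda)\grad\bigr)^{-1}|\grad|\,U^*,
\]
so that the object whose $\lambda\to 0$ limit you must take is the $|\grad|$-conjugated resolvent, not the bare resolvent. On the transverse mode $\mu_k$ the operator $|\grad|$ acts as $(-\partial_1^2+\mu_k)^{1/2}$, and this conjugation is precisely what reinstalls the transverse $\mu_k$-dependence, turning the eigenvalue of $f(0)$ on the full $d$-dimensional mode $(k_1,\dots,k_d)$ into $\mm(\alpha_\#^{-1})\,\frac{\sum_m k_m^2}{k_1^2}$ rather than $\mm(\alpha_\#^{-1})\,\frac{1}{k_1^2}$. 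The paper sidesteps this pitfall by diagonalising the full $|\grad|(\dive\Gamma\grad)^{-1}|\grad|$ at once in Lemma~\ref{lem:computeproj-mr}. You should carry out the same conjugation in the transverse-mode picture; without it the case~(b) limit you obtain is wrong, and the inner-spectrum formula $\frac{1}{\mm(\alpha_\#^{-1})}\overline{\{k_1^2/\sum_{m\ge 2}k_m^2\}}$ cannot be read off. The same conjugation, incidentally, is also needed for the case~(a) argument that $r(\lambda)\to 0$ only on a dense (rather than the whole) set, and for the explicit rational expression in case~(c).
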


In the following we embark on the quest to prove the above statements. The starting point for all of the developed theory is the well-posedness statement from \cite{TW14_FE}, which is dealt with next.

\section{Abstract divergence-form operators}\label{sec:WPT}

The theory underlying the solution method for the differential equation discussed here is the identification of divergence form problems as a composition of three invertible mappings, see \cite{TW14_FE}. The main difference to the traditional Lax--Milgram lemma approach is that the main result in \cite{TW14_FE} \emph{characterises} well-posedness instead of providing a criterion for it. We refer to \cite{PTW15_WP_P} for a more detailed discussion of the Lax--Milgram lemma in connection to accretive/coercive operators/relations. What is more, the results in \cite{TW14_FE} provide a more explicit solution formula than the classical approach. This solution formula will prove instrumental for our approach proving our main results.

For a densely defined closed linear operator $C\colon \dom(C)\subseteq H_0\to H_1$ acting from the Hilbert spaces $H_0$ into $H_1$, we set $H^1(C)\coloneqq (\dom(C),\langle\cdot,\cdot\rangle_C)$ with $\langle\cdot,\cdot\rangle_C$ being the graph scalar product. We define $C^\diamond \colon H_1 \to H^{-1}(C)\coloneqq (H^1(C))^*$ via
\[
    C^\diamond \phi \colon H^1(C) \to \K, u\mapsto \langle \phi, Cu\rangle_{H_1}.
\]It is then not difficult to see that $C^\diamond$ extends $C^*$, where we identify $H_1$ with its dual via the unitary Riesz mapping; for details we refer to \cite[Chapter 9]{STW_EE21}.

For a Hilbert space $H$ and a closed subspace $V\subseteq H$, we define
\[
   \iota_V \colon V\hookrightarrow H, x\mapsto x,
\]\textbf{the canonical embedding}. Note that then $\iota_V^* \colon H \rightarrow V$ acts as the orthogonal projection onto $V$ and $\pi_V\coloneqq \iota_V\iota_V^*$ is the actual orthogonal projection. 

Now, throughout the rest of this section, let $H_0$, $H_1$ be Hilbert spaces. We assume that  $C\colon \dom(C)\subseteq H_0\to H_1$ is a closed, densely defined linear operator 
 with closed range 
 \[
 V_1\coloneqq \ran(C)\subseteq H_1;
 \] the closed range theorem yields that 
 \[
 V_0 \coloneqq \ran(C^*)\subseteq H_0
 \] is closed as well, see, e.g., \cite[Corollary 2.5]{TW14_FE}. In this case, we define, $C_{\rdd}\coloneqq \iota_{V_1}^*C\iota_{V_0}$, \textbf{the reduced operator}. For $ a\in L(H_1)$ we also define the operator
\[
    D_{\rdd, a} \colon \dom(D_{\rdd,a})\subseteq V_0 \to \ran(C_{\rdd}^*), u\mapsto C_{\rdd}^\diamond a C_{\rdd} u
\]with domain 
\[
   \dom(D_{\rdd,a}) = \{ u\in \dom(C_{\rdd}) ; a C_{\rdd} u \in \dom(C_{\rdd}^*)\}.
\]
The replacement for the Lax--Milgram lemma reads as follows.

\begin{theorem}[{{\cite[Theorem 3.1]{TW14_FE}}}]\label{thm:TW-Mana} The following conditions are equivalent:
\begin{enumerate}
\item[(i)] for all $f\in H^{-1}(C_{\rdd})$ there exists a unique $u\in H^1(C_{\rdd})$ such that
\[
    \langle a C u, C\phi\rangle = f(\phi)\quad(\phi\in H^1(C_{\rdd}));
\]
\item[(ii)] for all $f\in H^{-1}(C_{\rdd})$ there exists a unique $u\in H^1(C_{\rdd})$ such that
\[
     C_{\rdd}^\diamond \iota_{V_1}^* a\iota_{V_1} C_{\rdd} u = f;
\]
\item[(iii)] the operator $\iota_{V_1}^*a\iota_{V_1} \in L(V_1)$ is continuously invertible.
\end{enumerate}
In either case, if $f\in H^{-1}(C_{\rdd})$ then \[u=(C_{\rdd})^{-1} ( \iota_{V_1}^* a\iota_{V_1})^{-1} (C_{\rdd}^\diamond)^{-1} f\] and, if $f\in \ran(C_{\rdd}^*)$, 
\[u=(C_{\rdd})^{-1} ( \iota_{V_1}^* a\iota_{V_1})^{-1} (C_{\rdd}^*)^{-1} f.\]
$D_{\rdd, a}$ is densely defined, closed and continuously invertible. If $\dom(C)\cap \ker(C)^\bot \hookrightarrow H_0$ compactly, then $\dom(D_{\rdd,a})$ has compact resolvent. Finally, $D_{\rdd,a}^*=D_{\rdd,a^*}$.
\end{theorem}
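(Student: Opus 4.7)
The plan is to build everything on a single structural fact: the reduction $C_{\rdd}\colon H^1(C_{\rdd}) \to V_1$ is a topological isomorphism of Hilbert spaces. First I would establish this isomorphism. The operator $C_{\rdd}$ is injective because its domain lies in $\ker(C)^\bot$, it is surjective onto $V_1$ by definition of the reduction, and its graph (in the graph norm) is closed because $C$ is; the bounded inverse theorem then supplies continuity of $C_{\rdd}^{-1}$, which concretely amounts to the Poincar\'e-type inequality $\|u\|_{H_0} \lesssim \|C_{\rdd}u\|_{V_1}$ on $\dom(C_{\rdd})$. Dualising via the Riesz identification of $V_1$ with its dual yields a second isomorphism $C_{\rdd}^\diamond\colon V_1 \to H^{-1}(C_{\rdd})$ which, by construction, extends $C_{\rdd}^*$.

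For (i)$\Leftrightarrow$(ii)$\Leftrightarrow$(iii) my plan is to observe that for $\phi \in H^1(C_{\rdd}) \subseteq V_0$ one has $C\phi = \iota_{V_1} C_{\rdd}\phi$, so that
\[
\langle aCu, C\phi\rangle_{H_1} = \langle \iota_{V_1}^* a\iota_{V_1} C_{\rdd} u, C_{\rdd}\phi\rangle_{V_1} = \bigl(C_{\rdd}^\diamond \iota_{V_1}^* a\iota_{V_1} C_{\rdd} u\bigr)(\phi),
\]
which reformulates (i) as (ii) with no further work. The operator on the left of (ii) then factorises as $C_{\rdd}^\diamond \circ M \circ C_{\rdd}\colon H^1(C_{\rdd}) \to H^{-1}(C_{\rdd})$ with $M \coloneqq \iota_{V_1}^* a\iota_{V_1} \in L(V_1)$, and since the outer factors are isomorphisms by the preliminary step, unique solvability for every $f$ is equivalent to $M$ being continuously invertible, i.e.\ to (iii). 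Inverting factor by factor yields the first solution formula; for $f \in \ran(C_{\rdd}^*)$ the second follows because $C_{\rdd}^\diamond$ agrees with $C_{\rdd}^*$ on $\ran(C_{\rdd}^*)$.

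For the assertions about $D_{\rdd,a}$ I would read off from the factorisation the explicit bounded inverse
\[
D_{\rdd,a}^{-1} = C_{\rdd}^{-1} M^{-1} (C_{\rdd}^*)^{-1} \in L(V_0).
\]
Continuous invertibility and closedness of $D_{\rdd,a}$ are then immediate, since any operator with bounded everywhere-defined inverse is automatically closed. For density of $\dom(D_{\rdd,a}) = \ran(D_{\rdd,a}^{-1})$ in $V_0$ I would unfold the factors: $(C_{\rdd}^*)^{-1}(V_0) = \dom(C_{\rdd}^*)$ is dense in $V_1$; $M^{-1}$ is a bounded bijection on $V_1$ and so preserves density; and $C_{\rdd}^{-1}\colon V_1 \to H^1(C_{\rdd})$ is a topological isomorphism onto the graph Hilbert space, whose ambient set $\dom(C_{\rdd})$ is dense in $V_0$. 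For the compact resolvent claim, the hypothesis $\dom(C) \cap \ker(C)^\bot \hookrightarrow H_0$ compactly says exactly that $H^1(C_{\rdd}) \hookrightarrow V_0$ is compact, so $C_{\rdd}^{-1}\colon V_1 \to V_0$ is compact and hence so is $D_{\rdd,a}^{-1}$.

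Finally, the adjoint identity I would obtain by inverting on both sides: combining the formula with $(C_{\rdd}^{-1})^* = (C_{\rdd}^*)^{-1}$, $((C_{\rdd}^*)^{-1})^* = C_{\rdd}^{-1}$, and $M^* = \iota_{V_1}^* a^* \iota_{V_1}$ yields $(D_{\rdd,a}^{-1})^* = D_{\rdd,a^*}^{-1}$; since $D_{\rdd,a}$ is closed, densely defined, and boundedly invertible on $V_0$, one has $(D_{\rdd,a}^{-1})^* = (D_{\rdd,a}^*)^{-1}$, whence $D_{\rdd,a}^* = D_{\rdd,a^*}$. The step I expect to require the most care is the density of $\dom(D_{\rdd,a})$ in $V_0$: one has to track how a dense subspace of $V_1$ survives being pushed back through $C_{\rdd}^{-1}$, which is where the precise distinction between $H^1(C_{\rdd})$ as a Hilbert space in its own right and as a subspace of $V_0$ becomes essential. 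Every other step reduces to bookkeeping around the single factorisation $D_{\rdd,a} = C_{\rdd}^\diamond M C_{\rdd}$.
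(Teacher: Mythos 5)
Your proof is correct and follows exactly the approach the paper attributes to \cite{TW14_FE}: factorise the divergence-form operator as the composition $C_{\rdd}^\diamond (\iota_{V_1}^* a\iota_{V_1}) C_{\rdd}$ of two fixed topological isomorphisms sandwiching the bounded operator $\iota_{V_1}^*a\iota_{V_1}\in L(V_1)$, so that every assertion in the theorem reduces to bookkeeping around this single factorisation. The auxiliary arguments you supply for closedness (bounded everywhere-defined inverse), density of $\dom(D_{\rdd,a})$ (tracking a dense subspace of $V_1$ back through $C_{\rdd}^{-1}$ into $V_0$), compact resolvent (compactness of $C_{\rdd}^{-1}\colon V_1\to V_0$), and the adjoint identity (via $(T^{-1})^*=(T^*)^{-1}$) are all sound.
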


Motivated by the observation that the abstract divergence form operator is invertible if, and only if, $\iota_{V_1}^*a\iota_{V_1}$ is, we define the \textbf{inner spectrum of $a$ (with respect to $V_1$)} via
\[
   \sigma_{V_1}(a)\coloneqq \{\lambda \in \C; (\iota_{V_1}^*a\iota_{V_1}-\lambda\id_{V_1})^{-1} \in L(V_1)\}=\sigma(\iota_{V_1}^*a\iota_{V_1}).  
\]
The \textbf{inner resolvent set of $a$ (with respect to $V_1$)} is defined by
\[
   \rho_{V_1}(a)\coloneqq \C\setminus  \sigma_{V_1}(a)
\]
\begin{remark}
The sets just introduced depend on $V_1$. In the following  we only use $V_1$ being the range of the gradient restricted to $H_0^1(\Omega)$ functions; that is, $V_1=g_0(\Omega)=\grad[H_0^1(\Omega)]$. 
\end{remark}

Introducing the spectral parts \textbf{inner point spectrum}, $\sigma_{V_1,\textnormal{p}}(a)$, \textbf{inner continuous spectrum}, $\sigma_{V_1,\textnormal{c}}(a)$, and \textbf{inner residual spectrum}, $\sigma_{V_1,\textnormal{r}}(a)$, respectively given by
\begin{align*}
\sigma_{V_1,\textnormal{p}}(a) & = \{ \lambda \in \sigma_{V_1}(a); \iota_{V_1}^* a \iota_{V_1}-\lambda \text{ not one-to-one}\}\\
\sigma_{V_1,\textnormal{c}}(a) & = \{ \lambda \in \sigma_{V_1}(a); \iota_{V_1}^* a \iota_{V_1}-\lambda \text{ one-to-one}, \ran(\iota_{V_1}^* (a-\lambda) \iota_{V_1})\subseteq V_1\text{ dense} \} \\
\sigma_{V_1,\textnormal{r}}(a) & = \{ \lambda \in \sigma_{V_1}(a); \iota_{V_1}^* a \iota_{V_1}-\lambda \text{ one-to-one}, \ran(\iota_{V_1}^* (a-\lambda) \iota_{V_1})\subseteq V_1\text{ not dense} \},
\end{align*}
we can use the operator $D_{\rdd,a}$ in order to understand parts of the inner spectrum. We use similar notation for the usual spectral parts.

\begin{lemma}\label{lem:pointinner} Let $\lambda\in \C$. Then
\[
   0\in  \sigma_{\textnormal{p}}(D_{\rdd,a-\lambda}) \iff \lambda\in  \sigma_{V_1,\textnormal{p}}(a).
\]
\end{lemma}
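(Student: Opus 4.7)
The plan is to unfold the definition of $\ker(D_{\rdd,a-\lambda})$ and exploit the closed-range structure of $C_{\rdd}$. Since $V_1=\ran(C)$ is closed by the standing assumption, the closed range theorem tells us that $C_{\rdd}\colon \dom(C_{\rdd})\subseteq V_0\to V_1$ is a closed densely defined bijection (injective since $V_0=\ker(C)^\perp$; surjective onto $V_1$ since $\ran(C_{\rdd})=\ran(C)=V_1$), hence admits a bounded inverse. In particular, $C_{\rdd}^*\colon \dom(C_{\rdd}^*)\subseteq V_1\to V_0$ is also a closed bijection with bounded inverse, and $C_{\rdd}^*$ is injective because $\ker(C_{\rdd}^*)=\ran(C_{\rdd})^\perp=\{0\}$ inside $V_1$.

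Reading $D_{\rdd,a-\lambda}$ as $C_{\rdd}^\diamond \iota_{V_1}^*(a-\lambda)\iota_{V_1} C_{\rdd}$ (per Theorem \ref{thm:TW-Mana}(ii)), the condition $u\in \ker(D_{\rdd,a-\lambda})$ reads: $u\in\dom(C_{\rdd})$, $\iota_{V_1}^*(a-\lambda)C_{\rdd}u\in\dom(C_{\rdd}^*)$, and $C_{\rdd}^*\iota_{V_1}^*(a-\lambda)C_{\rdd}u=0$. Injectivity of $C_{\rdd}^*$ reduces this to $\iota_{V_1}^*(a-\lambda)C_{\rdd}u=0$; setting $v\coloneqq C_{\rdd}u\in V_1$, this is $\iota_{V_1}^*(a-\lambda)\iota_{V_1}v=0$ (as $v=\iota_{V_1}v$), with $u\neq 0\iff v\neq 0$ by bijectivity of $C_{\rdd}$. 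Conversely, any $v\in V_1$ with $\iota_{V_1}^*(a-\lambda)\iota_{V_1}v=0$ arises as $v=C_{\rdd}u$ for a unique $u\in\dom(C_{\rdd})$ by surjectivity of $C_{\rdd}$, and the vanishing element $0$ trivially lies in $\dom(C_{\rdd}^*)$, so $u\in\dom(D_{\rdd,a-\lambda})$ with $D_{\rdd,a-\lambda}u=0$.

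Combining, $0\in\sigma_{\textnormal{p}}(D_{\rdd,a-\lambda})$ iff $\iota_{V_1}^*(a-\lambda)\iota_{V_1}$ fails to be one-to-one, which is exactly $\lambda\in\sigma_{V_1,\textnormal{p}}(a)$. I anticipate no substantive obstacle here; the argument amounts to bookkeeping once one isolates the two structural consequences of the closed-range hypothesis, namely the surjectivity of $C_{\rdd}\colon\dom(C_{\rdd})\to V_1$ and the injectivity of $C_{\rdd}^*$.
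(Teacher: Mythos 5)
Your proof is correct and is, in substance, the paper's argument: both reduce $\ker(D_{\rdd,a-\lambda})$ to $\ker(\iota_{V_1}^*(a-\lambda)\iota_{V_1})$ via $C_{\rdd}$. The only cosmetic difference is that the paper routes through $C_{\rdd}^\diamond$, using $\ker(C_{\rdd}^*)=\ker(C_{\rdd}^\diamond)$ together with the fact that $C_{\rdd}$ and $C_{\rdd}^\diamond$ are topological isomorphisms, whereas you work directly with $C_{\rdd}^*$, observing that it is injective because $\ran(C_{\rdd})=V_1$; this sidesteps the $\diamond$-formalism but is the same underlying bookkeeping.
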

\begin{proof}
If $0\notin \sigma_{\textnormal{p}}(D_{\rdd,a})$, then
\[
   C_{\rdd}^* \iota_{V_1}^* a\iota_{V_1} C_{\rdd} u =0;
\] has only the trivial solution. Since $\ker(C_{\rdd}^*)=\ker(C_{\rdd}^\diamond)$ (see, e.g., \cite[Proposition 9.2.2]{STW_EE21}), the equation
\[
   C_{\rdd}^\diamond \iota_{V_1}^* a\iota_{V_1} C_{\rdd} u =0;
\] has only the trivial solution either. As both $C_{\rdd}$ and $C_{\rdd}^\diamond$ are topological isomorphisms (see \cite{TW14_FE}), $\iota_{V_1}^* a\iota_{V_1}$ is one-to-one. Reverting the argument, we infer that $\iota_{V_1}^* a\iota_{V_1}$ one-to-one implies $0\neq  \sigma_{\textnormal{p}}(D_{\rdd,a})$. The assertion follows from
\[
 0\in  \sigma_{\textnormal{p}}(D_{\rdd,a-\lambda}) \iff  0 \in \sigma_{V_1,\textnormal{p}}(a-\lambda) \iff \lambda \in \sigma_{V_1,\textnormal{p}}(a).\qedhere
\]
\end{proof}

\begin{theorem}\label{thm:dinner} If $0\in \rho(D_{\rdd,a})$, then $0\in \rho_{V_1}(a)\cup \sigma_{V_1,\textnormal{c}}(a)$. Moreover, 
\[
   \rho_{V_1}(a)\subseteq \{\lambda \in \C; 0 \in \rho(D_{\rdd,\alpha-\lambda})\} \subseteq \rho_{V_1}(a)\cup \sigma_{V_1,\textnormal{c}}(a),
\]or, equivalently,
\[
  \sigma_{V_1,\textnormal{r}}(a)\cup \sigma_{V_1,\textnormal{p}}(a) \subseteq \{ \lambda \in \C; 0\in \sigma(D_{\rdd,\alpha-\lambda})\} \subseteq \sigma_{V_1}(a).
\]
\end{theorem}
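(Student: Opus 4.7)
The plan is to first notice that the initial assertion (the case $\lambda = 0$) is an immediate specialisation of the ``moreover'' part, and that the two displayed chains of inclusions are equivalent by complementation in $\C$ together with the disjoint decomposition $\sigma_{V_1}(a) = \sigma_{V_1,\textnormal{p}}(a) \cup \sigma_{V_1,\textnormal{c}}(a) \cup \sigma_{V_1,\textnormal{r}}(a)$, which holds by the very definitions of the inner spectral parts. The whole theorem therefore reduces to proving
\[
\rho_{V_1}(a) \;\subseteq\; \{\lambda\in\C\,;\, 0\in \rho(D_{\rdd,a-\lambda})\} \;\subseteq\; \rho_{V_1}(a)\cup \sigma_{V_1,\textnormal{c}}(a).
\]

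The first inclusion is almost tautological. Applying Theorem \ref{thm:TW-Mana} with $a$ replaced by $a-\lambda \in L(H_1)$, the hypothesis $\lambda \in \rho_{V_1}(a)$ is precisely condition (iii) for the shifted operator; the closing assertion of that theorem then delivers continuous invertibility of $D_{\rdd,a-\lambda}$, i.e.\ $0 \in \rho(D_{\rdd,a-\lambda})$.

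For the second inclusion, fix $\lambda \in \C$ with $0 \in \rho(D_{\rdd,a-\lambda})$ and set $B_\lambda \coloneqq \iota_{V_1}^*(a-\lambda)\iota_{V_1} \in L(V_1)$. I need to verify that $B_\lambda$ is both injective and has dense range, which says exactly that $\lambda \in \rho_{V_1}(a) \cup \sigma_{V_1,\textnormal{c}}(a)$. Injectivity is handed over directly by Lemma \ref{lem:pointinner}: the hypothesis excludes $0 \in \sigma_{\textnormal{p}}(D_{\rdd,a-\lambda})$, hence $\lambda \notin \sigma_{V_1,\textnormal{p}}(a)$. For density of the range I would pass to the adjoint. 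Closed, densely defined operators share continuous invertibility with their adjoints, so the identity $D_{\rdd,a-\lambda}^* = D_{\rdd,(a-\lambda)^*} = D_{\rdd,a^*-\bar\lambda}$ recorded at the end of Theorem \ref{thm:TW-Mana} produces $0 \in \rho(D_{\rdd,a^*-\bar\lambda})$. A second application of Lemma \ref{lem:pointinner} then shows that $B_\lambda^* = \iota_{V_1}^*(a^*-\bar\lambda)\iota_{V_1}$ is injective, whence $\ran(B_\lambda)^\bot = \ker(B_\lambda^*) = \{0\}$ and $\ran(B_\lambda)$ is dense in $V_1$.

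The only non-routine point is this dense-range step: unlike injectivity, it cannot be extracted from a single application of Lemma \ref{lem:pointinner}, and the argument hinges on the adjoint identity $D_{\rdd,a}^* = D_{\rdd,a^*}$ provided at the close of Theorem \ref{thm:TW-Mana}. Once that symmetry is invoked, everything else is a formal consequence of the equivalences already established.
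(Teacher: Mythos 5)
Your proof is correct. The reduction to the middle chain of inclusions, the treatment of the first inclusion via Theorem \ref{thm:TW-Mana}(iii), and the use of Lemma \ref{lem:pointinner} to exclude inner point spectrum are exactly what the paper does. The one place where you genuinely diverge from the paper's argument is the density-of-range step. The paper argues \emph{constructively}: starting from the solution formula $u = D_{\rdd,a-\lambda}^{-1}f$ for $f\in\ran(C_{\rdd}^*)$, it observes that $\iota_{V_1}^*(a-\lambda)\iota_{V_1}C_{\rdd}u = (C_{\rdd}^*)^{-1}f$, so the dense subspace $\dom(C^*)\cap V_1 = \ran((C_{\rdd}^*)^{-1})$ sits inside $\ran(\iota_{V_1}^*(a-\lambda)\iota_{V_1})$. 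You instead argue by \emph{duality}: since $D_{\rdd,a-\lambda}$ is continuously invertible, so is its adjoint $D_{\rdd,a-\lambda}^* = D_{\rdd,a^*-\bar\lambda}$ (via the identity at the end of Theorem \ref{thm:TW-Mana}), whence by a second application of Lemma \ref{lem:pointinner} the operator $B_\lambda^* = \iota_{V_1}^*(a^*-\bar\lambda)\iota_{V_1}$ is injective and $\overline{\ran(B_\lambda)} = \ker(B_\lambda^*)^\bot = V_1$. Both routes are valid and of comparable length; the paper's gives the sharper bonus information that $\ran(B_\lambda)$ actually contains the explicit dense subspace $\dom(C^*)\cap V_1$, whereas yours is a cleaner abstract argument that makes the symmetry $D_{\rdd,a}^* = D_{\rdd,a^*}$ do all the work.
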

\begin{proof}
Assume $0\in \rho(D_{\rdd,a})$. By Lemma \ref{lem:pointinner}, $0\notin \sigma_{V_1,\textnormal{p}}(a)$. Furthermore, if $f\in \ran(C_{\rdd}^*)=V_0$, then $D_{\rdd,a}^{-1} f \eqqcolon u$ is a solution of 
\[
    C_{\rdd}^* \iota_{V_1}^* a\iota_{V_1} C_{\rdd} u =f.
\]
In particular, 
\[
      \iota_{V_1}^* a\iota_{V_1} C_{\rdd} u =(C_{\rdd}^*)^{-1}f,
\]
which shows that $\dom(C^*)\cap V_1 = \ran((C_{\rdd}^*)^{-1})\subseteq \ran(\iota_{V_1}^* a\iota_{V_1})$ is dense in $V_1$ (see, e.g., \cite[Lemma 4.4]{EGW17_D2N}). Thus, $0\in \rho_{V_1}(a)\cup \sigma_{V_1,\textnormal{c}}(a)$, as required. In particular, the second inclusion is thus proved. The first one is a consequence of Theorem \ref{thm:TW-Mana}. The remaining line of inclusions follows from the previous one by computing complements.
\end{proof}

\begin{example} The inclusion $ \{\lambda \in \C; 0 \in \rho(D_{\rdd,\alpha-\lambda})\} \subseteq \rho_{V_1}(a)\cup \sigma_{V_1,\textnormal{c}}(a)$ is optimal in the sense that  $\sigma_{V_1,\textnormal{c}}(a)$ cannot be dropped. Indeed, take a Hilbert space $H$ and $a\in L(H)$ with $0\in \sigma_{\textnormal{c}}(a)$. There exists $T\colon \ran(a)\subseteq H\to H$ closed, onto and one-to-one. In particular, $\ran(T)=H$ is closed and, thus, so is $\ran(T^*)\subseteq H$. Moreover, $T$ one-to-one leads to $T^*$ mapping onto $H$. Consider $\mathcal{T}_a\coloneqq TaT^*$. Then for all $f\in H_0$, $u\coloneqq (T^*)^{-1}a^{-1}T^{-1}f$ is a solution of $\mathcal{T}_au=f$. Moreover, $T^{-1}$ is continuous and, as an inverse of a bounded linear operator, $a^{-1}$ is closed. Thus, $a^{-1}T^{-1}$ is closed, and, as it maps from $H$ to $H$, it is continuous by the closed graph theorem. Finally, as $(T^*)^{-1}=(T^{-1})^*$ is, too, continuous, $\mathcal{T}_a^{-1} \in L(H)$. Hence, $0\in \rho(\mathcal{T}_a)$. However, $0\in \sigma_{\textnormal{c}}(a) =\sigma_{H,\textnormal{c}}(a) =\sigma_{\ran(T^*),\textnormal{c}}(a)$.
\end{example}


\begin{remark} (a) We recall that the standard assumption for the coefficients $a\in L(H_1)$ for weak variational problems of the type discussed in (i) is that $\Re a\geq c$ for some $c>0$ in the sense of positive definiteness. A moment's reflection reveals that this condition yields $\Re \iota_{V_1}^* a\iota_{V_1}\geq c\id_{V_1}$ and, by the boundedness of $\iota_{V_1}^*a\iota_{V_1}$, the invertibility of $\iota_{V_1}^* a\iota_{V_1}$ follows; see, e.g., \cite[Proposition 6.2.3(b)]{STW_EE21}. In fact, the same holds true for $a$ being a maximal monotone relation with $\dom(a)=H_1$. The corresponding argument is, however, somewhat more involved, see \cite[Theorem 4]{MrsRobinson}. This has been exploited in \cite{TW14_FE} to provide a solution theory for nonlinear divergence form problems. 

(b) Note that if $C$ is one-to-one (as it will always be the case in the applications to follow), $D_{\rdd,a}=D_a$, where 
$
   D_a \colon \dom(D_a)\subseteq H_0 \to H_0, u\mapsto C^*a Cu,
$ and 
\begin{align*}
 \dom(D_a) & =\{ u\in \dom(C_{\rdd}) ; a C_{\rdd} u \in \dom(C_{\rdd}^*)\}\\
 &=\{ u\in \dom(C) ; a Cu \in \dom(C^*)\},
\end{align*}
where we used that $\dom(C)=\dom(C_{\rdd})$ as $C$ is one-to-one and that $a Cu \in \dom(C^*)$ if and only if $a Cu \in \dom(C_{\rdd}^*)$. Also, by the injectivity of $C$ (together with the closed range of $C$), it follows that $C^*$ is onto; i.e., $\ran(C^*)=H_0$.
\end{remark}

\section{The one-dimensional problem -- well-posedness}\label{sec:1DWP}

Having discussed the underlying solution method, we may consider the one-dimensional case of sign-changing coefficients. In fact, we shall see that this case is rather elementary to deal with and can be treated quite elegantly with the results from \cite{TW14_FE}. Note that the results in the present section have been obtained also in \cite[Chapter 8]{Schmitz14}. Note that in \cite{Schmitz14} it suffices to have $\alpha^{-1}\in L_2(\Omega)$ for the well-posedness Theorem \ref{thm:1Dwp}. We recall these results here to demonstrate that in one dimensions the results are computable. To this extent, this is a property not shared by the higher-dimensional case.

\begin{setting}\label{set:1Dstart} Let $\Omega\subseteq \R$ be an open and bounded interval. Let $\alpha \in L_\infty(\Omega)$. Then we define $X_\alpha$ acting as
\[
   X_\alpha u = -(\alpha u')'.
\]in the distributional sense and consider this operator as an operator from $H_0^1(\Omega)$ into $H^{-1}(\Omega)$. The operator 
\[D_\alpha \colon \dom(D_\alpha)\subseteq L_2(\Omega) \to L_2(\Omega)
\] is the $L_2(\Omega)$-realisation of $X_\alpha$ with maximal domain $\dom(D_\alpha)$ contained in $H_0^1(\Omega)$.
\end{setting}

Under mild conditions on $\alpha$, the differential equation associated with $X_\alpha$ turns out to be well-posed. For this, we introduce $\mm_{\Omega}(\beta) \coloneqq \frac{1}{\lambda{(\Omega)}}\int_\Omega \beta$ for $\beta \in L_1 (\Omega)$, $\lambda{(\Omega)}$ being the Lebesgue measure of $\Omega$.

\begin{theorem}\label{thm:1Dwp} Assume Setting \ref{set:1Dstart}. In addition, assume $\alpha^{-1} \in L_\infty(\Omega)$ and $\mm_{\Omega} (\alpha^{-1}) \neq 0$. 

Then for all $f\in H^{-1}(\Omega)$ there exists a unique $u\in H_0^1(\Omega)$ such that
\[
     X_\alpha u = f\quad \text{(in the distributional sense)}.
\]The problem is equivalent to finding $\phi\in L_2(\Omega)$ given $\psi\in \{1\}^\bot \subseteq L_2(\Omega)$ such that
\[
    \alpha \phi - \mm_{\Omega}(\alpha\phi) = \psi;
\]
which can be found via
\[
   \phi = \alpha^{-1}\psi - \alpha^{-1}\frac{\mm_\Omega(\alpha^{-1}\psi)}{\mm_{\Omega}(\alpha^{-1})}.
   \]
   Moreover, $D_{\alpha}$, the $L_2(\Omega)$ realisation of $X_\alpha$ is continuously invertible and has compact resolvent and $D_{\alpha}^*=D_{\alpha^*}$.
\end{theorem}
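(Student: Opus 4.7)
The natural approach is to apply the well-posedness characterisation of Theorem~\ref{thm:TW-Mana} with $C=\partial$ taken as the closed, densely defined, injective operator from $H_0^1(\Omega)\subseteq L_2(\Omega)$ into $L_2(\Omega)$. Because $\Omega$ is a bounded interval, a simple application of the fundamental theorem of calculus shows $V_1=\ran(C)=\{\1\}^\bot\subseteq L_2(\Omega)$ (any mean-zero $L_2$ function admits a primitive vanishing at both endpoints, and conversely $\int_\Omega u'=0$ for all $u\in H_0^1(\Omega)$); in particular $V_1$ is closed, so the theorem applies. The adjoint embedding then acts as subtraction of the integral mean, $\iota_{V_1}^*v=v-\mm_\Omega(v)\1$, and the bilinear form $\langle\alpha u',\phi'\rangle_{L_2}$ is exactly the weak formulation of $X_\alpha u=f$ in $H^{-1}(\Omega)$. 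Thus Theorem~\ref{thm:TW-Mana}(i)$\Leftrightarrow$(iii) reduces well-posedness of $X_\alpha u=f$ to continuous invertibility of $\iota_{V_1}^*\alpha(\m)\iota_{V_1}$ on $V_1$, and simultaneously recasts the problem in the equivalent form $\alpha\phi-\mm_\Omega(\alpha\phi)=\psi$ stated in the theorem.

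The next step is then the explicit solution of this reduced equation. Given $\psi\in V_1$, since $\alpha^{-1}\in L_\infty(\Omega)$ one may multiply through, writing $\phi=\alpha^{-1}\psi+c\alpha^{-1}$ with the as yet unknown scalar $c=\mm_\Omega(\alpha\phi)$. The requirement $\phi\in V_1$, i.e.\ $\mm_\Omega(\phi)=0$, collapses to the single scalar equation
\[
\mm_\Omega(\alpha^{-1}\psi)+c\,\mm_\Omega(\alpha^{-1})=0,
\]
which is uniquely solvable precisely because $\mm_\Omega(\alpha^{-1})\neq 0$. This yields the announced formula for $\phi$; a direct substitution confirms it solves the reduced equation, and estimating $|c|$ by Cauchy--Schwarz gives $\|\phi\|_{L_2}\leq K\|\psi\|_{L_2}$ for an explicit constant $K$ depending only on $\|\alpha^{-1}\|_\infty$, $\mm_\Omega(\alpha^{-1})$ and $\lambda(\Omega)$. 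Hence $\iota_{V_1}^*\alpha(\m)\iota_{V_1}$ is continuously invertible, and the first three claims of the theorem follow.

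The remaining assertions about $D_\alpha$ are then built into Theorem~\ref{thm:TW-Mana}: since $C$ is one-to-one, Remark~(b) gives $D_\alpha=D_{\rdd,\alpha(\m)}$, which is closed, densely defined and continuously invertible; the compact embedding $\dom(C)\cap\ker(C)^\bot=H_0^1(\Omega)\hookrightarrow L_2(\Omega)$ (Rellich--Kondrachov) delivers the compact resolvent; and the identity $D_{\rdd,a}^*=D_{\rdd,a^*}$ specialises to $D_\alpha^*=D_{\alpha^*}$ since the adjoint of a scalar multiplication operator is multiplication by the complex conjugate. I do not expect any serious obstacle here; the only substantive point is the interplay between the two mean-zero constraints --- that $\psi$ lies in $V_1$ and that $\phi$ must also lie in $V_1$ --- which, bridged by the factor $\alpha^{-1}$, is precisely what turns invertibility into the scalar condition $\mm_\Omega(\alpha^{-1})\neq 0$.
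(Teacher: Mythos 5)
Your proposal is correct and follows essentially the same approach as the paper: both apply Theorem~\ref{thm:TW-Mana} with $C=\mathring{\partial}$, identify $V_1=\{1\}^\bot$ and $\iota_{V_1}^*$ as subtraction of the mean, and then solve the reduced equation $\alpha\phi-\mm_\Omega(\alpha\phi)=\psi$ explicitly by isolating the scalar $\mm_\Omega(\alpha\phi)$ via the constraint $\mm_\Omega(\phi)=0$, where $\mm_\Omega(\alpha^{-1})\neq 0$ is exactly what makes the scalar equation solvable. The only cosmetic difference is that you justify closedness of $\ran(\mathring\partial)$ directly by the fundamental theorem of calculus rather than citing the Poincar\'e inequality, and you make the invertibility estimate slightly more explicit; the substance is identical.
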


 The rationale follows similar lines to the slightly simpler situation in \cite[Example 1.10]{PTW15_WP_P}.

\begin{proof}[Proof of Theorem \ref{thm:1Dwp}] We apply Theorem \ref{thm:TW-Mana} to $C=\mathring{\partial} \colon H_0^1(\Omega)\subseteq L_2(\Omega) \to L_2(\Omega)$ and $a=\alpha(\m)$, the multiplication operator induced by the multiplication by $\alpha$. Then $C$ is densely defined, closed. $C$ has closed range by the Poincar\'e inequality, see, e.g., \cite{Almansi}. Moreover, 
\[V_1 = \ran(\mathring{\partial}) = \ker(\partial)^\bot =\{1\}^\bot.\] We compute $\alpha_{1} \coloneqq  \iota_{V_1}^*a \iota_{V_1}$ next. For this, note that since $\ker(\partial) = \{1\}$ is spanned by the unit vector $g\coloneqq \frac{1}{\lambda(\Omega)^{1/2}}$, for any $f\in L_2(\Omega)$, we have
\[
    \iota_{V_1}^*f = f- \frac{1}{\lambda(\Omega)^{1/2}}\langle 1,f\rangle \frac{1}{\lambda(\Omega)^{1/2}}1= f- \mm_{\Omega}(f).
\]
Next, let $\phi, \psi \in \{1\}^\bot$ be such that
\[
    \alpha_{1}\phi = \psi.
\]
Then 
\[
    \psi = \alpha_{1}\phi = \alpha \phi - \mm_\Omega(\alpha\phi)
\] and so $\alpha^{-1}\psi = \phi - \alpha^{-1}\mm_\Omega(\alpha\phi)$. Hence,
\[
   \mm_{\Omega}(\alpha^{-1}\psi) = \mm_{\Omega}(\phi) - \mm_{\Omega}(\alpha^{-1})\mm_\Omega(\alpha\phi) =- \mm_{\Omega}(\alpha^{-1})\mm_\Omega(\alpha\phi),
\]since $\phi \in V_1$. Finally,
\[
   \phi = \alpha^{-1}\psi + \alpha^{-1}\mm_\Omega(\alpha\phi) = \alpha^{-1}\psi - \alpha^{-1}\frac{\mm_\Omega(\alpha^{-1}\psi)}{\mm_{\Omega}(\alpha^{-1})}, 
\]which leads to
\[
  \alpha_{1}^{-1} \colon \psi \mapsto \alpha^{-1}\psi - \alpha^{-1}\frac{\mm_\Omega(\alpha^{-1}\psi)}{\mm_{\Omega}(\alpha^{-1})}.
\]Since $\alpha_1^{-1}$ is continuous and everywhere defined, we infer the invertibility of $X_\alpha$ and $D_\alpha$. The remaining assertions follow from Theorem \ref{thm:TW-Mana}, together with $H_0^1(\Omega) \hookrightarrow\hookrightarrow L_2(\Omega)$.
\end{proof}

A closer look into the proof of Theorem \ref{thm:1Dwp} -- particularly for the case of piecewise constant coefficients -- reveals the inner spectrum with respect to $g_0(\Omega)=\{1\}^\bot$. For illustrational purposes we only consider a special case; for $\alpha_0,\ldots,\alpha_r \in \C$ write $\alpha(\m)$ for the multiplication operator induced by 
\[\alpha=\sum_{j=0}^r \alpha_j \1_{( jh,(j+1)h]}\] on $L_2(0,1)$.

\begin{theorem}\label{thm:isp} Let $\alpha_0,\ldots,\alpha_r\in \C$, $h\coloneqq 1/(r+1)$. Then for $V_1=\{1\}^\bot\subseteq L_2(0,1)$ we have
\[
     \sigma_{V_1}(\alpha(\m)) = \{ \lambda \in \C; (\alpha-\lambda)^{-1}\in L_\infty, \mm((\alpha-\lambda)^{-1})=0\}\cup \{\alpha_j; j\in\{0,\ldots,r\}\}.
\]
\end{theorem}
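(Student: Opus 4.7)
The plan is to argue by bijection failure versus success of $\alpha_{1,\lambda} \coloneqq \iota_{V_1}^*(\alpha(\m)-\lambda)\iota_{V_1}$, which by definition determines $\lambda \in \sigma_{V_1}(\alpha(\m))$ or $\lambda \in \rho_{V_1}(\alpha(\m))$. Recall that $\iota_{V_1}^* f = f - \mm(f)$ on $L_2(0,1)$, so that the action of $\alpha_{1,\lambda}$ on $\phi \in V_1 = \{1\}^{\perp}$ is $\alpha_{1,\lambda}\phi = (\alpha-\lambda)\phi - \mm((\alpha-\lambda)\phi)$. I would split into the two complementary cases $\lambda \in \{\alpha_0,\ldots,\alpha_r\}$ and $\lambda \notin \{\alpha_0,\ldots,\alpha_r\}$, each of which can be dispatched directly.

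First, suppose $\lambda = \alpha_j$ for some $j$. Then $\alpha-\lambda$ vanishes on the interval $I_j = (jh,(j+1)h]$, so any mean-zero $\phi \in L_2(0,1)$ supported in $I_j$ (for instance $\phi = \mathbbold{1}_{(jh,(j+1/2)h]} - \mathbbold{1}_{((j+1/2)h,(j+1)h]}$) lies in $V_1$, is nonzero, and satisfies $(\alpha-\lambda)\phi = 0$, hence $\alpha_{1,\lambda}\phi = 0$. Thus $\ker(\alpha_{1,\lambda}) \neq \{0\}$ and $\lambda \in \sigma_{V_1}(\alpha(\m))$; in fact the kernel is infinite-dimensional.

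Second, suppose $\lambda \notin \{\alpha_0,\ldots,\alpha_r\}$. Then $(\alpha-\lambda)^{-1} \in L_\infty(0,1)$ trivially, and I apply Theorem \ref{thm:1Dwp} with $\alpha$ replaced by $\alpha-\lambda$: the criterion $\mm((\alpha-\lambda)^{-1}) \neq 0$ yields bounded invertibility of $\alpha_{1,\lambda}$ (the proof of that theorem produces the explicit inverse of $\alpha_{1,\lambda}$ using $\mm((\alpha-\lambda)^{-1})$ in the denominator), hence $\lambda \in \rho_{V_1}(\alpha(\m))$. Conversely, if $\mm((\alpha-\lambda)^{-1}) = 0$, then $\phi \coloneqq (\alpha-\lambda)^{-1} \in L_\infty(0,1) \subseteq L_2(0,1)$ is nonzero, has vanishing mean so lies in $V_1$, and satisfies $(\alpha-\lambda)\phi = 1$, whence $\alpha_{1,\lambda}\phi = 1 - \mm(1) = 0$; thus $\lambda \in \sigma_{V_1}(\alpha(\m))$.

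Combining the two cases gives exactly the asserted set equality. There is no real obstacle here: the only point that requires a moment's care is that the characterisation written in the statement implicitly agrees that the condition $(\alpha-\lambda)^{-1} \in L_\infty$ automatically excludes $\lambda \in \{\alpha_0,\ldots,\alpha_r\}$ (on which $\alpha-\lambda$ vanishes on a set of positive measure), so the two components of the union on the right-hand side are consistent with the case split and together cover precisely the $\lambda$'s in $\sigma_{V_1}(\alpha(\m))$.
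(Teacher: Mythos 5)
Your proof is correct and follows the paper's case split (isolate $\lambda\in\{\alpha_0,\ldots,\alpha_r\}$, then divide the complementary case by whether $\mm((\alpha-\lambda)^{-1})$ vanishes), with Theorem \ref{thm:1Dwp} covering the invertible sub-case just as in the paper. The difference lies in how non-invertibility of $a_\lambda\coloneqq\iota_{V_1}^*(\alpha(\m)-\lambda)\iota_{V_1}$ is detected in the two singular cases: the paper shows $a_\lambda$ is \emph{not onto} (when $\lambda=\alpha_j$ every element of the range is constant on $(jh,(j+1)h]$; when $\mm((\alpha-\lambda)^{-1})=0$ the range lies in $\{1,(\alpha-\lambda)^{-1}\}^\bot\subsetneq V_1$), whereas you exhibit explicit nonzero \emph{kernel} vectors (a mean-zero function supported where $\alpha-\lambda$ vanishes, resp.\ $\phi=(\alpha-\lambda)^{-1}$ itself, noting $(\alpha-\lambda)\phi=1$ implies $a_\lambda\phi=1-\mm(1)=0$). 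Both routes are valid and of comparable length; your argument is marginally more direct and has the additional payoff of locating these $\lambda$ in the inner \emph{point} spectrum $\sigma_{V_1,\textnormal{p}}(\alpha(\m))$, a sharper conclusion than the paper's surjectivity argument records by itself.
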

\begin{proof}
We apply Theorem \ref{thm:TW-Mana} and, thus, need to look for exactly those $\lambda\in \C$ such that the operator $a_\lambda \coloneqq \iota_{V_1}^* (\alpha(\m)-\lambda)\iota_{V_1}$ is not invertible. For this let $\phi,\psi\in V_1$ and consider the equation
\[
   a_\lambda \phi = \psi,
\]which reads
\begin{equation}\label{eq:psipho}
   \psi = (\alpha-\lambda)\phi - \mm_{(0,1)}((\alpha-\lambda)\phi).
\end{equation}
Now, if $\alpha-\lambda =0$ on some subinterval, $\psi$ is constant on this subinterval. Thus, $a_\lambda$ is not onto. Hence, 
\[
  \{\alpha_j; j\in\{0,\ldots,r\}\}\subseteq  \sigma_{V_1}(a).
\]
Next, consider the case $(\alpha-\lambda)^{-1}\in L_\infty$ (which complements the other as $\alpha$ is piecewise constant). If $\mm((\alpha-\lambda)^{-1} )\neq 0$, Theorem \ref{thm:1Dwp} applies and $\lambda \notin  \sigma_{V_1}(\alpha(\m))$. On the other hand, if $\mm((\alpha-\lambda)^{-1} ) = 0$, then \eqref{eq:psipho} leads to 
\[
    (\alpha-\lambda)^{-1}\psi = \phi - (\alpha-\lambda)^{-1}\mm((\alpha-\lambda)\phi)
\]showing that
\[
  \mm ((\alpha-\lambda)^{-1}\psi )=0.
\]
Since $\mm((\alpha-\lambda)^{-1} ) = 0$, we infer $(\alpha-\lambda)^{-1}$ is not constant. Thus, $\psi\in \{1, (\alpha-\lambda)^{-1}\}^\bot \neq \{1\}^\bot$. Hence, we again deduce that $a_\lambda$ is not onto. 
\end{proof}

\begin{remark}
The precise detection of the set  
\[\{ \lambda \in \C; (\alpha-\lambda)^{-1}\in L_\infty, \mm((\alpha-\lambda)^{-1})=0\} = \big\{\lambda\in \C; \sum_{j=0}^r \frac{1}{\alpha_j-\lambda} =0\big\}
\] is a classical albeit nontrivial task, see, e.g., \cite{W65}; see also \cite{ELR94} for a related question.
\end{remark}

Next we address the solution theory for problems that are not well-posed and consider small perturbations of these. In the higher-dimensional situation we need to revisit this result. Since, for these problems, we will consider piecewise constant coefficients only, the condition $\alpha^{-1}\in L_\infty(\Omega)$ is automatically satisfied. If, for general $\alpha\in L_\infty(\Omega)$, its inverse is contained in $L_\infty(\Omega)$ with vanishing integral mean, the spectral point $0$ of the inner spectrum is isolated. Again, let $\alpha(\m)$ be the multiplication operator on $L_2(\Omega)$ induced by $\alpha$.

\begin{corollary}\label{cor:1Dwp} Assume Setting \ref{set:1Dstart}. In addition, assume $\alpha^{-1} \in L_\infty(\Omega;\R)$ and $\mm_{\Omega} (\alpha^{-1}) = 0$. 

Then there exists $\varepsilon_0>0$ such that
\[
    B_\C(0,\varepsilon_0)\setminus\{0\} \subseteq \rho_{g_0(\Omega)}(\alpha(\m)).
\]
\end{corollary}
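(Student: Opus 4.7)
The plan is to verify that, for every $\lambda$ in a small punctured disc around $0$, both hypotheses of Theorem \ref{thm:1Dwp} are satisfied by the coefficient $\alpha-\lambda$; then Theorem \ref{thm:TW-Mana} will yield $\lambda\in \rho_{g_0(\Omega)}(\alpha(\m))$ directly.

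For the first hypothesis I would note that $\alpha^{-1}\in L_\infty(\Omega;\R)$ forces $|\alpha|\geq c$ a.e.\ for some $c>0$, whence $|\alpha-\lambda|\geq c/2$ a.e.\ whenever $|\lambda|<c/2$. This gives $(\alpha-\lambda)^{-1}\in L_\infty(\Omega)$, uniformly in such $\lambda$, and, via the resolvent identity $(\alpha-\lambda)^{-1}-\alpha^{-1}=\lambda\alpha^{-1}(\alpha-\lambda)^{-1}$, the map $\lambda\mapsto(\alpha-\lambda)^{-1}$ is $L_\infty(\Omega)$-valued holomorphic on $B_\C(0,c/2)$. In particular the scalar function $g(\lambda)\coloneqq\mm_\Omega((\alpha-\lambda)^{-1})$ is holomorphic there.

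For the second hypothesis the key observation is that $g$ vanishes to first order at $0$. Indeed, $g(0)=0$ by assumption, and differentiating under the integral yields $g'(0)=\mm_\Omega(\alpha^{-2})$. Since $\alpha$ is real-valued and essentially bounded, $\alpha^{-2}=(\alpha^{-1})^2$ is bounded below by $\|\alpha\|_\infty^{-2}>0$ a.e., so $g'(0)>0$. Zeros of a non-trivial holomorphic function being isolated, there exists $\varepsilon_0\in(0,c/2)$ with $g(\lambda)\neq 0$ on $B_\C(0,\varepsilon_0)\setminus\{0\}$.

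Combining the two ingredients, for every $\lambda\in B_\C(0,\varepsilon_0)\setminus\{0\}$ both $(\alpha-\lambda)^{-1}\in L_\infty(\Omega)$ and $\mm_\Omega((\alpha-\lambda)^{-1})\neq 0$ hold, so Theorem \ref{thm:1Dwp} applied to the coefficient $\alpha-\lambda$ (whose proof rests only on Theorem \ref{thm:TW-Mana} and is insensitive to complex-valuedness) produces continuous invertibility of $\iota_{V_1}^*(\alpha(\m)-\lambda)\iota_{V_1}$, which is precisely $\lambda\in \rho_{g_0(\Omega)}(\alpha(\m))$. No step constitutes a serious obstacle; the only point requiring mild attention is the sign $g'(0)\neq 0$, which is where the real-valuedness of $\alpha$ enters essentially.
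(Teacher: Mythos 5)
Your proof is correct and follows essentially the same structure as the paper's: reduce to Theorem \ref{thm:1Dwp}, show $(\alpha-\lambda)^{-1}\in L_\infty(\Omega)$ for small $|\lambda|$, observe $\lambda\mapsto\mm_\Omega((\alpha-\lambda)^{-1})$ is holomorphic, and conclude that its zeros near $0$ are isolated. The only genuine difference is how you establish that this holomorphic function is not identically zero: the paper evaluates at $\lambda=i\delta$ and computes the imaginary part of the integral, whereas you differentiate to obtain $g'(0)=\mm_\Omega(\alpha^{-2})\geq\|\alpha\|_\infty^{-2}>0$. Your variant is slightly sharper in that it exhibits $0$ as a \emph{simple} zero of $g$ rather than merely an isolated one, and it also makes cleaner use of where the real-valuedness of $\alpha$ enters (namely, to force $\alpha^{-2}>0$), but both arguments carry the same weight for the conclusion.
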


\begin{remark}
Since $\alpha$ is real, $\C\setminus\R \subseteq \rho_{g_0(\Omega)}(\alpha(\m))$ by the classical Lax--Milgram lemma. Indeed, for $\lambda\in \C\setminus\R$, we find $\gamma\in \C$ such that $\Re \gamma(\alpha-\lambda)\geq c$ for some $c>0$.
\end{remark}

\begin{proof}[Proof of Corollary \ref{cor:1Dwp}] By Theorem \ref{thm:1Dwp}, it suffices to show the existence of $\varepsilon_0>0$ such that for all $\lambda \in B(0,\varepsilon_0)\setminus\{0\}$ we have $\mm_{\Omega}((\alpha-\lambda)^{-1})\neq 0$. For this, we find $c>0$ such that $|\alpha^{-1}(x)|\leq c$ for a.e.~$x\in \Omega$. Thus, for all $\lambda \in (-\tfrac{1}{2c},\tfrac{1}{2c})$, we obtain for a.e.~$x\in \Omega$:
\[
   |\alpha(x)-\lambda| = |\tfrac{1}{c}(c\alpha(x) -c\lambda)|\geq \tfrac{1}{c} (|c\alpha(x)|-\tfrac{1}{2})\geq \tfrac{1}{2c},
\]thus proving $(\alpha-\lambda)^{-1}\in L_\infty(\Omega)$. Similar arguments show that $B_\C(0,\varepsilon_0) \ni\lambda \mapsto (\alpha-\lambda)^{-1}\in L_\infty(\Omega)$ is holomorphic (in fact it follows almost literally the same lines as the proof of the holomorphicity of the resolvent map for linear operators). In consequence, $F\colon  B_\C(0,\varepsilon_0)  \ni \lambda \mapsto \mm_\Omega((\alpha+\lambda)^{-1})$ is, too, holomorphic. Since, for $\lambda=i\delta$, $\delta\in \R$, we obtain
\[
  \Im \int_\Omega (\alpha(x)-i\delta)^{-1} \dd x =    \int_\Omega -\delta(\alpha(x)^2 +\delta^2)^{-1} \dd x \neq 0.
\]Hence, $F\neq 0$. Thus, the identity theorem yields that the zeros of $F$ cannot accumulate around $0$, implying the assertion.
\end{proof}

One might hope that the corresponding result for higher-dimensional cases with laminated media is a somewhat straightforward generalisation of the one-dimensional case. This is far from correct as the projection onto the range of the gradient is not one-co-dimensional but infinite-co-dimensional instead. That is why we need a lot more work to treat this case.

\section{Sturm--Liouville problems with indefinite coefficients}\label{sec:sturm}

The analysis of higher-dimensional stratified media requires a more detailed picture of the structure of the operator $X_\alpha$ in Setting \ref{set:1Dstart}. In passing, we shall provide some insight into the spectral theory of Sturm--Liouville operators with indefinite coefficients. The focus here is on invertibility rather than computing the whole (classical) spectrum even though the one yields the other.

In the following, we address the solvability of the resolvent type problem associated with the operator
\begin{equation}\tag{SLP}\label{eq:SLP}
    D_\alpha + \beta = (u\mapsto -(\alpha u')'+\beta u)
\end{equation}
in $L_2(0,1)$ for piecewise constant coefficients $\alpha$ and $\beta$ with $\dom (D_\alpha) \subseteq H_0^1(0,1).$

However, using the result from the previous section a first statement is rather immediate from Fredholm theory for general coefficients. We will use $\mm_\Omega$ for $\Omega=(0,1)$ only and, hence, just write $\mm$ to denote the integral mean on $(0,1)$. 

\begin{proposition}\label{prop:elementaryFredholm} Let $\alpha,\alpha^{-1},\beta\in L_\infty(0,1)$. If $\mm(\alpha^{-1})\neq 0$, then the following conditions are equivalent:
\begin{enumerate}
 \item[(i)] $D_\alpha +\beta$ is continuously invertible in $L_2(0,1)$.
 \item[(ii)] $D_\alpha + \beta$ is one-to-one.
 \end{enumerate}
\end{proposition}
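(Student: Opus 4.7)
The direction $(i)\Rightarrow(ii)$ is immediate from the definition of invertibility, so the content lies in $(ii)\Rightarrow(i)$. My plan is to reduce the statement to a standard Fredholm-alternative argument by exploiting that $D_\alpha$ alone is already well-behaved under the hypothesis $\mm(\alpha^{-1})\neq 0$.

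First, I would invoke Theorem \ref{thm:1Dwp}: since $\alpha,\alpha^{-1}\in L_\infty(0,1)$ and $\mm(\alpha^{-1})\neq 0$, the operator $D_\alpha$ is continuously invertible on $L_2(0,1)$ and has compact resolvent; in particular $D_\alpha^{-1}\in L(L_2(0,1))$ is a compact operator. Next, writing $\beta(\m)$ for the bounded multiplication operator induced by $\beta\in L_\infty(0,1)$, I would factor
\[
   D_\alpha +\beta(\m) = D_\alpha\bigl(\id_{L_2(0,1)} + D_\alpha^{-1}\beta(\m)\bigr)
\]
on $\dom(D_\alpha)$. Since $D_\alpha^{-1}$ is compact and $\beta(\m)$ is bounded, $K\coloneqq D_\alpha^{-1}\beta(\m)$ is compact on $L_2(0,1)$.

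Now, $\id + K$ is a compact perturbation of the identity, hence Fredholm of index zero. By the classical Fredholm alternative, $\id + K$ is continuously invertible if and only if it is one-to-one. Combining this with the fact that $D_\alpha$ is a bijection from $\dom(D_\alpha)$ onto $L_2(0,1)$, I obtain that $D_\alpha+\beta(\m)$ is continuously invertible if and only if $\id + K$ is one-to-one, if and only if $D_\alpha+\beta(\m)$ is one-to-one. This is exactly the equivalence (i)$\Leftrightarrow$(ii).

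I do not anticipate a genuine obstacle here; the one cosmetic point worth being careful about is checking that $\ker(D_\alpha+\beta(\m)) = D_\alpha^{-1}\ker(\id+K)$, which is immediate from injectivity of $D_\alpha$, so that injectivity transfers between the two operators. The essential ingredient is the compactness of $D_\alpha^{-1}$, which is already supplied by Theorem \ref{thm:1Dwp} (via the compact embedding $H_0^1(0,1)\hookrightarrow\hookrightarrow L_2(0,1)$).
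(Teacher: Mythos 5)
Your argument is essentially the paper's: both proofs hinge on Theorem \ref{thm:1Dwp} to get compact resolvent of $D_\alpha$ and then treat $\beta(\m)$ as a relatively compact perturbation, invoking Fredholm theory with index zero; you merely repackage the unbounded-Fredholm step as the bounded factorisation $D_\alpha+\beta(\m)=D_\alpha(\id+K)$ with $K=D_\alpha^{-1}\beta(\m)$ compact, which is a slightly more elementary phrasing of the same idea. One small slip: the correct identity is $\ker(D_\alpha+\beta(\m))=\ker(\id+K)$ rather than $D_\alpha^{-1}\ker(\id+K)$ (if $(\id+K)u=0$ then $u=-D_\alpha^{-1}\beta(\m)u\in\dom(D_\alpha)$ already), but this does not affect the conclusion that injectivity transfers.
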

\begin{proof}
(i)$\Rightarrow$(ii): Obvious.

(ii)$\Rightarrow$(i): Theorem \ref{thm:1Dwp} implies that $D_\alpha$ is continuously invertible and has compact resolvent. In consequence, the multiplication with $\beta$ is a $D_\alpha$-compact perturbation of $D_\alpha$. Thus, $\ind (D_\alpha+\beta)=\ind (D_\alpha)=\{0\}$, see for instance \cite[Theorem 3.6]{GW16} (and references therein) for the notion of unbounded Fredholm operators and their index. Hence, $D_\alpha+\beta$ is onto if and only if it is one-to-one; yielding (i). 
\end{proof}

We shall characterise the injectivity for a more particular situation in the results to come. The setting is as follows.

\begin{setting}\label{set:spectral1D} Let $\alpha_0,\ldots, \alpha_r \in \C\setminus\{0\}$ and $\beta_0,\ldots,\beta_r\in \C$ and define for $h\coloneqq 1/(r+1)$
\[
   \alpha =\sum_{j=0}^r \alpha_j \1_{( jh,(j+1)h]}\text{ and }    \beta =\sum_{j=0}^r \beta_j \1_{( jh,(j+1)h]}.
\] Let  $D_\alpha$ be as in Setting \ref{set:1Dstart} for $\Omega=(0,1)$.
\end{setting}

In order to properly study the intricacies of this situation, we introduce the following notion, which is related to the monodromy matrix from Floquet theory. However, the considered boundary conditions are different. In order not to confused the following with the notion from Floquet theory, we coined a different notion.

Let $h>0, \tau\in \K$, $A\in \K^{2\times 2}$. We call $A$  \textbf{transition matrix with respect to $(h,\tau)$}, if for all $v\in \K^2$ and $u$ being the solution of
\[
    \begin{cases} 
       u''=\tau u,& \text{ on }(0,h)\\
       \begin{pmatrix}u(0) \\ u'(0) \end{pmatrix} =v&
    \end{cases}
\]we have
\[
   A v = \begin{pmatrix}u(h) \\ u'(h) \end{pmatrix}.
\]
By the solution theory for linear ODEs, transition matrices are uniquely determined from $(h,\tau)$. Next, we provide the necessary examples needed here. Most prominently we shall require of case (b) in the subsequent sections.
\begin{example} (a) Let $\tau=0$. Then $u(x)=1$ and $v(x)=x$ are linearly independent solutions of $u''=0$. Since $u'=0$ and $v(0)=0$ we get
\[
    A = \begin{pmatrix} 1 & h \\ 0 & 1\end{pmatrix}
\]is the transition matrix with respect to $(h,0)$.

(b) Let $\tau=\mu^2>0$, $\mu>0$. Then $u(x)=\sinh(\mu x)$ and $v(x)=\cosh(\mu x)$ are linearly independent solutions of $u''=\tau u$. Then
\[
  A = \begin{pmatrix} \cosh(\mu h) & \tfrac{1}{\mu}\sinh(\mu h) \\ \mu\sinh(\mu h) & \cosh(\mu h)\end{pmatrix}
\]is the transition matrix w.r.t.~$(h,\mu^2)$,

(c) Let $\tau=-\mu^2<0$, $\mu>0$. Then $u(x)=\sin(\mu x)$ and $v(x)=\cos(\mu x)$ are linearly independent solutions of $u''=\tau u$. Then
\[
  A = \begin{pmatrix} \cos(\mu h) & \frac{1}{\mu}\sin(\mu h) \\ -\mu\sin(\mu h) & \cos(\mu h)\end{pmatrix}
\]is the transition matrix w.r.t.~$(h,-\mu^2)$. 
\end{example}

 The main result of this section and at the same time key for understanding the inner spectra considered in the forthcoming sections is as follows.
\begin{theorem}\label{thm:spectral1D} Assume Setting \ref{set:spectral1D}. If $\mm(\alpha^{-1})\neq 0$, then the following conditions are equivalent:
\begin{enumerate}
 \item[(i)] $D_\alpha +\beta$ is continuously invertible;
 \item[(ii)] $D_\alpha + \beta$ is one-to-one;
 \item[(iii)] $\begin{pmatrix}a_{00}^{(r)} &  \alpha_{r}^{-1} a_{01}^{(r)} \end{pmatrix} \tilde{A}_{\alpha_{r-1}}\cdots  \tilde{A}_{\alpha_{1}}\begin{pmatrix}a_{01}^{(0)} \\  \alpha_0 a_{11}^{(0)} \end{pmatrix}\neq 0$,
\end{enumerate}
where $\tilde{A}_{\alpha_{j}} =\begin{pmatrix} 1 & 0 \\ 0 & \alpha_{j} \end{pmatrix} A^{(j)} \begin{pmatrix} 1 & 0 \\ 0 & \alpha_{j}^{-1} \end{pmatrix}$ and  $A^{(j)} = (a_{kl}^{(j)})_{k,l\in \{0,1\}}$ is the transition matrix with respect to $(\frac{1}{1+r},\tfrac{\beta_j}{\alpha_j})$, $j\in \{1,\ldots,r\}$.
\end{theorem}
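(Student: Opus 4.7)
The equivalence (i)$\Leftrightarrow$(ii) is handed to us by Proposition \ref{prop:elementaryFredholm} thanks to the standing assumption $\mm(\alpha^{-1})\neq 0$. So the real content is the characterisation of injectivity of $D_\alpha+\beta$ in terms of (iii).

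To analyse $\ker(D_\alpha+\beta)$, take any $u\in H_0^1(0,1)$ with $-(\alpha u')'+\beta u=0$. From the equation $(\alpha u')'=\beta u\in L_2$, so $\alpha u'\in H^1(0,1)$ and is therefore continuous across each interior node $jh$. On each slab $I_j\coloneqq (jh,(j+1)h)$ the coefficients are constant, whence $u\in H^2(I_j)$ satisfies the constant-coefficient ODE $u''=(\beta_j/\alpha_j)u$. Together with the transmission conditions (continuity of $u$ and of $\alpha u'$) and the boundary values $u(0)=u(1)=0$, elements of the kernel are parametrised by the single free scalar $s\coloneqq u'(0+)\in\K$.

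The natural ``phase variable'' for such Sturm--Liouville transmission problems is $(u,\alpha u')^\top$, precisely because its two entries are the quantities that are continuous across nodes. On $I_j$ the transition in $(u,u')$ is by definition $A^{(j)}$, so a conjugation by $\diag(1,\alpha_j^{-1})$ gives the transition in $(u,\alpha u')$ as $\tilde{A}_{\alpha_j}$. Iterating across all slabs and using continuity of $(u,\alpha u')$ at every $jh$ yields
\[
  \begin{pmatrix} u(1)\\ \alpha u'(1)\end{pmatrix} = \tilde{A}_{\alpha_r}\tilde{A}_{\alpha_{r-1}}\cdots \tilde{A}_{\alpha_0}\begin{pmatrix} 0 \\ \alpha_0 s\end{pmatrix},
\]
so the boundary condition $u(1)=0$ becomes a linear equation in $s$ with coefficient $\begin{pmatrix}1 & 0\end{pmatrix}\tilde{A}_{\alpha_r}\cdots\tilde{A}_{\alpha_0}\begin{pmatrix}0\\\alpha_0\end{pmatrix}$. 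A non-trivial kernel exists if and only if this scalar vanishes, so (ii) is equivalent to its non-vanishing.

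What remains is a purely algebraic bookkeeping step: expanding $\tilde{A}_{\alpha_r}=\diag(1,\alpha_r)A^{(r)}\diag(1,\alpha_r^{-1})$ collapses the row $\begin{pmatrix}1&0\end{pmatrix}\tilde{A}_{\alpha_r}$ to $\begin{pmatrix}a_{00}^{(r)} & \alpha_r^{-1}a_{01}^{(r)}\end{pmatrix}$, while an analogous expansion of $\tilde{A}_{\alpha_0}$ collapses $\tilde{A}_{\alpha_0}(0,\alpha_0)^\top$ to $(a_{01}^{(0)},\alpha_0 a_{11}^{(0)})^\top$. The intermediate factors $\tilde{A}_{\alpha_1},\ldots,\tilde{A}_{\alpha_{r-1}}$ appear unchanged, and the resulting scalar is exactly the one displayed in (iii). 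The main (but mild) obstacle is to get this end-factor collapse right, so that the product lines up on the nose with the statement; everything else is the Sturm--Liouville transfer-matrix formalism in its most standard form.
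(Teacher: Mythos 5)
Your proof is correct and takes a genuinely more streamlined route than the paper. The paper assembles a $4(r+1)\times 4(r+1)$ homogeneous linear system $\mathcal{A}\mathbf{x}=0$ in the slab-endpoint values $(u_k(jh),u_k'(jh))$, encoding both the transfer relations and all boundary/transmission conditions, and then shows by a chain of determinant expansions, a Schur-complement-type formula, and a block-bidiagonal inversion lemma (Proposition~\ref{prop:triangMat}) that $\det\mathcal{A}$ equals a nonzero multiple of the scalar in (iii). You instead exploit at the outset that $(u,\alpha u')$ is the continuous ``phase variable'' and that, with the Dirichlet constraint $u(0)=0$ fixed, kernel elements form a one-parameter family in $s=u'(0+)$; propagating $(0,\alpha_0 s)^{\top}$ forward by $\tilde A_{\alpha_0},\ldots,\tilde A_{\alpha_r}$ and imposing $u(1)=0$ yields a single scalar equation, whose coefficient visibly collapses (via $\begin{pmatrix}1&0\end{pmatrix}\tilde A_{\alpha_r}=\begin{pmatrix}a_{00}^{(r)}&\alpha_r^{-1}a_{01}^{(r)}\end{pmatrix}$ and $\tilde A_{\alpha_0}(0,\alpha_0)^{\top}=(a_{01}^{(0)},\alpha_0 a_{11}^{(0)})^{\top}$) to exactly the expression in (iii). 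This is the classical shooting/transfer-matrix argument; it buys a much shorter and more transparent derivation, completely bypassing the large determinant computation and Proposition~\ref{prop:triangMat}, while resting on the same underlying ODE ingredients (the $A^{(j)}$ as transition matrices and the interior continuity of $u$ and $\alpha u'$). The one hygiene point worth keeping explicit — which you handle correctly but tersely — is that nontriviality of the kernel forces $s\neq0$ (else $u\equiv0$ on the first slab, hence everywhere by the transmission conditions), so the kernel is nontrivial iff the coefficient of $s$ vanishes.
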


The proof of Theorem \ref{thm:spectral1D} is rather technical, though elementary, and postponed to Section \ref{sec:proofs}. In the Setting \ref{set:spectral1D}, we call 
\[
   p_{\alpha,\beta} \coloneqq \begin{pmatrix}a_{00}^{(r)} &  \alpha_{r}^{-1} a_{01}^{(r)} \end{pmatrix} \tilde{A}_{\alpha_{r-1}}\cdots  \tilde{A}_{\alpha_{1}}\begin{pmatrix}a_{01}^{(0)} \\  \alpha_0 a_{11}^{(0)} \end{pmatrix}
\] used in Theorem \ref{thm:spectral1D} the \textbf{characteristic function of $(\alpha,\beta)$}.

We conclude this section with an example, which in a way reproduces the results developed earlier.

\begin{example}\label{ex:1D} We consider the case $\beta=0$ and recall $h=1/(r+1)$. Then for $j\in \{0,\ldots,r\}$,
\[
    A^{(j)} = \begin{pmatrix} 1 & h \\ 0 & 1\end{pmatrix}
\]and, thus, for $j\in \{1,\ldots,r-1\}$
\[
  \tilde{A}_{\alpha_j} = \begin{pmatrix} 1 & 0 \\ 0 & \alpha_j \end{pmatrix} \begin{pmatrix} 1 & h \\ 0 & 1\end{pmatrix}
  \begin{pmatrix} 1 & 0 \\ 0 & \alpha_j^{-1} \end{pmatrix} = \begin{pmatrix} 1 & h \\ 0 & \alpha_j\end{pmatrix}\begin{pmatrix} 1 & 0 \\ 0 & \alpha_j^{-1} \end{pmatrix}=\begin{pmatrix} 1 & h \alpha_j^{-1} \\ 0 & 1\end{pmatrix}.
\]
Hence, 
\begin{align*}
p_{\alpha,0} & = \begin{pmatrix}a_{00}^{(r)} &  \alpha_{r}^{-1} a_{01}^{(r)} \end{pmatrix} \tilde{A}_{\alpha_{r-1}}\cdots  \tilde{A}_{\alpha_{1}}\begin{pmatrix}a_{01}^{(0)} \\  \alpha_0 a_{11}^{(0)} \end{pmatrix} \\
& = \begin{pmatrix}1 &  \alpha_{r}^{-1} h \end{pmatrix} \begin{pmatrix} 1 & h \sum_{j=1}^{r-1} \alpha_j^{-1} \\ 0 & 1\end{pmatrix}\begin{pmatrix}h\alpha_0^{-1} \\  1 \end{pmatrix} \alpha_0  = h\alpha_0 \sum_{j=0}^r \alpha_j^{-1} = \alpha_0 \mm(\alpha^{-1}).
\end{align*}
\end{example}

\section{The higher-dimensional problem -- preliminaries}\label{sec:DDWP}

In this section, we address the application of the results in the previous section to coefficients modelling stratified media. More precisely, we consider the $d$-dimensional open set $\Omega\coloneqq (0,1)\times \hat{\Omega}$ for some $\hat{\Omega}\subseteq \R^{d-1}$ open and bounded and address solvability of 
\[
    -\Delta_{\alpha} u \coloneqq -\overline{\dive \alpha(\m_1) \grad}\, u = f \in L_2(\Omega)
\] and $u \in H_0^1(\Omega)$, where $\alpha \in L_\infty(0,1)$ and $\alpha(\m_1)$ denotes the multiplication operator on $L_2(\Omega)^d$ induced by $(x_1,\ldots,x_d)\mapsto \alpha(x_1)$. The closure is taken as an operator acting in $L_2(\Omega)$.  The tensor product structure of $\Omega$  and the conductivity allows to reduce the problem to a Sturm--Liouville type situation with unbounded potential. Before we go into the details of this observation, we recall $g_0(\Omega)$ and the corresponding $\iota_0\colon g_0(\Omega)\hookrightarrow L_2(\Omega)^{d}$ from \eqref{eq:rangr} and apply our results on the inner spectrum from Section \ref{sec:WPT} to the present situation.
\begin{theorem}\label{thm:innerspectcont}
Let $\alpha \in L_\infty(0,1)$, $\lambda\in \C$. Then 
\begin{enumerate}
\item[(a)] $0\in \sigma(\Delta_{\alpha-\lambda})\Rightarrow \lambda \in \sigma_{g_0(\Omega)}( \alpha (\m_1) )$;
\item[(b)] $0\in \rho(\Delta_{\alpha-\lambda}) \Rightarrow \lambda \in \rho_{g_0(\Omega)}(\alpha(\m_1))\cup \sigma_{g_0(\Omega),\textnormal{c}}(\alpha(\m_1))$.
\end{enumerate}
\end{theorem}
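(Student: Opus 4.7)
The plan is to derive Theorem \ref{thm:innerspectcont} as a direct application of Theorem \ref{thm:dinner} once the abstract setup of Section \ref{sec:WPT} is identified with the present operator. Take $H_0=L_2(\Omega)$, $H_1=L_2(\Omega)^d$, $C\coloneqq \grad$ with $\dom(C)=H_0^1(\Omega)$, and $a\coloneqq \alpha(\m_1)\in L(L_2(\Omega)^d)$. By Poincar\'e's inequality, $C$ is closed, densely defined, one-to-one and has closed range $V_1=g_0(\Omega)$, so $\iota_{V_1}=\iota_0$ from \eqref{eq:rangr}. Integration by parts gives $C^*=-\dive$ on $H(\dive,\Omega)$, and hence $D_{\rdd,\alpha(\m_1)-\lambda}$ coincides with the natural-domain operator
\[
T_\lambda\colon u\mapsto -\dive(\alpha-\lambda)(\m_1)\grad u,\quad \dom(T_\lambda)=\{u\in H_0^1(\Omega);(\alpha-\lambda)(\m_1)\grad u\in H(\dive,\Omega)\},
\]
whose $L_2(\Omega)$-closure is, by definition, $-\Delta_{\alpha-\lambda}$.

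Statement (a) I would obtain by contraposition: if $\lambda\in\rho_{g_0(\Omega)}(\alpha(\m_1))$ then $\iota_0^*(\alpha(\m_1)-\lambda)\iota_0$ is continuously invertible, and Theorem \ref{thm:TW-Mana} ensures that $T_\lambda$ itself is continuously invertible \emph{and closed}. A closed operator coincides with its own closure, so $T_\lambda=-\Delta_{\alpha-\lambda}$ and $0\in\rho(\Delta_{\alpha-\lambda})$.

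For (b) I would again contrapose and split $\sigma_{g_0(\Omega)}(\alpha(\m_1))\setminus\sigma_{g_0(\Omega),\textnormal{c}}(\alpha(\m_1))$ into its point and residual parts. If $\lambda\in\sigma_{g_0(\Omega),\textnormal{p}}(\alpha(\m_1))$, Lemma \ref{lem:pointinner} produces a non-zero $u\in\dom(T_\lambda)\subseteq\dom(\Delta_{\alpha-\lambda})$ with $\Delta_{\alpha-\lambda}u=T_\lambda u=0$, whence $0\in\sigma(\Delta_{\alpha-\lambda})$. If instead $\lambda\in\sigma_{g_0(\Omega),\textnormal{r}}(\alpha(\m_1))$, then taking the Hilbert-space adjoint of $\iota_0^*(\alpha(\m_1)-\lambda)\iota_0$ yields $\iota_0^*(\overline{\alpha}(\m_1)-\overline{\lambda})\iota_0$ with non-trivial kernel, so $\overline{\lambda}\in\sigma_{g_0(\Omega),\textnormal{p}}(\overline{\alpha}(\m_1))$, and applying Lemma \ref{lem:pointinner} to the conjugated coefficient provides a non-zero $v\in\dom(\tilde T)$ with $\tilde Tv=0$, where $\tilde T\coloneqq -\dive(\overline{\alpha}-\overline{\lambda})(\m_1)\grad$ is the analogue of $T_{\overline{\lambda}}$ with $\alpha$ replaced by $\overline{\alpha}$.

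The main obstacle is transferring this $v$ to an element of $\ker(\Delta_{\alpha-\lambda})^*$. I would establish the inclusion $\overline{\tilde T}\subseteq(-\Delta_{\alpha-\lambda})^*$ as follows: a formal integration by parts on the respective natural domains, legitimate since $\alpha\in L_\infty$, gives $\tilde T\subseteq T_\lambda^*=(\overline{T_\lambda})^*=(-\Delta_{\alpha-\lambda})^*$, and since the right-hand side is closed the closure of the left-hand side is still contained in it. Hence $v$ is a non-zero kernel vector of $(\Delta_{\alpha-\lambda})^*$, so $\ran(\Delta_{\alpha-\lambda})$ is not dense in $L_2(\Omega)$ and $0\in\sigma(\Delta_{\alpha-\lambda})$. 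This adjoint manipulation is the only step beyond the general machinery already developed in Section \ref{sec:WPT}; the rest is bookkeeping.
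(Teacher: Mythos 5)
Your proof is correct and follows essentially the same route as the paper's terse proof: identify the abstract framework of Section~\ref{sec:WPT} with $C=\grad$, $V_1=g_0(\Omega)$ and $a=\alpha(\m_1)$, get (a) by contraposition from Theorem~\ref{thm:TW-Mana}, and split (b) into the point and residual parts. For the residual part the paper only hints that ``residual spectrum is stable under closure processes''; your explicit adjoint argument --- passing to $\iota_0^*(\overline{\alpha}(\m_1)-\overline{\lambda})\iota_0$, extracting a kernel vector via Lemma~\ref{lem:pointinner} applied to the conjugate coefficient, and transferring it through the integration-by-parts inclusion $\tilde T\subseteq T_\lambda^*=(\overline{T_\lambda})^*$ --- is precisely the mechanism behind that hint (since $\ker(T_\lambda^*)=\ker((\overline{T_\lambda})^*)=\ran(\overline{T_\lambda})^\perp$). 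It has the additional merit of side-stepping the intermediate claim $\lambda\in\sigma_{V_1,\textnormal{r}}(a)\Rightarrow 0\in\sigma_{\textnormal{r}}(D_{\rdd,\alpha-\lambda})$, which the paper leaves to the reader. One small clean-up: under closure the sign convention of the paper gives $\overline{T_\lambda}=-\Delta_{\alpha-\lambda}$ rather than $\Delta_{\alpha-\lambda}$; this of course does not affect any of the kernel/range statements you draw from it.
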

\begin{proof}The statements (a) and (b) are immediate applications of Theorem \ref{thm:dinner}. (For the second implication prove the contraposition and use that both point spectrum as well as residual spectrum are stable under closure processes.) \end{proof}

\begin{remark}\label{rem:conj}
The statement in Theorem \ref{thm:innerspectcont} is not optimal. However, I still conjecture that at least for piecewise constant $\alpha$, the statement in (a) is, in fact, an equivalence. Note that for the case $d=1$, $\alpha(\m_1)$ has no inner continuous spectrum, so that equivalence holds.
\end{remark}

Concerning the inner spectrum, we provide an elementary observation next. If $\alpha=0$ on some nontrivial subinterval of $(0,1)$, then $-\Delta_{\alpha}$ is not one-to-one: Indeed, choose $\hat{\phi}\in C_c^\infty(\hat{\Omega})\setminus\{0\}$ and a non-zero $\check{\phi}\in C_c^\infty(0,1)$ with $\check{\phi}\alpha=0$. Then $\phi(x_1,\ldots,x_n)=\check{\phi}(x_1)\hat{\phi}(x_2,\ldots,x_d)$ satisfies
 \[
    -\Delta_\alpha \phi = 0. 
 \]
 Hence, as a consequence, we deduce the following result:
 \begin{corollary}\label{cor:eleminner} In the present situation assume $\alpha$ takes the values $\{\alpha_0,\ldots,\alpha_r\}\subseteq \C$ on nondegenerate subintervals of $(0,1)$. 
 
 Then 
 \[
   \{\alpha_0,\ldots,\alpha_r\}\subseteq \sigma_{g_0(\Omega)}(\alpha(\m_1)).
 \] 
 \end{corollary}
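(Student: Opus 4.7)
The plan is to reduce the statement to the observation made in the paragraph immediately preceding the corollary, applied not to $\alpha$ itself but to the shifted coefficient $\alpha - \alpha_j$, and then to invoke Theorem \ref{thm:innerspectcont}(a).

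Fix $j \in \{0, \ldots, r\}$. By hypothesis there is a nondegenerate subinterval $I_j \subseteq (0,1)$ on which $\alpha \equiv \alpha_j$; hence the coefficient $\alpha - \alpha_j$ vanishes identically on $I_j$. Now I would apply the construction from the paragraph just before the corollary: choose $\check{\phi} \in C_c^\infty(I_j) \setminus \{0\}$ and $\hat{\phi} \in C_c^\infty(\hat{\Omega}) \setminus \{0\}$, and set $\phi(x_1,\ldots,x_d) \coloneqq \check{\phi}(x_1)\hat{\phi}(x_2,\ldots,x_d)$. Then $\phi \in C_c^\infty(\Omega) \setminus \{0\} \subseteq H_0^1(\Omega)$, and since $\spt \phi \subseteq I_j \times \hat{\Omega}$ while $(\alpha - \alpha_j) \equiv 0$ on $I_j$, we have $(\alpha - \alpha_j)(\m_1) \grad \phi = 0$ identically on $\Omega$. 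In particular $(\alpha - \alpha_j)(\m_1) \grad \phi \in H(\dive, \Omega)$, so $\phi$ lies in the domain of the (non-closed) operator whose closure defines $-\Delta_{\alpha - \alpha_j}$, and $-\dive (\alpha - \alpha_j)(\m_1) \grad \phi = 0$. Therefore $\phi \in \ker(-\Delta_{\alpha - \alpha_j})$, which gives $0 \in \sigma_{\textnormal{p}}(-\Delta_{\alpha - \alpha_j}) \subseteq \sigma(-\Delta_{\alpha - \alpha_j}) = \sigma(\Delta_{\alpha - \alpha_j})$.

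At this point Theorem \ref{thm:innerspectcont}(a) applied with $\lambda = \alpha_j$ yields $\alpha_j \in \sigma_{g_0(\Omega)}(\alpha(\m_1))$. Since $j \in \{0,\ldots,r\}$ was arbitrary, the claimed inclusion $\{\alpha_0, \ldots, \alpha_r\} \subseteq \sigma_{g_0(\Omega)}(\alpha(\m_1))$ follows.

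There is no serious obstacle: the only mild point to watch is that the test function $\phi$ actually lies in the domain of the closed operator $-\Delta_{\alpha - \alpha_j}$, but this is transparent here because $(\alpha - \alpha_j) \grad \phi$ vanishes identically, so the distributional divergence is trivially in $L_2(\Omega)$. Everything else is a direct invocation of the preceding observation and of Theorem \ref{thm:innerspectcont}(a).
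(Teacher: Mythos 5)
Your proof is correct and follows essentially the same route the paper intends: you apply the kernel construction from the paragraph immediately preceding the corollary to the shifted coefficient $\alpha-\alpha_j$, observe that this places $0$ in $\sigma(\Delta_{\alpha-\alpha_j})$, and then invoke Theorem \ref{thm:innerspectcont}(a) with $\lambda=\alpha_j$. The care you take to note that $\phi$ genuinely lies in the domain of the closure because $(\alpha-\alpha_j)\grad\phi$ vanishes identically is exactly the right (and only) point that needs checking.
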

 We conclude this section by providing the relation of Sturm--Liouville problems to the Laplace equation with stratified media. For this, we call $\Lambda\subseteq \C$ \textbf{discrete}, if 
  \[
       \inf \{ |\lambda-\mu|;  \lambda,\mu \in \Lambda, \lambda\neq \mu\}>0.
  \]
  We call a sequence $(\lambda_k)_k$ in $\C$ \textbf{discrete}, if $\{\lambda_k; k\in \N\}\subseteq \C$ is discrete.

\begin{remark} Note that for a discrete sequence $(\lambda_k)_k$, the set of finite accumulation values is 
\[
\{ \lambda; \lambda_k=\lambda\text{ for infinitely many }k\in \N\}. 
\]As a consequence, for any discrete sequence $(\lambda_k)_k$ in $\R_{>0}$, we obtain 
\[
\inf_{k\in \N} |\lambda_k| = \min_{k\in \N} |\lambda_k| >0.
\]
\end{remark}

The announced result on the unitary equivalence of $\Delta_\alpha$ is stated next. Note that, as a consequence, since we provide results relative to \emph{any} discrete sequence in the following, we cover all geometries of $\hat{\Omega}$ as long as it is open and bounded.
\begin{theorem}\label{thm:ndtensor} Assume $\alpha,\alpha^{-1}\in L_\infty(0,1)$ with $\mm(\alpha^{-1})\neq 0$. Recall $D_{\rdd,\alpha}=D_{\alpha}$ from Setting \ref{set:1Dstart}. Then $-\Delta_{\alpha}$ is self-adjoint and unitarily equivalent to the diagonal operator
\[
(D_{\rdd,\alpha} + \alpha \lambda_k)_{k\in \N}
\]
on $L_2(0,1;\ell_2(\N))$ for a discrete sequence $(\lambda_k)_k$ in $\R_{>0}$.
\end{theorem}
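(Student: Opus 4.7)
The plan is to use separation of variables: both $\Omega = (0,1) \times \hat{\Omega}$ and the coefficient $\alpha(\m_1)$ have product structure, so spectrally decomposing in the transverse variable reduces $-\Delta_\alpha$ to a family of one-dimensional operators of the kind studied in Section~\ref{sec:1DWP}.

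Since $\hat{\Omega} \subseteq \R^{d-1}$ is open and bounded, the Dirichlet Laplacian $-\hat{\Delta}$ on $L_2(\hat{\Omega})$ is self-adjoint, strictly positive, and has compact resolvent. Listed with multiplicity, its eigenvalues form a sequence $(\lambda_k)_{k \in \N}$ in $\R_{>0}$ with associated $L_2(\hat{\Omega})$-orthonormal eigenbasis $(\phi_k)_{k \in \N}$; after this enumeration the underlying set of distinct values is a discrete subset of $\R_{>0}$. Fubini and Parseval make the map
\[
U\colon L_2(\Omega) \to L_2(0,1;\ell_2(\N)), \quad (Uu)_k(x_1) = \langle \phi_k, u(x_1,\cdot)\rangle_{L_2(\hat{\Omega})},
\]
unitary.

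Next, I would compute on the algebraic tensor core $\mathcal{D} \coloneqq \dom(D_\alpha) \odot \lin\{\phi_k : k \in \N\}$, which sits inside the initial domain $\{w \in H_0^1(\Omega) : \alpha(\m_1)\grad w \in H(\dive,\Omega)\}$. Since $\alpha$ depends only on $x_1$ and $-\hat{\Delta}\phi_k = \lambda_k \phi_k$, a direct distributional computation yields
\[
-\dive \alpha(\m_1)\grad (v \otimes \phi_k) = \bigl((D_\alpha + \alpha \lambda_k) v\bigr) \otimes \phi_k,
\]
so that, under $U$, the action of $-\dive\alpha(\m_1)\grad$ on $\mathcal{D}$ matches the candidate block-diagonal operator $T \coloneqq \bigoplus_k (D_\alpha + \alpha\lambda_k)$ on its corresponding tensor subspace. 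By Theorem~\ref{thm:1Dwp} (applied to $\alpha$ real-valued, as is necessary for a self-adjoint $-\Delta_\alpha$), $D_\alpha$ is self-adjoint on $L_2(0,1)$; because $\alpha\lambda_k$ is bounded self-adjoint multiplication, each summand $T_k \coloneqq D_\alpha + \alpha\lambda_k$ is self-adjoint, and hence so is $T$ on its natural maximal domain $\{(v_k)_k : v_k \in \dom(D_\alpha),\ \sum_k\|T_k v_k\|^2 < \infty\}$.

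The main step — and the principal technical obstacle — is to identify $U^*TU$ with $-\Delta_\alpha$, rather than just with a restriction of it. The inclusion $-\Delta_\alpha \subseteq U^*TU$ is straightforward: on the initial domain the two operators agree by the tensor computation, and $U^*TU$ is closed, hence contains the closure $-\Delta_\alpha$. For the reverse inclusion I would prove that $\mathcal{D}$ is a core for $T$: for any $v = (v_k)_k \in \dom(T)$, the finite truncations $v^{(N)} = (v_1, \ldots, v_N, 0, \ldots)$ lie in $U\mathcal{D}$ and converge to $v$ in the graph norm of $T$ by dominated convergence applied to $\sum_k\|v_k\|^2$ and $\sum_k\|T_k v_k\|^2$. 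Pulling back via $U$, every element of $\dom(U^*TU)$ is a graph-norm limit of elements of the initial domain of $-\dive\alpha(\m_1)\grad$, forcing $U^*TU \subseteq -\Delta_\alpha$. Combining the two inclusions yields the claimed unitary equivalence, and self-adjointness of $-\Delta_\alpha$ is an immediate consequence.
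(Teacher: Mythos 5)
Your route is conceptually the same as the paper's — separate variables through the transverse Dirichlet eigenbasis and reduce to a diagonal family of Sturm--Liouville operators — but the implementation differs. The paper never builds the unitary $U$ explicitly; it establishes essential self-adjointness of $D_{\rdd,\alpha}-\alpha\Delta_{d-1}$ on its natural domain by commuting the regularising resolvent $(1-\varepsilon\Delta_{d-1})^{-1}$ through the operator and its adjoint, then closes the argument with $-\Delta_\alpha^*\subseteq(D_{\rdd,\alpha}-\alpha\Delta_{d-1})^*$. You instead construct $U$ concretely and prove that the algebraic tensor set $\mathcal{D}$ is a core for the diagonal operator $T$ by truncation; pulling back along $U$ gives $U^*TU\subseteq -\Delta_\alpha$. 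That direction of your argument is sound. The soft spot is the ``forward'' inclusion $-\Delta_\alpha\subseteq U^*TU$, which you call straightforward because ``the two operators agree on the initial domain by the tensor computation''; but your tensor computation was carried out only on $\mathcal{D}$, not on the full initial domain $\{w\in H_0^1(\Omega);\alpha(\m_1)\grad w\in H(\dive,\Omega)\}$. Showing that the transverse coefficients $(Uw)_k$ of a general such $w$ lie in $\dom(D_\alpha)$ with $\sum_k\|(D_\alpha+\alpha\lambda_k)(Uw)_k\|^2<\infty$ requires a genuine distributional Fubini argument (pairing $\dive(\alpha(\m_1)\grad w)$ against $\phi_k$ and moving $\hat\Delta$ onto $\phi_k$). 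Fortunately this inclusion is superfluous: you have already shown $U^*TU$ self-adjoint, $-\Delta_\alpha$ is symmetric (immediate since $\alpha$ is real-valued), and $U^*TU\subseteq-\Delta_\alpha$; since self-adjoint operators admit no proper symmetric extensions, equality follows. That maximality step is, in different clothing, exactly how the paper closes its own proof.
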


We provide the proof in Section \ref{sec:proofs}. 

\section{The higher dimensional problem -- well-posedness}\label{sec:hdp-wp}

We turn back to piecewise constant coefficients. For this we recall that for $\alpha_0,\ldots, \alpha_r \in \R\setminus\{0\}$ and $h\coloneqq 1/(r+1)$ we defined
\[
   \alpha =\sum_{j=0}^r \alpha_j \1_{( jh,(j+1)h]}.
\] Let  $D_\alpha$ be as in Setting \ref{set:1Dstart} for $\Omega=(0,1)$. We address criteria that ensure that for a given  discrete sequence $(\lambda_k)_k $ in $\R_{>0}$, the diagonal operator 
\[
    S_{\alpha} \coloneqq ((D_\alpha + \alpha\lambda_k)^{-1})_{k\in \N}
\] acting in $L_2(0,1;\ell_2(\N))$ is both well-defined and bounded. For this particular situation, we rephrase Theorem \ref{thm:spectral1D}. Before doing so, we introduce a function with which the respective formulation is less cumbersome. Define
   \begin{multline*}
     \tilde{q}(m_0,\ldots, m_r)\coloneqq \\
     \begin{pmatrix}1 &  \alpha_{r}^{-1} m_r^{-1} t_r \end{pmatrix} \begin{pmatrix} 1 & \alpha_{r-1}^{-1}m_{r-1}^{-1} t_{r-1}\\ \alpha_{r-1}m_{r-1} t_{r-1} & 1\end{pmatrix}\cdots  \begin{pmatrix} 1 &\alpha_{1}^{-1}m_{1}^{-1} t_{1} \\  \alpha_{1}m_{1} t_{1} & 1\end{pmatrix}\begin{pmatrix}m_0^{-1} t_0 \\  \alpha_0  \end{pmatrix}
   \end{multline*}
   with $t_j = \tanh (m_j h)$, $j\in \{0,\ldots,r\}$, and $m_0,\ldots,m_r>0$. We present a reformulation of Theorem \ref{thm:spectral1D}. Recall $\mm(\beta) = \int_0^1\beta$ for $\beta\in L_1(0,1)$.

\begin{theorem}\label{thm:sp1Dff} Let $\alpha_0,\ldots, \alpha_r \in \R\setminus\{0\}$ with $\mm (\alpha^{-1})\neq 0$. Let $(\lambda_k)_k$ in $\R_{>0}$ be discrete and choose $C>1/(\min_{k\in \N}\lambda_k \min_j |\alpha_j|)$.

Then the following conditions are equivalent:
\begin{enumerate}
 \item[(i)] for all $k\in \N$, the operator $D_\alpha+\alpha\lambda_k$ is continuously invertible with $\|(D_\alpha+\alpha\lambda_k)^{-1}\|\leq C$;
 \item[(ii)] $(-1/C,1/C)\subseteq \rho(D_\alpha+\alpha\lambda_k)$
 \item[(iii)] for all $k\in \N$ and $\delta \in [-1,1]$, $ \tilde{q}(m_0,\ldots, m_r)\neq 0$ where $t_j=\tanh(m_j h)$ and $m_j =\sqrt{ \lambda_k+\tfrac{\delta}{\alpha_j C}}$ 
\end{enumerate}
\end{theorem}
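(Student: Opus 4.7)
The plan is to establish the chain \textnormal{(i)} $\Leftrightarrow$ \textnormal{(ii)} $\Leftrightarrow$ \textnormal{(iii)} by reducing to already-developed machinery. The first equivalence will rest on self-adjointness of $D_\alpha + \alpha\lambda_k$ together with the spectral theorem; the second on Theorem \ref{thm:spectral1D} applied to the shifted operator $D_\alpha + \alpha\lambda_k - \mu$ combined with a direct matrix factorisation identifying $p_{\alpha,\beta}$ with $\tilde q$ up to a strictly positive scalar.

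For \textnormal{(i)} $\Leftrightarrow$ \textnormal{(ii)}, since $\alpha_0,\ldots,\alpha_r \in \R\setminus\{0\}$, Theorem \ref{thm:1Dwp} yields $D_\alpha^* = D_{\alpha^*} = D_\alpha$, i.e.\ $D_\alpha$ is self-adjoint; adding the bounded self-adjoint multiplication operator $\alpha\lambda_k$ preserves self-adjointness for each fixed $k$. For a self-adjoint operator $T$, the bound $\|T^{-1}\| \leq C$ is equivalent to $\sigma(T) \cap (-1/C, 1/C) = \emptyset$, i.e.\ $(-1/C, 1/C) \subseteq \rho(T)$, which is precisely \textnormal{(ii)}.

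For \textnormal{(ii)} $\Leftrightarrow$ \textnormal{(iii)}, fix $k \in \N$ and $\mu \in \R$; the shifted operator $D_\alpha + \alpha\lambda_k - \mu$ is of the form $D_\alpha + \beta$ of Setting \ref{set:spectral1D} with $\beta_j = \alpha_j \lambda_k - \mu$. The hypothesis $\mm(\alpha^{-1}) \neq 0$ is in force, so Theorem \ref{thm:spectral1D} applies and $\mu \in \rho(D_\alpha + \alpha\lambda_k)$ iff the characteristic function $p_{\alpha,\beta}$ of $(\alpha,\beta)$ is nonzero. Substituting $\mu = -\delta/C$ turns the ratios into $\beta_j/\alpha_j = \lambda_k + \delta/(\alpha_j C)$, which is strictly positive for every $\delta \in [-1,1]$ thanks to $C > 1/(\min_{k}\lambda_k \min_j |\alpha_j|)$. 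Hence I may use the transition matrix from case (b) with $m_j = \sqrt{\lambda_k + \delta/(\alpha_j C)} > 0$, so $A^{(j)}$ has $\cosh(m_j h)$ on the diagonal and $m_j^{-1}\sinh(m_j h)$, $m_j \sinh(m_j h)$ off-diagonal. A short computation using $\tilde A_{\alpha_j} = \diag(1,\alpha_j) A^{(j)} \diag(1,\alpha_j^{-1})$ then gives
\[
  \tilde A_{\alpha_j} = \cosh(m_j h) \begin{pmatrix} 1 & \alpha_j^{-1} m_j^{-1} t_j \\ \alpha_j m_j t_j & 1 \end{pmatrix}, \quad t_j = \tanh(m_j h),
\]
and the boundary row/column vectors in $p_{\alpha,\beta}$ factor similarly as $\cosh(m_r h)\begin{pmatrix} 1 & \alpha_r^{-1} m_r^{-1} t_r \end{pmatrix}$ and $\cosh(m_0 h)\begin{pmatrix} m_0^{-1} t_0 \\ \alpha_0 \end{pmatrix}$. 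Pulling these cosh factors outside the matrix product yields
\[
  p_{\alpha,\beta} = \Big(\prod_{j=0}^r \cosh(m_j h)\Big)\, \tilde q(m_0,\ldots,m_r),
\]
so the strict positivity of every $\cosh(m_j h)$ gives $p_{\alpha,\beta} \neq 0 \Leftrightarrow \tilde q(m_0,\ldots,m_r) \neq 0$.

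The main technical step is the factorisation identity for $p_{\alpha,\beta}$ above; it is routine but requires care with the $\diag(1,\alpha_j)\,\cdot\,\diag(1,\alpha_j^{-1})$ conjugation in $\tilde A_{\alpha_j}$ and with the correct interplay of the $m_j$ and $\alpha_j$ factors. A minor subtlety is the mismatch between the open interval $(-1/C,1/C)$ in \textnormal{(ii)} and the closed range $\delta \in [-1,1]$ in \textnormal{(iii)}: this will be handled by continuity of $\delta \mapsto \tilde q$ together with the strict inequality on $C$, which gives enough slack to apply Theorem \ref{thm:spectral1D} at the endpoints as well via the above factorisation.
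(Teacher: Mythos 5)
Your proof follows the paper's essentially step for step: (i)$\Leftrightarrow$(ii) via self-adjointness and the spectral theorem (the paper diagonalises explicitly through von Neumann's theorem as a multiplication operator $M_V$, and your distance-to-spectrum phrasing is the same fact), and (ii)$\Leftrightarrow$(iii) via Theorem \ref{thm:spectral1D} with $\beta_j = \alpha_j\lambda_k - \mu$, combined with the factorisation $p_{\alpha,\beta} = \big(\prod_{j}\cosh(m_j h)\big)\,\tilde q(m_0,\ldots,m_r)$, which you re-derive inline and which in the paper is Proposition \ref{prop:pam}(a), invoked silently. The one substantive difference is diagnostic: you explicitly flag the open-vs-closed mismatch between $(-1/C,1/C)$ in (ii) and $\delta\in[-1,1]$ in (iii). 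The paper's proof quietly conflates these by asserting that (ii) holds \emph{if and only if} $D_\alpha+\alpha\lambda_k+\delta/C$ is invertible for all $\delta\in[-1,1]$, i.e.\ $[-1/C,1/C]\subseteq\rho(D_\alpha+\alpha\lambda_k)$. But your proposed repair does not close that gap: the strict inequality on $C$ merely guarantees $\lambda_k + \delta/(\alpha_j C) > 0$ for every $\delta\in[-1,1]$ so that the $m_j$ are real and positive (Theorem \ref{thm:spectral1D} was applicable at $\delta=\pm1$ anyway), and continuity of $\delta\mapsto\tilde q$ cannot upgrade non-vanishing on the open interior to non-vanishing at the endpoints. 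The missing ingredient for (ii)$\Rightarrow$(iii) is an argument that $\pm1/C$ cannot be eigenvalues of the self-adjoint compact-resolvent operator $D_\alpha+\alpha\lambda_k$ when $(-1/C,1/C)$ is spectrum-free, which the stated hypotheses do not give. So you have correctly noticed, but not repaired, a small endpoint gap that the paper itself carries without comment; the direction (iii)$\Rightarrow$(ii) is unaffected.
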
 

The proof is carried out in Section \ref{sec:proofs}. An elementary consequence of Theorem \ref{thm:sp1Dff} is the following.

\begin{corollary}\label{cor:1D0} Assume the situation of Theorem \ref{thm:sp1Dff}. The following conditions are equivalent:
\begin{enumerate}
 \item[(i)] $S_\alpha$ is well-defined and bounded;
 \item[(ii)] there exists $\delta_0>0$ such that for all $d\in [-\delta_0,\delta_0]$ and $k\in \N$, $ \tilde{q}(m_0,\ldots, m_r)\neq 0$, where $t_j =\tanh(m_jh)$ and $m_j = \sqrt{\lambda_k + \sgn(\alpha_j)d}$.
 \end{enumerate}
\end{corollary}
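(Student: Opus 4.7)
The plan is to reduce the boundedness of $S_\alpha$ to a spectral question about a self-adjoint rescaling of $T_k\coloneqq D_\alpha+\alpha\lambda_k$, and then to translate that question into the $\tilde q$-condition via Theorem \ref{thm:spectral1D}. Since $S_\alpha$ is a diagonal operator on $L_2(0,1;\ell_2(\N))$, assertion (i) is equivalent to the existence of a single $C>0$ with $\|(D_\alpha+\alpha\lambda_k)^{-1}\|\le C$ for every $k\in\N$. By Theorem \ref{thm:TW-Mana} and the reality of $\alpha$, each $T_k$ is self-adjoint.

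The central device is conjugation by the bounded, strictly positive, self-adjoint multiplication operator $|\alpha|^{-1/2}$, which has the bounded inverse $|\alpha|^{1/2}$. Setting $S_k\coloneqq |\alpha|^{-1/2}T_k|\alpha|^{-1/2}$ on the natural domain $|\alpha|^{1/2}\dom(T_k)$, a direct check (using that $|\alpha|^{\pm 1/2}$ are bounded with bounded inverse to identify $\dom(S_k^*)=\dom(S_k)$) shows that $S_k$ is self-adjoint and
\[
T_k+|\alpha|d=|\alpha|^{1/2}(S_k+d)|\alpha|^{1/2}\quad(d\in\R).
\]
Hence $T_k+|\alpha|d$ is invertible iff $-d\notin\sigma(S_k)$, and $\|T_k^{-1}\|$ is comparable to $\|S_k^{-1}\|$ with ratios controlled by $\max_j|\alpha_j|$ and $1/\min_j|\alpha_j|$. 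The self-adjointness of $S_k$ then yields that $\sup_k\|S_k^{-1}\|<\infty$ is equivalent to the existence of $\delta_0>0$ with $\sigma(S_k)\cap[-\delta_0,\delta_0]=\emptyset$ for every $k$, which is in turn the invertibility of $T_k+|\alpha|d$ for every $d\in[-\delta_0,\delta_0]$ and every $k\in\N$. Chaining these equivalences recasts (i) as: there exists $\delta_0>0$ such that $T_k+|\alpha|d$ is invertible for all $d\in[-\delta_0,\delta_0]$ and all $k\in\N$.

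It remains to translate this invertibility into the non-vanishing of $\tilde q$. I would apply Theorem \ref{thm:spectral1D} to $D_\alpha+\beta$ with $\beta_j=\alpha_j\lambda_k+|\alpha_j|d$; this gives $\beta_j/\alpha_j=\lambda_k+\sgn(\alpha_j)d$, which is strictly positive provided $|d|<\min_k\lambda_k$ (positive by discreteness of $(\lambda_k)$). The transition matrix $A^{(j)}$ for $(h,\beta_j/\alpha_j)$ is then of $\sinh/\cosh$ type with $m_j=\sqrt{\lambda_k+\sgn(\alpha_j)d}$, and factoring $\cosh(m_jh)>0$ out of each $A^{(j)}$ in the expression for $p_{\alpha,\beta}$ produces, after a direct matrix computation,
\[
p_{\alpha,\beta}=\Big(\prod_{j=0}^r\cosh(m_jh)\Big)\,\tilde q(m_0,\ldots,m_r).
\]
Consequently $p_{\alpha,\beta}\neq 0$ iff $\tilde q(m_0,\ldots,m_r)\neq 0$, which is precisely condition (ii) with the prescribed $m_j$. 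The main obstacle I anticipate is the matrix bookkeeping producing the factorisation above, together with the domain verification for self-adjointness of $S_k$; shrinking $\delta_0$ further if necessary to keep all radicands positive is routine given $\min_k\lambda_k>0$.
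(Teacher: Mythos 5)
Your proof is correct, and it takes a genuinely different (and in my view more careful) route than the paper's terse argument. The paper justifies (ii)$\Rightarrow$(i) by invoking Theorem \ref{thm:sp1Dff} (iii)$\Rightarrow$(i); but the $\tilde q$-condition in Theorem \ref{thm:sp1Dff}(iii), read through Theorem \ref{thm:spectral1D} and the factorisation of Proposition \ref{prop:pam}(a), corresponds to invertibility of $D_\alpha+\alpha\lambda_k+\delta/C$ (a constant shift of $D_\alpha+\alpha\lambda_k$), whereas condition (ii) of the corollary corresponds to invertibility of $D_\alpha+\alpha\lambda_k+|\alpha|d$ (a shift proportional to $|\alpha|$). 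Unless all $|\alpha_j|$ coincide these are distinct one-parameter perturbation families, so the corollary is not a formal specialisation of Theorem \ref{thm:sp1Dff}(iii), and the paper's phrase ``which eventually leads to the assertion'' glosses over exactly the step you supply. Your conjugation $S_k=|\alpha|^{-1/2}(D_\alpha+\alpha\lambda_k)|\alpha|^{-1/2}$ is precisely the bridge: it converts the $|\alpha|d$-perturbation into a scalar shift of the self-adjoint $S_k$, so that uniform invertibility of $T_k+|\alpha|d$ for $|d|\le\delta_0$ is equivalent to a uniform spectral gap for $S_k$, hence (by norm comparability) to a uniform bound on $\|T_k^{-1}\|$, which is (i); and the translation into the non-vanishing of $\tilde q$ then follows from Theorem \ref{thm:spectral1D} in one stroke, giving both implications simultaneously. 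Two small remarks: the factorisation $p_{\alpha,\beta}=\big(\prod_{j=0}^r\cosh(m_jh)\big)\tilde q(m_0,\ldots,m_r)$ that you rederive is already recorded as Proposition \ref{prop:pam}(a) and can simply be cited; and the self-adjointness of $D_\alpha+\alpha\lambda_k$ is established in the proof of Theorem \ref{thm:sp1Dff}, which you could quote rather than re-deriving it from Theorem \ref{thm:TW-Mana}.
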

\begin{proof}
(ii)$\Rightarrow$(i): This is a consequence of Theorem \ref{thm:sp1Dff} (iii)$\Rightarrow$(i).

(i)$\Rightarrow$(ii): Since $S_\alpha$ is a diagonal operator, it is well-defined and bounded, if, and only if, $0\in \rho(D_\alpha+\alpha\lambda_k)$ for all $k\in \N$ and there exists $C\geq 0$ such that $\sup_{k\in \N}\|(D_\alpha+\alpha\lambda_k)^{-1}\|\leq C$; hence, for $\delta\in [-1,1]$ the non-zero condition holds with
\[
m_j^2 = \lambda_k + \tfrac{\delta}{\alpha_j C} =\lambda_k +\sgn(\alpha_j) \tfrac{\delta}{|\alpha_j| C} \quad (k\in \N,j\in \{0,\ldots,r\}),
\]which eventually leads to the assertion.
\end{proof}

Condition (ii) in the latter characterisation is rather difficult to apply for particular settings. Therefore, we require a more handy criterion ensuring (ii) (and thus (i)) to hold.  

Let $\alpha_0,\ldots,\alpha_r \in \R\setminus\{0\}$ and $(\lambda_k)_k$ a discrete sequence in $\R_{>0}$. We say that $\alpha_0,\ldots,\alpha_r$ and $(\lambda_k)_k$ satisfy the \textbf{$\tilde{q}$-criterion}, if 
 there exists $\delta_0>0$ such that for all $d\in [-\delta_0,\delta_0]$ and $k\in \N$
\begin{equation}\label{eq:qtildecrit}
\tilde{q}(m_0,\ldots, m_r) \neq 0,
\end{equation}
where $t_j =\tanh(m_jh)$ and $m_j = \sqrt{\lambda_k + \sgn(\alpha_j)d}$.

The satisfaction of the $\tilde{q}$-criterion  works as follows. There is a local part asking for a non-zero condition to be satisfied for $d=0$ and an asymptotic condition that warrants the existence of $\delta_0>0$ \emph{uniformly} in $k\in \N$.

Recall the characteristic function $p_{\alpha,\beta}$ from Theorem \ref{thm:spectral1D} and define for $\mu_0>0$
\[
   M_{\mu_0} \coloneqq [\mu_0,\infty)\times \mu_0[-\tfrac{1}{2},\tfrac{1}{2}]^{r+1}\text{ and }S(\mu,x)\coloneqq ((\mu+x_j))_{j\in \{0,\ldots,r\} }\quad ((\mu,x)\in M_{\mu_0})
   \]
   Moreover, we set
   \begin{align*}
      q & \colon  M_{\mu_0}  \to \R, (\mu,x)\mapsto \tilde{q}(S(\mu,x)) \\      
      w & \colon  M_{\mu_0} \to \R, (\mu,x)\mapsto \tilde{r}(S(\mu,x)),        \end{align*}
where
   \begin{align*}
         \tilde{w}(m_0,\ldots, m_r) \coloneqq 
         \begin{pmatrix}1 &  \alpha_{r}^{-1} m_r^{-1}  \end{pmatrix} \begin{pmatrix} 1 & \alpha_{r-1}^{-1}m_{r-1}^{-1} \\ \alpha_{r-1}m_{r-1}  & 1\end{pmatrix}\cdots  \begin{pmatrix} 1 &\alpha_{1}^{-1}m_{1}^{-1}  \\  \alpha_{1}m_{1} & 1\end{pmatrix}\begin{pmatrix}m_0^{-1}  \\  \alpha_0  \end{pmatrix} 
   \end{align*}
for $m_0,\ldots,m_r>0$.

\begin{proposition}\label{prop:pam} (a) Let $m_0,\ldots,m_r>0$, let $\alpha$ be composed of $\alpha_j$ and $\beta$ be as in Setting \ref{set:spectral1D} with $\beta_j = m_j^2\alpha_j$. Then
\[
   p_{\alpha,\beta}= \big(\prod_{j=0}^r \cosh(m_j h)\big) \tilde{q}(m_0,\ldots, m_r).
\]
(b) Let $\mu_0>0$. Then
\[
     \lim_{\mu\to\infty} \mu w(\mu,x)=  \mu_0 w(\mu_0,0)=\prod_{j=0}^{r-1}(1+\alpha_{j+1}^{-1}\alpha_j) \eqqcolon \chi(\alpha)
\]uniformly for $x\in \mu_0 [-\tfrac12,\tfrac12]^{r+1}$.

\noindent
(c) Let $\mu_0>0$.  Then, as $\mu\to\infty$, $q(\mu,\cdot), w(\mu,\cdot) \in \mathcal{O}(\tfrac{1}{\mu})$. Moreover,
\[
   \lim_{\mu\to\infty} \| \mu (q(\mu,\cdot)-w(\mu,\cdot))\|_{\infty} = 0.
\]
(d) For all $\mu_0>0$, the function
\[
  \hat{q} \colon M_{\mu_0} \ni (\mu,x)\mapsto \mu q(\mu,x)
\]is uniformly continuous.
\end{proposition}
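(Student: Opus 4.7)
For (a), since $\beta_j/\alpha_j = m_j^2 > 0$, the transition matrix $A^{(j)}$ with respect to $(h,m_j^2)$ is
\[
A^{(j)} = \begin{pmatrix}\cosh(m_j h) & m_j^{-1}\sinh(m_j h) \\ m_j\sinh(m_j h) & \cosh(m_j h)\end{pmatrix}.
\]
Computing $\tilde{A}_{\alpha_j} = \diag(1,\alpha_j)A^{(j)}\diag(1,\alpha_j^{-1})$, I can pull out the factor $\cosh(m_j h)$ to obtain $\tilde{A}_{\alpha_j} = \cosh(m_j h)\bigl(\begin{smallmatrix} 1 & \alpha_j^{-1}m_j^{-1}t_j \\ \alpha_j m_j t_j & 1\end{smallmatrix}\bigr)$, and the two boundary vectors of $p_{\alpha,\beta}$ analogously factor as $\cosh(m_r h)(1,\alpha_r^{-1}m_r^{-1}t_r)$ and $\cosh(m_0 h)(m_0^{-1}t_0,\alpha_0)^\top$. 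Collecting all $r+1$ cosine factors yields (a).

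For (b), the crucial observation is that each middle matrix $B_j = \bigl(\begin{smallmatrix} 1 & \alpha_j^{-1}m_j^{-1} \\ \alpha_j m_j & 1\end{smallmatrix}\bigr)$ appearing in $\tilde{w}$ has determinant $1-1 = 0$ and hence admits the rank-one factorisation $B_j = c_j d_j^\top$ with $c_j = (1,\alpha_j m_j)^\top$ and $d_j^\top = (1,\alpha_j^{-1}m_j^{-1})$. Using $d_{j+1}^\top c_j = 1+\alpha_{j+1}^{-1}\alpha_j m_j/m_{j+1}$, the middle product collapses telescopically to a scalar multiple of $c_{r-1}d_1^\top$, and pairing with the two boundary vectors gives the closed form
\[
\tilde{w}(m_0,\ldots,m_r) = m_0^{-1}\prod_{j=0}^{r-1}\bigl(1 + \alpha_{j+1}^{-1}\alpha_j m_j/m_{j+1}\bigr).
\]
Specialising to $m_0=\cdots=m_r=\mu_0$ makes every ratio $m_j/m_{j+1}$ equal to $1$, whence $\mu_0 w(\mu_0,0) = \chi(\alpha)$. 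For the limit, with $m_j = \mu + x_j$ and $|x_j|\le \mu_0/2$, both $\mu/(\mu+x_0)$ and $(\mu+x_j)/(\mu+x_{j+1})$ tend to $1$ uniformly in $x$ as $\mu\to\infty$, giving $\mu w(\mu,x)\to\chi(\alpha)$ uniformly on $\mu_0[-\tfrac12,\tfrac12]^{r+1}$.

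For (c), I would use the identity $B_j^q = t_j B_j^w + (1-t_j) I$, which is verified by direct computation, together with the analogous one-component differences of the boundary vectors, e.g., $(m_0^{-1}t_0,\alpha_0)^\top = (m_0^{-1},\alpha_0)^\top + (m_0^{-1}(t_0-1),0)^\top$. Since $1 - t_j = 1-\tanh(m_j h) = \mathcal{O}(e^{-2m_j h})$ and $m_j \ge \mu/2$ on $M_{\mu_0}$ for $\mu$ sufficiently large, expanding $\tilde{q}$ by multilinearity produces $\tilde{w}$ plus error terms each carrying at least one factor of the form $(1-t_k)$; the remaining factors are controlled by a polynomial in $\mu$ because $\|B_j^w\|$, $\|v_w\|$, $\|u_w\|$ and their perturbations grow at most polynomially in $\mu$. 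Each error is therefore dominated by $P(\mu)e^{-h\mu}$ for some fixed polynomial $P$ independent of $x$, yielding $\mu(q-w)(\mu,\cdot)\to 0$ uniformly. Together with (b), this gives $q,w\in \mathcal{O}(1/\mu)$.

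For (d), parts (b) and (c) combine into
\[
\lim_{\mu\to\infty}\hat{q}(\mu,x) = \lim_{\mu\to\infty}\bigl(\mu w(\mu,x) + \mu(q-w)(\mu,x)\bigr) = \chi(\alpha)
\]
uniformly in $x\in \mu_0[-\tfrac12,\tfrac12]^{r+1}$. Since $\hat{q}$ is manifestly continuous on $M_{\mu_0}$ (a composition of smooth functions of $(\mu,x)$ with $m_j\ge \mu_0/2>0$), this uniform limit at infinity allows me to extend $\hat{q}$ to a continuous function on the compact space $[\mu_0,\infty]\times \mu_0[-\tfrac12,\tfrac12]^{r+1}$ by declaring $\hat{q}(\infty,x)\coloneqq \chi(\alpha)$. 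The extension is uniformly continuous by compactness, and the restriction to $M_{\mu_0}$ inherits this uniform continuity. The main technical hurdle in the whole argument is the bookkeeping of the polynomially-bounded but combinatorially numerous error terms in (c); once the rank-one structure of the $B_j^w$ is exploited as in (b), all other steps reduce to routine computation.
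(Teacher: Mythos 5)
Your proof is correct throughout, and for the most part it follows the same lines as the paper. Part (a) is identical: factor $\cosh(m_j h)$ out of each transition matrix and boundary vector. In (b) you take a cleaner route than the paper: instead of the paper's induction $\begin{pmatrix}1 & \alpha_{r}^{-1}\mu^{-1}\end{pmatrix}\begin{pmatrix}1 & \alpha_{r-1}^{-1}\mu^{-1}\\ \alpha_{r-1}\mu & 1\end{pmatrix}=(1+\alpha_r^{-1}\alpha_{r-1})\begin{pmatrix}1 & \alpha_{r-1}^{-1}\mu^{-1}\end{pmatrix}$ (for equal arguments, followed by a rational-function argument for the off-diagonal case), you exploit the rank-one factorisation $B_j = c_jd_j^\top$ directly for arbitrary $m_0,\ldots,m_r$, collapsing the product telescopically to the closed form $\tilde{w}(m)=m_0^{-1}\prod_{j=0}^{r-1}(1+\alpha_{j+1}^{-1}\alpha_j m_j/m_{j+1})$. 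This is arguably more transparent and yields both the identity $\mu_0 w(\mu_0,0)=\chi(\alpha)$ and the uniform limit immediately. In (c) the underlying estimate is the same (exponential decay of $1-t_j$, controlled by a polynomial), but the paper uses a telescoping sum with exactly $r+1$ error terms while your multilinear expansion of $B_j^q = B_j^w + (1-t_j)(I-B_j^w)$ gives up to $2^{r+1}$; both work, and the paper's bookkeeping is only marginally simpler than the one you describe as a "technical hurdle". In (d) the paper shows $\hat{q}-\chi(\alpha)\in C_0(M_{\mu_0})$ and uses density of $C_c$ to conclude uniform continuity, while you pass to the one-point compactification $[\mu_0,\infty]\times\mu_0[-\tfrac12,\tfrac12]^{r+1}$. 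Your step "the restriction inherits this uniform continuity" implicitly uses that the compactified metric is Lipschitz-dominated by the Euclidean metric on $M_{\mu_0}$ (e.g.\ via $|\arctan\mu-\arctan\nu|\le|\mu-\nu|$), which is true and easy but would be worth a sentence; with that observation in place the argument is complete.
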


The proof can be found in Section \ref{sec:proofs}. For the characterisation of the $\tilde{q}$-criterion, we introduce another function:
\begin{align*}
p_{\alpha}(t)\coloneqq \begin{pmatrix}1 &  \alpha_{r}^{-1}  t  \end{pmatrix} \begin{pmatrix} 1 & \alpha_{r-1}^{-1}t \\ \alpha_{r-1}t  & 1\end{pmatrix}\cdots  \begin{pmatrix} 1 &\alpha_{1}^{-1}t  \\  \alpha_{1} t & 1\end{pmatrix} \begin{pmatrix}t  \\  \alpha_0  \end{pmatrix}.
\end{align*}

A first consequence of Proposition \ref{prop:pam} is a condition guaranteeing the $\tilde{q}$-criterion for $\alpha_0,\ldots,\alpha_r$ and $(\lambda_k)_{k\geq k_0}$ for some $k_0\geq 0$ making use of $\chi(\alpha) \coloneqq    \prod_{j=0}^{r-1}(1+\alpha_{j+1}^{-1}\alpha_j),$ introduced in Proposition \ref{prop:pam} (b).

\begin{theorem}\label{thm:asy} Let $\alpha_0,\ldots, \alpha_r \in \R\setminus\{0\}$, $(\mu_k)_k$ a discrete sequence in $\R_{>0}$ with $\mu_k\to \infty$ as $k\to\infty$. If $\chi(\alpha)\neq 0$, then there exists $k_0\in\N$ and $\delta>0$ such that  $q(\mu_k,x)\neq 0$ for all $k\geq k_0$ and $x\in [-\delta,\delta]^{r+1}$.
\end{theorem}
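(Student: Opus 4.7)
The plan is to derive Theorem \ref{thm:asy} as a direct consequence of parts (b) and (c) of Proposition \ref{prop:pam} via a triangle-inequality argument. The underlying idea: $\mu w(\mu,x)$ converges uniformly to the nonzero constant $\chi(\alpha)$, and $\mu q$ is close to $\mu w$, so $\mu q(\mu,x)$ is eventually bounded away from $0$ uniformly in a small cube around $x=0$. Since $\mu q$ and $q$ have the same zero set, this yields the claim.

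The first step is to pick a convenient scale. Fix any $\mu_0>0$ and set $\delta \coloneqq \mu_0/2$, so that $[-\delta,\delta]^{r+1}\subseteq \mu_0[-\tfrac12,\tfrac12]^{r+1}$ and, for $\mu_k\geq \mu_0$, the pair $(\mu_k,x)$ with $x\in[-\delta,\delta]^{r+1}$ lies in the domain $M_{\mu_0}$ on which the uniform estimates from Proposition \ref{prop:pam} are available. Applying Proposition \ref{prop:pam} (b), there exists $M_1\geq \mu_0$ such that for all $\mu\geq M_1$ and $x\in \mu_0[-\tfrac12,\tfrac12]^{r+1}$ one has $|\mu w(\mu,x)-\chi(\alpha)|<\tfrac14|\chi(\alpha)|$. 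Applying Proposition \ref{prop:pam} (c), there exists $M_2\geq\mu_0$ such that for $\mu\geq M_2$ the quantity $\|\mu(q(\mu,\cdot)-w(\mu,\cdot))\|_\infty$ is less than $\tfrac14|\chi(\alpha)|$.

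The triangle inequality then gives, for all $\mu\geq M\coloneqq \max\{M_1,M_2\}$ and all $x\in \mu_0[-\tfrac12,\tfrac12]^{r+1}$,
\[
|\mu q(\mu,x)-\chi(\alpha)|<\tfrac12|\chi(\alpha)|,
\]
so $|\mu q(\mu,x)|>\tfrac12|\chi(\alpha)|>0$; in particular $q(\mu,x)\neq 0$. Since $\mu_k\to\infty$, we can choose $k_0\in\N$ with $\mu_k\geq M$ for all $k\geq k_0$, and then $q(\mu_k,x)\neq 0$ for every $k\geq k_0$ and every $x\in[-\delta,\delta]^{r+1}$. There is essentially no obstacle to speak of: once Proposition \ref{prop:pam} is granted, the only care required is in choosing $\delta$ compatibly with the cube $\mu_0[-\tfrac12,\tfrac12]^{r+1}$ where the uniform estimates live. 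The substantive work has already been absorbed into Proposition \ref{prop:pam}(b)–(c).
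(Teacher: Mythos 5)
Your proof is correct, but it takes a slightly different route from the paper's. The paper uses Proposition \ref{prop:pam}(b) and (c) only through their pointwise consequence at $x=0$ --- namely that $\mu q(\mu,0)=\mu\tilde q(\mu,\ldots,\mu)\to\chi(\alpha)\neq 0$ --- and then extends the lower bound to a cube around $x=0$ by invoking the uniform continuity of $\hat q$ from part (d). You instead exploit the uniformity in $x$ that is already built into (b) and (c): part (b) gives $\mu w(\mu,x)\to\chi(\alpha)$ uniformly on $\mu_0[-\tfrac12,\tfrac12]^{r+1}$, part (c) gives $\|\mu(q(\mu,\cdot)-w(\mu,\cdot))\|_\infty\to 0$, and a triangle inequality yields a uniform lower bound on $|\mu q(\mu,x)|$ for large $\mu$ with no appeal to part (d). The care you take in checking that $(\mu_k,x)$ stays inside $M_{\mu_0}$ (so that the uniform estimates apply) is exactly what is needed. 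Your route is marginally more economical, skipping (d), and arguably more in the spirit of (b)--(c), since those parts are proved with uniformity in $x$ that the paper's proof then discards; both arguments are sound, with the substantive content absorbed into Proposition \ref{prop:pam} in either case.
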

\begin{proof}
By Proposition \ref{prop:pam} (b) and (c) and assumption, we get $c\coloneqq \lim_{\mu\to\infty} \mu \tilde{q}(\mu,\ldots,\mu)\neq 0$. Hence, we find $k_0\in \N$ such that, for all $k\geq k_0$, $|\mu_{k} \tilde{q}(\mu_{k},\ldots, \mu_{k})|\geq c/2$. By uniform continuity of $\hat q$ from Proposition \ref{prop:pam} (d), we find $\delta>0$ such that for all $x\in [-\delta,\delta]^{r+1}$ and $k\in \N$, we have
 \[
     \mu_k| q(\mu_{k},x)|\geq \tfrac{c}{4};
 \]yielding the desired assertion.
\end{proof}

We are now in the position to summarise our findings concerning the $\tilde{q}$-criterion, which we prove in Section \ref{sec:proofs}.

\begin{theorem}\label{thm:charcrit1}Let $\alpha_0, \ldots,\alpha_r \in \R\setminus\{0\}$, $(\lambda_k)_k$ a discrete sequence in $\R_{>0}$, $\mu_k\coloneqq \sqrt{\lambda_k}$,  with $\lambda_k\to \infty$ as $k\to\infty$. Consider the following conditions:
\begin{enumerate}
\item[(i)]  $\alpha_0, \ldots,\alpha_r$ and $(\lambda_k)_k$ satisfy the $\tilde{q}$-criterion;
\item[(ii)] for all $k\in \N$, $q(\mu_k,0)\neq 0$;
\item[(iii)] for all $k\in \N$, with $t_k\coloneqq \tanh(\mu_k h)$,  $p_{\alpha}(t_k)\neq 0$;
\item[(iv)] for all $k\in \N$,  $q(\mu_k,0)\neq 0$ and $\chi(\alpha) \neq 0$;
\item[(v)] for all $k\in \N$, with $t_k\coloneqq \tanh(\mu_k h)$,  $p_{\alpha}(t_k)\neq 0$ and $\chi(\alpha) \neq 0$.
\end{enumerate}
Then (v)$\Leftrightarrow$(iv)$\Rightarrow$(i)$\Rightarrow$(ii)$\Leftrightarrow$(iii).
\end{theorem}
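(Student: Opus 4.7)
I plan to establish the pointwise identity
\[
    q(\mu_k,0)=\tilde q(\mu_k,\ldots,\mu_k)=\mu_k^{-1}\,p_\alpha(t_k),\qquad t_k=\tanh(\mu_k h),
\]
by an elementary induction on the partial matrix products. Writing $(p_j,q_j)^{T}$ for the partial products in the matrix chain defining $p_\alpha(t_k)$, and $(\tilde p_j,\tilde q_j)^{T}$ for the corresponding partials of $\tilde q$ evaluated at $m_0=\cdots=m_r=\mu_k$, the base case gives $\tilde p_0=\mu_k^{-1}p_0$ and $\tilde q_0=q_0$; direct computation shows that each transition matrix preserves this scaling, and the final row contraction against $(1,\alpha_r^{-1}\mu_k^{-1}t_k)$ yields the claimed identity. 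Since $\mu_k>0$, this settles (ii)$\Leftrightarrow$(iii); appending the clause ``$\chi(\alpha)\neq 0$'' on both sides then furnishes (iv)$\Leftrightarrow$(v).

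\textbf{The implication (i)$\Rightarrow$(ii).} Specialise the $\tilde q$-criterion to $d=0$. Then $m_j=\sqrt{\lambda_k}=\mu_k$ for every $j$, and hence $q(\mu_k,0)=\tilde q(\mu_k,\ldots,\mu_k)\neq 0$.

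\textbf{The core implication (iv)$\Rightarrow$(i).} Using $\chi(\alpha)\neq 0$, Theorem \ref{thm:asy} supplies $k_0\in\N$ and $\eta>0$ with $q(\mu_k,x)\neq 0$ whenever $k\geq k_0$ and $|x|_\infty\leq\eta$. For the finitely many indices $k<k_0$, the hypothesis $q(\mu_k,0)\neq 0$ combined with continuity of $q(\mu_k,\cdot)$ (available from Proposition \ref{prop:pam}(d) after choosing $\mu_0>0$ below $\min_k\mu_k$) yields, for each such $k$, a radius $\eta_k>0$ on which $q(\mu_k,\cdot)$ is non-vanishing. Taking $\eta_{*}>0$ to be the minimum of $\eta$ and these finitely many $\eta_k$ produces a $k$-uniform neighborhood of $0$ on which $q(\mu_k,\cdot)\neq 0$ for every $k$. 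The $\tilde q$-criterion is then reached through a square-root reparametrisation: from
\[
   \bigl|\sqrt{\lambda_k+\sgn(\alpha_j)d}-\mu_k\bigr|=\frac{|d|}{\sqrt{\lambda_k+\sgn(\alpha_j)d}+\mu_k}\leq\frac{|d|}{\mu_{\min}},
\]
with $\mu_{\min}\coloneqq\min_k\mu_k>0$ (available since $\mu_k\to\infty$ and each $\mu_k>0$), choosing $\delta_0\coloneqq\min(\eta_{*}\mu_{\min},\tfrac12\min_k\lambda_k)$ forces, for every $|d|\leq\delta_0$ and every $k$, each $m_j=\sqrt{\lambda_k+\sgn(\alpha_j)d}$ to be real and within $\eta_{*}$ of $\mu_k$; hence $\tilde q(m_0,\ldots,m_r)=q(\mu_k,(m_j-\mu_k)_j)\neq 0$, which is precisely the $\tilde q$-criterion.

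The hard part is this last dovetailing: Theorem \ref{thm:asy} controls only the tail, so it has to be spliced with a finite-$k$ continuity argument, and the resulting $k$-uniform $x$-neighborhood must be propagated through the non-linear reparametrisation $d\mapsto m_j-\mu_k$ in such a way that a single $\delta_0$ serves every $k\in\N$ simultaneously. All the required ingredients are, however, already in place from Proposition \ref{prop:pam} and Theorem \ref{thm:asy}.
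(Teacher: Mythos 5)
Your argument is correct and follows the paper's overall strategy, with one genuine simplification in the core direction (iv)$\Rightarrow$(i). For (ii)$\Leftrightarrow$(iii) you prove the identity $\tilde q(\mu_k,\ldots,\mu_k)=\mu_k^{-1}p_\alpha(t_k)$ by pushing a diagonal scaling through the matrix chain; the paper derives the same identity in Lemma~\ref{lem:homdeg0} via the homogeneity fact $p_{\mu\alpha}(t)=p_\alpha(t)$ together with $\mu\tilde q(\mu,\ldots,\mu)=p_{\mu\alpha}(t)$, so this is the same content by a different bookkeeping. Your (i)$\Rightarrow$(ii) is identical to the paper's. The noteworthy deviation is (iv)$\Rightarrow$(i): the paper bounds $\bigl|\sqrt{\lambda_k+s}-\mu_k\bigr|$ by expanding $\sqrt{1+\xi}$ in a Taylor series and working with the constant $\kappa$ controlling the tail coefficients, introducing an extra index $k_1\geq k_0$; you instead use the conjugate identity $\sqrt a-\sqrt b=(a-b)/(\sqrt a+\sqrt b)$, which gives the uniform bound $|d|/\mu_{\min}$ for \emph{every} $k$ at once with no power-series apparatus and no need for $k_1$. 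This is genuinely more elementary, and it also sidesteps a small inaccuracy in the paper's threshold (the displayed condition $1/\lambda_k\leq\kappa/\delta_1$ should read $1/\lambda_k\leq\delta_1^2/\kappa^2$ for the subsequent estimate to close). Both you and the paper still need the separate finite-$k$ continuity step and the guard $|d|\leq\tfrac12\min_k\lambda_k$ to keep the radicand nonnegative; you should also record that the radii $\eta,\eta_k$ must not exceed $\mu_0/2$ so that $(\mu_k,x)$ stays inside the domain $M_{\mu_0}$ of $q$, but this is automatic from the way Theorem~\ref{thm:asy} and Proposition~\ref{prop:pam}(d) are stated.
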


Finally, we come to a rather handy sufficient condition for the satisfaction of the $\tilde{q}$-criterion.

\begin{corollary}\label{cor:WP1} Let $\alpha_0,\ldots,\alpha_r\in \R\setminus\{0\}$, $(\lambda_k)_k$ in $\R_{>0}$ discrete; $h=1/({r+1})$, $0<t_0\leq \tanh(\min_{k\in \N}\sqrt{\lambda_k}h)$. If for all $t\in [t_0,1]$, $p_{\alpha}(t)\neq 0,$
then  $\alpha_0, \ldots,\alpha_r$ and $(\lambda_k)_k$ satisfy the $\tilde{q}$-criterion.
\end{corollary}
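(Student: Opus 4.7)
The strategy is to verify hypothesis~(v) of Theorem~\ref{thm:charcrit1}, after which the implication (v)$\Rightarrow$(iv)$\Rightarrow$(i) delivers the $\tilde q$-criterion. Condition~(v) has two separate parts: $p_{\alpha}(t_k)\neq 0$ for every $k$ (with $t_k\coloneqq\tanh(\mu_k h)$ and $\mu_k=\sqrt{\lambda_k}$), and $\chi(\alpha)\neq 0$. The weaker condition~(iii) would only yield~(ii), so the factor $\chi(\alpha)$ really has to be extracted from the hypothesis in an independent way.

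The first of the two requirements is immediate. By strict monotonicity of $\tanh$ on $[0,\infty)$ and $\tanh<1$ there, one has
\[
    t_k\in[\tanh(\min_{k\in\N}\sqrt{\lambda_k}h),1)\subseteq[t_0,1]\qquad(k\in\N),
\]
so the assumption $p_\alpha\neq 0$ on $[t_0,1]$ yields $p_\alpha(t_k)\neq 0$ for every $k$.

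The key algebraic observation I would isolate as a lemma is
\[
    p_\alpha(1)=\chi(\alpha),
\]
from which the endpoint $t=1\in[t_0,1]$ gives $\chi(\alpha)\neq 0$ for free. To prove this, note that at $t=1$ each middle factor in the definition of $p_\alpha$ collapses to the rank-one product
\[
    \begin{pmatrix} 1 & \alpha_j^{-1} \\ \alpha_j & 1 \end{pmatrix}
    =\begin{pmatrix} 1 \\ \alpha_j \end{pmatrix}\begin{pmatrix} 1 & \alpha_j^{-1} \end{pmatrix}.
\]
Multiplying these factors right-to-left into the column $(1,\alpha_0)^{\top}$ then telescopes: after the $j$th step the running result is a scalar multiple of $(1,\alpha_j)^{\top}$, picking up one factor $(1+\alpha_{j+1}^{-1}\alpha_j)$ at each stage via the inner product $(1,\alpha_{j+1}^{-1})(1,\alpha_j)^{\top}=1+\alpha_{j+1}^{-1}\alpha_j$; contracting with the final row $(1,\alpha_r^{-1})$ contributes the last factor $(1+\alpha_r^{-1}\alpha_{r-1})$. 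Collecting the scalars gives $\prod_{j=0}^{r-1}(1+\alpha_{j+1}^{-1}\alpha_j)=\chi(\alpha)$. The degenerate case $r=0$ (no middle factors, $\chi(\alpha)=1$ by the empty-product convention) is vacuous and needs no separate treatment.

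Combining the two parts, condition~(v) of Theorem~\ref{thm:charcrit1} holds, and the chain (v)$\Rightarrow$(iv)$\Rightarrow$(i) closes the argument. The only non-routine step is the telescoping identity $p_\alpha(1)=\chi(\alpha)$, and even this reduces to the rank-one decomposition displayed above; I anticipate no real obstacle beyond bookkeeping.
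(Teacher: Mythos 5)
Your proof is correct and follows the same route as the paper: verify hypothesis (v) of Theorem~\ref{thm:charcrit1} and then use the chain (v)$\Rightarrow$(iv)$\Rightarrow$(i). The paper's own proof is a one-liner — ``follows from Theorem~\ref{thm:charcrit1} since $p_\alpha(1)=\chi(\alpha)$'' — so you are supplying the details the paper leaves implicit: that $t_k\in[t_0,1)$ by monotonicity of $\tanh$, and, more usefully, a clean telescoping proof of the identity $p_\alpha(1)=\chi(\alpha)$ via the rank-one factorisation $\begin{pmatrix}1&\alpha_j^{-1}\\\alpha_j&1\end{pmatrix}=\begin{pmatrix}1\\\alpha_j\end{pmatrix}\begin{pmatrix}1&\alpha_j^{-1}\end{pmatrix}$. (In the paper this identity can also be read off by setting $\mu=1$ in the inductive computation inside the proof of Proposition~\ref{prop:pam}(b), which shows $\mu w(\mu,0)=\chi(\alpha)$; your rank-one telescoping is a slightly more transparent way to see the same thing.) You were also right to flag that (iii) alone would only give (ii), not (i), so $\chi(\alpha)\neq0$ genuinely has to be extracted separately — which is exactly what evaluating $p_\alpha$ at the endpoint $t=1$ accomplishes.
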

\begin{proof}
The statement follows from Theorem \ref{thm:charcrit1} since $p_\alpha(1)=\chi(\alpha)$.
\end{proof}

\section{The inner spectrum in $d$ dimensions}\label{sec:pertex}

In this section we want to apply the well-posedness criterion from the previous section and want to obtain some information on the inner spectrum of $\alpha(\m_1)$ given as multiplication operator induced by the function 
\[
(x_1,\ldots,x_d)\mapsto \alpha(x_1)=\sum_{j=0}^{r} \alpha_j \1_{[j/{r+1}, (j+1)/(r+1))}(x_1), 
\] where $\alpha_0,\ldots,\alpha_r\in \R\setminus\{0\}$ for the case $\Omega=(0,1)\times \hat{\Omega}$, $\hat{\Omega}\subseteq \R^{d-1}$ open and bounded. For one, the inner spectrum is real and, two, bounded above and below by $\max_{j}|\alpha_j|$ (both statements follow from classical Lax--Milgram). We use the results obtained in the previous sections to deduce that it contains the actual values of $\alpha$ (see Corollary \ref{cor:eleminner}) and that the discrete part of the spectrum is countable. The latter follows from the representation of the inner spectrum as zeros of certain rational functions. Recall for $t>0$
\[
  p_{\alpha}(t)= \begin{pmatrix}1 &  \alpha_{r}^{-1}  t  \end{pmatrix} \begin{pmatrix} 1 & \alpha_{r-1}^{-1}t \\ \alpha_{r-1}t  & 1\end{pmatrix}\cdots  \begin{pmatrix} 1 &\alpha_{1}^{-1}t  \\  \alpha_{1} t & 1\end{pmatrix} \begin{pmatrix}t  \\  \alpha_0  \end{pmatrix}.
\]
Corollary \ref{cor:WP1} confirms that if $p_{\alpha}$ has no zeros for $t\in (0,1]$, then $\alpha_0,\ldots,\alpha_r$ and any discrete $(\lambda_k)_k$ in $\R_{>0}$ satisfy the $\tilde{q}$-criterion and thus the problem of finding $u\in H_0^1(\Omega)$ such that for given $f\in L_2(\Omega)$ and $\alpha$ piecewise constant $\alpha_j$ on slabs of width $1/(r+1)$ we have
\[
   -\Delta_{\alpha}=-\overline{\dive \alpha(\m_1) \grad}\, u = f
\]admits a unique solution $u$ continuously depending on $f$.

For the next statement, we recall that in a metric space $X$, a set $U\subseteq X$ is called \textbf{nowhere dense}, if $\intereset(\overline{U})=\emptyset$. As a consequence, if $X= [-A,A]\subseteq \R$ for some $A>0$ and $\overline{U}$ is countable, $U$ is nowhere dense.

\begin{theorem}\label{thm:perturb} Let $\alpha_0,\ldots,\alpha_r \in \R\setminus\{0\}$, $A>\max_{j\in \{0,\ldots,r\}}|\alpha_j|$. Then for all $(\lambda_k)_k$ in $\R_{>0}$ discrete,  the set 
\[
   W \coloneqq \{ s\in [-A,A]; \text{$\alpha_0-s, \ldots, \alpha_r-s$ and $(\lambda_k)_k$ satisfy the $\tilde{q}$-criterion}\} \setminus \{\alpha_0,\ldots,\alpha_r\}
   \] as a subset of $[-A,A]$ is the complement of a nowhere dense, countable set. 
\end{theorem}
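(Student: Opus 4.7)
The plan is to locate the complement $[-A,A]\setminus W$ inside an explicit countable set and then to show that this set has countable closure in $[-A,A]$; the remark immediately preceding the theorem then delivers the nowhere-density part automatically.

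First I would apply Theorem~\ref{thm:charcrit1}(v)$\Rightarrow$(i) to the perturbed tuple $(\alpha_0-s,\ldots,\alpha_r-s)$, which is admissible whenever $s\notin\{\alpha_0,\ldots,\alpha_r\}$. With $t_k\coloneqq\tanh(\sqrt{\lambda_k}\,h)$, the $\tilde q$-criterion is ensured as soon as $\chi(\alpha-s)\neq 0$ and $p_{\alpha-s}(t_k)\neq 0$ for every $k$. In the degenerate case that $\{\lambda_k:k\in\N\}$ is a finite set, no asymptotic input is needed at all: continuity of $\tilde q$ in $d$ around $0$, combined with Proposition~\ref{prop:pam}(a) and Corollary~\ref{cor:1D0}, reduces the $\tilde q$-criterion to finitely many non-vanishing conditions of the form $p_{\alpha-s}(t_k)\neq 0$. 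In either case, setting $Z\coloneqq\{s\in[-A,A]:\chi(\alpha-s)=0\}$ and $N_k\coloneqq\{s\in[-A,A]:p_{\alpha-s}(t_k)=0\}$, we obtain
\[
[-A,A]\setminus W\;\subseteq\; B\coloneqq\{\alpha_0,\ldots,\alpha_r\}\cup Z\cup\bigcup_{k\in\N}N_k.
\]
Countability of $B$ is the routine step: away from the finite set $\{\alpha_0,\ldots,\alpha_r\}$ the expressions $p_{\alpha-s}(t)$ (for fixed $t$) and $\chi(\alpha-s)$ are rational in $s$, and multiplication by $\prod_j(\alpha_j-s)$ turns them into genuine polynomials in $s$. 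Hence $Z$ is finite and each $N_k$ is finite, so $B$ is a countable union of finite sets.

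The main obstacle is to show that $\overline B$ is itself countable, since a priori the finite sets $N_k$ could accumulate densely in $[-A,A]$. The key observation is that $t_k\to 1$ whenever $\lambda_k\to\infty$, and a direct computation (peeling off the outer factors of $p_\alpha$) verifies $p_\alpha(1)=\chi(\alpha)$. To exploit this, I would pick $s^*\in\overline B\setminus B$; in particular $s^*\notin\{\alpha_0,\ldots,\alpha_r\}$. Choose $s_n\in B\setminus\{s^*\}$ with $s_n\to s^*$; since $\{\alpha_0,\ldots,\alpha_r\}\cup Z$ is finite, we may assume $s_n\in N_{k_n}$ for all $n$. A bounded $(k_n)$ would force the $s_n$ into a finite union of finite sets, hence eventually constant, contradicting $s_n\to s^*\notin B$. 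Therefore $k_n\to\infty$, and by the discreteness of $(\lambda_k)_k$ this forces $\lambda_{k_n}\to\infty$, hence $t_{k_n}\to 1$. Joint continuity of $(s,t)\mapsto p_{\alpha-s}(t)$ on a neighbourhood of $(s^*,1)$ (valid because $s^*$ avoids the poles at $\alpha_0,\ldots,\alpha_r$), combined with $p_{\alpha-s_n}(t_{k_n})=0$, yields in the limit $p_{\alpha-s^*}(1)=\chi(\alpha-s^*)=0$, i.e.\ $s^*\in Z\subseteq B$, contradicting $s^*\notin B$. Consequently $\overline B=B$ is countable in $[-A,A]$, and the remark preceding the theorem states that any subset of $[-A,A]$ with countable closure is nowhere dense. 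Since $[-A,A]\setminus W\subseteq B$ this completes the proof.
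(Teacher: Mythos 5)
Your proposal follows essentially the same route as the paper's proof: reduce membership of $[-A,A]\setminus W$ to zeros of the rational functions $s\mapsto p_{\alpha-s}(t)$ with $t$ ranging over $\{\tanh(\sqrt{\lambda_k}\,h):k\in\N\}\cup\{1\}$, and then show that this countable zero set is closed by exploiting $\tanh(\mu h)\to1$ together with $p_{\alpha-s}(1)=\chi(\alpha-s)$. The paper organises the closedness step via compactness of $T\coloneqq\{\tanh(\mu_k h):k\}\cup\{1\}\subseteq[t_0,1]$ and extracts a convergent subsequence of $t$-values; you argue directly that $\overline B=B$. Same decomposition, same key identity, different bookkeeping.

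There is, however, a genuine gap in your closedness step. The inference ``$k_n\to\infty$, and by the discreteness of $(\lambda_k)_k$ this forces $\lambda_{k_n}\to\infty$'' is false as written: discreteness is a property of the \emph{range} $\{\lambda_k:k\}$, not of the sequence, which is permitted to visit a given value infinitely often. For example, take $\lambda_k=1$ for odd $k$ and $\lambda_k=(k+2)/2$ for even $k$; this is discrete in $\R_{>0}$, yet the subsequence $k_n=2n+1\to\infty$ keeps $\lambda_{k_n}\equiv1$, so $t_{k_n}$ stays bounded away from $1$ and your limiting argument at $t=1$ never engages. The repair is short: if $(\lambda_{k_n})$ has a bounded subsequence, then by discreteness a further subsequence is eventually constant, so a tail of $(s_n)$ lies in a single finite set $N_{k^*}$ and closedness of that finite set forces $s^*\in N_{k^*}\subseteq B$, a contradiction; otherwise pass to a subsequence with $\lambda_{k_n}\to\infty$ and proceed as you did. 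The paper's formulation sidesteps the case split entirely by extracting $t(s_{\pi(n)})\to t^*$ inside the compact set $T$ and applying continuity at $t^*$, whether $t^*<1$ or $t^*=1$. A minor additional point: when asserting $Z$ and each $N_k$ are finite you should check the polynomials obtained after clearing denominators are not identically zero; the paper flags this via $f_{t^*}(A)\neq0$, which relies on $A>\max_j|\alpha_j|$ making $\alpha-A$ sign-definite.
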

\begin{proof}
We define for all $t>0$ (using the obvious meaning for $\alpha-s$)
\[
 [-A,A]\setminus\{\alpha_0,\ldots,\alpha_r\} \ni s\mapsto f_t (s)\coloneqq p_{\alpha-s}(t).
\]
Let $(\lambda_k)_k$ be a discrete sequence in $\R_{>0}$, $\mu_k\coloneqq \sqrt{\lambda_k}$, $h=1/(1+r)$, and choose any $0<t_0\leq \tanh(\min_k\mu_k h)$. With $T\coloneqq \{\tanh(\mu_kh); k\in \N\}\cup\{1\}$ consider the set
\[
  Z\coloneqq   \{ (t,s) \in [t_0,1]\times\big( [-A,A]\setminus\{\alpha_0,\ldots,\alpha_r\}\big); \exists t \in T: f_t(s) = 0 \} =\bigcup_{t\in T} [f_t=0].
\] Then $Z$ is countable as countable union of zeros of rational functions.
Next, the set 
\[
  U \coloneqq \{ s\in [-A,A]; \exists t\in [t_0,1]\colon (t,s)\in Z\}
\]is, too, countable. Indeed, if $U$ was uncountable, due to the countability of $Z$, we find $t^*\in [t_0,1]$ such that $U^*=\{s\in [-A,A]; f_{t^*}(s)=0\}$ is uncountable, which contradicts the fact that $f_{t^*}$, being a non-constant rational function $(f_{t^*}(A)\neq 0)$, has only finitely many zeros.

Define $\tilde{U}\coloneqq U\cup \{\alpha_0,\ldots,\alpha_r\}$. We show that $\tilde{U}$ is closed. For this, let $(s_n)_n$ be a sequence in $\tilde{U}$ converging to some $s^* \in [-A,A]$. If $s^*\in \{\alpha_0,\ldots,\alpha_r\}$, we are done. Otherwise, without restriction, we may assume the existence of $\varepsilon>0$ such that $(s_n)_n$ is a sequence in $B\coloneqq [-A,A]\setminus \big(\cup_{j=0}^r (\alpha_j-\varepsilon,\alpha_j+\varepsilon)\big)$; as a consequence $s^*\in B$. Then, for all $n\in \N$ we find $t(s_n) \in T$ such that $f_{t(s_n)}(s_n)=0$. Since $(\mu_k)_k$ is discrete possibly accumulating at $\infty$, we infer that $T\subseteq [t_0,1]$ is closed; hence compact. Consequently, we find a subsequence $(t(s_{\pi(n)}))_n$ converging to some $t^*\in T$. By continuity of $B\times T\ni (s,t)\mapsto f_t(s)$, $f_{t^*}(s^*)=0$ and, therefore, $s^*\in U$. Thus, $\tilde{U}$ is countable and closed.

Finally, note that, for all $k\in \N$ and $s\in V\coloneqq[-A,A]\setminus \tilde{U}$,  $f_{t_k}(s)\neq 0$ and $f_1(s)\neq 0$ where $t_k=\tanh(\mu_kh)$; hence $V\subseteq W$, by Theorem \ref{thm:charcrit1}. As $\tilde{U}$ is both countable and closed, the assertion follows.
\end{proof}

We are now in the position to describe the inner spectrum for laminated metamaterials of the form discussed in this manuscript; we recall $g_0(\Omega)=\grad[H_0^1(\Omega)]$.

\begin{theorem}\label{thm:isp-dd} Let $\alpha_0,\ldots,\alpha_r \in \R\setminus\{0\}$ and $\hat{\Omega}\subseteq \R^{d-1}$ open and bounded; $\Omega\coloneqq (0,1)\times \hat{\Omega}$, $A>\max_j|\alpha_j|$. Then 
\[
\sigma_{g_0(\Omega),\textnormal{c}}(\alpha(\m_1))\subseteq [-A,A]\] and there exists a countable and nowhere dense set $\{\alpha_0,\ldots,\alpha_r\}\subseteq \Sigma\subseteq [-A,A]$ such that
\[
    \sigma_{g_0(\Omega)}(\alpha(\m_1))\setminus \sigma_{g_0(\Omega),\textnormal{c}}(\alpha(\m_1)) =\Sigma.
\]
\end{theorem}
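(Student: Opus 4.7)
The first containment is straightforward: since $\alpha$ is real-valued with $\|\alpha(\m_1)\|\leq\max_j|\alpha_j|<A$, the compression $\iota_0^{*}\alpha(\m_1)\iota_0\in L(g_0(\Omega))$ is self-adjoint of norm strictly less than $A$, so its spectrum (which contains $\sigma_{g_0(\Omega),\textnormal{c}}(\alpha(\m_1))$) lies in $[-A,A]$.

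For the main identity, let $(\lambda_k)_k$ be the discrete sequence in $\R_{>0}$ furnished by Theorem~\ref{thm:ndtensor} for the cross-section $\hat\Omega$, and let $W\subseteq[-A,A]\setminus\{\alpha_0,\ldots,\alpha_r\}$ be the set produced by Theorem~\ref{thm:perturb} for this $(\lambda_k)_k$. I combine the two obstructions to applying Theorem~\ref{thm:ndtensor} into
\[
\Sigma_0\coloneqq\{\alpha_0,\ldots,\alpha_r\}\cup\bigl\{s\in[-A,A];\,\mm((\alpha-s)^{-1})=0\bigr\},\qquad\Sigma'\coloneqq\Sigma_0\cup\bigl([-A,A]\setminus W\bigr).
\]
Because $s\mapsto\sum_{j=0}^{r}(\alpha_j-s)^{-1}$ is a non-zero rational function of $s$ of degree at most $r$, $\Sigma_0$ is finite; together with Theorem~\ref{thm:perturb} this makes $\Sigma'$ countable and nowhere dense. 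For $s\in[-A,A]\setminus\Sigma'$, both $\mm((\alpha-s)^{-1})\neq 0$ and the $\tilde q$-criterion for the shifted coefficients $\alpha_0-s,\ldots,\alpha_r-s$ and $(\lambda_k)_k$ are in force; Theorem~\ref{thm:ndtensor} then unitarily identifies $-\Delta_{\alpha-s}$ with the diagonal operator $(D_{\alpha-s}+(\alpha-s)\lambda_k)_{k\in\N}$, and Corollary~\ref{cor:1D0} shows that this diagonal is boundedly invertible. Hence $0\in\rho(-\Delta_{\alpha-s})$, and Theorem~\ref{thm:innerspectcont}(b) gives $s\in\rho_{g_0(\Omega)}(\alpha(\m_1))\cup\sigma_{g_0(\Omega),\textnormal{c}}(\alpha(\m_1))$. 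Setting $\Sigma\coloneqq\sigma_{g_0(\Omega)}(\alpha(\m_1))\setminus\sigma_{g_0(\Omega),\textnormal{c}}(\alpha(\m_1))$, this yields $\Sigma\subseteq\Sigma'$, so $\Sigma$ is itself countable and nowhere dense.

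It remains to verify $\{\alpha_0,\ldots,\alpha_r\}\subseteq\Sigma$. Corollary~\ref{cor:eleminner} already places each $\alpha_j$ in $\sigma_{g_0(\Omega)}(\alpha(\m_1))$, so it is enough, via Lemma~\ref{lem:pointinner}, to exhibit a non-zero kernel element of $-\Delta_{\alpha-\alpha_j}$. Pick a non-zero $\check\phi\in C_c^\infty(0,1)$ supported inside the nondegenerate subinterval where $\alpha\equiv\alpha_j$, and any non-zero $\hat\phi\in C_c^\infty(\hat\Omega)$. Then $\phi(x_1,\hat x)\coloneqq\check\phi(x_1)\hat\phi(\hat x)\in H_0^1(\Omega)\setminus\{0\}$ satisfies $(\alpha-\alpha_j)\grad\phi=0$ and hence $-\Delta_{\alpha-\alpha_j}\phi=0$; Lemma~\ref{lem:pointinner} deposits $\alpha_j$ in the inner point spectrum, which is disjoint from $\sigma_{g_0(\Omega),\textnormal{c}}(\alpha(\m_1))$, so $\alpha_j\in\Sigma$ as required.

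The conceptual obstacle running through the argument is the one-way character of Theorem~\ref{thm:innerspectcont}(b): resolvent information on $-\Delta_{\alpha-s}$ excludes $s$ only from $\sigma_{g_0(\Omega),\textnormal{p}}\cup\sigma_{g_0(\Omega),\textnormal{r}}$, not from $\sigma_{g_0(\Omega),\textnormal{c}}$. This is precisely why the $\tilde q$-criterion is capable of describing the complement of the continuous inner spectrum inside the inner spectrum but not the inner spectrum in full (cf.~Remark~\ref{rem:conj}).
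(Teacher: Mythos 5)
Your proof is correct and follows the paper's own route: Theorem~\ref{thm:ndtensor} to reduce to the diagonal Sturm--Liouville family, Theorem~\ref{thm:perturb} for the $\tilde q$-criterion set $W$, Corollary~\ref{cor:1D0} plus Theorem~\ref{thm:innerspectcont}(b) to place the complement of a countable, nowhere dense set in $\rho_{g_0(\Omega)}(\alpha(\m_1))\cup\sigma_{g_0(\Omega),\textnormal{c}}(\alpha(\m_1))$, and the kernel construction for the endpoints. The one place where you are more careful than the paper's proof is worth noting: the paper justifies $\{\alpha_0,\ldots,\alpha_r\}\subseteq\Sigma$ simply by citing Corollary~\ref{cor:eleminner}, which on its face only gives $\{\alpha_j\}\subseteq\sigma_{g_0(\Omega)}(\alpha(\m_1))$, not disjointness from the inner \emph{continuous} spectrum; you close that gap explicitly by exhibiting the product eigenfunction $\check\phi\otimes\hat\phi\in\ker(-\Delta_{\alpha-\alpha_j})$ and routing it through Lemma~\ref{lem:pointinner} to land $\alpha_j$ in $\sigma_{g_0(\Omega),\textnormal{p}}$, which is disjoint from $\sigma_{g_0(\Omega),\textnormal{c}}$ by definition. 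That is the implicit content behind the paper's citation, but your version spells it out.
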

\begin{proof}
Since $\alpha_j\in \R$, for any $s\in \C\setminus\R$, $\gamma(\alpha(\m_1)-s)$ is coercive for a suitable $\gamma$, thus showing that $\sigma_{g_0(\Omega)}(\alpha(\m_1))\subseteq \R$. Similarly, it follows that $\sigma_{g_0(\Omega)}(\alpha(\m_1))\subseteq [-A,A]$. By Theorem \ref{thm:ndtensor} there exists a discrete sequence $(\lambda_k)_k$ in $\R_{>0}$ such that $-\Delta_\alpha$ is unitarily equivalent to $(D_\alpha+\alpha\lambda_k)_k$. From Theorem \ref{thm:perturb} it follows that  \[
   W = \{ s\in [-A,A]; \text{$\alpha_0-s, \ldots, \alpha_r-s$ and $(\lambda_k)_k$ satisfy the $\tilde{q}$-criterion}\}\setminus\{\alpha_0,\ldots,\alpha_r\}
\] is the complement of a closed and countable set. Note that $\mm((\alpha-s)^{-1})=0$ only for finitely many $s_1,\ldots,s_k\in \R$; recall $\rho_{g_0(\Omega)}(\alpha(\m_1))=\C\setminus \sigma_{g_0(\Omega)}(\alpha(\m_1))$.
By Corollary \ref{cor:1D0} and Theorem \ref{thm:innerspectcont} (b), we obtain that
\[
   W \setminus \{s_1,\ldots,s_k\} \subseteq \big( \rho_{g_0(\Omega)}(\alpha(\m_1)) \cap [-A,A]\big) \cup \sigma_{g_0(\Omega),\textnormal{c}}(\alpha(\m_1))
\]
Since, by Theorem \ref{thm:perturb},  the left-hand side is the complement of a countable and nowhere dense set, so is the right-hand side. Therefore, $\sigma_{g_0(\Omega)}(\alpha(\m_1))\setminus \sigma_{g_0(\Omega),\textnormal{c}}(\alpha(\m_1))\subseteq [-A,A]$ is countable and nowhere dense. The inclusion of  $\{\alpha_0,\ldots,\alpha_r\}$ in the spectrum is shown in Corollary \ref{cor:eleminner}.
\end{proof}

In the next couple of examples we explicitly compute $p_{\alpha+s}(t)$ for the sign-changing coefficients $\alpha_j=(-1)^j$.

\begin{example}\label{exam:minusoneplusone}
Let $r\in \N$ odd, $(\alpha_j)_j = ((-1)^j)_j$. Then we obtain for $s\neq \pm 1$
\begin{align*}
   p_{\alpha+s}(t)&= \begin{pmatrix}1 &  (-1+s)^{-1}  t  \end{pmatrix} \begin{pmatrix} 1 & (1+s)^{-1}t \\ (1+s)t  & 1\end{pmatrix}\cdots  \begin{pmatrix} 1 &(-1+s)^{-1}t  \\  (-1+s) t & 1\end{pmatrix} \begin{pmatrix}t  \\  1+s  \end{pmatrix}\\
   & =  \begin{pmatrix}1 &  \frac{1}{-1+s}  t  \end{pmatrix} \begin{pmatrix} 1+\frac{-1+s}{1+s}t^2 & (\frac{1}{1+s}+\frac{1}{-1+s})t \\ 2st  & 1+\frac{1+s}{-1+s}t^2\end{pmatrix}^{(r-1)/2}
    \begin{pmatrix}t  \\  1+s  \end{pmatrix}
\end{align*}
In particular, if $s=0$, we obtain
\begin{align*}
  p_{\alpha}(t) &=\begin{pmatrix}1 &  - t  \end{pmatrix} \begin{pmatrix} 1-t^2 & 0 \\ 0  & 1-t^2\end{pmatrix}^{(r-1)/2}
    \begin{pmatrix}t  \\  1  \end{pmatrix} \\
    & = (1-t^2)^{(r-1)/2} (t-t)\\
    &=0,
\end{align*}which confirms the irregular situation one is confronted with in \cite{BK18}.
\end{example}
Continuing the latter example, we shall now address whether the set $W$ in Theorem \ref{thm:perturb} is the entire interval (except $s=0$). It turns out that the answer depends on the number of slabs in the crosswalk.
\begin{example}\label{exam:minusoneplusone1}
Let $r\in \N$ odd, $(\alpha_j)_j = ((-1)^j)_j$. Then we obtain for $s\in (-1,1)$
\begin{align*}
   p_{\alpha+s}(t)   & =  \begin{pmatrix}1 &  \frac{1}{-1+s}  t  \end{pmatrix} \begin{pmatrix} 1+\frac{-1+s}{1+s}t^2 & (\frac{1}{1+s}+\frac{1}{-1+s})t \\ 2st  & 1+\frac{1+s}{-1+s}t^2\end{pmatrix}^{(r-1)/2}
    \begin{pmatrix}t  \\  1+s  \end{pmatrix}\\
   & =  \begin{pmatrix}1 &  \frac{1}{-1+s}  t  \end{pmatrix} \Big(1_2 +\begin{pmatrix} \frac{-1+s}{1+s}t^2 & (\frac{1}{1+s}+\frac{1}{-1+s})t \\ 2st  & \frac{1+s}{-1+s}t^2\end{pmatrix}\Big)^{(r-1)/2}
    \begin{pmatrix}t  \\  1+s  \end{pmatrix} \\
    & = \sum_{\ell=0}^{(r-1)/2} \binom{(r-1)/2}{\ell}\begin{pmatrix}1 &  \frac{1}{-1+s}  t  \end{pmatrix} \begin{pmatrix} \frac{-1+s}{1+s}t^2 & (\frac{1}{1+s}+\frac{1}{-1+s})t \\ 2st  & \frac{1+s}{-1+s}t^2\end{pmatrix}^{\ell}
    \begin{pmatrix}t  \\  1+s  \end{pmatrix},
\end{align*}
where $\binom{(r-1)/2}{\ell}$ denotes the binomial coefficent.
Using
\begin{align*}
 \begin{pmatrix} \frac{-1+s}{1+s}t^2 & (\frac{1}{1+s}+\frac{1}{-1+s})t \\ 2st  & \frac{1+s}{-1+s}t^2\end{pmatrix} & = 
 \frac{t}{s^2-1}  \begin{pmatrix}(-1+s)^2t & (-1+s+1+s) \\ 2s(s^2-1)  & (1+s)^2t\end{pmatrix} \\
 & =\frac{t}{s^2-1}  \begin{pmatrix}(-1+s)^2t & 2s \\ 2s(s^2-1)  & (1+s)^2t\end{pmatrix}, 
\end{align*}
we obtain
\begin{align*}
p_{\alpha+s}(t) &= \begin{pmatrix}1 &  \frac{1}{-1+s}  t  \end{pmatrix} \begin{pmatrix} 1+\frac{-1+s}{1+s}t^2 & (\frac{1}{1+s}+\frac{1}{-1+s})t \\ 2st  & 1+\frac{1+s}{-1+s}t^2\end{pmatrix}^{(r-1)/2}
    \begin{pmatrix}t  \\  1+s  \end{pmatrix}
\\ &= \sum_{\ell=0}^{(r-1)/2} \binom{(r-1)/2}{\ell} \big(\frac{t}{s^2-1}\big)^\ell \begin{pmatrix}1 &  \frac{1}{-1+s}  t  \end{pmatrix}  \begin{pmatrix}(-1+s)^2t & 2s \\ 2s(s^2-1)  & (1+s)^2t\end{pmatrix}^{\ell}
    \begin{pmatrix}t  \\  1+s  \end{pmatrix}.
\end{align*}
In the simplest case, $r=1$, we obtain
\[
p_{\alpha+s}(t) = \begin{pmatrix}1 &  \frac{1}{-1+s}  t  \end{pmatrix}  \begin{pmatrix}t  \\  1+s  \end{pmatrix} = t(1 +\frac{1+s}{-1+s}),
\]which, for all $s\neq 0$, leads to the applicability of Corollary \ref{cor:WP1} and, particularly, to the satisfaction of the $\tilde{q}$-criterion for all discrete $(\lambda_k)_k$ in $\R_{>0}$.

For $r=3$, however, we see that 
\begin{align*}
p_{\alpha+s}(t) & =   \begin{pmatrix}1 &  \frac{1}{-1+s}  t  \end{pmatrix} \begin{pmatrix}t  \\  1+s  \end{pmatrix} + \frac{t}{s^2-1}\begin{pmatrix}1 &  \frac{1}{-1+s}  t  \end{pmatrix}  \begin{pmatrix}(-1+s)^2t & 2s \\ 2s(s^2-1)  & (1+s)^2t\end{pmatrix}
    \begin{pmatrix}t  \\  1+s  \end{pmatrix} \\
    & = t(1 +\frac{1+s}{-1+s}) + 
    \frac{t}{s^2-1}\begin{pmatrix}(-1+s)^2t+2st(1+s) &  2s+2st\frac{(1+s)^2}{-1+s}  t  \end{pmatrix} 
    \begin{pmatrix}t  \\  1+s  \end{pmatrix} \\
    & = \frac{4 s t (s^2-1 + (s^2+1)t^2)}{(-1 + s)^2 (1 + s) }.
\end{align*}
For $s\neq 0$ and $|s|<1$, we obtain 
\[
   z(s)\coloneqq \sqrt{\frac{1-s^2}{1+s^2}}
\] as the only strictly positive zero of $p_{\alpha+s}$ in $(0,1)$. In particular, since for $s\to 1$, we get $z(s)\to 0$ and, for $s\to 0$, $z(s)\to 1$, we deduce that for all $(\mu_k)_k$ in $\R_{>0}$ discrete with $\mu_k\to \infty$ and $\varepsilon_0>0$, we find $s_0\in [-\varepsilon_0,\varepsilon_0]\setminus\{0\}$ so that $z(s_0) =\tanh(\mu_k h)$ for some $k\in \N$. In particular, using Theorem \ref{thm:charcrit1}, we find a sequence $(s_\ell)_\ell$ in $[-\varepsilon_0,\varepsilon_0]\setminus\{0\}$ such that the $\tilde{q}$-criterion is violated for $\alpha+s_\ell$ and $(\lambda_k)_k$. Hence, $\Sigma$ from Theorem \ref{thm:isp-dd} accumulates at $0$ showing it to be in the essential spectrum. In particular, in contrast to $r=1$, $0$ is \emph{not} an isolated inner spectral point.
\end{example}
The previous example shows that the statement in Theorem \ref{thm:perturb} is optimal, that is, in the case $\mm(\alpha^{-1})=0$, one cannot hope for that $0$ is isolated in $\sigma_{g_0(\Omega)}(\alpha(\m_1))$. On the other hand, unfortunately, the information on the inner continuous spectrum is rather coarse, which emphasises the importance of the conjecture mentioned in Remark \ref{rem:conj}.
 
In order to study highly oscillatory sign-changing coefficients, we provide some elements from operator theory next. 

\section{Classical $G$-convergence}\label{sec:classG}

We recall the concept of $G$-convergence, see \cite{Spagnolo1967,Spagnolo1976}. We let $\Omega\subseteq \R^d$ open and bounded, and recall $\iota_0 \colon g_0(\Omega)\hookrightarrow L_2(\Omega)^d$ from \eqref{eq:rangr}. We introduce the set
\[
  M(\Omega)\coloneqq \{ \tilde{a}\in L_\infty(\Omega)^{d\times d}; 0 \in \rho(\iota_0^*\tilde{a}\iota_0)\},
\]where we identify $\tilde{a}\in M(\Omega)$ with the corresponding multiplication operator on $L_2(\Omega)^d$. The restriction of $M(\Omega)$ to symmetric $L_\infty$-matrix valued functions will be denoted by $M_{\sym}(\Omega)$.

By Theorem \ref{thm:TW-Mana}, if $\tilde{a}\in M(\Omega)$, for any given $f\in H^{-1}(\Omega)$, the divergence form problem 
\[
   -\dive \tilde{a} \grad u= f
\]
is uniquely solvable for $u\in H_0^1(\Omega)$. Now, if $(\tilde{a}_n)_n$ in $M(\Omega)$ and $\tilde{a}\in M(\Omega)$, we say that $(\tilde{a}_n)_n$ \textbf{$G$-converges} to $\tilde{a}$, if for all $f\in H^{-1}(\Omega)$ and $u_n\in H_0^1(\Omega)$ with
\[
   -\dive \tilde{a}_n \grad u_n = f
\]implies $u_n\to u$ weakly in $H_0^1(\Omega)$ and
\[
    -\dive \tilde{a} \grad u = f.
\]
\begin{remark}
The concept of $G$-convergence is originally introduced for sequences contained in the set
\[
   M_{\sym}(\alpha,\beta,\Omega)\coloneqq \{\tilde{a}\in L_\infty(\Omega)^{d\times d}; \tilde{a}(x)=\tilde{a}(x)^*\geq \alpha, \|\tilde{a}\|\leq \beta\},
\]where $0<\alpha<\beta$. It can be shown that the $G$-limit is unique in $M_{\sym}(\alpha,\beta,\Omega)$. We address uniqueness in $M_{\sym}(\Omega)$ after the following characterisation of $G$-convergence.
\end{remark}
The basic operator theoretic description for $G$-convergence observed in \cite{W16_Gcon} and implicitly contained in \cite{Tartar2009} reads as follows, see also \cite{EGW17_D2N}. Again, it uses the representation of the solution for divergence form problems provided in Theorem \ref{thm:TW-Mana}. We will provide a slightly more general result in Theorem \ref{thm:Gconcocoer} below as it merely requires a closer inspection of the proof of the main result in \cite{W16_Gcon}.

\begin{theorem}[\cite{W16_Gcon}]\label{thm:homoGcon} Let $0<\alpha<\beta$ and $(\tilde{a}_n)_n$ in $M_{\sym}(\alpha,\beta,\Omega)$, $\tilde{a}\in M_{\sym}(\alpha,\beta,\Omega)$. Then the following conditions are equivalent:
\begin{enumerate}
\item[(i)] $(\tilde{a}_n)_n$ $G$-converges to $\tilde{a}$;
\item[(ii)] $(\iota_0^*\tilde{a}_n \iota_0)^{-1}\to (\iota_0^*\tilde{a} \iota_0)^{-1}$ in the weak operator topology.
\end{enumerate}
\end{theorem}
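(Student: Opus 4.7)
The plan is to exploit the explicit solution representation of Theorem \ref{thm:TW-Mana} to translate $G$-convergence into weak operator convergence of the reduced inverses with essentially no extra work. Take $C\coloneqq\grad$ with domain $H_0^1(\Omega)$; by the Poincar\'e inequality $C$ is injective with closed range $V_1=g_0(\Omega)$, so $C_{\rdd}$ agrees with $C$ on $H_0^1(\Omega)$, $H^1(C_{\rdd})=H_0^1(\Omega)$ and $H^{-1}(C_{\rdd})=H^{-1}(\Omega)$. For each $\tilde{a}\in M_{\sym}(\alpha,\beta,\Omega)$ and each $f\in H^{-1}(\Omega)$, Theorem \ref{thm:TW-Mana} provides the unique solution $u$ of $-\dive\tilde{a}\grad u=f$ as $u=C^{-1}(\iota_0^*\tilde{a}\iota_0)^{-1}(C_{\rdd}^\diamond)^{-1}f$; the single identity driving the whole argument is
\[
\grad u\;=\;(\iota_0^*\tilde{a}\iota_0)^{-1}\phi,\qquad \phi\coloneqq(C_{\rdd}^\diamond)^{-1}f\in g_0(\Omega),
\]
where $(C_{\rdd}^\diamond)^{-1}\colon H^{-1}(\Omega)\to g_0(\Omega)$ is a topological isomorphism independent of $\tilde{a}$.

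With this in hand, the two implications run symmetrically. For (ii)$\Rightarrow$(i), I would fix $f\in H^{-1}(\Omega)$, set $\phi\coloneqq(C_{\rdd}^\diamond)^{-1}f$ and apply the weak operator convergence to obtain $\grad u_n=(\iota_0^*\tilde{a}_n\iota_0)^{-1}\phi\rightharpoonup(\iota_0^*\tilde{a}\iota_0)^{-1}\phi=\grad u$ weakly in $g_0(\Omega)$, hence in $L_2(\Omega)^d$; continuity of $C^{-1}\colon g_0(\Omega)\to H_0^1(\Omega)$ (Poincar\'e again) then upgrades this to $u_n\rightharpoonup u$ in $H_0^1(\Omega)$, while the representation guarantees $-\dive\tilde{a}\grad u=f$. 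For (i)$\Rightarrow$(ii), I would reverse the procedure: starting from $\phi\in g_0(\Omega)$, set $f\coloneqq C_{\rdd}^\diamond\phi\in H^{-1}(\Omega)$, invoke $G$-convergence to deduce $u_n\rightharpoonup u$ in $H_0^1(\Omega)$, take gradients to conclude $(\iota_0^*\tilde{a}_n\iota_0)^{-1}\phi\rightharpoonup(\iota_0^*\tilde{a}\iota_0)^{-1}\phi$ in $L_2(\Omega)^d$, and test against arbitrary $\psi\in g_0(\Omega)$ to read off the weak operator convergence.

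There is no genuine obstacle here; Theorem \ref{thm:TW-Mana} does all of the heavy lifting by turning a PDE question into bounded-operator book-keeping. The only subtlety worth noting is the (standard) equivalence of weak convergence in the closed subspace $g_0(\Omega)$ and in the ambient space $L_2(\Omega)^d$ for sequences confined to $g_0(\Omega)$, which is immediate from weak closedness of the subspace. The symmetry assumption on the coefficients plays no role in the argument beyond ensuring $\tilde{a}\in M(\Omega)$, anticipating the more general statement of Theorem \ref{thm:Gconcocoer} mentioned in the paragraph preceding the theorem.
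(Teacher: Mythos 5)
Your proof is correct and follows essentially the same route as the paper's own proof (given for the more general Theorem \ref{thm:Gconcocoer}): both hinge on the explicit solution representation from Theorem \ref{thm:TW-Mana}, the fact that the outer factors $C_{\rdd}^{-1}$ and $(C_{\rdd}^\diamond)^{-1}$ are topological isomorphisms independent of the coefficient, and the observation that $(C_{\rdd}^\diamond)^{-1}$ maps $H^{-1}(\Omega)$ onto $g_0(\Omega)$, so quantifying over $f\in H^{-1}(\Omega)$ is the same as quantifying over $\phi\in g_0(\Omega)$. Your closing remark about the symmetry being inessential to the argument is also precisely what the paper uses when it deduces Theorem \ref{thm:Gconcocoer}.
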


A (straightforward) consequence of the proof of Theorem \ref{thm:homoGcon} is the following statement, which does not require coercivity conditions (nor the symmetry).
\begin{theorem}\label{thm:Gconcocoer} Let $(\tilde{a}_n)_n$ in $M(\Omega)$, $\tilde{a}\in M(\Omega)$. Then the following conditions are equivalent:
\begin{enumerate}
\item[(i)] $(\tilde{a}_n)_n$ $G$-converges to $\tilde{a}$;
\item[(ii)] $(\iota_0^*\tilde{a}_n \iota_0)^{-1}\to (\iota_0^*\tilde{a} \iota_0)^{-1}$ in the weak operator topology.
\end{enumerate}
\end{theorem}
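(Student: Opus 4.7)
The plan is to follow the argument underlying Theorem~\ref{thm:homoGcon} and observe that coercivity is never used once the solution formula from Theorem~\ref{thm:TW-Mana} is at our disposal. With $C = \mathring\partial \coloneqq \grad|_{H_0^1(\Omega)}$, one has $V_1 = g_0(\Omega)$, and $C$ is injective by the Poincar\'e inequality, so $\ran(C) = g_0(\Omega)$ is closed. Theorem~\ref{thm:TW-Mana}, applied to $a = \tilde{a}_n \in M(\Omega)$, then provides the key identity
\[
    \grad u_n = (\iota_0^*\tilde{a}_n\iota_0)^{-1}(C^\diamond)^{-1}f,
\]
valid for the unique $u_n \in H_0^1(\Omega)$ solving $-\dive\tilde{a}_n\grad u_n = f$, and analogously for $u$ and $\tilde{a}$. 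By the same theorem, $C^\diamond\colon g_0(\Omega)\to H^{-1}(\Omega)$ is a topological isomorphism, so that $f \mapsto (C^\diamond)^{-1}f$ is a bijection between $H^{-1}(\Omega)$ and $g_0(\Omega)$.

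For (ii)$\Rightarrow$(i), I would fix $f \in H^{-1}(\Omega)$, set $y \coloneqq (C^\diamond)^{-1}f \in g_0(\Omega)$, and read off from the identity that $\grad u_n = (\iota_0^*\tilde{a}_n\iota_0)^{-1}y$. The assumption (ii) yields $\grad u_n \rightharpoonup (\iota_0^*\tilde{a}\iota_0)^{-1}y = \grad u$ weakly in $g_0(\Omega)$; since $g_0(\Omega)$ is a closed subspace of $L_2(\Omega)^d$, this is equivalent to weak convergence in $L_2(\Omega)^d$, which, in conjunction with the Poincar\'e inequality, is equivalent to $u_n \rightharpoonup u$ in $H_0^1(\Omega)$. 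The equation $-\dive\tilde{a}\grad u = f$ holds by construction of $u$ via the same solution formula with $\tilde{a}$ in place of $\tilde{a}_n$.

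For (i)$\Rightarrow$(ii), every $g \in g_0(\Omega)$ can be represented as $g = (C^\diamond)^{-1}f$ with $f \coloneqq C^\diamond g \in H^{-1}(\Omega)$. The solution formula then gives $(\iota_0^*\tilde{a}_n\iota_0)^{-1}g = \grad u_n$ for the corresponding solutions $u_n$, and the $G$-convergence hypothesis yields $\grad u_n \rightharpoonup \grad u = (\iota_0^*\tilde{a}\iota_0)^{-1}g$ weakly in $L_2(\Omega)^d$, hence also in $g_0(\Omega)$. Since this holds for every $g \in g_0(\Omega)$, it is precisely the desired weak operator convergence $(\iota_0^*\tilde{a}_n\iota_0)^{-1}\to(\iota_0^*\tilde{a}\iota_0)^{-1}$.

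There is no genuine obstacle: the argument is a direct algebraic translation between two weak-convergence statements via the solution formula from Theorem~\ref{thm:TW-Mana}. The coercivity assumption of Theorem~\ref{thm:homoGcon}, previously needed only to secure well-posedness of the divergence form problem, is here replaced by the defining property of $M(\Omega)$, which guarantees exactly the same well-posedness and continuous invertibility of $\iota_0^*\tilde{a}_n\iota_0$ and $\iota_0^*\tilde{a}\iota_0$. This is what is meant by calling the statement a ``straightforward consequence'' of the proof of Theorem~\ref{thm:homoGcon}.
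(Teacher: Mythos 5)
Your proof is correct and follows essentially the same route as the paper's own argument: rewrite the solution via the factorisation from Theorem~\ref{thm:TW-Mana}, observe that $(\iota_0^*\grad)^{-1}$ and $(\dive\iota_0)^{-1}$ (your $(C^\diamond)^{-1}$, restricted to $g_0(\Omega)$, i.e.\ the reduced operator $C_{\rdd}^\diamond$) are topological isomorphisms, and translate weak convergence of $u_n$ in $H_0^1(\Omega)$ into weak convergence of $(\iota_0^*\tilde a_n\iota_0)^{-1}q$ for $q$ ranging over all of $g_0(\Omega)$. Aside from the slight notational conflation of $C^\diamond$ with its reduced version $C_{\rdd}^\diamond$, the argument matches the paper exactly.
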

\begin{proof}
Let $f\in H^{-1}(\Omega)$. For $n\in \N$, by Theorem \ref{thm:TW-Mana}, 
\[
  u_n = (\iota_0^*\grad)^{-1} (\iota_0^*\tilde{a}_n\iota_0)^{-1} (\dive\iota_0)^{-1} f.
\]Since $(\iota_0^*\grad)^{-1}$ is a topological isomorphism, $u_n\to u$ weakly in $H_0^1(\Omega)$ with \[u=(\iota_0^*\grad)^{-1} (\iota_0^*\tilde{a}_n\iota_0)^{-1} (\dive\iota_0)^{-1} f,\]  if, and only if, $(\iota_0^*\tilde{a}_n\iota_0)^{-1} (\dive\iota_0)^{-1} f \to (\iota_0^*\tilde{a}_n\iota_0)^{-1} (\dive\iota_0)^{-1} f$ weakly. So, if the former convergence holds for all $f$, the latter holds for all $q\in (\dive\iota_0)^{-1}[H^{-1}(\Omega)]=\ran(\iota_0)=g_0(\Omega)$. The assertion follows.
\end{proof}
\begin{remark}
In the situation of Theorem \ref{thm:Gconcocoer} consider the $L_2(\Omega)$ realisations $D_{\tilde{a}_n}$ of the divergence form operators $-\dive \tilde{a}_n\grad$ initially considered as operator from $H_0^1(\Omega)$ to $H^{-1}(\Omega)$, see also Theorem \ref{thm:TW-Mana}. Then, if $\tilde{a}_n$ $G$-converges to $\tilde{a}$, then $D_{\tilde{a}_n}^{-1}\to D_{\tilde{a}}^{-1}$ in operator norm.

Indeed, this follows using a similar technique as in the proof of \cite[Theorem 4.2]{EGW17_D2N}. For this we observe that both $\iota_0^*\grad$ and $(\iota_0^*\grad)^*=-\dive\iota_0$ have compact resolvent. In consequence, for all $(f_n)_n$ in $L_2(\Omega)$ weakly convergent to some $f$, we obtain, using the representation in Theorem \ref{thm:TW-Mana}, 
\[
u_n\coloneqq (\iota_0^*\grad)^{-1}(\iota_0^*\tilde{a}_n\iota_0)^{-1}(-\dive\iota_0)^{-1}f_n \to u\coloneqq (\iota_0^*\grad)^{-1}(\iota_0^*\tilde{a}\iota_0)^{-1}(-\dive\iota_0)^{-1}f
\]strongly in $L_2(\Omega)$ (recall that compact operators map weakly convergent sequences to strongly convergent sequences). Now, assuming that $D_{\tilde{a}_n}^{-1}$ does not converge to $D_{\tilde{a}}^{-1}$ in operator norm, we find a sequence of unit vectors $(f_n)_n$ in $L_2(\Omega)$ such that 
\[
\|D_{\tilde{a}_n}^{-1}f_n-D_{\tilde{a}}^{-1}f_n\|\geq \varepsilon,
\] for some $\varepsilon>0$. Re-using the index $n$ for a suitable weakly convergent subsequence of $(f_n)_n$ and using that $D_{\tilde{a}}^{-1}$ is a compact operator, we obtain a contradiction, as we have shown in the argument above, that we also have that $D_{\tilde{a}_n}^{-1}f_n\to D_{\tilde{a}}^{-1}f$ strongly as $n\to\infty$.
\end{remark}
\begin{remark}
Note that $\tilde{a}\in M(\Omega)$ if and only if $\gamma \tilde{a} \in M(\Omega)$ for all $\gamma\in \C$, $\gamma\neq 0$. Thus, if
\[
    W(\tilde{a}) \coloneqq \{ \langle \tilde{a}\phi,\phi\rangle, \phi\in L_2(\Omega)^d, \|\phi\|=1\},
\] the numerical range of $\tilde{a}$, is contained in a half space of $\C$ and $0\notin \overline{W(\tilde{a})}$, then we find $\gamma \in \C\setminus\{0\}$ such that $\Re \gamma \tilde{a}\geq c$ for some $c>0$. Next, let $(\tilde{a}_n)_n$ be bounded in $M(\Omega)\subseteq L_\infty(\Omega)^{d\times d}$ with $\bigcup_n W(\tilde{a}_n)$ being contained in a half space and $0\notin \overline{\bigcup_n W(\tilde{a}_n)}$. Then we find $\gamma$ and $c>0$ such that for all $n\in \N$, $\Re \gamma \tilde{a}_n\geq c$. If $(\tilde{a}_n)_n$ $G$-converges to some $\tilde{a}\in M(\Omega)$, then, by the characterisation in Theorem \ref{thm:Gconcocoer}, $(\gamma \tilde{a}_n)_n$ $G$-converges to $\gamma \tilde{a}$. In particular, it follows that $\Re \iota_0^* \gamma \tilde{a}\iota_0\geq \tilde{c}$ for some $\tilde{c}>0$; see also \cite[Proof of Theorem 6.5]{Tartar2009}. 
\end{remark}
The $G$-limit fails to be unique if one keeps the definition as is and considers non-symmetric operators $\tilde{a}_n$. However, if the coefficients are symmetric, that is, belong to the set
\[
M_{\sym}(\Omega) =\{\tilde{a} \in M(\Omega); \tilde{a}(x)=\tilde{a}(x)^*\;\text{a.e.~}x\in\Omega\},
\] the limit is unique:
\begin{corollary}[Uniqueness of $G$-limit]\label{cor:uniqueG} Let $(\tilde{a}_n)_n$ in $M_{\sym}(\Omega)$ $G$-converge to $\tilde{a}$ and $\tilde{b}$ in $M_{\sym}(\Omega)$. Then $\tilde{a}=\tilde{b}$.
\end{corollary}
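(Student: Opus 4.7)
The plan is to apply Theorem \ref{thm:Gconcocoer} to reduce the assertion to a pointwise identity on coefficients, and then extract that identity via carefully chosen quadratic test functions. First I would note: both $G$-convergences yield, by Theorem \ref{thm:Gconcocoer}, weak operator convergence of $(\iota_0^*\tilde a_n\iota_0)^{-1}$ to $(\iota_0^*\tilde a\iota_0)^{-1}$ and to $(\iota_0^*\tilde b\iota_0)^{-1}$. Uniqueness of weak operator limits then forces $\iota_0^*\tilde a\iota_0 = \iota_0^*\tilde b\iota_0$; equivalently, $c := \tilde a - \tilde b \in L_\infty(\Omega)^{d\times d}$, which is symmetric, satisfies
\[
\int_\Omega c(x)\grad u(x)\cdot \grad v(x)\,dx = 0 \qquad (u,v\in H_0^1(\Omega)).
\]
It remains to show this identity forces $c = 0$ almost everywhere.

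For this deduction, I would fix a ball $B = B_\delta(x_0)\Subset\Omega$ and a cutoff $\chi\in C_c^\infty(\Omega)$ with $\chi\equiv 1$ on an open neighbourhood of $\overline{B}$, so that $\grad\chi\equiv 0$ on $B$. As a first step, testing with $u_\xi(x) := \chi(x)\,\xi\cdot(x-x_0) \in H_0^1(\Omega)$ and $v\in C_c^\infty(B)$, and using $\grad u_\xi = \xi$ on $B$, reduces the identity to $\int c\xi\cdot\grad v\,dx = 0$; arbitrariness of $v$ and then of $B$ gives $\dive(ce_k) = 0$ in $\mathcal{D}'(\Omega)$ for each standard basis vector $e_k$.

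The core step is then to test with $u_{kl}(x) := \chi(x)\,(x-x_0)_k(x-x_0)_l$ and $v\in C_c^\infty(B)$; since $\grad u_{kl} = (x-x_0)_l e_k + (x-x_0)_k e_l$ on $B$, the identity becomes the distributional equation $\sum_i\partial_i\bigl(c_{ik}(x-x_0)_l + c_{il}(x-x_0)_k\bigr) = 0$ on $B$. Applying the Leibniz rule for distributional products of $L_\infty$ coefficients with the smooth polynomial factors $(x-x_0)_j$, and inserting $\dive(ce_k) = \dive(ce_l) = 0$ from the previous step, the two sums collapse to $c_{lk}$ and $c_{kl}$ respectively. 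Hence $c_{lk} + c_{kl} = 0$ in $\mathcal{D}'(B)$, and the symmetry of $c$ then yields $2c_{kl} = 0$, so $c_{kl} = 0$ a.e.\ on $B$; by arbitrariness of $B$, $c = 0$ a.e.\ in $\Omega$, and therefore $\tilde a = \tilde b$. The main — and only mild — obstacle is the careful application of the distributional Leibniz rule to the products $c_{ij}\cdot (x-x_0)_k$, which is exactly the step that converts the variational identity into the pointwise information $c_{kl} = 0$.
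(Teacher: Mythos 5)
Your proof is correct, and it departs from the paper's at exactly the point where the work is done. Both begin identically: via Theorem \ref{thm:Gconcocoer}, both $G$-limits have the same inverse projected coefficient, hence $\iota_0^*\tilde a\iota_0 = \iota_0^*\tilde b\iota_0$, i.e.\ $\int_\Omega c\,\grad u\cdot\grad v = 0$ for all $u,v\in H_0^1(\Omega)$ with $c := \tilde a - \tilde b$ symmetric. The paper then inserts oscillating test functions $u_\lambda = \tau\,\e^{\ii\lambda x\cdot\xi}$ into the quadratic form, divides by $\lambda^2$ and sends $\lambda\to\infty$ to isolate $\int|\tau|^2\langle\tilde a(x)\xi,\xi\rangle\,\dd x = 0$, then uses symmetry to conclude. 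You instead use localized polynomial test functions in the bilinear form: the linear ones give $\dive(ce_k)=0$ in $\mathcal{D}'(\Omega)$, and the quadratic ones, combined with the distributional Leibniz rule and the just-proved divergence-free condition, give $c_{lk}+c_{kl}=0$, whence $c_{kl}=0$ by symmetry. Both are versions of the standard observation that a multiplication operator which maps $g_0(\Omega)$ into $g_0(\Omega)^\perp$ must vanish; the paper's high-frequency argument is shorter and closer to the usual $H$-convergence localisation lemma, while yours avoids the limit in $\lambda$ at the cost of a second layer of test functions and a (correctly executed) distributional product computation. One small cosmetic remark: since the scalar product is complex, you should either restrict to real-valued test functions or carry the conjugate on $\grad v$ — it changes nothing because $c$ is real and symmetric.
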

\begin{proof}
We follow the argument in \cite[p.~201]{EGW17_D2N}; see also \cite{Tartar2009}. By Theorem \ref{thm:Gconcocoer} it follows that $(\iota_0^* \tilde{a}\iota_0)^{-1}=(\iota_0^* \tilde{b}\iota_0)^{-1}$ and, thus, $\iota_0^* \tilde{a}\iota_0 = \iota_0^* \tilde{b}\iota_0$. By linearity, it suffices to show for $\tilde{a}\in M_{\sym}(\Omega)$ that $\iota_0^*\tilde{a}\iota_0 = 0$ implies $\tilde{a}=0$. For this, take $\tau\in C_c^\infty(\Omega)$ and $\xi\in \R^d$. Then define for $\lambda\geq 0$, $u_\lambda (x)\coloneqq \tau(x)\e^{\ii \lambda x\xi}$, it follows that $u_\lambda\in H_0^1(\Omega)$ and, by assumption,
\begin{align*}
 0 = \langle \tilde{a} \grad u_\lambda, \grad u_\lambda \rangle = \langle \tilde{a} (\grad \tau + \ii\lambda \xi \tau ), (\grad \tau + \ii\lambda \xi \tau )\rangle.
\end{align*}
Dividing by $\lambda^2$ and letting $\lambda\to \infty$, we deduce
\[
   0 = \int_{\Omega} |\tau(x)|^2 \langle \tilde{a}(x)\xi,\xi\rangle \dd x
\]
This leads to $\langle \tilde{a}(x)\xi,\xi\rangle =0$ for a.e.~$x\in \Omega$ and, by the separability of $\R^d$ and $\tilde{a}=\tilde{a}^*$, we infer $\tilde{a}=0$.
\end{proof}

\begin{remark}
The uniqueness statement can still be obtained for $H$-convergence replacing $G$-convergence, see \cite{Murat1997}. We shall not discuss this here since for our purposes the above uniqueness statement is sufficient.
\end{remark}

\section{Holomorphic $G$-convergence}\label{sec:holG}

This section is devoted to generalise the concept of $G$-convergence in order to deal with operators that are not necessarily invertible. 

In order to provide the broader context of the results to come, we present an example highlighting the intricacies of the weak operator topology. The example is based on the following well-known result.

\begin{theorem}[{{see, e.g., \cite{Cioranescu1999} or \cite[Theorem 13.2.4 and Remark 13.2.5]{STW_EE21}}}]\label{thm:perweak} Let $\Omega\subseteq \R^d$ open, $f\colon \R^d\to \C$ $1$-periodic (i.e., for all $k\in \Z^d$, $f(\cdot+k)=f$), bounded, measurable. Then 
\[
   f(n\cdot)\to \int_{[0,1)^d} f(x)dx 
\]as $n\to\infty$ in $\sigma(L_\infty(\Omega),L_1(\Omega))$, the weak*-topology of $L_\infty(\Omega)$.
\end{theorem}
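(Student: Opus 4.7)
The plan is to verify, for every $g\in L_1(\Omega)$, the scalar convergence
\[
\int_\Omega f(nx)\,g(x)\,\dd x \;\longrightarrow\; \Big(\int_{[0,1)^d} f(y)\,\dd y\Big)\int_\Omega g(x)\,\dd x,
\]
which is precisely the defining convergence of $\sigma(L_\infty(\Omega),L_1(\Omega))$. Since $\|f(n\cdot)\|_{L_\infty(\Omega)}\leq \|f\|_\infty$ uniformly in $n$, a standard three-$\varepsilon$ argument reduces matters to a dense subclass of $L_1(\Omega)$; I would take the linear span of characteristic functions $\1_Q$ of axis-parallel open cubes $Q$ with $\overline{Q}\subseteq \Omega$, so that by linearity it suffices to treat $g=\1_Q$ alone.

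For such a cube, the substitution $y=nx$ rewrites the test integral as $n^{-d}\int_{nQ} f(y)\,\dd y$. The key idea is then to tile $nQ$ by integer translates of $[0,1)^d$, distinguishing the interior translates $I_n\coloneqq \{k\in\Z^d : k+[0,1)^d\subseteq nQ\}$ from the boundary translates $B_n\coloneqq \{k\in \Z^d : (k+[0,1)^d)\cap nQ\neq \emptyset,\ k\notin I_n\}$. By $1$-periodicity every interior translate contributes exactly $\int_{[0,1)^d} f$; elementary counting gives $\#I_n=n^d\lambda(Q)+O(n^{d-1})$ while $\#B_n=O(n^{d-1})$, the constants depending only on $Q$, and each boundary cube contributes at most $\|f\|_\infty$ in absolute value. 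Summing and dividing by $n^d$ yields
\[
n^{-d}\int_{nQ} f(y)\,\dd y \;=\; \lambda(Q)\int_{[0,1)^d} f(y)\,\dd y\;+\;O(1/n),
\]
and passing to the limit $n\to\infty$ establishes the claim for $g=\1_Q$.

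The only real obstacles are bookkeeping: the reduction to the dense test class (standard, courtesy of the uniform $L_\infty$-bound) and the estimate $\#B_n=O(n^{d-1})$ (immediate from the fact that the Minkowski thickening of $\partial(nQ)$ by $\sqrt{d}$ has volume $O(n^{d-1})$). Bilinearity and density then extend the identity to all $g\in L_1(\Omega)$ and deliver the asserted weak-$*$ convergence.
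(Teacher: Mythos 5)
Your argument is correct. Note that the paper does not actually prove Theorem~\ref{thm:perweak} --- it only quotes it with references to \cite{Cioranescu1999} and \cite[Theorem~13.2.4 and Remark~13.2.5]{STW_EE21} --- so there is no in-text proof to compare against. What you give is the classical elementary argument: by the uniform bound $\|f(n\cdot)\|_{L_\infty(\Omega)}\le\|f\|_\infty$ and density in $L_1(\Omega)$ of the span of indicators of axis-parallel cubes compactly contained in $\Omega$, the problem reduces to $g=\1_Q$; rescaling $y=nx$ and tiling $nQ$ by integer translates of $[0,1)^d$, the $\#I_n=n^d\lambda(Q)+O(n^{d-1})$ interior translates each contribute $\int_{[0,1)^d}f$ by periodicity while the $O(n^{d-1})$ boundary translates contribute $O(n^{d-1})\|f\|_\infty$, so after dividing by $n^d$ one obtains $\lambda(Q)\int_{[0,1)^d}f+O(1/n)$, as required. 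The cube-counting and Minkowski-thickening bounds are all correct, the reduction handles unbounded $\Omega$ without change, and the only slip is cosmetic: the closing appeal should read ``linearity'' rather than ``bilinearity''.
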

The following remark recalls a well-known fact about the  relationship between the weak*-topology of $L_\infty(\Omega)$ and the weak operator topology on $L(L_2(\Omega))$.
\begin{remark} For a bounded, measurable function $f\colon \Omega\to \C$ we denote the associated multiplication operator by
\[
  f(\m)\colon L_2(\Omega)\to L_2(\Omega), \phi\mapsto [x\mapsto f(x)
  \phi(x)].
\]
Then it is easy to see that the mapping
\[
   (L_\infty(\Omega),\sigma(L_\infty(\Omega),L_1(\Omega))) \ni f\mapsto f(\m)\in (L(L_2(\Omega)),\tau_{\textnormal{w}})
\]is a homeomorphism onto its image, see also \cite[Proposition 13.2.1]{STW_EE21}. Indeed, this follows from the fact that any $L_1$-function can be written as a product of two $L_2$-functions and that, by the Cauchy--Schwarz inequality, the product of any two $L_2$-functions is in $L_1$.
\end{remark}
We come to the announced example showing that convergence of inverses in the weak operator does not imply convergence of the respective resolvents. 
\begin{example}\label{exa:holo} (a) Let $f = \frac12 \1_{(0,1/2]}+\frac13\1_{(1/2,1]}$. Periodically extend $f$ to whole of $\R$. Define $f_n\coloneqq f(n\cdot).$  Then for all $\lambda$, sufficiently small, we have
\[
   (f_n(\m)+\lambda)^{-1} \to \int_{0}^1 \frac{1}{f(x)+\lambda}dx = \frac12 (\frac{1}{1/2+\lambda}+\frac{1}{1/3+\lambda}).
\]
Note that for $\lambda=0$, we have 
\[
  f_n(\m)^{-1}\to \frac{5}{2}.
\]
(b) Define 
\[g_n\coloneqq \begin{cases} f_n(\m),& n\text{ odd},\\
\frac25,&n\text{ even}.
\end{cases}\] Then $g_n^{-1}$ converges in the weak operator topology to $\frac52$. However, for $\lambda\neq 0$ we have 
\[
   \frac12 (\frac{1}{1/2+\lambda}+\frac{1}{1/3+\lambda})\neq (\frac25+\lambda)^{-1}.
\]
\end{example}
Even though, we have just seen that the convergence of the resolvents is not necessarily implied by the convergence of the inverses, we shall provide an elementary characterisation next. This statement provides  a link between resolvents and inverses; which itself is a consequence of Cauchy's integral formula.

\begin{proposition}\label{thm:justification} Let $(a_n)_n$ in $L(H)$, $b\in L(H)$. Then the following conditions are equivalent:
\begin{enumerate}
\item[(i)] $a_n \to b$ in the weak operator topology;
\item[(ii)] for all $n\in \N$ there exists $f_n\colon \dom(f_n)\subseteq \C\to L(H)$ holomorphic in a neighbourhood of $0$ with $f_n(0)=a_n$ and
\[
   \int_{\gamma_n} f_n(\lambda)\frac{1}{\lambda} d\lambda \to    \int_{\gamma_\infty}f_\infty(\lambda)\frac{1}{\lambda} d\lambda
\]
in the weak operator topology, where $f_\infty\colon \dom(f_\infty)\subseteq \C\to L(H)$ holomorphic around $0$ with $f_\infty(0)=b$ and $\gamma_n$ is a closed, piecewise continuously differentiable path in $\dom(f_n)$  with $\ind_0 \gamma_n = 1$ for all $n\in \N\cup \{\infty\}$.
\end{enumerate}
\end{proposition}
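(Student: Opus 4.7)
The plan is to observe that both directions are immediate consequences of the (operator-valued) Cauchy integral formula, so the proposition really amounts to repackaging weak operator convergence through loop integrals that pin down the value at $0$.

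First I would note that, since $f_n$ and $f_\infty$ are holomorphic in a neighbourhood of $0$ and $\gamma_n$, $\gamma_\infty$ are closed, piecewise continuously differentiable loops with winding number $1$ about $0$, the scalar Cauchy integral formula applied to the entry functions $\lambda\mapsto \langle u,f_n(\lambda)v\rangle$ (for $u,v\in H$) gives
\[
   \bigl\langle u,\tfrac{1}{2\pi\ii}\int_{\gamma_n} f_n(\lambda)\tfrac{1}{\lambda}\,\dd\lambda\,v\bigr\rangle = \langle u,f_n(0)v\rangle =\langle u,a_n v\rangle,
\]
and analogously $\tfrac{1}{2\pi\ii}\int_{\gamma_\infty} f_\infty(\lambda)\tfrac{1}{\lambda}\,\dd\lambda = b$ as a weak (equivalently, strong) identity in $L(H)$. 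Hence in either direction the scalar $\tfrac{1}{2\pi\ii}$ converts the convergence of the loop integrals in the weak operator topology into convergence of the constant terms, and vice versa.

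For (ii)$\Rightarrow$(i), the identity above directly yields
\[
   2\pi\ii\, a_n = \int_{\gamma_n} f_n(\lambda)\tfrac{1}{\lambda}\,\dd\lambda \stackrel{\tau_{\mathrm{w}}}{\longrightarrow} \int_{\gamma_\infty} f_\infty(\lambda)\tfrac{1}{\lambda}\,\dd\lambda = 2\pi\ii\, b,
\]
so $a_n\to b$ in the weak operator topology. For (i)$\Rightarrow$(ii), it suffices to exhibit an admissible family. I would take $f_n\colon \C\to L(H)$ to be the constant function $f_n(\lambda)\coloneqq a_n$, $f_\infty(\lambda)\coloneqq b$, and $\gamma_n = \gamma_\infty \coloneqq \partial B_\C(0,1)$ oriented counter-clockwise, which is closed, piecewise continuously differentiable, and has winding number $1$ about $0$. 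All $f_n$ are entire, $f_n(0)=a_n$, $f_\infty(0)=b$, and by the computation above the loop integrals equal $2\pi\ii\, a_n$ and $2\pi\ii\, b$ respectively; convergence of these in the weak operator topology is then identical to the assumed convergence $a_n\to b$.

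There is no real obstacle: the only delicate point is justifying that the operator-valued line integral commutes with the scalar sesquilinear form $\langle u,\cdot\,v\rangle$, which holds since the integrand $\lambda\mapsto f_n(\lambda)/\lambda$ is continuous on the compact trace of $\gamma_n$ and the integral is defined as a Bochner (or even Riemann) integral in $L(H)$; this is standard (see e.g. \cite[Proposition A.3]{ABHN11}). Once this is noted, the proposition is a direct bookkeeping statement around Cauchy's formula.
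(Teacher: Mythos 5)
Your proof is correct and follows exactly the paper's argument: for (i)$\Rightarrow$(ii) choose constant holomorphic functions and any admissible loops, and for (ii)$\Rightarrow$(i) apply Cauchy's integral formula to recover $a_n$ and $b$ from the loop integrals. The only difference is that you are more scrupulous than the paper about the $2\pi\ii$ normalisation and about justifying the interchange of the Bochner integral with the sesquilinear form, both of which are welcome but inessential refinements.
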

\begin{proof}
(i)$\Rightarrow$(ii) Choose $f_n(\lambda)\coloneqq a_n$,$f_\infty(\lambda)\coloneqq b$, and any paths $\gamma_n$, $\gamma_\infty$ with the described properties in $\C$.

 (ii)$\Rightarrow$(i)
 By Cauchy's integral formula,  $ \int_{\gamma_n} f_n(\lambda)\frac{1}{\lambda} d\lambda = a_n$ and $\int_{\gamma_\infty}f_\infty(\lambda)\frac{1}{\lambda} d\lambda=b$.
\end{proof}

This observation provides a way of generalising convergence in the weak operator topology. The major application, we have in mind, of the above is $f_n(\lambda)=(a_n-\lambda)^{-1}$ for $a_n$ invertible. In the above theorem, if we had $(a_n-\lambda)^{-1}\to (a-\lambda)^{-1}$ in the weak operator topology for all $\lambda$ in a sufficiently small neighbourhood of $0$, we immediately get that $a_n^{-1}\to a^{-1}$ in the weak operator topology. The above example however shows that the weak operator topology limit of resolvents is not necessarily a resolvent again. Moreover, we are interested in cases for $a_n$ not necessarily invertible. Thus, the functions $f_n$ we are interested in have a singularity at $0$. Even worse, the singularities are not isolated at $0$ (uniformly in $n$). 

For a Hilbert spaces $H_1,H_2$ and an open $\omega\subseteq \C$, we introduce the set
\[
   \mathcal{H}(\omega;L(H_1,H_2))\coloneqq \{ f\colon \omega\to L(H_1,H_2); f \text{ holomorphic}\}
\]of operator-valued holomorphic functions.
Let $(f_n)_n$ in $   \mathcal{H}(\omega;L(H_1,H_2))$. We call $(f_n)_n$ \textbf{locally bounded} or \textbf{normal}, if for all $K\subseteq \omega$ compact, $\sup_{n\in \N} \|f_n\|_{\infty,K}<\infty$, where $\|f\|_{\infty,K}\coloneqq \sup_{\lambda\in K} \|f(\lambda)\|_{L(H_1,H_2)}$ for all $f\in    \mathcal{H}(\omega;L(H_1,H_2))$.

$(f_n)_n$ in $\mathcal{H}(\omega;L(H_1,H_2))$ is said to converge in the \textbf{compact open weak operator topology} to some $f$, $f_n\stackrel{\cowot}{\to}f$ or $f=\cowotl_{n\to\infty}f_n$, if for all $\phi,\psi\in H$
\[
    \langle \phi,f_{n}(\cdot)\psi\rangle \to     \langle \phi,f(\cdot)\psi\rangle
\]
in the compact open topology of uniform convergence on compact sets. If $H_2=\C$, then $L(H_1)=H_1$, by the Riesz representation theorem, and we say \textbf{compact open weak topology} instead of compact open weak operator topology and use $\stackrel{\cow}{\to}$ and $\cowl$ as suitable symbols.

With the help of Tikhonov's theorem in conjunction with Montel's theorem as well as using Riesz representation and Dunford's theorem characterising vector-valued holomorphic functions, one can show Montel's theorem also for the compact open weak operator topology. For general Hilbert spaces, it is only possible to provide the existence of an accumulation point. If $H$ is separable, the full statement of the scalar version of Montel's theorem can be recovered:

\begin{theorem}[{{Montel's theorem, \cite[Theorem 3.4]{W12_HO}, or \cite{W11_P}}}]\label{thm:MontalOV} Let $H_1,H_2$ be a separable Hilbert spaces, $\omega\subseteq \C$ open. $(f_n)_n$ a normal family of holomorphic functions from $\omega$ to $L(H_1,H_2)$. Then there exists a subsequence $(f_{\pi(n)})_n$ and $f\colon \omega\to L(H_1,H_2)$ with $\cowotl_{n\to\infty} f_{\pi(n)} = f$.
\end{theorem}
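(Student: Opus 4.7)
The plan is a diagonal argument reducing the statement to the classical scalar-valued Montel theorem, followed by a Riesz-type identification of the limit and an invocation of Dunford's theorem to secure the operator-valued holomorphy.

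First, pick countable dense subsets $D_i \subseteq H_i$ ($i=1,2$). For each $(\phi,\psi)\in D_1\times D_2$, the scalar holomorphic functions $g_{n,\phi,\psi}(\lambda)\coloneqq \langle \psi, f_n(\lambda)\phi\rangle$ form a normal family on $\omega$, since $|g_{n,\phi,\psi}(\lambda)|\leq \|\phi\|\|\psi\|\|f_n(\lambda)\|$ and $(f_n)_n$ is locally bounded by assumption. Classical Montel applied to each such family, combined with a Cantor diagonal over the countable product $D_1\times D_2$, yields a single subsequence $(f_{\pi(n)})_n$ such that $g_{\pi(n),\phi,\psi}$ converges in the compact-open topology to some scalar holomorphic function for every $(\phi,\psi)\in D_1\times D_2$.

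Second, extend the convergence to arbitrary pairs in $H_1\times H_2$. Fix a compact $K\subseteq\omega$ and set $M_K\coloneqq \sup_n\|f_{\pi(n)}\|_{\infty,K}<\infty$. Given $(\phi,\psi)\in H_1\times H_2$ and $\varepsilon>0$, approximate by $(\phi',\psi')\in D_1\times D_2$; sesquilinearity combined with the uniform bound $M_K$ shows that $(g_{\pi(n),\phi,\psi})_n$ differs uniformly on $K$ from $(g_{\pi(n),\phi',\psi'})_n$ by at most a controlled amount, so a standard $\varepsilon/3$-argument gives that $(g_{\pi(n),\phi,\psi})_n$ is Cauchy in the compact-open topology on $\omega$. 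Hence it converges to a scalar holomorphic function $B(\psi,\phi;\cdot)$ on $\omega$.

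Third, for each fixed $\lambda\in\omega$, the map $(\phi,\psi)\mapsto B(\psi,\phi;\lambda)$ is sesquilinear (as a pointwise limit of sesquilinear forms) and bounded by $M_{\{\lambda\}}\|\phi\|\|\psi\|$; the Riesz representation theorem produces $f(\lambda)\in L(H_1,H_2)$ with $\langle\psi,f(\lambda)\phi\rangle=B(\psi,\phi;\lambda)$, and by construction $f_{\pi(n)}\stackrel{\cowot}{\to} f$. To verify that $f$ is in fact norm-holomorphic (implicit in the statement via the codomain $L(H_1,H_2)$, although the bare definition of $\cowot$-convergence does not demand it), note that each scalar map $\lambda\mapsto\langle\psi,f(\lambda)\phi\rangle$ is a compact-open limit of scalar holomorphic functions, hence holomorphic; combined with local boundedness $\|f(\lambda)\|\leq M_{\{\lambda\}}$ (by Banach--Steinhaus applied uniformly on compact sets), Dunford's theorem upgrades weak-operator holomorphy to norm holomorphy.

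The main obstacle is this last upgrade: pointwise weak-operator holomorphy does not obviously force norm holomorphy for $L(H_1,H_2)$-valued maps, since the functionals of the form $T\mapsto\langle\psi,T\phi\rangle$ do not exhaust $L(H_1,H_2)^*$. The standard remedy, and the one implicit in the paper's parenthetical citation of Dunford, is to argue scalar-wise with Cauchy's integral formula, $\langle\psi,f(\lambda)\phi\rangle=(2\pi\ii)^{-1}\oint_\gamma \langle\psi,f(\mu)\phi\rangle/(\mu-\lambda)\,\dd\mu$, and then, using local boundedness, recast the right-hand side as a norm-convergent integral whose value equals $f(\lambda)$, yielding holomorphy in the operator norm. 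The rest of the argument is a routine diagonal-plus-approximation exercise.
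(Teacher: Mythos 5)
Your proof is correct and follows the same route the paper intends: the paper's own proof is essentially a citation to \cite[Theorem~3.4]{W12_HO} (which handles a single compact set, i.e.\ uniformly bounded sequences) plus a diagonalization over an exhaustion of $\omega$, and your argument is the detailed unfolding of exactly that citation --- scalar Montel over a countable dense set $D_1\times D_2$, Cantor diagonalization, an $\varepsilon/3$-density argument to pass to all $(\phi,\psi)$, and Riesz representation to assemble $f(\lambda)$. The one place that deserves a slightly more careful treatment is the weak-to-strong holomorphy upgrade at the end, and you are right to flag it: bare Dunford requires weak holomorphy against \emph{all} of $L(H_1,H_2)^*$, and the functionals $T\mapsto\langle\psi,T\phi\rangle$ do not span a dense subspace of that dual. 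Your Cauchy-integral remedy is correct in spirit, but writing $\oint_\gamma f(\mu)(\mu-\lambda)^{-1}\,\dd\mu$ directly as a norm-convergent Bochner integral presupposes strong measurability of $f$, which is not yet known at that stage. The clean fix is the power-series form of your idea: the scalar Cauchy coefficients $a_k(\psi,\phi)=\tfrac{1}{2\pi\ii}\oint_\gamma\langle\psi,f(\mu)\phi\rangle(\mu-\lambda_0)^{-k-1}\,\dd\mu$ are bounded sesquilinear forms with $\abs{a_k(\psi,\phi)}\le Mr^{-k}\norm{\psi}\norm{\phi}$, Riesz gives operators $A_k$ with $\norm{A_k}\le Mr^{-k}$, hence $\sum_k A_k(\lambda-\lambda_0)^k$ converges in operator norm and agrees with $f$ against every pair $(\psi,\phi)$, so equals $f$. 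Equivalently, one can invoke the Arendt--Nikolski refinement of Dunford's theorem (local boundedness together with weak holomorphy against a separating subspace of the dual yields norm holomorphy), which is what is tacitly at work here. Finally, note that the theorem as stated only asserts $f\colon\omega\to L(H_1,H_2)$ and does not formally claim $f$ holomorphic, but that holomorphy is used later (e.g.\ in the proof of Theorem~\ref{thm:hGcontw}), so your upgrade is the right content to include.
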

\begin{proof}
In \cite[Theorem 3.4]{W12_HO} it has been shown that uniformly bounded sequences admit a convergent subsequence in the compact open weak operator topology. Using a standard exhaustion of $\omega$ with open sets being compactly contained in $\omega$, the statement follows upon applying a standard diagonal procedure. 
\end{proof}

The abstract concepts now help to generalise convergence in the weak operator topology of inverses of bounded linear operators. For this let $H$ be a separable Hilbert space and $(a_n)_n$ a sequence in $L(H)$ and assume there exists $\omega\subseteq \bigcap_{n\in\N} \rho(a_n)$ open in $\C$ with $0\in \overline{\omega}$ such that $(a_n-\cdot)^{-1}$ is locally bounded on $\omega$. Let $a\subseteq H\times H$ be a relation. Then we say that $(a_n)_n$ \textbf{holomorphically $G$-converges (on $\omega$)} to $a$, if the set $\mathcal{G}$ of $\phi\in H$ satisfying the following two conditions (a) and (b) is dense in $H$, where
\begin{enumerate}
\item[(a)] there exists $f_\phi \colon \omega \to H$ holomorphic, such that $(a_n-\cdot)^{-1}\phi \to f_\phi$ in the compact open weak topology,
\item[(b)] $f_\phi$ admits a holomorphic extension to $0$;
\end{enumerate} and 
\[
   a^{-1} = \{ (\phi,\psi)\in \mathcal{G}\times H; \psi = f_\phi(0)\}.
\]
%

\begin{remark}\label{rem:holG}
(a) By the identity theorem, for $\phi\in \mathcal{G}$, the holomorphic extension of $f_\phi$ in a neighbourhood of zero is uniquely determined from its values on $\omega$ since $0\in \overline{\omega}$. Thus, the holomorphic $G$-limit is unique. 

(b) The holomorphic $G$-limit may depend on $\omega$. If, however, 
\[\hat{\omega}\coloneqq\{ \lambda \in \bigcap_{n\in \N} \rho(a_n); (a_n-\lambda)^{-1} \text{ bounded}\}
\] consists of only one connected component and $(a_n)_n$ is bounded, then the limit is independent of the choice of $\omega$. This property will \emph{always} be satisfied in the applications considered in the present manuscript.

Let us prove the just mentioned uniqueness statement. Let $(a_n)_n$ holomorphically $G$-converge on $\omega_1$ to $a$ and on $\omega_2$ to $b$. Denote $\mathcal{G}_a\coloneqq \dom(a^{-1})$ and $\mathcal{G}_b\coloneqq \dom(b^{-1})$. By definition, $(a_n-\cdot)^{-1}\phi\eqqcolon f_{n,\phi} \stackrel{\cow}{\to} f_{\phi}^{(a)}$ on $\omega_1$ and $(a_n-\cdot)^{-1}\psi\eqqcolon f_{n,\psi} \stackrel{\cow}{\to} f_{\psi}^{(b)}$ on $\omega_2$ for all $\phi\in \mathcal{G}_a$ and $\psi\in \mathcal{G}_b$. By the Vitali Convergence Theorem (\cite[Theorem 6.2.8]{Simon2015}) (carried out weakly), for all $\phi\in \mathcal{G}_a$, $(a_n-\cdot)^{-1}\phi= f_{n,\phi} \stackrel{\cow}{\to} f_{\phi}$ on the whole of $\hat{\omega}$ for some $f_\phi\colon \hat{\omega}\to H$ holomorphiccally extending $f_\phi^{(a)}$. In particular, $f_{n,\phi} \stackrel{\cow}{\to} f_{\phi}$ on $\omega_2$. By the identity theorem, it follows that $f_{\phi} = f_{\phi}^{(b)}$ and $\phi\in \mathcal{G}_b$ as well as $(\phi,f_{\phi}(0))\in b^{-1}$. Thus, $a^{-1}\subseteq b^{-1}$. By symmetry, $a=b$.

(c) Given the knowledge of $a_n^{-1}\to a^{-1}$ in the weak operator topology does not imply holomorphic $G$-convergence of $(a_n)_n$ to $a$ as Example \ref{exa:holo}(b) confirms. It is however possible to still obtain a suitable equivalence statement as Theorem \ref{thm:hGcontw} below shows.

(d) If $(a_n)_n$ holomorphically $G$-converges to $a$ with $\mathcal{G}_a=H$ and if there exists $\varepsilon>0$ such that  $f_\phi$ is holomorphic on $B(0,\varepsilon)$ for all $\phi \in H$. Then, by Dunford's theorem,  $f_n =(a_n -\cdot)^{-1}$ converges in the compact open weak operator topology on $B(0,\varepsilon)$.

(e) If $\hat{\omega}$ from (b) satisfes $B(0,\varepsilon)\subseteq \hat{\omega}$ for some $\varepsilon>0$ and if $(a_n)_n$ holomorphically $G$-converges to $a$, then as $(a_n-\cdot)^{-1}$ is a normal family on $B(0,\varepsilon)$, the density of $\mathcal{G}_a$ yields $\mathcal{G}_a=H$. Hence, by (d), $(a_n-\cdot)^{-1}$ is convergent in the compact open weak operator topology.
\end{remark}

\begin{lemma}\label{lem:invble} Let $(a_n)_n$ in $L(H)$ invertible with $(a_n^{-1})_n$ bounded. Then there exists $\varepsilon>0$ and $C>0$ such that $B(0,\varepsilon)\subseteq \bigcap_{n\in \N} \rho(a_n)$ and
\[
    f_n\colon B(0,\varepsilon)\ni \lambda \to (a_n-\lambda)^{-1}
\]
defines a normal family of holomorphic functions with $\sup_n\|f_n\|_{\infty, B(0,\varepsilon)}\leq C$.
\end{lemma}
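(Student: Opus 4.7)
The plan is to invoke a uniform Neumann series argument. Set $M \coloneqq \sup_{n\in\N}\|a_n^{-1}\| < \infty$ by hypothesis, and pick $\varepsilon \coloneqq 1/(2M)$ (any $\varepsilon < 1/M$ works, and a definite choice makes bookkeeping easier).

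For any $\lambda \in B(0,\varepsilon)$, we have $\|\lambda a_n^{-1}\| \le \varepsilon M = 1/2 < 1$, so $\id_H - \lambda a_n^{-1} \in L(H)$ is invertible via the Neumann series
\[
  (\id_H - \lambda a_n^{-1})^{-1} = \sum_{k=0}^\infty \lambda^k (a_n^{-1})^k,
\]
with $\|(\id_H - \lambda a_n^{-1})^{-1}\| \le (1-|\lambda| M)^{-1} \le 2$ uniformly in $n$. Factorising $a_n - \lambda = a_n(\id_H - \lambda a_n^{-1})$ shows $a_n - \lambda$ is invertible with
\[
  (a_n - \lambda)^{-1} = (\id_H - \lambda a_n^{-1})^{-1} a_n^{-1} = \sum_{k=0}^\infty \lambda^k (a_n^{-1})^{k+1},
\]
which yields $B(0,\varepsilon) \subseteq \bigcap_{n\in\N}\rho(a_n)$ and the uniform bound
\[
  \|f_n(\lambda)\| = \|(a_n-\lambda)^{-1}\| \le \frac{M}{1-|\lambda|M} \le 2M \eqqcolon C
\]
for every $\lambda \in B(0,\varepsilon)$ and every $n\in\N$. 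This gives $\sup_n\|f_n\|_{\infty,B(0,\varepsilon)} \le C$.

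Holomorphicity of each $f_n$ on $B(0,\varepsilon)$ is immediate from the operator-valued power series expansion just derived (which converges in operator norm on $B(0,\varepsilon)$), so $(f_n)_n \subseteq \mathcal{H}(B(0,\varepsilon);L(H))$; combined with the uniform norm bound, this is precisely the definition of a normal family. There is no real obstacle here — the only point requiring a moment of care is that one needs the uniform bound on $\|a_n^{-1}\|$ to choose a single $\varepsilon$ that works for all $n$ simultaneously, which is guaranteed by the assumption that $(a_n^{-1})_n$ is bounded.
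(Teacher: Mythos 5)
Your proof is correct and takes essentially the same route as the paper: both factor $a_n - \lambda = a_n(\id_H - \lambda a_n^{-1})$ and invoke the Neumann series to obtain a uniform bound on $(a_n-\lambda)^{-1}$ for $|\lambda|$ small. The only difference is cosmetic — you take $\varepsilon = 1/(2M)$ whereas the paper takes $\varepsilon = 1/(4M)$ — and both choices yield the same constant $C = 2M$.
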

\begin{proof}
Let $C\coloneqq 2\sup_{n}\|a_n^{-1}\|$, $\varepsilon\coloneqq 1/(2C)$. Then for $\lambda\in B(0,\varepsilon)$ we deduce that $(a_n-\lambda)=a_n(1-a_n^{-1}\lambda)$ is invertible using the Neumann series and that
\[
   \|(a_n-\lambda)^{-1}\|=\|(1-a_n^{-1}\lambda)^{-1}a_n^{-1}\|\leq \frac{C}{2}\sum_{k=0}^\infty \|a_n^{-k} \lambda^k\| \leq \frac{C}{2}\frac{1}{1-1/2} = C.\qedhere
\]
\end{proof}

\begin{theorem}\label{thm:hGcontw} Let $(a_n)_n$ in $L(H)$ invertible with $(a_n^{-1})_n$ bounded, $a\in L(H)$ invertible, $H$ separable. 
Consider the following assertions:
\begin{enumerate}
\item[(i)] $(a_n)_n$ holomorphically $G$-converges to $a$;
\item[(ii)] every subsequence $(a_{\pi(n)})_n$ contains a subsequence holomorphically $G$-converging to $a$;
\item[(iii)]  $a_n^{-1}\to a^{-1}$ in the weak operator topology.
\end{enumerate}
Then (i)$\Rightarrow$(iii)$\Leftrightarrow$(ii).
\end{theorem}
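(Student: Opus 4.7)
The plan is to prove the three implications (i)$\Rightarrow$(iii), (iii)$\Rightarrow$(ii), and (ii)$\Rightarrow$(iii) separately, with Lemma \ref{lem:invble} and Montel's Theorem \ref{thm:MontalOV} as the workhorses. In all three steps, the uniform bound on $(a_n^{-1})_n$ is converted via Lemma \ref{lem:invble} into a single disk $B(0,\varepsilon)\subseteq\bigcap_n\rho(a_n)$ on which $\lambda\mapsto(a_n-\lambda)^{-1}$ forms a normal family of $L(H)$-valued holomorphic functions; in particular $B(0,\varepsilon)\subseteq\hat\omega$ in the sense of Remark \ref{rem:holG}(b).

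For (i)$\Rightarrow$(iii) I would invoke Remark \ref{rem:holG}(e) directly: normality on $B(0,\varepsilon)$ together with density of the defining set $\mathcal{G}_a$ forces $\mathcal{G}_a=H$ and yields convergence $(a_n-\cdot)^{-1}\stackrel{\cowot}{\to}f$ for some holomorphic $f\colon B(0,\varepsilon)\to L(H)$; the relation $a^{-1}=\{(\phi,f_\phi(0)):\phi\in\mathcal{G}_a\}$ then pins down $f(0)=a^{-1}$, and evaluating at $\lambda=0$ gives $\langle\psi,a_n^{-1}\phi\rangle\to\langle\psi,a^{-1}\phi\rangle$ for all $\phi,\psi\in H$. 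For (iii)$\Rightarrow$(ii) I start from an arbitrary subsequence $(a_{\pi(n)})_n$ and apply Montel's Theorem \ref{thm:MontalOV} to the normal family $((a_{\pi(n)}-\cdot)^{-1})_n$ on $B(0,\varepsilon)$ to extract a sub-subsequence $(a_{\pi'(n)})_n$ with $(a_{\pi'(n)}-\cdot)^{-1}\stackrel{\cowot}{\to}f$ for some holomorphic $f$; evaluation at $\lambda=0$ combined with (iii) identifies $f(0)=a^{-1}$. Choosing $\omega\coloneqq B(0,\varepsilon)$ (so that $0\in\omega$ and condition (b) in the definition of holomorphic $G$-convergence is automatic) and $f_\phi\coloneqq f(\cdot)\phi$ for every $\phi\in H$, one reads off $\mathcal{G}=H$ and $a^{-1}=\{(\phi,f_\phi(0)):\phi\in H\}$, which is precisely holomorphic $G$-convergence of $(a_{\pi'(n)})_n$ to $a$. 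Finally (ii)$\Rightarrow$(iii) is a standard subsequence-principle contradiction: if (iii) fails, isolate $\phi,\psi\in H$, $\varepsilon_0>0$, and a subsequence $(a_{\pi(n)})_n$ with $|\langle\psi,(a_{\pi(n)}^{-1}-a^{-1})\phi\rangle|\geq\varepsilon_0$; then (ii) supplies a sub-subsequence that holomorphically $G$-converges to $a$, and the already-established (i)$\Rightarrow$(iii) gives $a_{\pi'(n)}^{-1}\rightharpoonup a^{-1}$, contradicting the gap.

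The main obstacle I anticipate is the interplay between the choice of $\omega$ in the definition of holomorphic $G$-convergence and the canonical disk $B(0,\varepsilon)$ from Lemma \ref{lem:invble}: for (i)$\Rightarrow$(iii) one must justify that Remark \ref{rem:holG}(e) indeed applies regardless of which $\omega$ came with the assumption of (i), which is the content of the Vitali-type argument sketched in Remark \ref{rem:holG}(b), and for (iii)$\Rightarrow$(ii) one must accept $\omega=B(0,\varepsilon)$ as a legitimate choice so that $0\in\omega$ trivialises the extension requirement. A secondary subtle point is the reason why (ii) rather than direct equivalence with (i) is the proper counterpart of (iii): Example \ref{exa:holo}(b) shows that weak convergence of inverses alone does not force a globally holomorphic resolvent family, so Montel-type subsequence extraction is indispensable and the implication (iii)$\Rightarrow$(i) genuinely fails without passing to a subsequence.
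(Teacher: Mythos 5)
Your proposal is correct and follows essentially the paper's route: Lemma \ref{lem:invble} produces the disk $B(0,\varepsilon)$, Remark \ref{rem:holG}(e) yields (i)$\Rightarrow$(iii), Montel's Theorem \ref{thm:MontalOV} extracts the sub-subsequence for (iii)$\Rightarrow$(ii), and (ii)$\Rightarrow$(iii) is the standard subsequence-principle contradiction. The only cosmetic difference is that you evaluate the compact-open-weak-operator-topology limit directly at $\lambda=0$, whereas the paper routes the same conclusion through Proposition \ref{thm:justification} via a contour integral of radius $\varepsilon/2$; you have also correctly flagged the $\omega$-versus-$B(0,\varepsilon)$ subtlety that Remark \ref{rem:holG}(b)/(e) resolves.
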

\begin{proof} (i)$\Rightarrow$(iii) By Lemma \ref{lem:invble} $((a_n-\cdot)^{-1})_n$ is a normal family on $B(0,\varepsilon)$ for some $\varepsilon>0$ Hence, by Remark \ref{rem:holG} (e) in conjunction with (i), $((a_n-\cdot)^{-1})_n$ is convergent in the compact open weak operator topology to some $g$. Thus, $a_n^{-1}\to a^{-1}$ in the weak operator topology by Proposition \ref{thm:justification} (ii) applied to $f_n = (a_n-\cdot)^{-1}$, $f_\infty=g$ and $\gamma_n=\gamma_\infty$ be the circle of radius $\varepsilon/2$ around $0$.

(iii)$\Rightarrow$(ii) By Lemma \ref{lem:invble}, $f_n=(a_{\pi(n)}-\cdot)^{-1}$ is a normal family on $B(0,\varepsilon)$. By the separability of $H$ and Theorem \ref{thm:MontalOV}, we may choose a subsequence $(f_{\tilde{\pi}(n)})_n$ converging in the compact open weak operator topology to some holomorphic $f\colon B(0,\varepsilon)\to L(H)$. In particular, we have
\[
  f(0) = \tau_{\textnormal{w}}\text{-}\lim_{n\to\infty} f_{\tilde{\pi}(n)}(0)=\tau_{\textnormal{w}}\text{-}\lim_{n\to\infty} a_{\tilde{\pi}(n)}^{-1} = a^{-1},
\]which shows the assertion.

(ii)$\Rightarrow$(iii) This follows from the implication (i)$\Rightarrow$(ii) in conjunction with a subsequence principle.
\end{proof}

\begin{proof}[Proof of Theorem \ref{thm:hGcontw-mr}] 
It suffices to apply Theorem \ref{thm:hGcontw} and to note that, by Theorem \ref{thm:homoGcon}, (iii) is equivalent to $(\iota_0^*\tilde{a}_n\iota_0)^{-1} \to (\iota_0^*\tilde{a}\iota_0)^{-1}$ in the weak operator topology.
\end{proof}

%
%
%

\section{The one-dimensional problem -- homogenisation}\label{sec:hom1d}

We will apply the abstract findings from the previous section to obtain homogenisation results for divergence form problems with possibly rapidly sign-changing coefficients. As for the analysis concerning well-posedness, we start with the regular case. The regular case will then provide us with the necessary tools to understand the irregular case, where the well-posedness criteria are violated. This violation, however, happens to be only in a very controlled way so that holomorphic $G$-convergence can be shown. 

Define $\Omega\coloneqq (0,1)$, as usual let $\iota_0\colon g_0(\Omega)\hookrightarrow L_2(\Omega)$ be the canonical embedding, where $g_0(\Omega)=\partial[H_0^1(\Omega)]$ and recall $\mm(\beta)=\int_0^1\beta$.

\begin{theorem}\label{thm:hom1D} Assume Setting \ref{set:1Dstart}. Let $(\alpha_n)_n$ be a sequence in $L_\infty(\Omega)$ such that $(\alpha_n^{-1})_n$ converges in $\sigma(L_\infty(\Omega),L_1(\Omega))$ to some $\beta \in L_\infty(\Omega)$ with $\mm(\beta)\neq 0$. Then
\[
   (\iota_0^*\alpha_{n}\iota_0)^{-1} \to \big[\psi \mapsto  \beta \psi - \beta \frac{\mm(\beta \psi) }{\mm(\beta)}\big].
\]in the weak operator topology of $L(g_0(\Omega))$. If, in addition, $\alpha_\infty\coloneqq \beta^{-1}\in L_\infty(\Omega)$ then $\alpha_n\stackrel{G}{\to}\alpha_\infty$.
\end{theorem}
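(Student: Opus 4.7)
The plan is to exploit the explicit inverse formula from Theorem \ref{thm:1Dwp}, reducing the theorem to a term-by-term passage to the limit in which $L_\infty$-weak-$\ast$ convergence of the inverses $\alpha_n^{-1}$ can be cashed in. First I would verify that Theorem \ref{thm:1Dwp} applies to each (sufficiently large) $\alpha_n$: weak-$\ast$ convergence implies $\sup_n\|\alpha_n^{-1}\|_\infty<\infty$ by the Banach--Steinhaus theorem, and testing $\alpha_n^{-1}\to\beta$ against the function $1\in L_1(\Omega)$ yields $\mm(\alpha_n^{-1})\to\mm(\beta)\neq 0$. Hence, after discarding finitely many indices, $\mm(\alpha_n^{-1})\neq 0$ and Theorem \ref{thm:1Dwp} provides
\[
(\iota_0^*\alpha_n\iota_0)^{-1}\psi \;=\; \alpha_n^{-1}\psi \;-\; \alpha_n^{-1}\,\frac{\mm(\alpha_n^{-1}\psi)}{\mm(\alpha_n^{-1})}\qquad(\psi\in g_0(\Omega)).
\]

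Next I would pass to the limit weakly. Fix $\phi,\psi\in g_0(\Omega)\subseteq L_2(\Omega)$; since $\Omega=(0,1)$ is bounded, $\overline{\phi}\psi$, $\psi$, and $\overline{\phi}$ all belong to $L_1(\Omega)$. The defining property of $\sigma(L_\infty,L_1)$-convergence thus gives $\langle\phi,\alpha_n^{-1}\psi\rangle\to\langle\phi,\beta\psi\rangle$, $\mm(\alpha_n^{-1}\psi)\to\mm(\beta\psi)$, $\mm(\alpha_n^{-1})\to\mm(\beta)$, and $\langle\phi,\alpha_n^{-1}\rangle\to\langle\phi,\beta\rangle$. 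Because $\mm(\beta)\neq 0$, the scalar factor $\mm(\alpha_n^{-1}\psi)/\mm(\alpha_n^{-1})$ is eventually well-defined, bounded, and converges to $\mm(\beta\psi)/\mm(\beta)$. Combining these facts yields
\[
\langle\phi,(\iota_0^*\alpha_n\iota_0)^{-1}\psi\rangle \;\to\; \langle\phi,\beta\psi\rangle - \frac{\mm(\beta\psi)}{\mm(\beta)}\langle\phi,\beta\rangle \;=\; \Bigl\langle\phi,\;\beta\psi-\beta\tfrac{\mm(\beta\psi)}{\mm(\beta)}\Bigr\rangle,
\]
proving weak operator convergence in $L(g_0(\Omega))$.

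Finally, for the $G$-convergence statement, if $\alpha_\infty=\beta^{-1}\in L_\infty(\Omega)$, then $\alpha_\infty^{-1}=\beta$ with $\mm(\alpha_\infty^{-1})\neq 0$, so Theorem \ref{thm:1Dwp} applies to $\alpha_\infty$ and its inverse formula reads precisely $\psi\mapsto \beta\psi-\beta\,\mm(\beta\psi)/\mm(\beta)$, which is exactly the weak operator limit just identified. Theorem \ref{thm:Gconcocoer} then closes the argument: weak operator convergence $(\iota_0^*\alpha_n\iota_0)^{-1}\to(\iota_0^*\alpha_\infty\iota_0)^{-1}$ is equivalent to $\alpha_n\stackrel{G}{\to}\alpha_\infty$. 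No single step poses a genuine obstacle; the only mild subtlety is the joint passage to the limit in numerator and denominator of the rank-one correction, which is handled cleanly thanks to $\mm(\beta)\neq 0$ and is exactly the reason this hypothesis appears.
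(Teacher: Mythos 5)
Your proof is correct and follows essentially the same route as the paper: apply the explicit inverse formula from Theorem \ref{thm:1Dwp} for each $n$, pass to the limit term by term using weak-$*$ convergence tested against $\overline{\phi}\psi,\psi,1\in L_1(\Omega)$, and invoke Theorem \ref{thm:Gconcocoer} for the $G$-convergence claim. You merely make explicit a detail the paper glosses over, namely that $\mm(\alpha_n^{-1})\to\mm(\beta)\neq 0$ ensures $\mm(\alpha_n^{-1})\neq 0$ for sufficiently large $n$ so that Theorem \ref{thm:1Dwp} actually applies.
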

\begin{proof}
The form of the solution operator in Theorem \ref{thm:1Dwp} shows that 
\[
    (\iota_0^*\alpha_n\iota_0)^{-1} = \big[\psi \mapsto  \alpha_n \psi - \alpha_n \frac{\mm(\alpha_n \psi) }{\mm_{\Omega}(\alpha_n)}\big].
\]
Given the assumed convergence in $L_\infty$-weak*, we deduce the corresponding limit (note that in particular ${\mm(\alpha_n \psi) }\to {\mm(\beta \psi) }$ as $n\to\infty$ since by the boundedness of $\Omega$, $L_2(\Omega)\hookrightarrow L_1(\Omega)$. The second assertion follows from Theorem \ref{thm:Gconcocoer} (the additional assumptions on $\beta$ guarantee that $\beta^{-1}\in M_{\sym}(\Omega)$).
\end{proof}
\begin{remark} The treatment in Theorem \ref{thm:hom1D} provides a complete description of the homogenisation problem for coefficients $\alpha$  satisfying $\alpha,\alpha^{-1} \in L_\infty(\Omega)$ under the additional requirement that $\mm(\alpha^{-1})\neq 0$.
\end{remark}

Let us turn to the degenerate case. 

\begin{theorem}\label{thm:hGcon1D} Let $\alpha\in L_\infty(\R; \R)$ be $1$-periodic with $\alpha^{-1}\in L_\infty(\R)$ and assume that for $\Omega\coloneqq (0,1)$, $\mm (\alpha^{-1})=0$; define $\alpha_n\coloneqq \alpha(n\cdot)$. Then for all $\lambda>\|\alpha\|_\infty$ or $\lambda \in \C\setminus \R$,
\[
    (\alpha_n -\lambda)^{-1} \to \mm((\alpha -\lambda)^{-1})
\]
in $\sigma(L_\infty(\Omega),L_1(\Omega))$.
Moreover, 
\[
   \iota_0^* \alpha_n\iota_0 \stackrel{\textnormal{hol-}G}{\to}0^{-1}=\{0\}\times g_0(\Omega),
   \] or, defining,  $\alpha_\infty\coloneqq \{0\}\times L_2(\Omega)$, we have 
   \[
       \alpha_n \stackrel{\textnormal{hol-}G}{\to} \alpha_\infty
   \]
   and
   $\sigma_{g_0(\Omega)}(\alpha_\infty)= \emptyset$.
\end{theorem}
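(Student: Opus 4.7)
Part~(1) follows directly from Theorem~\ref{thm:perweak}: under either hypothesis on $\lambda$ the function $(\alpha-\lambda)^{-1}$ is bounded, measurable, and $1$-periodic on $\R$ (for $\lambda\in\C\setminus\R$ we use $|\alpha-\lambda|\geq|\Im\lambda|$ since $\alpha$ is real-valued; for real $\lambda>\|\alpha\|_\infty$ we use $|\alpha-\lambda|\geq\lambda-\|\alpha\|_\infty$), and Theorem~\ref{thm:perweak} applied to $f=(\alpha-\lambda)^{-1}$ yields $(\alpha_n-\lambda)^{-1}\to\mm((\alpha-\lambda)^{-1})$ in $\sigma(L_\infty(\Omega),L_1(\Omega))$.

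For part~(2) I would first choose $\omega\coloneqq B_\C(0,\varepsilon_0)\setminus\{0\}$, where $\varepsilon_0>0$ is taken small enough that the argument used in the proof of Corollary~\ref{cor:1Dwp} applies: $\lambda\mapsto(\alpha-\lambda)^{-1}$ is holomorphic and uniformly bounded from $B_\C(0,\varepsilon_0)$ into $L_\infty(\R)$, while $c\colon\lambda\mapsto\mm((\alpha-\lambda)^{-1})$ is holomorphic on $B_\C(0,\varepsilon_0)$, non-vanishing on $\omega$, and satisfies $c(0)=\mm(\alpha^{-1})=0$. By periodicity of $\alpha$, $\mm((\alpha_n-\lambda)^{-1})=c(\lambda)$ for every $n\in\N$, so Theorem~\ref{thm:1Dwp} applies to $\alpha_n-\lambda$: writing $a_n\coloneqq\iota_0^*\alpha_n\iota_0$ one obtains $\omega\subseteq\bigcap_{n\in\N}\rho(a_n)$ and, for each $\psi\in g_0(\Omega)$,
\[
(a_n-\lambda)^{-1}\psi \;=\; (\alpha_n-\lambda)^{-1}\psi \;-\; (\alpha_n-\lambda)^{-1}\frac{\mm((\alpha_n-\lambda)^{-1}\psi)}{c(\lambda)}.
\]

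The $L_\infty$-weak-$*$ convergence from part~(1) now drives the passage to the limit. The first term converges weakly in $L_2$ to $c(\lambda)\psi$. In the second term the scalar $\mm((\alpha_n-\lambda)^{-1}\psi)/c(\lambda)$ converges to $c(\lambda)\mm(\psi)/c(\lambda)=0$, since $\psi\in g_0(\Omega)=\{1\}^\bot$ forces $\mm(\psi)=0$; multiplied against the $L_\infty$-bounded factor $(\alpha_n-\lambda)^{-1}$ the second term therefore tends to $0$ even in $L_2$-norm. Hence $(a_n-\lambda)^{-1}\psi\rightharpoonup f_\psi(\lambda)\coloneqq c(\lambda)\psi$ pointwise in $\lambda\in\omega$. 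The same explicit formula shows that $\|(a_n-\lambda)^{-1}\|_{L(g_0(\Omega))}$ is uniformly bounded in $n$ on any compact $K\subseteq\omega$ (because $|c|$ is bounded below on $K$). Vitali's theorem applied to the scalar holomorphic functions $\lambda\mapsto\langle\phi,(a_n-\lambda)^{-1}\psi\rangle$ then upgrades the pointwise weak convergence to uniform convergence on compacts, yielding condition~(a) in the definition of holomorphic $G$-convergence; condition~(b) is immediate because $c$, and hence $f_\psi=c\psi$, extends holomorphically across $0$ with $f_\psi(0)=0$. Thus $\mathcal{G}=g_0(\Omega)$ and $a^{-1}$ is the zero operator on $g_0(\Omega)$, so $a=\{0\}\times g_0(\Omega)$. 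A direct check gives $\iota_0^*\alpha_\infty\iota_0=\{0\}\times g_0(\Omega)=a$ for $\alpha_\infty=\{0\}\times L_2(\Omega)$, delivering $\alpha_n\stackrel{\textnormal{hol-}G}{\to}\alpha_\infty$. Finally, for every $\lambda\in\C$ the shifted relation $\iota_0^*\alpha_\infty\iota_0-\lambda$ coincides with $\{0\}\times g_0(\Omega)$, whose inverse is the zero operator in $L(g_0(\Omega))$; hence $\sigma_{g_0(\Omega)}(\alpha_\infty)=\emptyset$.

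The main obstacle lies in the apparent singularity $1/c(\lambda)$ as $\lambda\to 0$ in the explicit solution formula: it is cancelled by the simultaneous vanishing of the numerator $\mm((\alpha_n-\lambda)^{-1}\psi)\to c(\lambda)\mm(\psi)=0$ for $\psi\in g_0(\Omega)$, and precisely this cancellation is what produces the clean holomorphic $G$-limit $f_\psi(\lambda)=c(\lambda)\psi$ that vanishes at $0$, collapsing the limit relation to $\{0\}\times g_0(\Omega)$ with empty inner spectrum.
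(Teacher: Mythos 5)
Your proof is correct and follows the same route as the paper's: weak-$*$ convergence from Theorem~\ref{thm:perweak}, the explicit solution operator from Theorem~\ref{thm:1Dwp} applied to $\alpha_n-\lambda$, and the observation that the limit $\lambda\mapsto\mm((\alpha-\lambda)^{-1})\psi$ extends holomorphically to $0$ by $0$. You are somewhat more careful than the paper's terse argument, which simply cites Theorem~\ref{thm:hom1D} pointwise in $\lambda$ and implicitly works on $\omega=\{\lambda\in\R;\lambda>\|\alpha\|_\infty\}\cup(\C\setminus\R)$: you instead take the punctured ball $\omega=B_\C(0,\varepsilon_0)\setminus\{0\}$ furnished by Corollary~\ref{cor:1Dwp} and make explicit the Vitali/normal-family step upgrading pointwise weak convergence of resolvents to compact-open weak convergence, a step the paper leaves implicit.
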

\begin{proof}
For the first statement it suffices to realise that $\alpha$ is $1$-periodic and, thus, so is $(\alpha-\lambda)^{-1}$. Hence, the first assertion follows from Theorem \ref{thm:perweak}. Next Theorem \ref{thm:hom1D} implies for  $\lambda>\|\alpha\|_\infty$ or $\lambda \in \C\setminus \R$, that 
\begin{align*}
(  \iota_0^* (\alpha_n-\lambda)\iota_0)^{-1} & \to \big[\psi \mapsto  \mm((\alpha -\lambda)^{-1}) \psi - \mm((\alpha -\lambda)^{-1})\frac{\mm(\mm((\alpha -\lambda)^{-1}) \psi) }{\mm(\mm((\alpha -\lambda)^{-1}))}\big] \\
& = \big[\psi \mapsto  \mm((\alpha -\lambda)^{-1}) \psi - \mm((\alpha -\lambda)^{-1})\mm(\psi) \big].
\end{align*} The right-hand side defines a holomorphic function $f$, which holomorphically extends to $0$
by $0$; which shows that $ \iota_0^* \alpha_n\iota_0 \text{ holomorphically $G$-converges to }0$. \end{proof}

\section{The higher-dimensional problem -- homogenisation}\label{sec:homdd}

In the higher-dimensional homogenisation problem, we focus on periodic coefficients right away. We recall here the well-known theorem on the homogenisation problem for periodic stratified media (or, equivalently, laminated materials) due to \cite{Murat1997}.  We only consider the case of scalar coefficients.  For $\Omega\subseteq \R^d$, we call $\tilde{a}\in L_\infty(\Omega)^{d\times d}$ \textbf{laminated}, if there exists $\alpha \colon \R\to \R$ such that $\tilde{a}(x) = \diag(\alpha(x_1),\ldots,\alpha(x_1))$. 

\begin{theorem}[{{see \cite{Murat1997} or \cite[Theorem 5.12]{Cioranescu1999}}}]\label{thm:stmed} Let $\tilde{a}\in L_\infty(\R^d)^{d\times d}$ $1$-periodic, laminated with $\tilde{a}(x)\geq c$ for a.e.~$x\in \R^d$ and some $c>0$. Define $\tilde{a}_n(x)\coloneqq \tilde{a}(nx)$. Then for all bounded, open $\Omega\subseteq \R^d$, $(\tilde{a}_n)_n$  $G$-converges to 
\[
   \diag(\mm(\alpha^{-1})^{-1}, \mm(\alpha),\ldots,\mm(\alpha)).
\]
\end{theorem}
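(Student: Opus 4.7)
The plan is to verify the definition of $G$-convergence directly by the classical Murat--Tartar oscillating-test-function method, exploiting the fact that $\alpha$ depends on $x_1$ only. Fix $f\in H^{-1}(\Omega)$ and let $u_n\in H_0^1(\Omega)$ solve $-\dive \tilde{a}_n\grad u_n=f$. Uniform ellipticity ($\alpha(nx_1)\geq c>0$) yields a uniform $H_0^1$-bound, so along a subsequence $u_n\rightharpoonup u$ weakly in $H_0^1(\Omega)$ and, by Rellich, $u_n\to u$ strongly in $L_2(\Omega)$. The flux $p_n\coloneqq \tilde{a}_n\grad u_n=\alpha(n\m_1)\grad u_n$ is bounded in $L_2(\Omega)^d$; passing to a further subsequence, $p_n\rightharpoonup p$ weakly in $L_2(\Omega)^d$. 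The goal is to identify $p=\tilde{a}_\infty\grad u$, since then $\dive p_n=-f$ passes to the limit and uniqueness of $(\iota_0^*\tilde{a}_\infty\iota_0)^{-1}$ (Theorem \ref{thm:Gconcocoer}) yields convergence of the full sequence.

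For the off-diagonal components ($j\geq 2$), integration by parts in $x_j$, which is permissible because $\alpha$ does not depend on $x_j$, gives for every $\phi\in C_c^\infty(\Omega)$
\[
  \int_\Omega p_n^{(j)}\phi = -\int_\Omega \alpha(nx_1) u_n\,\partial_{x_j}\phi.
\]
Combining the strong convergence $u_n\to u$ in $L_2(\Omega)$ with $\alpha(n\cdot)\rightharpoonup^*\mm(\alpha)$ in $\sigma(L_\infty,L_1)$ from Theorem \ref{thm:perweak}, one concludes that $\alpha(nx_1)u_n\rightharpoonup \mm(\alpha)u$ weakly in $L_2(\Omega)$, and hence $p_n^{(j)}\rightharpoonup \mm(\alpha)\partial_{x_j}u$.

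The main obstacle, as usual in laminated homogenisation, is the first component. Here I would introduce the corrector
\[
  A_n(x_1)\coloneqq \int_0^{x_1}\bigl(\alpha(ns)^{-1}-\mm(\alpha^{-1})\bigr)\,ds,
\]
which satisfies $A_n\to 0$ uniformly on $[0,\diam(\Omega)]$ and $A_n'(x_1)=\alpha(nx_1)^{-1}-\mm(\alpha^{-1})$. Writing $\partial_{x_1}u_n=\alpha(nx_1)^{-1}p_n^{(1)}$ and splitting $\alpha(nx_1)^{-1}=\mm(\alpha^{-1})+A_n'(x_1)$, for $\phi\in C_c^\infty(\Omega)$
\[
  \int_\Omega \partial_{x_1}u_n\,\phi = \mm(\alpha^{-1})\int_\Omega p_n^{(1)}\phi + \int_\Omega A_n'(x_1)p_n^{(1)}\phi.
\]
Integrating the second term by parts in $x_1$ and using $\partial_{x_1}p_n^{(1)}=-f-\sum_{j\geq 2}\partial_{x_j}p_n^{(j)}$ together with the fact that $A_n$ is independent of $x_j$, every term picks up a factor $A_n$ or $\partial_{x_j}\phi$ against bounded sequences in $L_2$ or $H^{-1}$, and hence vanishes in the limit by the uniform convergence $A_n\to 0$. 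Taking $n\to\infty$ identifies $\partial_{x_1}u=\mm(\alpha^{-1})p^{(1)}$, i.e.\ $p^{(1)}=\mm(\alpha^{-1})^{-1}\partial_{x_1}u$. This identification is precisely the hard part: it uses the laminated structure in an essential way, since the primitive $A_n$ of $\alpha^{-1}(n\cdot)-\mm(\alpha^{-1})$ has vanishing $x_j$-derivatives for $j\geq 2$, which is what allows a compensated-compactness-type cancellation to close without invoking a general div-curl lemma. Combining, $p=\tilde{a}_\infty\grad u$, and the theorem follows.
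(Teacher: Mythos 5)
The paper does not prove this theorem; it cites it as classical (Murat--Tartar; Cioranescu--Donato, Theorem 5.12), where it is established by the oscillating-test-function / compensated-compactness method. Your proposal is a correct, essentially self-contained reconstruction of that argument, specialised to the laminated geometry so that the full div-curl lemma can be sidestepped: the integrations by parts in $x_j$ for $j\geq 2$ supply the curl-free information, while the primitive $A_n$ plays the role of the one-dimensional corrector, and the identity $\partial_{x_1}p_n^{(1)}=-f-\sum_{j\geq 2}\partial_{x_j}p_n^{(j)}$ closes the identification of $p^{(1)}$. This is faithful to the cited classical proof, just written in a form that exploits the stratification to keep the argument elementary.

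One step needs more care than the phrase ``against bounded sequences in $L_2$ or $H^{-1}$'' suggests. The pairing $\langle f, A_n\phi\rangle_{H^{-1},H_0^1}$ does not vanish merely because $A_n\to 0$ uniformly: since $\partial_{x_1}(A_n\phi)=A_n'\phi + A_n\partial_{x_1}\phi$ and $A_n'=\alpha(n\cdot)^{-1}-\mm(\alpha^{-1})$ is bounded but not small in $L_\infty$, the sequence $A_n\phi$ does \emph{not} converge to $0$ strongly in $H_0^1(\Omega)$. You must either (a) observe that $A_n\phi\rightharpoonup 0$ weakly in $H_0^1(\Omega)$, which follows because $A_n'\phi\rightharpoonup 0$ in $L_2(\Omega)$ by the convergence $\alpha(n\cdot)^{-1}\to\mm(\alpha^{-1})$ in $\sigma(L_\infty,L_1)$ from Theorem \ref{thm:perweak}; weak $H_0^1$-convergence is exactly what makes the duality pairing with a fixed $f\in H^{-1}(\Omega)$ tend to zero; or (b) first establish the limit for $f\in L_2(\Omega)$, where $\int f\,A_n\phi\to 0$ by uniform convergence of $A_n$, and then extend to all $f\in H^{-1}(\Omega)$ by density together with the uniform bound $\|(\iota_0^*\tilde{a}_n\iota_0)^{-1}\|\le c^{-1}$, which via Theorem \ref{thm:Gconcocoer} upgrades weak-operator-topology convergence on a dense set to $G$-convergence. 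With this small repair the proof is complete.
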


For the degenerate case, we focus on piecewise constant coefficients, that is, laminated materials with $\alpha$ being piecewise constant as in Setting \ref{set:spectral1D}. Before we provide the homogenisation theorem, we need a technical lemma, the proof of which is postponed to Section \ref{sec:proofs}. For the rest of this section, we let $\Omega\coloneqq (0,1)^d$.
\begin{lemma}\label{lem:computeproj} There exists a unitary operator $U\colon \ell_2(\N_{>0}^d) \to g_0(\Omega)$ such that for all $\gamma>0$ and $\Gamma \coloneqq \diag(\gamma,1,\ldots,1)$ we have
\[
   U^*(\iota_0^* \Gamma \iota_0)^{-1}U = \left[(x_k)_{k\in \N_{>0}^d} \mapsto \Big (\frac{  \sum_{m=1}^d k_m^2}{\gamma k_1^2 +\sum_{m=2}^d k_m^2 } x_k\Big)_{k\in \N_{>0}^d}\right]\in L(\ell_2(\N_{>0}^d))
\]
\end{lemma}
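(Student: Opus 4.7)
The plan is to diagonalise $\iota_0^* \Gamma \iota_0$ directly in the basis of gradients of Dirichlet--Laplace eigenfunctions on the cube $\Omega = (0,1)^d$. For $k=(k_1,\ldots,k_d)\in \N_{>0}^d$ set
\[
  e_k(x) \coloneqq \prod_{m=1}^d \sqrt{2}\sin(\pi k_m x_m), \qquad \lambda_k \coloneqq \pi^2 \sum_{m=1}^d k_m^2,
\]
so that $\{e_k\}_{k\in \N_{>0}^d}$ is an orthonormal basis of $L_2(\Omega)$, each $e_k \in H_0^1(\Omega)\cap C^\infty(\overline{\Omega})$, and $-\Delta e_k = \lambda_k e_k$. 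First I would define $\phi_k \coloneqq \grad e_k/\sqrt{\lambda_k} \in g_0(\Omega)$ and check that $\{\phi_k\}_{k\in \N_{>0}^d}$ is an orthonormal basis of $g_0(\Omega)$. Orthonormality follows at once from integration by parts:
\[
   \langle \phi_j, \phi_k\rangle_{L_2(\Omega)^d} = \frac{1}{\sqrt{\lambda_j \lambda_k}}\int_\Omega \grad e_j \cdot \grad e_k = \frac{1}{\sqrt{\lambda_j \lambda_k}} \langle e_j, -\Delta e_k\rangle_{L_2} = \frac{\lambda_k}{\sqrt{\lambda_j\lambda_k}}\delta_{jk} = \delta_{jk}.
\]
Completeness is a consequence of the fact that $\grad \colon (H_0^1(\Omega),\langle \grad\cdot,\grad\cdot\rangle_{L_2}) \to g_0(\Omega)$ is unitary (by the Poincar\'e inequality and the definition of $g_0(\Omega)$) and that $\{e_k/\sqrt{\lambda_k}\}$ is an orthonormal basis of $H_0^1(\Omega)$ with respect to this inner product, which is a standard consequence of $\{e_k\}$ being an orthonormal basis of $L_2(\Omega)$ consisting of Dirichlet--Laplace eigenfunctions.

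Next I would define the unitary operator $U\colon \ell_2(\N_{>0}^d)\to g_0(\Omega)$ by $U((x_k)_k) \coloneqq \sum_{k\in \N_{>0}^d} x_k \phi_k$, so that $U^* \psi = (\langle \phi_k,\psi\rangle)_k$ for $\psi\in g_0(\Omega)$. It remains to compute the action of $\iota_0^*\Gamma\iota_0$ on $\phi_k$. Since $\iota_0$ is the inclusion, for any $j,k\in \N_{>0}^d$,
\[
  \langle \phi_j, \iota_0^*\Gamma\iota_0 \phi_k\rangle_{g_0(\Omega)} = \langle \phi_j, \Gamma \phi_k\rangle_{L_2(\Omega)^d} = \frac{1}{\sqrt{\lambda_j\lambda_k}}\int_\Omega \grad e_j \cdot (\Gamma \grad e_k).
\]
Using $\partial_m^2 e_k = -\pi^2 k_m^2 e_k$ and integration by parts (all eigenfunctions vanish on $\partial\Omega$),
\[
  \int_\Omega \grad e_j \cdot (\Gamma \grad e_k) = -\int_\Omega e_j \dive(\Gamma \grad e_k) = \pi^2\Big(\gamma k_1^2 + \sum_{m=2}^d k_m^2\Big)\langle e_j, e_k\rangle_{L_2} = \mu_k \delta_{jk},
\]
where $\mu_k \coloneqq \pi^2\bigl(\gamma k_1^2 + \sum_{m=2}^d k_m^2\bigr)$. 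Hence $\iota_0^*\Gamma\iota_0 \phi_k = (\mu_k/\lambda_k) \phi_k$, and since $\mu_k/\lambda_k \in (0,\max(1,\gamma)]$ is uniformly bounded away from $0$ (by $\min(1,\gamma)>0$), $\iota_0^*\Gamma\iota_0$ is continuously invertible with
\[
   (\iota_0^*\Gamma\iota_0)^{-1}\phi_k = \frac{\lambda_k}{\mu_k}\phi_k = \frac{\sum_{m=1}^d k_m^2}{\gamma k_1^2 + \sum_{m=2}^d k_m^2}\phi_k,
\]
which upon conjugating by $U$ gives exactly the stated multiplication operator on $\ell_2(\N_{>0}^d)$.

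I do not expect any serious obstacle here; the only slightly non-routine point is the verification that $\{\phi_k\}$ is a complete orthonormal system of $g_0(\Omega)$ (as opposed to just an orthonormal sequence), which is handled by invoking the standard fact that $\{e_k/\sqrt{\lambda_k}\}$ is an orthonormal basis of $H_0^1(\Omega)$ endowed with the Dirichlet inner product and that $\grad$ is then a unitary bijection onto $g_0(\Omega)$.
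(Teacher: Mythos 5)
Your proof is correct. Both you and the paper exploit the same underlying fact — that the sine eigenfunctions of the Dirichlet Laplacian on the cube are simultaneously eigenfunctions of $-\dive\Gamma\grad$, with eigenvalues $\lambda_k = \pi^2\sum_m k_m^2$ and $\mu_k = \pi^2(\gamma k_1^2 + \sum_{m\geq 2} k_m^2)$ respectively — but you organise the argument differently. The paper first derives the operator identity $(\iota_0^*\Gamma\iota_0)^{-1} = \iota_0^*\grad(\dive\Gamma\grad)^{-1}\dive\iota_0$ from the solution formula of Theorem \ref{thm:TW-Mana}, then invokes the polar decomposition of $\grad$ to write $\iota_0^*\grad = U|\grad|$ and reduce to computing $|\grad|(\dive\Gamma\grad)^{-1}|\grad|$, which is then read off from the joint diagonalisation. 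You instead construct the orthonormal basis $\phi_k = \grad e_k/\sqrt{\lambda_k}$ of $g_0(\Omega)$ directly, verify its completeness via the unitarity of $\grad$ with respect to the Dirichlet inner product, and diagonalise $\iota_0^*\Gamma\iota_0$ itself by a one-line integration by parts. Your route is more self-contained: it avoids both the solution formula and the polar-decomposition apparatus, at the price of spelling out the completeness of $\{\phi_k\}$ (which the paper hides inside the polar decomposition statement). Either way the answer is the same multiplication operator by $\lambda_k/\mu_k$ on $\ell_2(\N_{>0}^d)$, and your boundedness observation $\mu_k/\lambda_k \in [\min(1,\gamma),\max(1,\gamma)]$ correctly justifies invertibility.
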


To compute the inner spectrum for operators of the type $\Gamma$ is now an easy task:
\begin{theorem}\label{thm:innerspGamma} Let $\gamma>0$ and $\Gamma \coloneqq \diag(\gamma,1,\ldots,1)$. Then
\[
   \sigma_{g_0(\Omega)}(\Gamma) = \overline{\big\{\tfrac{\gamma k_1^2 +\sum_{m=2}^d k_m^2  }{ \sum_{m=1}^d k_m^2}; k_1,\ldots,k_m\in \N_{>0}\big\}}
   \]
\end{theorem}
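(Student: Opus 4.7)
The plan is to reduce the computation of $\sigma_{g_0(\Omega)}(\Gamma) = \sigma(\iota_0^*\Gamma\iota_0)$ to the easy spectrum of a diagonal multiplication operator on $\ell_2(\N_{>0}^d)$ by invoking Lemma \ref{lem:computeproj} directly, and then inverting.

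First, I would observe that since $\Gamma\geq \min(1,\gamma)>0$, one has $\iota_0^*\Gamma\iota_0\geq \min(1,\gamma)\id_{g_0(\Omega)}$, hence $0\in\rho(\iota_0^*\Gamma\iota_0)$. Thus $\sigma(\iota_0^*\Gamma\iota_0)$ and $\sigma((\iota_0^*\Gamma\iota_0)^{-1})$ are related by $\lambda\leftrightarrow 1/\lambda$, and both are contained in a compact subinterval of $(0,\infty)$.

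Next I would apply Lemma \ref{lem:computeproj} to obtain a unitary $U\colon \ell_2(\N_{>0}^d)\to g_0(\Omega)$ with
\[
   U^*(\iota_0^*\Gamma\iota_0)^{-1}U = M_c \quad\text{where}\quad c_k \coloneqq \tfrac{\sum_{m=1}^d k_m^2}{\gamma k_1^2+\sum_{m=2}^d k_m^2}\quad(k\in\N_{>0}^d),
\]
and $M_c$ is the diagonal multiplication by $(c_k)_k$. Since the spectrum is invariant under unitary equivalence, and the spectrum of a bounded diagonal multiplication operator on $\ell_2$ equals the closure of the set of its diagonal values, this gives
\[
   \sigma((\iota_0^*\Gamma\iota_0)^{-1}) = \overline{\{c_k;k\in\N_{>0}^d\}}.
\]

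Finally I would use that $\sigma(\iota_0^*\Gamma\iota_0) = \{1/\mu; \mu\in\sigma((\iota_0^*\Gamma\iota_0)^{-1})\}$. Because the values $c_k$ lie in the compact subinterval $[\min(1,1/\gamma),\max(1,1/\gamma)]\subseteq (0,\infty)$, the map $t\mapsto 1/t$ is a homeomorphism on this interval and commutes with taking closures. Hence
\[
   \sigma_{g_0(\Omega)}(\Gamma) = \overline{\{1/c_k; k\in\N_{>0}^d\}} = \overline{\Big\{\tfrac{\gamma k_1^2+\sum_{m=2}^d k_m^2}{\sum_{m=1}^d k_m^2}; k_1,\ldots,k_d\in\N_{>0}\Big\}},
\]
which is the claim. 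There is no serious obstacle here: the entire content has been packaged into Lemma \ref{lem:computeproj}, so the only point requiring care is the (routine) observation that the values $c_k$ stay uniformly away from $0$ and $\infty$, which legitimises passing from the spectrum of the inverse back to the spectrum of $\iota_0^*\Gamma\iota_0$.
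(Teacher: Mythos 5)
Your argument is correct and is essentially the paper's own proof: invoke Lemma \ref{lem:computeproj} to diagonalise $(\iota_0^*\Gamma\iota_0)^{-1}$, read off the spectrum of the resulting multiplication operator as the closure of its diagonal values, and invert. The extra sentences you add (coercivity of $\Gamma$, boundedness of the $c_k$ away from $0$ and $\infty$, $t\mapsto 1/t$ being a homeomorphism on the relevant compact interval) are exactly the routine justifications the paper leaves implicit.
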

\begin{proof}
The statement follows from the unitary equivalence stated in Lemma \ref{lem:computeproj} and standard results on the spectrum of multiplication operators, see, e.g., \cite[Theorem 2.4.7]{STW_EE21}. 
\end{proof}

We present the homogenisation theorem in this situation next. For this, we let $\alpha_0,\ldots,\alpha_n\in \R\setminus\{0\}$ and define $\alpha$ to be the $1$-periodic extension to the whole of $\R^d$ of the function 
\[
(x_1,\ldots,x_d)\mapsto \sum_{j=0}^{r} \alpha_j \1_{[j/{r+1}, (j+1)/(r+1))}(x_1).
\]
\begin{theorem}\label{thm:mainhom} Define $\alpha_n\coloneqq \alpha(n\cdot)$. Then for all $\lambda>\|a\|_{\infty}$ or $\lambda \in \C\setminus\R$, we have
\[
  (\alpha_n(\m_1)-\lambda)\textnormal{ $G$-converges to } \diag(\mm((\alpha-\lambda)^{-1})^{-1}, \mm(\alpha-\lambda),\ldots,\mm(\alpha-\lambda)).
\]
Moreover, the following holomorphic $G$-convergence statements hold:
\begin{enumerate}
\item[(a)] If $\mm(\alpha^{-1})=0$, then
\[
   \iota_0^*\alpha_n(\m_1)\iota_0\stackrel{\textnormal{hol-}G}{\longrightarrow} \mathbf{0}^{-1}\subseteq g_0(\Omega)\times g_0(\Omega),
\]
where $\mathbf{0}$ is a densely defined, proper restriction of the relation $g_0(\Omega)\times \{0\}$.
\item[(b)] If $\mm(\alpha)=0$ and $\mm(\alpha^{-1})\neq 0$, then
\[
   \iota_0^*\alpha_n(\m_1)\iota_0\stackrel{\textnormal{hol-}G}{\longrightarrow} \frac{1}{\mm(\alpha^{-1})}\Delta_{(0,1)}(\Delta_{(0,1)^{(d-1)}})^{-1}\subseteq g_0(\Omega)\times g_0(\Omega),
\]
where $\Delta_{(0,1)}$ denotes the Dirichlet--Laplace operator on $L_2(0,1)$ tensorised with $d-1$ copies of the identity of $L_2(0,1)$; similarly $\Delta_{(0,1)^{d-1}}$ is the identity on $L_2(0,1)$ tensorised with the Dirichlet--Laplacian on $L_2((0,1)^{d-1})$. 
\item[(c)]If $\mm(\alpha)\neq 0$ and $\mm(\alpha^{-1})\neq 0$, then\[
  \alpha_n(\m_1)\stackrel{\textnormal{hol-}G}{\longrightarrow} 
     \diag(\mm((\alpha)^{-1})^{-1}, \mm(\alpha),\ldots,\mm(\alpha))
\]
\end{enumerate}
\end{theorem}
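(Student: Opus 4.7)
The plan is to proceed in two stages: first derive the classical $G$-convergence statement on an open region $\omega \subseteq \C$ with $0 \in \overline{\omega}$ using Murat's theorem for laminates; then upgrade to holomorphic $G$-convergence by extending the resulting resolvent family across $\lambda = 0$ via the spectral representation of Lemma \ref{lem:computeproj}.

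\emph{Stage 1 (classical $G$-convergence).} Set $\omega := (\|\alpha\|_\infty, \infty) \cup (\C \setminus \R)$. For real $\lambda > \|\alpha\|_\infty$, the coefficient $-(\alpha_n(\m_1) - \lambda) = (\lambda - \alpha(n\cdot)) I_d$ is $1$-periodic, laminated, and uniformly coercive since $\lambda - \alpha \geq \lambda - \|\alpha\|_\infty > 0$, so Theorem \ref{thm:stmed} directly yields the $G$-limit; multiplying through by $-1$ and invoking the operator-theoretic characterization of Theorem \ref{thm:Gconcocoer} gives
\[
\alpha_n(\m_1) - \lambda \stackrel{G}{\to} \diag\bigl(\mm((\alpha-\lambda)^{-1})^{-1},\, \mm(\alpha-\lambda),\, \ldots,\, \mm(\alpha-\lambda)\bigr) =: a_{G,\lambda}.
\]
For $\lambda \in \C \setminus \R$, choose $\gamma \in \C$ with $\Re \gamma(\alpha - \lambda) \geq c > 0$ uniformly and argue analogously via Theorem \ref{thm:Gconcocoer}, which already accommodates complex scalar coefficients.

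\emph{Stage 2 (holomorphic extension).} By Stage 1, the projected resolvents $f_n(\lambda) := (\iota_0^* \alpha_n(\m_1) \iota_0 - \lambda)^{-1}$ converge pointwise on $\omega$ in the weak operator topology to $g(\lambda) := (\iota_0^* a_{G,\lambda} \iota_0)^{-1}$. Since $(f_n(\lambda))_n$ is a locally uniformly bounded normal family of holomorphic operator-valued functions on $\omega$, Theorem \ref{thm:MontalOV} together with Vitali's theorem upgrades the convergence to compact open weak operator convergence on $\omega$. Lemma \ref{lem:computeproj} furnishes a unitary $U: \ell_2(\N_{>0}^d) \to g_0(\Omega)$ with
\[
U^* g(\lambda) U \colon (x_k)_k \mapsto \Bigl(\tfrac{\sum_m k_m^2}{\mm((\alpha-\lambda)^{-1})^{-1} k_1^2 + \mm(\alpha-\lambda) \sum_{m \geq 2} k_m^2}\, x_k\Bigr)_k =: (\mu_k(\lambda)\, x_k)_k,
\]
so identifying the holomorphic $G$-limit reduces to analyzing the extension of $\mu_k(\cdot)$ across $\lambda = 0$. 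In case (c), both $\mm(\alpha)$ and $\mm(\alpha^{-1})$ are nonzero of compatible signs, so $\mu_k(0)$ is bounded uniformly in $k$; hence $g$ extends to the bounded operator $(\iota_0^* a_{G,0} \iota_0)^{-1}$, giving $a_\infty = a_{G,0}$ as stated. In case (b), $\mm(\alpha - \lambda) = -\lambda$, so $\mu_k(0) = \mm(\alpha^{-1}) \sum_m k_m^2 / k_1^2$ defines an unbounded diagonal multiplier; the holomorphic extension $f_\phi(0)$ exists precisely on the dense domain of $\phi \in g_0(\Omega)$ for which this multiplier acts $\ell_2$-summably under $U^*$, and matching eigenvalues against the spectral decomposition of the Dirichlet Laplacians on $(0,1)$ and $(0,1)^{d-1}$ identifies the limit relation with the inverse of $\tfrac{1}{\mm(\alpha^{-1})} \Delta_{(0,1)}(\Delta_{(0,1)^{d-1}})^{-1}$. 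In case (a), $\mm((\alpha-\lambda)^{-1})^{-1} \to 0$ as $\lambda \to 0$, so $\mu_k(\lambda) \to \infty$ for each $k$; thus $f_\phi(0) = 0$ on a dense proper subspace $\mathcal{G} \subseteq g_0(\Omega)$, yielding $a_\infty^{-1} \subseteq g_0(\Omega) \times \{0\}$.

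The principal obstacle is case (b): carefully matching the unbounded multiplier $\mu_k(0)$, viewed as an unbounded operator on its natural $\ell_2$-domain, with the claimed 4th-order pseudodifferential operator on $g_0(\Omega)$ requires reconciling the spectral decomposition implicit in Lemma \ref{lem:computeproj} with the tensor-product structure of the Dirichlet Laplacians on $(0,1)$ and $(0,1)^{d-1}$, including a careful identification of domains. A parallel but simpler domain analysis produces the dense subspace $\mathcal{G}$ in case (a), and the same spectral computation simultaneously yields the inner spectrum assertions of Theorem \ref{thm:mainhom-mr}.
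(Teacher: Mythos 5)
Your approach is essentially the one the paper takes: obtain the classical $G$-limit for each admissible $\lambda$ from Theorem \ref{thm:stmed}, factor the limit as $\mm(\alpha-\lambda)\Gamma_\lambda$, pass to the diagonal multiplier $\mu_k(\lambda)$ via Lemma \ref{lem:computeproj}, and then identify the behaviour of $\mu_k(\lambda)$ as $\lambda\to 0$ in the three cases (the paper uses the identity theorem where you invoke Vitali/Montel, but that is cosmetic). Two issues with the write-up are worth fixing. In case (a) the directions are inverted: since $\mm(\alpha^{-1})=0$, one has $\mm((\alpha-\lambda)^{-1})\to 0$ and hence $\mm((\alpha-\lambda)^{-1})^{-1}\to\infty$ (not $0$); the denominator of $\mu_k(\lambda)$ therefore blows up and $\mu_k(\lambda)\to 0$ (not $\infty$), which is what actually yields $f_\phi(0)=0$ on the dense span of the basis vectors. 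As stated, the chain ``$\to 0$, hence $\mu_k\to\infty$, hence $f_\phi(0)=0$'' is internally inconsistent even though the terminus is correct. In case (b), which you correctly flag as the delicate step, you should actually carry out the eigenvalue matching rather than assert it: your $\mu_k(0)=\mm(\alpha^{-1})\sum_{m=1}^d k_m^2/k_1^2$ has the \emph{full} sum $\sum_{m=1}^d k_m^2$ in the numerator, whereas the inverse of $\tfrac{1}{\mm(\alpha^{-1})}\Delta_{(0,1)}(\Delta_{(0,1)^{d-1}})^{-1}$ has multiplier $\mm(\alpha^{-1})\sum_{m\geq 2}k_m^2/k_1^2$; these differ by the additive constant $\mm(\alpha^{-1})$, and reconciling (or correcting) this is precisely the domain/identification work you postpone. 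Spelling that out would both close the gap you yourself identify and make the domain of the limiting relation explicit.
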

\begin{proof}
The $G$-convergence result is a special case of Theorem \ref{thm:stmed}. 
We have
\begin{multline*}
    \diag(\mm((\alpha-\lambda)^{-1})^{-1}, \mm(\alpha-\lambda),\ldots,\mm(\alpha-\lambda)) \\
= \mm(\alpha-\lambda)\diag(\frac{\mm((\alpha-\lambda)^{-1})^{-1}}{\mm(\alpha-\lambda)}, 1,\ldots,1) \eqqcolon \mm(\alpha-\lambda)\Gamma_{\lambda}
 \end{multline*}
By Lemma \ref{lem:computeproj} we find a unitary operator independent of $\lambda>\|\alpha\|_\infty$ such that
\begin{align*}
 f(\lambda)\coloneqq U^*(  \iota_0^*\mm(\alpha-\lambda)\Gamma_\lambda\iota_0)^{-1}U & =\frac{1}{\mm(\alpha-\lambda)} \left(\frac{  \sum_{m=1}^d k_m^2}{\frac{\mm((\alpha-\lambda)^{-1})^{-1}}{\mm(\alpha-\lambda)} k_1^2 +\sum_{m=2}^d k_m^2 }\right)_{k\in \N^d_{>0}} \\
 &=\left(\frac{   \sum_{m=1}^d k_m^2}{\frac{1}{\mm((\alpha-\lambda)^{-1})} k_1^2 +\mm(\alpha-\lambda)\sum_{m=2}^d k_m^2 }\right)_{k\in \N^d_{>0}} \\
 &=\left(\frac{\mm((\alpha-\lambda)^{-1})  \sum_{m=1}^d k_m^2}{k_1^2 +\mm((\alpha-\lambda)^{-1})\mm(\alpha-\lambda)\sum_{m=2}^d k_m^2 }\right)_{k\in \N^d_{>0}}
\end{align*}where we identified the multiplication operator in the right-hand side with the function it is multiplying with.

The expression on the right-hand side yields the following convergences eventually implying the statements in (a), (b), and (c) of the present theorem. By the identity theorem for holomorphic functions, it suffices to treat the real limits only. The connectedness of the considered $\omega$ in the definition of holomorphic $G$-convergence follows from the fact that both $\C\setminus\R$ and all $\lambda\in \R$, $|\lambda|$ large enough belong to the inner resolvent of $\alpha_n(\m_1)$, see also Theorem \ref{thm:isp-dd}.

If $\mm(\alpha^{-1})=0$, then, as $\lambda\to 0$ for all canonical basis vectors $\phi$
\[
  f_\phi(\lambda) \to 0.
\]
Note that the convergence does not hold for all $\phi$.

If $\mm(\alpha)=0$ and $\mm(\alpha^{-1})\neq 0$, then, as $\lambda\to 0$, for all $\phi$ such that $U\phi\in \dom(\Delta_{(0,1)^{(d-1)}}(-\Delta_{(0,1)})^{-1})$
\[
Uf(\lambda)U^*U\phi\to -\mm(\alpha^{-1})\Delta_{(0,1)^{(d-1)}}(-\Delta_{(0,1)})^{-1}U\phi,
\]
If $\mm(\alpha)\neq 0$ and $\mm(\alpha^{-1})\neq 0$, then, as $\lambda\to 0$,
\[
  Uf(\lambda)U^* \to
     (\iota_0^*(\mm(\alpha)\Gamma_0)\iota_0)^{-1}
\]and with 
\[
 \diag(\mm((\alpha)^{-1})^{-1}, \mm(\alpha),\ldots,\mm(\alpha))=\mm(\alpha)\Gamma_0
\]the assertion follows.
%
\end{proof}

We briefly describe the inner spectra of the respective $G$-limits. The statement in (a) follows from the fact that the coefficient is not defined everywhere; the equations in (b) and (c) follows from the representation of $f(\lambda)$ in the proof of Theorem \ref{thm:mainhom} and the identity theorem.

\begin{corollary}\label{cor:mainhom} In the situation of Theorem \ref{thm:mainhom}, the inner spectra of the limits in the respective cases (a), (b), and (c) are
\begin{enumerate}
  \item[(a)] $\sigma_{g_0(\Omega)} (\textnormal{hol-}G\text{-}\lim_{n\to \infty}\alpha_n(\m_1)) =\C$;
  \item[(b)]$\sigma_{g_0(\Omega)} (\textnormal{hol-}G\text{-}\lim_{n\to \infty}\alpha_n(\m_1)) =\frac{1}{\mm(\alpha^{-1})} \overline{\{\frac{k_1^2}{k_2^2+\ldots+k_d^2}; k_1,\ldots,k_d\in \N_{>0}\}}$;
  \item[(c)] if, in addition, $\mm(\alpha)\mm(\alpha^{-1})>0$,
  \[\sigma_{g_0(\Omega)} (\textnormal{hol-}G\text{-}\lim_{n\to \infty}\alpha_n(\m_1)) = \overline{\{\frac{\mm(\alpha^{-1})  \sum_{m=1}^d k_m^2}{k_1^2 +\mm(\alpha^{-1})\mm(\alpha)\sum_{m=2}^d k_m^2 }; k_1,\ldots,k_m\in \N_{>0}\}}\]
\end{enumerate}
\end{corollary}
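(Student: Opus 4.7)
The plan is to derive each case of the corollary from the explicit form of the holomorphic $G$-limit obtained in Theorem \ref{thm:mainhom}, combined with the unitary diagonalisation afforded by Lemma \ref{lem:computeproj}. Throughout, the guiding principle is that the spectrum of a (possibly unbounded) diagonal multiplication operator on $\ell_2(\N_{>0}^d)$ equals the closure of the range of its diagonal sequence.

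For case (a), the limit $a_\infty \subseteq g_0(\Omega)\times g_0(\Omega)$ is a relation whose inverse is a proper densely defined restriction of $g_0(\Omega)\times\{0\}$. Hence $a_\infty$ itself has domain $\{0\}$, so for any $\lambda\in\C$ the shifted relation $a_\infty-\lambda$ coincides with $a_\infty$, and its inverse is a proper densely defined restriction of $g_0(\Omega)\times\{0\}$, which cannot belong to $L(g_0(\Omega))$. Therefore $\lambda\in\sigma_{g_0(\Omega)}(a_\infty)$ for every $\lambda$, yielding $\sigma_{g_0(\Omega)}(a_\infty)=\C$.

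For cases (b) and (c), the strategy is to start from the explicit diagonal representation derived inside the proof of Theorem \ref{thm:mainhom}, namely
\[
U^*(\iota_0^*\mm(\alpha-\lambda)\Gamma_\lambda\iota_0)^{-1}U = \left(\frac{\mm((\alpha-\lambda)^{-1})\sum_{m=1}^d k_m^2}{k_1^2+\mm((\alpha-\lambda)^{-1})\mm(\alpha-\lambda)\sum_{m=2}^d k_m^2}\right)_{k\in\N_{>0}^d}.
\]
In case (c) the hypothesis $\mm(\alpha)\mm(\alpha^{-1})>0$ guarantees that both scalar means have non-zero limits of the same sign as $\lambda\to 0$ and that the denominators stay bounded away from zero; the stated closure then follows by direct substitution of $\lambda=0$. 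In case (b), $\mm(\alpha-\lambda)=-\lambda\to 0$ causes the second summand of the denominator to disappear, leaving the limiting diagonal entries $\mm(\alpha^{-1})\sum_m k_m^2/k_1^2$; these match the eigenvalues of the inverse of $\tfrac{1}{\mm(\alpha^{-1})}\Delta_{(0,1)}(\Delta_{(0,1)^{d-1}})^{-1}$ on the tensor sine basis, whose own eigenvalues are $k_1^2/(k_2^2+\cdots+k_d^2)$, yielding the displayed closure.

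The main technical delicacy lies in case (b): the limiting operator is unbounded, so one needs the spectral identification for unbounded diagonal multiplication operators on $\ell_2$ together with the unitary equivalence to $\tfrac{1}{\mm(\alpha^{-1})}\Delta_{(0,1)}(\Delta_{(0,1)^{d-1}})^{-1}$ on its natural domain. The identity theorem is also required to promote compact-open weak convergence of $f(\lambda)$ on the open set $\omega$ to the pointwise limit at $0$, which by the definition of holomorphic $G$-convergence coincides with $(\iota_0^*a_\infty\iota_0)^{-1}$. Cases (a) and (c) sidestep these issues: (a) is purely relational, and (c) produces a bounded continuously invertible limit on which the inner spectrum can be read off directly from the diagonal formula above.
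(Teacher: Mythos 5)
Your treatment of case (a) is correct and merely expands the paper's one-sentence remark by making explicit that $\dom(a_\infty)=\{0\}$, that $a_\infty-\lambda=a_\infty$ for every $\lambda$, and that a proper densely defined restriction of the zero map on $g_0(\Omega)$ cannot be a bounded everywhere-defined operator.

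Cases (b) and (c), however, contain a genuine gap in the step from the diagonal entries to the inner spectrum. The function $f(\lambda)=U^*(\iota_0^*\mm(\alpha-\lambda)\Gamma_\lambda\iota_0)^{-1}U$ is by construction the unitary transform of the \emph{inverse} of the projected coefficient, so the diagonal entries obtained by substituting $\lambda=0$ are those of $(\iota_0^*a_\infty\iota_0)^{-1}$, not of $\iota_0^*a_\infty\iota_0$. By your own stated guiding principle, $\sigma_{g_0(\Omega)}(a_\infty)=\sigma(\iota_0^*a_\infty\iota_0)$ is then the closure of the \emph{reciprocals} of those entries; this is exactly the inversion performed in passing from Lemma~\ref{lem:computeproj}, where $(\iota_0^*\Gamma\iota_0)^{-1}$ has entries $\frac{\sum_m k_m^2}{\gamma k_1^2+\sum_{m\geq 2}k_m^2}$, to Theorem~\ref{thm:innerspGamma}, which gives $\sigma_{g_0(\Omega)}(\Gamma)=\overline{\{\frac{\gamma k_1^2+\sum_{m\geq 2}k_m^2}{\sum_m k_m^2}\}}$, and it is the formula used in Example~\ref{ex:classDD}. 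Your derivation of (c) closes $\{f(0)_k\}$ directly, which would yield $\sigma\big((\iota_0^*a_\infty\iota_0)^{-1}\big)$ rather than $\sigma(\iota_0^*a_\infty\iota_0)$, so the step ``the stated closure then follows by direct substitution'' is not justified as written. For (b) the asserted ``match'' also fails: $f(0)_k=\mm(\alpha^{-1})\frac{\sum_{m=1}^d k_m^2}{k_1^2}=\mm(\alpha^{-1})\big(1+\frac{k_2^2+\cdots+k_d^2}{k_1^2}\big)$, whereas the eigenvalues of the inverse of $\frac{1}{\mm(\alpha^{-1})}\Delta_{(0,1)}(\Delta_{(0,1)^{d-1}})^{-1}$ are $\mm(\alpha^{-1})\frac{k_2^2+\cdots+k_d^2}{k_1^2}$; these differ by the constant $\mm(\alpha^{-1})\id$ because $\sum_{m=1}^d k_m^2\neq k_2^2+\cdots+k_d^2$. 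Until the explicit limit $f(0)$ is reconciled with the operator asserted in Theorem~\ref{thm:mainhom}(b), the displayed closure in (b) cannot be read off from the diagonal representation, and in (c) the closure you obtain is the reciprocal of what the earlier results of the paper (Theorem~\ref{thm:innerspGamma}, Example~\ref{ex:classDD}) would give.
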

We provide examples for the corresponding cases.
\begin{example}(a) $n=1$, $\alpha_j =(-1)^{j}$, $j\in \{0,1\}$, leading to $\mm(\alpha^{-1})=0$; for all $d\geq 2$.

(b) $n=2$, $\alpha_0=1$, $\alpha_1=-2$, $\alpha_2=1$; $\mm(\alpha) = \tfrac{1}{3} (1-2+1)=0$, $\mm(\alpha^{-1})=(1-\tfrac{1}{2}+1)/3 = \tfrac{1}{2}\neq 0$, for all $d\geq 2$.

(c) $n=2$, $\alpha_0=1$, $\alpha_1=-1$, $\alpha_2=1$; $\mm(\alpha)=\mm(\alpha^{-1})=\tfrac13$, $d=2$.
\end{example}

\section{Proofs}\label{sec:proofs}

\subsection{Results in Section \ref{sec:sturm}}

We start with some (standard) auxiliary material helping to prove Theorem \ref{thm:spectral1D}.

\begin{remark}\label{rem:aux1D}
 (a) Let $\Omega_-,\Omega_+\subseteq \R$ disjoint, open intervals such that $\Omega\coloneqq\Omega_-\cup\{\gamma\}\cup\Omega_+$ is connected for some $\gamma\in \R$. Then for $\phi\in H^1(\Omega_-)\cap H^1(\Omega_+)$ the following conditions are equivalent:
 \begin{enumerate}
   \item[(i)] $\phi$ extends to a function in $H^1(\Omega)$;
   \item[(ii)] $\phi$ extends to a continuous function on $\Omega$.
 \end{enumerate}
 The proof can be carried out by a suitable application of integration by parts ((ii)$\Rightarrow$(i)) and the Sobolev embedding theorem ((i)$\Rightarrow$(ii)).
 
 (b) Let $M\in \K^{N\times N}$ be a block operator matrix 
 \[
     M = \begin{pmatrix} A & B \\ C & D\end{pmatrix}
 \]
 with quadratic matrices $A$ and $D$ and matrices of appropriate size $B$ and $C$. If $A$ is invertible, then
 \[
    \det M = \det A \det (D - CA^{-1}B).
 \]
 The proof is not difficult and uses the elementary representation
 \[
    M =     \begin{pmatrix} 1 &0 \\   CA^{-1} & 1 \end{pmatrix}
\begin{pmatrix} A & 0 \\ 0 & D-CA^{-1}B\end{pmatrix}\begin{pmatrix} 1 & A^{-1}B \\0 & 1 \end{pmatrix}.   
 \]
\end{remark}

\begin{proof}[Proof of Theorem \ref{thm:spectral1D}]
(i)$\Leftrightarrow$(ii): Proposition \ref{prop:elementaryFredholm}.

The remainder of the proof is concerned with establishing the equivalence between (ii) and (iii). For this, let $u\in H_0^1(\Omega)$ be a solution of
\[
    D_\alpha u = -(\alpha u')'=-\beta u.
\]
We characterise $u$ to be a solution in terms of 
\[u_k(\cdot) \coloneqq u|_{(kh,(k+1)h)}(kh+\cdot),\quad k\in\{0,\ldots,r-1\}.\]
Using the characterisation of $H^1(\Omega)$-functions in Remark \ref{rem:aux1D} and keeping Setting \ref{set:spectral1D} in mind, we infer that for $u$ to be a solution it is equivalent that \begin{equation}\label{eq:ODEs}
   -\alpha_k u_k''=-\beta_k u_k \quad(k\in \{0,\ldots,r-1\}).
\end{equation}together with the boundary conditions
\begin{align*}
  & u_0(0) = 0, u_n(h)=0,\quad u_k(h)=u_{k+1}(0)\quad(k\in \{0,\ldots,r-1\})\\
  &  \alpha_k u'_k(h)=-\alpha_{k+1}u'_{k+1}(0)\quad(k\in \{0,\ldots,r-1\})
\end{align*}
With the help of the transition matrix $A^{(k)}$ with respect to $(h,\beta_k/\alpha_k)$, we obtain the following equations replacing the ODEs in \eqref{eq:ODEs}
\begin{equation}\label{eq:trm}
   A^{(k)} \begin{pmatrix} u_k(0) \\ u'_k(0)\end{pmatrix}= \begin{pmatrix} u_k(h) \\ u'_k(h)\end{pmatrix}.
\end{equation}
Thus, taking the unknowns $x_{k,j}\coloneqq u_k(jh)$,  $y_{k,j}\coloneqq u'_k(jh)$, $k\in \{0,\ldots,r\}$ and $j\in \{0,1\}$, we obtain the following set of equations:
\begin{equation}\label{eq:Ax0}
   \mathcal{A} \mathbf{x}  = 0,
\end{equation}
where $\mathbf{x} = (x^{(0)},x^{(1)})$ with
\begin{align*}
  x^{(0)} &=  (x_{0,0}, y_{0,0} ,   x_{1,0} , y_{1,0} , \cdots , x_{r-1,0} ,  y_{r-1,0} , x_{r,0} , y_{r,0} )^\top \\
 x^{(1)} & = (x_{0,1}, y_{0,1} , x_{1,1} , y_{1,1} , \cdots , x_{r-1,1},  y_{r-1,1} , x_{r,1} ,  y_{r,1} )^\top
\end{align*} providing shorthands for the left-hand and right-hand boundary values respectively. The matrix
 \[\mathcal{A} = (\mathcal{A}_{jk})_{j,k\in \{0,1\}}
  = \begin{pmatrix}
   \mathcal{A}_{00} & \mathcal{A}_{01} \\ \mathcal{A}_{10} & \mathcal{A}_{11} 
  \end{pmatrix}\] consists of
\begin{align*}
  \mathcal{A}_{00}& = -1_{2(r+1)} \in \K^{2(r+1)\times 2(r+1)} & \mathcal{A}_{01}& = \diag( A^{(0)},\ldots, A^{(r)}) \in \K^{2(r+1)\times 2(r+1)}  
\end{align*}
with $1_{2(r+1)}=\diag(1,\ldots,1)\in \K^{2(r+1)\times 2(r+1)}$. The equation 
\[
\begin{pmatrix} \mathcal{A}_{00} & \mathcal{A}_{01} \end{pmatrix} \begin{pmatrix} x^{(0)} \\ x^{(1)} \end{pmatrix}  = 0\] takes account of the equations in \eqref{eq:trm}. The other equations,  $\begin{pmatrix} \mathcal{A}_{10} & \mathcal{A}_{11} \end{pmatrix} \begin{pmatrix} x^{(0)} \\ x^{(1)} \end{pmatrix}  = 0$, are the boundary conditions. The formulas are
\begin{align*}
  \mathcal{A}_{10}& =  \begin{pmatrix}
   \mathbf{o}_{2r}^\top & | & | \\ \diag( 1, \alpha_0, 1, \alpha_1, \cdots, 1, \alpha_{r-1}) & \mathbf{e}^{2(r+1)}_{2(r+1)} & \mathbf{o}_{2(r+1)} \\ \mathbf{o}_{2r}^\top & | & |  \end{pmatrix} \in \K^{2(r+1)\times 2(r+1)} \\ \mathcal{A}_{11}& =\begin{pmatrix}
  | & | & \mathbf{o}_{2r}^\top\\ \mathbf{e}^1_{2(r+1)} & \mathbf{o}_{2(r+1)}  &\diag(- 1, \alpha_1, -1, \alpha_2, \cdots, -1, \alpha_{r})  \\ | & | & \mathbf{o}_{2r}^\top
  \end{pmatrix}\in \K^{2(r+1)\times 2(r+1)}.
\end{align*}
where the lower indices of $\mathbf{o}$ and $\mathbf{e}$ refer to the dimension of the respective zero vector and canonical basis vector. The upper index of $\mathbf{e}$ signifies the position of the $1$.

Any solution $u$ of $(D_\alpha +\beta)u=0$ yields a solution of \eqref{eq:Ax0} by evaluating $u$ and $u'$ at the boundary. On the other hand, all solutions $\mathbf{x}$ of \eqref{eq:Ax0} yield a solution $u$ of $(D_\alpha +\beta)u=0$. Hence, $D_\alpha+\beta$ is one-to-one if and only if $d\coloneqq \det \mathcal{A}\neq 0$. Thus, the remainder of this proof is devoted to show that $\det \mathcal{A}\neq 0$ if and only if (iii) holds. Therefore, we need to compute $d$. For this we expand this determinant with respect to the last row of $\mathcal{A}$, to obtain
\[
  d = - \det \mathcal{A}'
\]
with $\mathcal{A}'$ given by
\[
\begin{pmatrix}
 \begin{pmatrix} -1_{2r}& 0 \\ 0 & \begin{pmatrix} 0 \\ -1 \end{pmatrix} \end{pmatrix} & \vline & \diag (A^{(0)}, \ldots, A^{(r)}) \\ \hline
  \begin{pmatrix}
   \mathbf{o}_{2r}^\top & | \\ \diag( 1, \alpha_0, 1, \alpha_1, \cdots, 1, \alpha_{r-1})  & \mathbf{o}_{2r+1}
    \end{pmatrix} &\vline &
    \begin{pmatrix}
  | & | & \mathbf{o}_{2r}^\top \\ \mathbf{e}^1_{2r+1} & \mathbf{o}_{2r+1}  &\diag(- 1, \alpha_1, -1, \alpha_2, \cdots, -1, \alpha_{r}) 
  \end{pmatrix}
\end{pmatrix}
\]
Next, we expand with respect to the row involving the $1$ of $\mathbf{e}^1_{2r+1}$ to obtain
\[
   d = \det\mathcal{A}''
\]
with $\mathcal{A}''$ reading as follows
\[
\begin{pmatrix}
 \begin{pmatrix} -1_{2r}& 0 \\ 0 & \begin{pmatrix} 0 \\ -1 \end{pmatrix} \end{pmatrix} & \vline & \begin{pmatrix}  \begin{pmatrix} a_{01}^{(0)} \\ a_{11}^{(0)} \end{pmatrix}& 0 \\ 0 &\diag (A^{(1)}, \ldots, A^{(r)}) \end{pmatrix}
   \\ \hline
  \begin{pmatrix}
 \diag( 1, \alpha_0, 1, \alpha_1, \cdots, 1, \alpha_{r-1}) & \mathbf{o}_{2r} \end{pmatrix} &\vline &
    \begin{pmatrix}
   \mathbf{o}_{2r}  &\diag(- 1, \alpha_1, -1, \alpha_2, \cdots, -1, \alpha_{r}) 
  \end{pmatrix}
\end{pmatrix}
\]
Next, we expand with respect to the last column of the upper left block operator matrix und obtain
\[
   d =\det\mathcal{A}'''
\]with $\mathcal{A}'''$ given by
\[
\begin{pmatrix}
\begin{pmatrix}-1_{2r} \\ \mathbf{o}_{2r}^\top\end{pmatrix} & \vline & \begin{pmatrix}  \begin{pmatrix} a_{01}^{(0)} \\ a_{11}^{(0)} \end{pmatrix}& 0  & \\ 0 &\diag (A^{(1)}, \ldots, A^{(r-1)}) & 0 \\ 0 &  0 & \begin{pmatrix}a_{00}^{(r)} & a_{01}^{(r)} \end{pmatrix} \end{pmatrix}
   \\ \hline
  \begin{pmatrix}
 \diag( 1, \alpha_0, 1, \alpha_1, \cdots, 1, \alpha_{r-1}) \end{pmatrix} &\vline &
    \begin{pmatrix}
   \mathbf{o}_{2r} &\diag(- 1, \alpha_1, -1, \alpha_2, \cdots, -1, \alpha_{r}) 
  \end{pmatrix}
\end{pmatrix}
\]We now apply the determinant formula provided in Remark \ref{rem:aux1D} using the following block structure
\begin{align*} & M = \begin{pmatrix} A & \vline & B \\ \hline C& \vline & D \end{pmatrix} =\\
&\begin{pmatrix}
 -1_{2r}  & \vline & \begin{pmatrix}  \begin{pmatrix} a_{01}^{(0)} \\ a_{11}^{(0)} \end{pmatrix} & 0 & | & |\\ 0 &\diag (A^{(1)}, \ldots, A^{(r-1)}) & \mathbf{o}_{2r} & \mathbf{o}_{2r}\end{pmatrix}
   \\ \hline 
  \begin{pmatrix} \mathbf{o}_{2r}^\top \\
 \diag( 1, \alpha_0, 1, \alpha_1, \cdots, 1, \alpha_{r-1}) \end{pmatrix} &\vline &
    \begin{pmatrix} 0 & \mathbf{o}_{2(r-1)}^\top & \begin{pmatrix}a_{00}^{(r)} & a_{01}^{(r)} \end{pmatrix} \\
   \mathbf{o}_{2r} &\diag(- 1, \alpha_1, -1, \alpha_2, \cdots, -1, \alpha_{r})  
  \end{pmatrix}
\end{pmatrix}
\end{align*}
 Then
\[
   d= \det \mathcal{B} = \det(D-CA^{-1}B)
\]with $\mathcal{B}=D-CA^{-1}B=D+CB$ given by
\begin{align*}
 \mathcal{B} & =\begin{pmatrix} 0 & \mathbf{o}_{2(r-1)}^\top & a_{00}^{(r)} & a_{01}^{(r)} \\
 \mathbf{o}_{2(r-1)} & \diag(-1,\alpha_1,\ldots,-1,\alpha_{r-1}) & & \\
 0 & \mathbf{o}_{2(r-1)}^\top & -1 & 0 \\
  0 & \mathbf{o}_{2(r-1)}^\top & 0 & \alpha_{r} \\ \end{pmatrix} \\ & \quad + 
   \begin{pmatrix}  \mathbf{o}_{2r}^\top \\
 \diag(1,\alpha_0,\ldots,1,\alpha_{r-1}) \end{pmatrix}    \begin{pmatrix}  a_{01}^{(0)} & \mathbf{o}_{2(r-1)}^\top & | & |\\
  a_{11}^{(0)} & \mathbf{o}_{2(r-1)}^\top & \mathbf{o}_{2r} & \mathbf{o}_{2r} \\ \mathbf{o}_{2(r-1)} &
 \diag(A^{(1)},\ldots,A^{(r-1)})& | & | \end{pmatrix} \\
 & =\begin{pmatrix} 0 & \mathbf{o}_{2(r-1)}^\top & a_{00}^{(r)} & a_{01}^{(r)} \\
 \mathbf{o}_{2(r-1)} & \diag(-1,\alpha_1,\ldots,-1,\alpha_{r-1}) & & \\
 0 & \mathbf{o}_{2(r-1)}^\top & -1 & 0 \\
  0 & \mathbf{o}_{2(r-1)}^\top & 0 & \alpha_{r} \\ \end{pmatrix} \\ & \quad + 
  \begin{pmatrix} \mathbf{o}_{2r+1}^\top \\
  a_{01}^{(0)} & \mathbf{o}_{2r}^\top  \\
  \alpha_0 a_{11}^{(0)} & \mathbf{o}_{2r}^\top  \\
  0 & \diag(A_{\alpha_1},\ldots, A_{\alpha_{r-1}}) & 0 & 0
  \end{pmatrix}\\
  & = \begin{pmatrix} 0                                                                              &  \mathbf{o}_{2(r-1)}^\top & & & \begin{pmatrix}a_{00}^{(r)} & a_{01}^{(r)} \end{pmatrix} \\
  	\begin{pmatrix} a_{01}^{(0)} \\ \alpha_0 a_{11}^{(0)} \end{pmatrix}  & \begin{pmatrix} -1 & 0 \\ 0 & \alpha_1 \end{pmatrix} & & &\begin{pmatrix} 0 & 0\\0 & 0\end{pmatrix} \\
				\begin{pmatrix} 0 \\ 0 \end{pmatrix}  & A_{\alpha_1}& \begin{pmatrix} -1 & 0 \\ 0 & \alpha_2 \end{pmatrix}  & & \begin{pmatrix} 0 & 0\\ 0 & 0\end{pmatrix}  \\ \vdots & 0 & \ddots & \ddots && \\ 
				\begin{pmatrix} 0 \\ 0 \end{pmatrix} & 0 & \cdots & A_{\alpha_{r-1}} & \begin{pmatrix} -1 & 0 \\ 0 &\alpha_r\end{pmatrix}
				\end{pmatrix},				
\end{align*}
where $A_{\alpha_j}\coloneqq \begin{pmatrix} 1 & 0 \\ 0 & \alpha_j \end{pmatrix}A^{(j)}$. Elementary row and column operations together with the above Remark  yield that $d=(\det\mathcal{C})d'$ with 
\[
   \mathcal{C} =  \begin{pmatrix} \begin{pmatrix}-1 & \\ & \alpha_1 \end{pmatrix} & & & \\
                                                                  A_{\alpha_1} & \begin{pmatrix}-1 & \\ & \alpha_2 \end{pmatrix} & & \\
                                                                  & \ddots &\ddots & \\
                                                                                                                                     &  & A_{\alpha_{r-1}} &\begin{pmatrix}-1 & \\ & \alpha_r \end{pmatrix} \\
                                                                    \end{pmatrix} 
\] and 
\[
   d' = \begin{pmatrix} o_{2(r-1)}^\top & \begin{pmatrix}a_{00}^{(r)} & a_{01}^{(r)} \end{pmatrix} \end{pmatrix}\mathcal{C}^{-1} \begin{pmatrix}  \begin{pmatrix}a_{01}^{(0)} & \alpha_0 a_{11}^{(0)} \end{pmatrix} & o_{2(r-1)}^\top  \end{pmatrix}^\top.
\]As $\det \mathcal{C} = (-1)^r \alpha_1\cdots\alpha_r$ is easily computed (and is always non-zero by the assumptions on $\alpha$); the computation if $d'$ is more involved. Denoting the lower left corner $2\times 2$ block of $\mathcal{C}^{-1}$ by $\mathcal{D}$, we obtain
\[
   d'= \begin{pmatrix}a_{00}^{(r)} & a_{01}^{(r)} \end{pmatrix}\mathcal{D}\begin{pmatrix}a_{01}^{(0)} \\  \alpha_0 a_{11}^{(0)} \end{pmatrix}.
\]
Thus, the assertion follows from Proposition \ref{prop:triangMat} below.
\end{proof}

Before we state and prove Proposition \ref{prop:triangMat}, we consider an elementary lemma next, which we state without the easy proof.

\begin{lemma}\label{lem:2by2elem} Let $A \in \K^{N\times N}$, $D\in \K^{M\times M}$ be invertible matrices and $C\in \K^{M\times N}$. Then the matrix $
    \begin{pmatrix} A & 0 \\ C & D \end{pmatrix}$
is invertible and
\[
      \begin{pmatrix} A & 0 \\ C & D \end{pmatrix}^{-1} = \begin{pmatrix} A^{-1} & 0 \\ -D^{-1}CA^{-1} & D^{-1} \end{pmatrix}
\]
\end{lemma}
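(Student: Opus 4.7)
The plan is to verify the claimed formula by direct block matrix multiplication; no deeper machinery is required here. Because $A$ and $D$ are both invertible over $\K$, the inverses appearing on the right-hand side are well-defined, so the only question is whether the product of the two block matrices equals the identity.

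First I would compute the product
\[
\begin{pmatrix} A & 0 \\ C & D \end{pmatrix} \begin{pmatrix} A^{-1} & 0 \\ -D^{-1}CA^{-1} & D^{-1} \end{pmatrix}
\]
block by block. The $(1,1)$-entry is $AA^{-1} + 0 = I_N$; the $(1,2)$-entry is $A\cdot 0 + 0\cdot D^{-1} = 0$; the $(2,1)$-entry is $CA^{-1} + D(-D^{-1}CA^{-1}) = CA^{-1}-CA^{-1}=0$; and the $(2,2)$-entry is $C\cdot 0 + DD^{-1} = I_M$. Hence the product is $I_{N+M}$.

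Since the ambient matrix is a square matrix in $\K^{(N+M)\times(N+M)}$, the existence of a right inverse already forces invertibility and coincidence of the right and left inverses. Consequently the displayed formula holds as stated. The only conceivable obstacle would be to confirm that $-D^{-1}CA^{-1}$ has the correct dimensions, which is clear from $A^{-1}\in\K^{N\times N}$, $C\in\K^{M\times N}$ and $D^{-1}\in\K^{M\times M}$, so the product is an element of $\K^{M\times N}$, as needed for the lower-left block.
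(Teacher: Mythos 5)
Your proof is correct. The paper explicitly omits the proof of this lemma, describing it as ``easy,'' and your block-by-block verification that the claimed right inverse works, combined with the observation that a square matrix with a right inverse over a field is invertible, is exactly the standard argument one would give.
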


\begin{proposition}\label{prop:triangMat} Let $A_1,\ldots,A_n \in \K^{\ell\times \ell }$ invertible and $B_1,\ldots, B_{r-1}\in \K^{\ell\times \ell }$. Then the matrix
\[\mathcal{C}\coloneqq \begin{pmatrix} A_1 & & & \\
                                                                  B_1 & A_2  & & \\
                                                                  & \ddots &\ddots & \\
                                                                                                                                     &  & B_{r-1} &A_r \\
                                                                    \end{pmatrix} \] is invertible and the lower left $\ell\times\ell$ block of $\mathcal{C}^{-1}$ reads:
                                                                    \[
                                                                    (-1)^r A_r^{-1}B_{r-1}A_{r-1}^{-1}\cdots B_{1}A_1^{-1}.
                                                                    \]
\end{proposition}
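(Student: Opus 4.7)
The plan is to argue by induction on $r$. Invertibility of $\mathcal{C}$ is immediate from its block lower triangular structure with invertible diagonal blocks; in particular $\det\mathcal{C}=\prod_{j=1}^{r}\det A_{j}\neq 0$, so the real content of the proposition is the explicit formula for the lower-left $\ell\times\ell$ block of $\mathcal{C}^{-1}$.

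The base case $r=2$ is precisely Lemma \ref{lem:2by2elem} (applied with $A=A_1$, $D=A_2$, $C=B_1$), producing the lower-left block $-A_2^{-1}B_1A_1^{-1}$. For the inductive step from $r-1$ to $r$, I would split $\mathcal{C}$ as the $2\times 2$ block matrix
\[
\mathcal{C}=\begin{pmatrix}\mathcal{C}' & 0 \\ R & A_r\end{pmatrix},
\]
where $\mathcal{C}'$ is the analogous $(r-1)\ell\times(r-1)\ell$ block lower bidiagonal matrix built from $A_1,\ldots,A_{r-1}$ and $B_1,\ldots,B_{r-2}$, and $R=(0,\ldots,0,B_{r-1})$ is the $\ell\times(r-1)\ell$ block row whose only non-zero entry sits in the last block column. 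Applying Lemma \ref{lem:2by2elem} once more,
\[
\mathcal{C}^{-1}=\begin{pmatrix}(\mathcal{C}')^{-1} & 0 \\ -A_r^{-1}R\,(\mathcal{C}')^{-1} & A_r^{-1}\end{pmatrix}.
\]

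Writing $(\mathcal{C}')^{-1}$ in block form as $(D_{ij})_{i,j=1}^{r-1}$, the block row $R\,(\mathcal{C}')^{-1}$ has $j$-th block entry $B_{r-1}D_{r-1,j}$, so its first block entry is $B_{r-1}D_{r-1,1}$. Hence the lower-left block of $\mathcal{C}^{-1}$ equals $-A_r^{-1}B_{r-1}D_{r-1,1}$, where $D_{r-1,1}$ is by definition the lower-left block of $(\mathcal{C}')^{-1}$. Substituting the induction hypothesis for $D_{r-1,1}$ and absorbing the extra factor $-1$ into the power of $-1$ yields the claimed product. The whole argument is essentially bookkeeping; the one thing to watch is the sign, which flips by $-1$ at every inductive step owing to the minus in $-A_r^{-1}R\,(\mathcal{C}')^{-1}$. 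I do not expect any genuine obstacle: Lemma \ref{lem:2by2elem} supplies the $2\times 2$ block inversion formula that isolates the bottom-left block, and the fact that $R$ has a single non-zero block means the induction hypothesis plugs in cleanly, with no further matrix combinatorics.
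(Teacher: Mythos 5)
Your argument has the same engine as the paper's — induction on $r$ driven by Lemma~\ref{lem:2by2elem} — but you peel from the opposite corner: you split off $A_r$ at the bottom-right, writing $\mathcal{C}=\begin{pmatrix}\mathcal{C}'&0\\ R&A_r\end{pmatrix}$, and then read the $(r,1)$ block directly off the block-inversion formula, whereas the paper splits off $A_1$ at the top-left and then recovers the unknown $(r,1)$ entry $D$ from the single block equation $(\mathcal{C}^{-1}\mathcal{C})_{r,1}=0$. Both routes are equally short and both are valid; your version arguably avoids one extra step, since the lower-left block of $\mathcal{C}^{-1}$ appears verbatim in $-A_r^{-1}R(\mathcal{C}')^{-1}$ rather than being inferred from the inverse identity.

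There is, however, a sign inconsistency you should have caught. Your own base case $r=2$ correctly produces $-A_2^{-1}B_1A_1^{-1}$, yet the formula you claim to establish, $(-1)^r A_r^{-1}B_{r-1}\cdots B_1A_1^{-1}$, evaluates at $r=2$ to $+A_2^{-1}B_1A_1^{-1}$. So your base case contradicts the statement you set out to prove, and the induction never gets started as written; "absorbing the extra factor $-1$ into the power of $-1$" does not reconcile the two. This is actually a typo in Proposition~\ref{prop:triangMat} itself: the correct exponent is $r-1$ (check $r=1$: the answer is $A_1^{-1}$, not $-A_1^{-1}$; check $r=3$: the answer is $+A_3^{-1}B_2A_2^{-1}B_1A_1^{-1}$). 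The paper's own proof has the same defect, since it declares the base case "$r=1$: nothing to show" when in fact the claimed formula fails there. The error is harmless downstream — the application in the proof of Theorem~\ref{thm:spectral1D} only exploits non-vanishing of the determinant, which is insensitive to the overall sign — but a careful write-up should either correct the exponent to $r-1$ or explicitly flag the discrepancy between the base case and the stated formula.
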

\begin{proof} The invertibility of $\mathcal{C}$ follows by induction from Lemma \ref{lem:2by2elem}. It remains to show the formula. 
We prove this claim by induction on $r\in \N$. For $r=1$ there is nothing to show. Assume the result to be true for $r-1$. By induction hypothesis and Lemma \ref{lem:2by2elem}, the last block row of $\mathcal{C}^{-1}$ reads in block operator matrix form
\[
  \begin{pmatrix} D & (-1)^{r-1}A_r^{-1}B_{r-1}A_{r-1}^{-1}\cdots B_{2}A_2^{-1} & \cdots & A_r^{-1} \end{pmatrix}
\]The product of this and the first block operator column of $\mathcal{C}$ needs to equate to $0$, i.e.,
\[
    DA_1 +(-1)^{r-1}A_r^{-1}B_{r-1}A_{r-1}^{-1}\cdots B_{2}A_2^{-1}B_1  = 0.
\]Hence, 
\[
   D= (-1)^{r}A_r^{-1}B_{r-1}A_{r-1}^{-1}\cdots B_{2}A_2^{-1}B_1 A_1^{-1}
\]as required.
\end{proof}

\subsection{Results in Section \ref{sec:DDWP}}

The technique used for the proof of Theorem \ref{thm:ndtensor} is akin to parts of the proof of \cite[Theorem 2.13]{PTWW13}. 
\begin{proof}[Proof of Theorem \ref{thm:ndtensor}]
We denote by $-\Delta_{d-1}$ the Dirichlet--Laplace operator on $L_2(\hat{\Omega})$, which is a strictly positive selfadjoint operator with compact resolvent; it is well-known that the corresponding eigenvalues are discrete and form a discrete sequence, see \cite[Section 6.5.1]{E10}. The operator $ D_{\rdd,\alpha} -\alpha \Delta_{d-1}$ is essentially self-adjoint. Indeed, one computes the adjoint using $(1-\varepsilon\Delta_{d-1})^{-1} \to 1$ as $\varepsilon\to 0$ as follows. Beforehand, note that $(1-\varepsilon\Delta_{d-1})^{-1}$ commutes with $(D_{\rdd,\alpha} -\alpha \Delta_{d-1})$ in the sense that
\[
(1-\varepsilon\Delta_{d-1})^{-1}  (D_{\rdd,\alpha} -\alpha \Delta_{d-1}) \subseteq  (D_{\rdd,\alpha} -\alpha \Delta_{d-1})(1-\varepsilon\Delta_{d-1})^{-1};
\]this follows from the fact that both $\alpha$ and $D_{\rdd,\alpha}$ are independent of the $(x_2,\ldots,x_{d})$-variables and that $(1-\varepsilon\Delta_{d-1})^{-1} $ is a (multiple of a) resolvent of $\Delta_{d-1}$ and, thus, commutes with $\Delta_{d-1}$.

Next, let $\phi\in \dom((D_{\rdd,\alpha} -\alpha \Delta_{d-1})^*)$; for $\varepsilon>0$ define $\phi_\varepsilon \coloneqq (1-\varepsilon\Delta_{d-1})^{-1}\phi$.  Then for all $u\in \dom(-D_{\rdd,\alpha} -\alpha \Delta_{d-1})$, we have
\begin{align*}
  \langle \phi_\varepsilon, (D_{\rdd,\alpha} -\alpha \Delta_{d-1})u\rangle & = \langle (1-\varepsilon\Delta_{d-1})^{-1}\phi, (D_{\rdd,\alpha} -\alpha \Delta_{d-1})u \rangle \\
  & = \langle \phi, (1-\varepsilon\Delta_{d-1})^{-1}(D_{\rdd,\alpha} -\alpha \Delta_{d-1})u \rangle \\
  & =\langle \phi, (D_{\rdd,\alpha} -\alpha \Delta_{d-1})(1-\varepsilon\Delta_{d-1})^{-1}u \rangle \\
  & =\langle (D_{\rdd,\alpha} -\alpha \Delta_{d-1})^*\phi, (1-\varepsilon\Delta_{d-1})^{-1}u \rangle \\
   & =\langle (1-\varepsilon\Delta_{d-1})^{-1}(D_{\rdd,\alpha} -\alpha \Delta_{d-1})^*\phi, u \rangle.
\end{align*}
This shows that $\phi_\varepsilon \in \dom((D_{\rdd,\alpha} -\alpha \Delta_{d-1})^*)$ and
\[
  (D_{\rdd,\alpha} -\alpha \Delta_{d-1})^*\phi_\varepsilon = (1-\varepsilon\Delta_{d-1})^{-1}(D_{\rdd,\alpha} -\alpha \Delta_{d-1})^*\phi.
\]
The strong operator topology convergence of $(1-\varepsilon\Delta_{d-1})^{-1} \to 1$ implies that $\phi_\varepsilon \to \phi$ as $\varepsilon\to 0$ in the graph norm of $(D_{\rdd,\alpha} -\alpha \Delta_{d-1})^*$. Next, from the above computation, we deduce using $\alpha \Delta_{d-1}= \Delta_{d-1} \alpha$
\begin{align*}
  \langle \phi_\varepsilon, D_{\rdd,\alpha}u\rangle & = \langle \phi_\varepsilon, \alpha \Delta_{d-1} u\rangle+ \langle (1-\varepsilon\Delta_{d-1})^{-1}(D_{\rdd,\alpha} -\alpha \Delta_{d-1})^*\phi, u \rangle \\
  & = \langle \alpha\Delta_{d-1} \phi_\varepsilon,   u\rangle+ \langle (1-\varepsilon\Delta_{d-1})^{-1}(D_{\rdd,\alpha} -\alpha \Delta_{d-1})^*\phi, u \rangle.
\end{align*}
Hence, since $\dom((D_{\rdd,\alpha} -\alpha \Delta_{d-1}))$ is dense in the graph space of $D_{\rdd,\alpha}$ (use again approximation and commutativity properties of $(1-\varepsilon\Delta_{d-1})^{-1}$), we get
\[
  \phi_\varepsilon \in \dom((D_{\rdd,\alpha})^*)= \dom(D_{\rdd,\alpha})
\]and
\[
    D_{\rdd,\alpha}\phi_\varepsilon =\alpha\Delta_{d-1} \phi_\varepsilon+(1-\varepsilon\Delta_{d-1})^{-1}(D_{\rdd,\alpha} -\alpha \Delta_{d-1})^*\phi.
\]Thus,
\[
 (D_{\rdd,\alpha} -\alpha \Delta_{d-1})^*\phi_\varepsilon = (1-\varepsilon\Delta_{d-1})^{-1}(D_{\rdd,\alpha} -\alpha \Delta_{d-1})^*\phi =D_{\rdd,\alpha}\phi_\varepsilon -\alpha\Delta_{d-1}\phi_\varepsilon,
\]which implies that $D_{\rdd,\alpha} -\alpha \Delta_{d-1}$ is essentially self-adjoint.
\noindent
Next, it is elementary that 
\[
    D_{\rdd,\alpha} -\alpha \Delta_{d-1} \subseteq -\Delta_\alpha.
\] Hence, 
\begin{align*}
   & D_{\rdd,\alpha} -\alpha \Delta_{d-1} \subseteq \overline{    D_{\rdd,\alpha} -\alpha \Delta_{d-1}}\subseteq  -\Delta_\alpha \\
   & -\Delta_\alpha^* \subseteq (D_{\rdd,\alpha} -\alpha \Delta_{d-1})^* =\overline{    D_{\rdd,\alpha} -\alpha \Delta_{d-1}}.
\end{align*} 
Consequently, 
\[
\overline{    D_{\rdd,\alpha} -\alpha \Delta_{d-1}} = -\Delta_\alpha,
\] and the statement follows.
\end{proof}

\subsection{Results in Section \ref{sec:hdp-wp}}

\begin{proof}[Proof of Theorem \ref{thm:sp1Dff}]
(i)$\Leftrightarrow$(ii) This is a standard fact for self-adjoint operators. For convenience of the reader, we present an argument invoking von Neumann's spectral theorem. By Theorem \ref{thm:TW-Mana} using that $\alpha$ is real, $D_\alpha+\alpha\lambda_ k$ is a self-adjoint operator. Moreover, $D_\alpha$ has compact resolvent. Hence, by von Neumann's the spectral theorem, there is a sequence $V\colon \N \to \R$ and a unitary operator $U\colon L_2(0,1)\to \ell_2(\N)$ such that $ D_\alpha+\alpha\lambda_ k = U^*M_V U$, where $M_V$ is the operator of multiplying by $V$ on $\ell_2(\N)$ with maximal domain. Thus, $D_\alpha+\alpha\lambda_ k$ is invertible if and only if $M_V$ is. Moreover, in this case, $\|M_V^{-1}\|\leq C$ is equivalent to $ \|\frac{1}{|V|}\|_{\infty} \leq C$. Thus, for all $j\in \N$, $\frac{1}{|V(j)|}\leq C$, thus, for all $\delta \in  (-1/C,1/C)$
 \[
   |V(j)-\delta|\geq |V(j)|-|\delta|\geq \frac{1}{|C|}-|\delta|>0,
 \]implying that $(-1/C,1/C)\subseteq \rho(M_V)=\rho(D_\alpha+\alpha\lambda_ k )$. 
If, now, $(-1/C,1/C)\subseteq \rho(D_\alpha+\alpha\lambda_ k )$, it follows that $\emptyset=(-1/C,1/C)\cap \sigma(D_\alpha+\alpha\lambda_ k )=\{V(j);j\in \N\}\cap (-1/C,1/C)$. Thus, $\|(D_\alpha+\alpha\lambda_ k )^{-1}\|=\|1/V\|_\infty\leq C$.

(ii)$\Leftrightarrow$(iii) (ii) holds if, and only if, for all $\delta \in [-1,1]$, the operator 
 \[
 D_\alpha+\alpha\lambda_k+\tfrac{1}{C}\delta =  D_\alpha+\alpha(\lambda_k+\tfrac{1}{\alpha C}\delta)
 \] is continuously invertible. Thus, the equivalence asserted  follows directly from Theorem \ref{thm:spectral1D}.
\end{proof}

\begin{proof}[Proof of Proposition \ref{prop:pam}](a) The formula is a straightforward consequence of the formula for $p_{\alpha,\beta}$ taking into account that
\[A^{(j)}=\begin{pmatrix} \cosh(m_j h) & \frac{1}{m_j}\sinh(m_j h) \\ m_j \sinh(m_j h) & \cosh(m_j h) \end{pmatrix}.
\]for all $j\in \{0,\ldots,r\}$ and factoring out $\cosh(m_jh)$. 

(b) We treat the case $x=0$ first. By induction on $r$, one obtains for $\mu\geq\mu_0$
\[
 \begin{pmatrix}1 &  \alpha_{r}^{-1} \mu^{-1}  \end{pmatrix} \begin{pmatrix} 1 & \alpha_{r-1}^{-1}\mu^{-1} \\ \alpha_{r-1}\mu  & 1\end{pmatrix}\cdots  \begin{pmatrix} 1 &\alpha_{1}^{-1}\mu^{-1}  \\  \alpha_{1}\mu & 1\end{pmatrix} = \big(\prod_{j=1}^{r-1}(1+\alpha_{j+1}^{-1}\alpha_{j})\big)\begin{pmatrix}1 &  \alpha_{1}^{-1} \mu^{-1}  \end{pmatrix}.
\]Indeed, the base case $r=1$ being trivial, the inductive step is proved by observing
\[
 \begin{pmatrix}1 &  \alpha_{r}^{-1} \mu^{-1}  \end{pmatrix} \begin{pmatrix} 1 & \alpha_{r-1}^{-1}\mu^{-1} \\ \alpha_{r-1}\mu  & 1\end{pmatrix} = (1+\alpha_r^{-1}\alpha_{r-1})\begin{pmatrix}1 &  \alpha_{r-1}^{-1} \mu^{-1}  
\end{pmatrix}.
\]
Thus, we deduce
\begin{align*}
  \mu w(\mu,0) &  = \mu \begin{pmatrix}1 &  \alpha_{r}^{-1} \mu^{-1}  \end{pmatrix} \begin{pmatrix} 1 & \alpha_{r-1}^{-1}\mu^{-1} \\ \alpha_{r-1}\mu  & 1\end{pmatrix}\cdots  \begin{pmatrix} 1 &\alpha_{1}^{-1}\mu^{-1}  \\  \alpha_{1}\mu & 1\end{pmatrix}\begin{pmatrix}\mu^{-1}  \\  \alpha_0  \end{pmatrix} \\ & =\mu \big(\prod_{j=1}^{r-1}(1+\alpha_{j+1}^{-1}\alpha_{j})\big)\begin{pmatrix}1 &  \mu^{-1}\alpha_{1}^{-1}  \end{pmatrix}\begin{pmatrix}\mu^{-1}  \\  \alpha_0  \end{pmatrix}
   \\ & = \prod_{j=0}^{r-1}(1+\alpha_{j+1}^{-1}\alpha_{j}).
\end{align*}
Next, for $x\in [-\mu_0/2,\mu_0/2]^{r+1}$, $\mu w(\mu,x)$ is a rational function in $\mu$ with leading order coefficients of both the enumerator and denominator polynomials coinciding with the respective ones from $\mu w(\mu,0)$. Since $x$ comes from a compact, thus, bounded subset of $\R^{r+1}$, the convergence is uniform in $x$.

(c) For the claim, it suffices to show the assertion concerning the limit; $q(\mu,\cdot)\in \mathcal{O}(\tfrac{1}{\mu})$ is then a consequence of (b). 
For all $s>0$
\[
  \tanh(s)-1 = \frac{1}{e^{s}\cosh(s)}.
\]
Thus, for all $j\in \{0,\ldots,r\}$
\[
   t_j - 1 = \frac{1}{e^{m_j h}\cosh(m_j h)}.
\]
Let $(\mu,x)\in M_{\mu_0}$ and $(m_0,\ldots,m_r)\coloneqq S(\mu,x)$. For $j\in \{0,\ldots,r\}$ let $s_j \in [0,1]$ and consider
\begin{align*}
  f_r(s_r) & \coloneqq \begin{pmatrix} 1 & \alpha_r^{-1}m_r^{-1} s_r\end{pmatrix} \\
  f_0 (s_0) &  \coloneqq \begin{pmatrix} m_0^{-1} s_0 \\ \alpha_0 \end{pmatrix} \\
    f_k (s_k) &  \coloneqq \begin{pmatrix} 1 & \alpha_k^{-1} m_k^{-1} s_k \\ \alpha_k m_k s_k & 1 \end{pmatrix},
\end{align*}where $k\in \{1,\ldots,r-1\}$.
Then,
\begin{align*}
    q(\mu,x)-w(\mu,x) & = f_r(t_r)f_{r-1}(t_{r-1}) \cdots f_0(t_0) - f_r(1)f_{r-1}(1) \cdots f_0(1) \\
    & = \sum_{j=0}^r f_r(t_r)f_{r-1}(t_{r-1}) \cdots f_{j+1}(t_{j+1}) (f_j (t_j)-f_j(1)) f_{j-1}(1)\cdots f_0(1).
\end{align*}
Since for all $j\in \{0,\ldots,r\}$
\begin{align*}
  \|f_j (t_j)-f_j(1)\| & \leq c_j \mu |t_j-1| \\
  \|f_j (t_j)\| & \leq c_j \mu
\end{align*}
for some $c_j>0$ (depending on $\mu_0$, $\alpha$) independently of $\mu$ and $x$, we obtain with $c\coloneqq \max_j c_j$
\[
  | \mu (q(\mu,x) - w(\mu,x)) |\leq (r+1) c^{r+1} \mu^{n+2} \max_j |t_j-1|.
\]
Since, however, for all $j\in \{0,\ldots,r\}$,  $|t_j-1|\to 0$ exponentially fast as $\mu\to\infty$ by our preliminary observation, we get the desired assertion.

(d) Since $C_c(\R^{r+1})\cap C(M_{\mu_0})$ consists of uniformly continuous functions, we obtain that $\overline{C_c(\R^{r+1})\cap C(M_{\mu_0})}^{\|\cdot\|_\infty}=C_0(\R^{r+1})\cap C(M_{\mu_0})=C_0(M_{\mu_0})$ is uniformly continuous. By (b) and (c), we obtain uniform continuity of
\[
    M_{\mu_0} \ni (\mu,x) \mapsto \mu q(\mu,x)-\prod_{j=0}^{r-1}(1+\alpha_{j+1}^{-1}\alpha_{j}) \in C_0 (M_{\mu_0}),
\]which implies the assertion.
\end{proof}

The proof of  Theorem \ref{thm:charcrit1} requires a preparation. 
\begin{lemma}\label{lem:homdeg0} (a) Let $t>0$, $\alpha_0,\ldots,\alpha_r\in \R\setminus\{0\}$. Then for all $\mu>0$,
\[
  p_{\mu \alpha}(t) = p_{\alpha}(t).
\]
(b) Let $\mu>0$ and $t=\tanh(\mu h)$. Then 
the following conditions are equivalent
\begin{enumerate}
\item[(i)] $\tilde{q}(\mu,\ldots,\mu)\neq 0$;
\item[(ii)] $p_{\alpha}(t)\neq 0$.
\end{enumerate}
\end{lemma}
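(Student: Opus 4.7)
My plan for Lemma \ref{lem:homdeg0} is to reduce both parts to a single diagonal-conjugation identity. The key for (a) is that $p_\alpha(t)$ depends on $\alpha$ only through the ratios $\alpha_{j-1}/\alpha_j$, which are invariant under $\alpha \mapsto \mu\alpha$; (b) then follows from (a) by factoring out a power of $\mu$ from $\tilde q(\mu,\ldots,\mu)$.

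To establish (a), I would set $D_j \coloneqq \operatorname{diag}(1,\alpha_j)$ and $M_0 \coloneqq \begin{pmatrix} 1 & t \\ t & 1 \end{pmatrix}$, and verify the similarity
\[
\begin{pmatrix} 1 & \alpha_j^{-1} t \\ \alpha_j t & 1 \end{pmatrix} = D_j\, M_0\, D_j^{-1},
\]
together with the rewritings $\begin{pmatrix}1 & \alpha_r^{-1}t\end{pmatrix} = \begin{pmatrix}1 & t\end{pmatrix} D_r^{-1}$ and $\begin{pmatrix}t \\ \alpha_0\end{pmatrix} = D_0\begin{pmatrix}t \\ 1\end{pmatrix}$. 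Plugging these into the definition of $p_\alpha(t)$, the interior pairs $D_j^{-1} D_{j-1}$ that appear between consecutive $M_0$-factors equal $\operatorname{diag}(1, \alpha_{j-1}/\alpha_j)$ for $j = 1,\ldots,r$, so
\[
p_\alpha(t) = \begin{pmatrix}1 & t\end{pmatrix} (D_r^{-1} D_{r-1})\, M_0\, (D_{r-1}^{-1} D_{r-2})\, M_0 \cdots M_0\, (D_1^{-1} D_0) \begin{pmatrix}t \\ 1\end{pmatrix}.
\]
Since each $D_j^{-1} D_{j-1}$ is unchanged under $\alpha \mapsto \mu\alpha$, (a) follows. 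The edge case $r=1$ (empty middle product) is handled directly by $p_\alpha(t) = t(1+\alpha_0/\alpha_1)$.

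For (b), substituting $m_0 = \cdots = m_r = \mu$ forces $t_j = \tanh(\mu h) = t$ uniformly, and factoring $\mu^{-1}$ from the rightmost column vector,
\[
\begin{pmatrix}\mu^{-1} t \\ \alpha_0 \end{pmatrix} = \mu^{-1}\begin{pmatrix}t \\ \mu\alpha_0\end{pmatrix},
\]
while recognising each interior matrix as $\begin{pmatrix}1 & (\mu\alpha_j)^{-1}t \\ (\mu\alpha_j)t & 1\end{pmatrix}$ and the left row vector as $\begin{pmatrix}1 & (\mu\alpha_r)^{-1}t\end{pmatrix}$, one reads off $\tilde q(\mu,\ldots,\mu) = \mu^{-1}\, p_{\mu\alpha}(t)$. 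Applying (a), this equals $\mu^{-1} p_\alpha(t)$, and since $\mu>0$ the equivalence (i)$\Leftrightarrow$(ii) is immediate. There is no genuine obstacle here: the argument is a compact algebraic identity, and the only care needed is tracking how the endpoint vectors absorb the ``loose'' $D$-factors so that the middle product genuinely reduces to the scale-invariant ratios.
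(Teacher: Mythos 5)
Your proof is correct. For part (b) it is essentially the paper's own argument: both of you read off the identity $\mu\,\tilde q(\mu,\ldots,\mu) = p_{\mu\alpha}(t)$ by absorbing $\mu$ into the $\alpha_j$'s and pulling $\mu^{-1}$ out of the right-hand column vector, and then invoke (a). For part (a), however, you take a genuinely different route. The paper argues by tracking a $\mu$-grading: it observes (schematically, with $\ast$ denoting $\mu$-independent scalars) that matrices of the shape $\begin{pmatrix}\ast & \mu^{-1}\ast \\ \mu\ast & \ast\end{pmatrix}$ are closed under multiplication, and that the boundary row and column vectors cancel the residual $\mu^{\pm 1}$-powers, so the whole scalar is $\mu$-free. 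You instead give an explicit diagonal conjugation $\begin{pmatrix}1 & \alpha_j^{-1}t \\ \alpha_j t & 1\end{pmatrix} = D_j M_0 D_j^{-1}$ with $D_j = \operatorname{diag}(1,\alpha_j)$ and $M_0 = \begin{pmatrix}1 & t \\ t & 1\end{pmatrix}$, which after regrouping yields
\[
p_\alpha(t) = \begin{pmatrix}1 & t\end{pmatrix}\,(D_r^{-1}D_{r-1})\,M_0\,(D_{r-1}^{-1}D_{r-2})\,M_0\cdots M_0\,(D_1^{-1}D_0)\begin{pmatrix}t\\ 1\end{pmatrix},
\]
in which each $D_j^{-1}D_{j-1} = \operatorname{diag}(1, \alpha_{j-1}/\alpha_j)$ is manifestly scale-invariant. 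This buys you something slightly stronger than the statement: it shows that $p_\alpha$ depends on the $\alpha_j$ only through consecutive ratios, and it replaces the paper's $\ast$-bookkeeping (which implicitly requires the induction you'd otherwise hand-wave) with an exact algebraic identity. The paper's version is shorter; yours is self-contained and arguably more transparent about \emph{why} the invariance holds.
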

\begin{proof}
(a) For all $\mu>0$, we compute\[
  \begin{pmatrix}
   \ast & \mu^{-1}\ast \\
   \mu \ast & \ast
  \end{pmatrix}    \begin{pmatrix}
   \ast & \mu^{-1}\ast \\
   \mu \ast & \ast
  \end{pmatrix} = \begin{pmatrix}
   \ast & \mu^{-1}\ast \\
   \mu \ast & \ast
  \end{pmatrix},
\]where the $\ast$-symbols symbolise possibly different arbitrary numbers independent of $\mu$.

Then, using the preliminary observation, we deduce by induction on $r$, that
\begin{align*}
 p_{\mu \alpha}( t) & = \begin{pmatrix}1 &  \mu^{-1}\alpha_{r}^{-1}  t  \end{pmatrix} \begin{pmatrix} \ast & \mu^{-1}\ast \\ \mu \ast  & \ast\end{pmatrix}\begin{pmatrix}t  \\  \mu\alpha_0  \end{pmatrix} \\
 & = \begin{pmatrix} \ast & \mu^{-1} \ast\end{pmatrix}\begin{pmatrix}t  \\  \mu\alpha_0  \end{pmatrix} = \ast.
\end{align*}

(b) We compute, using (a),
\[
   \mu \tilde{q}(\mu,\ldots,\mu) = p_{\mu \alpha}(t) = p_{\alpha}(t),
\]which yields the assertion.
\end{proof}

\begin{proof}[Proof of Theorem \ref{thm:charcrit1}]
 (i)$\Rightarrow$(ii) Put $d=0$ in \eqref{eq:qtildecrit}, the equation defining the $\tilde{q}$-criterion.
 
 (ii)$\Leftrightarrow$(iii), (iv)$\Leftrightarrow$(v) This is Lemma \ref{lem:homdeg0} (b).
 
 (iv)$\Rightarrow$(i) Since $\chi(\alpha)\neq 0$, by Theorem \ref{thm:asy}, we find $0<\delta_1\leq 1$ and $k_0\in \N$ such that for all $k\geq k_0$ and $x \in [-\delta_1,\delta_1]^{r+1}$, we have $q(\mu_k,x)\neq 0$. Consider the Taylor series $\sum_{\ell=0}^\infty c_\ell \xi^\ell= \sqrt{1+\xi}$ for $|\xi|\leq 1$. It is known that the series converges absolutely for $|\xi| \leq c$ for all $c<1$. In particular,
 \[
    \kappa \coloneqq \sup_{\xi\in [-1/2,1/2]} \sum_{\ell=1}^\infty c_{\ell}\xi^{\ell-1} <\infty. 
 \]  We find $k_1\geq k_0$ such that for all $k\geq k_1$ we have $1/\lambda_k\leq \min\{1/2,\frac{\kappa}{\delta_1}\}$. Thus, for $k\geq k_1$ and $-1\leq s\leq 1$ we compute
 \begin{align*}
    |\sqrt{\lambda_k + s}-\mu_k| & = |\sqrt{\lambda_k}\sqrt{1 + \frac{s}{{\lambda_k}}} -\sqrt{\lambda_k}| 
     = \sqrt{\lambda_k}\big|\sum_{\ell=1}^\infty c_\ell \left(\frac{s}{{\lambda_k}}\right)^\ell\big| 
    \leq  \frac{1}{\sqrt{\lambda_k}}\kappa \leq \delta_1.
 \end{align*}
 Since $q(\mu_k,0)\neq 0$ for all $k\in \{0,\ldots,k_1\}$, by continuity of $q$ and $\sqrt{\cdot}$, we find $0<\delta_0\leq \delta_1$ so that $\alpha_0,\ldots,\alpha_r$ and $(\lambda_k)_k$ satisfy the $\tilde{q}$-criterion.
 \end{proof}
 
 \subsection{Results in Section \ref{sec:homdd}}
 
 \begin{proof}[Proof of Lemma \ref{lem:computeproj}]
 Let $q=\grad u, r=\grad v \in g_0(\Omega)=\grad[H_0^1((0,1)^d)]$. We compute
 \begin{align*}
      q = (\iota_0^* \Gamma \iota_0)^{-1} r & \iff      \iota_0^* \Gamma \iota_0 \iota_0^*q =  \iota_0^*r\\
      & \iff \iota_0^* \Gamma \iota_0 \iota_0^*\grad u = \iota_0^* \grad v \\
      & \iff  \dive\iota_0\iota_0^* \Gamma \iota_0 \iota_0^*\grad u = \dive\iota_0\iota_0^* \grad v \\
      & \iff  \dive \Gamma \grad u = \dive \grad v \\
     & \iff   u = (\dive \Gamma \grad)^{-1}\dive \grad v \\
          & \iff  \iota_0^*q=\iota_0^*\grad u =\iota_0^*\grad (\dive \Gamma \grad)^{-1}\dive\iota_0 \iota_0^*r.
 \end{align*}
 Thus, $(\iota_0^* \Gamma \iota_0)^{-1}=\iota_0^*\grad (\dive \Gamma \grad)^{-1}\dive\iota_0$. Next, we derive a more explicit description of this expression. For this, we use the polar decomposition for unbounded operators, see, e.g., \cite[Proposition B.8.6]{PMTW20} for 
 \[\grad\colon H_0^1(\Omega)\subseteq L_2(\Omega)\to L_2(\Omega)^d.\] Thus, we find a unitary operator $U\colon \ran(|\grad|)\to \ran(\grad)$ such that 
 \[
     \iota_0^*\grad = U|\grad|;
 \]as a consequence $\dive\iota_0=-(\iota_0^*\grad)^*=|\grad| U^*$. Hence, we need to evaluate
 \[
   |\grad| (\dive \Gamma \grad)^{-1}|\grad|.
 \]As the latter operator is bounded, it suffices to compute its action on the dense subset $\dom(|\grad|^2)$.
 In order to derive an expression for this operator, we employ the spectral theorem for the Dirichlet--Laplace, $-|\grad|^2=\Delta$ on $L_2((0,1)^d)$ and the perturbed Dirichlet--Laplace $\Delta^{(\gamma)}\coloneqq \dive \Gamma \grad$ on $L_2((0,1)^d)$. Since $\Gamma$ is a diagonal matrix, it follows that the orthonormal basis of eigenfunctions 
 \[
     (e_k)_{k\in \N^d} = ((x_1,\ldots,x_d)\mapsto \frac{1}{2^{d/2}}\sin(k_1\pi x_1)\cdots \sin(k_d\pi x_d))_{k\in \N^d}
 \]
 for $-\Delta$ is also one for $-\Delta^{(\gamma)}$. The difference, however, being the eigenvalues, which in the former case are $\lambda_k = \pi^2\sum_{m=1}^d k_m^2$ and in the latter $\lambda_k^{(\gamma)} =\pi^2(\gamma k_1^2 +\sum_{m=2}^d k_m^2 )$ for all $k\in \N^d_{>0}$ (in order for $\lambda_k^{(\gamma)}>0$, one requires $\gamma>0$.) Thus, 
 \[
    |\grad| (\dive \Gamma \grad)^{-1}|\grad|
 \]
 is unitarily equivalent to the diagonal operator on $ L(\ell_2(\N^d_{>0}))$ of multiplying entry-wise with
 \[
    \N_{>0}^d \ni k\mapsto     \frac{  \sqrt{\sum_{m=1}^d k_m^2} \sqrt{\sum_{m=1}^d k_m^2}}{\gamma k_1^2 +\sum_{m=2}^d k_m^2 }= \frac{  \sum_{m=1}^d k_m^2}{\gamma k_1^2 +\sum_{m=2}^d k_m^2 }\qedhere
 \]
\end{proof}

\section{Conclusion}\label{sec:con}

We presented a new method for homogenisation. This method is predominantly tailored for self-adjoint and sign-changing coefficients with $0$ contained in the (essential) spectrum of the corresponding divergence form operator. The application of this new method required a detailed analysis for when divergence form as well as certain Sturm--Liouville operators with sign-changing coefficients lead to continuously invertible operators. The resulting condition on zeros of certain polynomials was used to identify the inner spectrum of the considered conductivities. For classical homogenisation problems, it was shown that these inner spectra behave rather tame comparing the pre-asymptotics to the homogenised coefficients. In the non-standard setting, which requires the new homogenisation method, the holomorphic $G$-limits are entirely unexpected as the homogenisation of a standard divergence form equation with local but sign-changing coefficients can be shown to lead in certain cases to a limit equation which is both nonlocal and -- in contrast to the second order equation one started out with -- of 4th order. This is particularly marked by a discontinuous behaviour of the inner spectra.


\bibliographystyle{abbrv}

\noindent
Marcus Waurick \\ Institute for Applied Analysis\\ Faculty of Mathematics and Computer Science\\ TU Bergakademie Freiberg\\
Freiberg, Germany\\
Email:
{\tt marcus.wau\rlap{\textcolor{white}{hugo@egon}}rick@math.\rlap{\textcolor{white}{balder}}tu-freiberg.de}

\end{document}